\documentclass[11pt]{aart}

\usepackage[letterpaper, hmargin=1in, top=1in, bottom=1.2in, footskip=0.6in]{geometry}
\usepackage{titlesec}
\titleformat{\section}[block]{\filcenter\normalfont\bfseries\large}{\thesection.}{.5em}{}\titlespacing*{\section}{0pt}{2\baselineskip}{1\baselineskip}
\titleformat{\subsection}[runin]{\normalfont\bfseries}{\thesubsection.}{.4em}{}[.]\titlespacing{\subsection}{0pt}{2ex plus .1ex minus .2ex}{.8em}
\titleformat{\subsubsection}[runin]{\normalfont\itshape}{\thesubsubsection.}{.3em}{}[.]\titlespacing{\subsubsection}{0pt}{1ex plus .1ex minus .2ex}{.5em}
\titleformat{\paragraph}[runin]{\normalfont\itshape}{\theparagraph.}{.3em}{}[.]\titlespacing{\paragraph}{0pt}{1ex plus .1ex minus .2ex}{.5em}

\usepackage[labelfont=bf,font=small,labelsep=period]{caption}
\usepackage[T1]{fontenc}
\usepackage[utf8]{inputenc}
\usepackage{microtype}

\usepackage{mlmodern}

\usepackage{amsmath}
\usepackage{amssymb}
\usepackage{amsfonts}
\usepackage{latexsym}
\usepackage{amsthm}
\usepackage{amsxtra}
\usepackage{amscd}
\usepackage{bbm}
\usepackage{mathrsfs}
\usepackage{bm}
\usepackage{mathtools}
\usepackage[dvipsnames]{xcolor}
\usepackage[textsize=footnotesize]{todonotes}
\presetkeys%
		{todonotes}%
		{color=Dandelion}{}%
\usepackage{hyperref}
\hypersetup{
    colorlinks,
    linkcolor={red!50!black},
    citecolor={blue!50!black},
    urlcolor={blue!80!black}
}
\usepackage[capitalise, nameinlink]{cleveref}

\definecolor{darkred}{rgb}{0.9,0,0.3}
\definecolor{darkblue}{rgb}{0,0.3,0.9}

\theoremstyle{plain} 
\newtheorem{theorem}{Theorem}[section]
\newtheorem*{theorem*}{Theorem}
\newtheorem{lemma}[theorem]{Lemma}
\newtheorem*{lemma*}{Lemma}
\newtheorem{corollary}[theorem]{Corollary}
\newtheorem*{corollary*}{Corollary}
\newtheorem{proposition}[theorem]{Proposition}
\newtheorem*{proposition*}{Proposition}

\newtheorem*{conjecture*}{Conjecture}

\theoremstyle{definition} 
\newtheorem{definition}[theorem]{Definition}
\newtheorem*{definition*}{Definition}
\newtheorem{example}[theorem]{Example}
\newtheorem*{example*}{Example}
\newtheorem{remark}[theorem]{Remark}
\newtheorem*{remark*}{Remark}
\newtheorem{assumption}[theorem]{Assumption}
\newtheorem*{assumption*}{Assumption}

\AddToHook{env/lemma/begin}{\crefalias{theorem}{lemma}}
\AddToHook{env/proposition/begin}{\crefalias{theorem}{proposition}}
\AddToHook{env/assumption/begin}{\crefalias{theorem}{assumption}}
\AddToHook{env/example/begin}{\crefalias{theorem}{example}}
\AddToHook{env/remark/begin}{\crefalias{theorem}{remark}}
\AddToHook{env/definition/begin}{\crefalias{theorem}{definition}}
\AddToHook{env/corollary/begin}{\crefalias{theorem}{corollary}}

\usetikzlibrary{calc}
\usetikzlibrary{decorations.markings}

\newcommand{\ind}{\mathbbmss{1}}
\newcommand{\Real}{\mathbb{R}}

\newcommand{\Sym}{\mathfrak{S}}

\newcommand{\hcV}{\mathcal{V}}
\newcommand{\cycV}{S_{1}^{\rm cyc}}
\newcommand{\du}{\rm du}
\newcommand{\Sdu}{S_{1}^{\du}}
\newcommand{\pp}{{\rm p}}
\newcommand{\vertices}{[N]}

\newcommand{\Ec}{\E}

\newcommand{\nc}{{\rm nc}}
\newcommand{\Gnc}{G^{\nc}}

\newcommand{\Anc}{A^{\nc}}
\newcommand{\simnc}{\overset{\nc}{\sim}}

\newcommand{\W}{\mathcal{W}}
\newcommand{\Tree}{\mathcal{T}}
\newcommand{\simT}[1]{\overset{\Tree_{#1}}{\sim}}
\newcommand{\simTc}[1]{\overset{\check{\Tree}_{#1}}{\sim}}

\DeclareMathOperator{\Id}{I}

\DeclareMathOperator{\Spec}{Spec}
\DeclareMathOperator{\Sib}{Sib}
\DeclareMathOperator{\sign}{sign}
\DeclareMathOperator{\sgn}{sgn}

\renewcommand{\leq}{\leqslant}
\renewcommand{\geq}{\geqslant}
\renewcommand{\epsilon}{\varepsilon}

\numberwithin{equation}{section}
\numberwithin{figure}{section}

\usepackage{enumitem}

\let\originalleft\left
\let\originalright\right
\renewcommand{\left}{\mathopen{}\mathclose\bgroup\originalleft}
\renewcommand{\right}{\aftergroup\egroup\originalright}

\renewcommand{\b}[1]{\boldsymbol{\mathrm{#1}}} 
\renewcommand{\r}{\mathrm} 

\renewcommand{\P}{\mathbb{P}}
\newcommand{\E}{\mathbb{E}}
\newcommand{\R}{\mathbb{R}}

\newcommand{\N}{\mathbb{N}}

\newcommand{\col}{\vcentcolon}
\newcommand*{\deq}{\mathrel{\vcenter{\baselineskip0.65ex \lineskiplimit0pt \hbox{.}\hbox{.}}}=}

\newcommand{\p}[1]{(#1)}
\newcommand{\pb}[1]{\bigl(#1\bigr)}
\newcommand{\pB}[1]{\Bigl(#1\Bigr)}
\newcommand{\pbb}[1]{\biggl(#1\biggr)}

\newcommand{\abs}[1]{\lvert #1 \rvert}

\newcommand{\norm}[1]{\lVert #1 \rVert}

\newcommand{\normBB}[1]{\Biggl\lVert #1 \Biggr\rVert}

\newcommand{\scalar}[2]{\langle#1 \mspace{2mu}, #2\rangle}

\newcommand{\order}{O}

\newcommand{\ee}{\r e}

\newcommand{\dd}{\r d}

\author{Thomas Buc--d'Alché \and Antti Knowles}
\title{Semilocalization for inhomogeneous random graphs}

\begin{document}

\maketitle

\begin{abstract}
We analyse the eigenvectors of the adjacency matrix of a random inhomogeneous graph constructed from a specified degree sequence. We assume that the empirical degree sequence has bounded mean and variance. We show that near the edges of the spectrum, the eigenvectors are semilocalized in the sense that their mass concentrates around a small set of resonant vertices. For the extremal eigenvalues, we establish localization around a single vertex. In order to obtain effective estimates in the presence of highly inhomogeneous degrees, we introduce a new economical pruning procedure that carefully extracts a forest from the original graph, whose adjacency matrix is compared to that of the original graph using a suitably constructed local coupling to random trees with independent edges.
\end{abstract}

\section{Introduction}
\label{sec:introduction}

\subsection{Overview}
Let $A$ be the adjacency matrix of a random graph on the vertex set $[N] = \{1, \dots, N\}$. We are interested in the geometric structure of the eigenvectors of $A$, in particular their \emph{spatial localization}. An $\ell^2$-normalized eigenvector $\b q = (q_x)_{x \in [N]} \in \R^N$ gives rise to a probability measure $x \mapsto q_x^2$ on the set of vertices $[N]$. Informally, $\b q$ is \emph{delocalized} if its mass is approximately uniformly distributed throughout $[N]$, and \emph{localized} if its mass is essentially concentrated on a small number of vertices.

In this paper we study the spatial localization of eigenvectors for a general inhomogeneous random graph model from \cite{hofstad_random_2016, bollobas2007phase, chung2002connected}, where the typical degree of a vertex is of order one, but these degrees can be of very different sizes. They are constructed from a specified degree sequence, which we assume to have finite first and second empirical moments; see \eqref{P_GRG} below. Our result applied when the tail of the empirical degree sequence is no lighter than exponential. In particular, we consider graphs whose empirical degree distribution is exponential or has a power law behaviour (so-called scale-free graphs).

Such inhomogeneous graphs differ substantially from their homogeneous counterpart, the Erd\H{o}s-Rényi graph. Heuristically, homogeneous random graphs are expected to exhibit random matrix behaviour such as delocalized eigenvectors. For the Erd\H{o}s-Rényi graph, the delocalized region is well understood \cite{ADK20,ADK_delocalized, HeKnowlesMarcozzi2018, EKYY1}; it corresponds essentially to the regime where the expected degree is at least of logarithmic size. In contrast, if the graph becomes too inhomogeneous, this random matrix behaviour breaks down and one expects eigenvectors to localize. For the Erd\H{o}s-Rényi graph, such localized behaviour has been investigated in \cite{ADK20, alt2024localized, ADK21, hiesmayr2025spectral}. In particular, it was shown in \cite{ADK20} that eigenvectors associated with eigenvalues near the spectral edges are \emph{semilocalized}, which means that their mass is concentrated on a vanishing fraction of the total number of vertices. More precisely, a semilocalized eigenvector has its mass concentrated around a small number of \emph{resonant vertices}, whose local energy is close to the associated eigenvalue, and around which the distribution of mass is radial and exponentially decaying.

An important motivation for our work stems from the localization-delocalization transition for disordered quantum systems, whereby the adjacency matrix $A$ is interpreted as the Hamiltonian of a free quantum particle hopping in the random geometry defined by the graph. This transition is an example of an \emph{Anderson transition}, where a disordered quantum system exhibits localized or delocalized states depending on the disorder strength and the location in the spectrum, corresponding to an insulator or conductor, respectively. Originally proposed in the 1950s \cite{anderson1958absence} to model conduction in semiconductors with random impurities, this phenomenon is now recognized as a general feature of wave transport in disordered media, and is one of the most influential ideas in modern condensed matter physics \cite{lee1985disordered, evers2008anderson, lagendijk2009fifty, abrahams201050}. It is expected to occur in great generality whenever linear waves, such as quantum particles, propagate through a disordered medium. One expects localization to occur for strong enough disorder, or inhomogeneity, of the system and for eigenvalues close enough to the spectral edge.

The main result of this paper is a proof of semilocalization for general inhomogeneous random graphs and eigenvectors associated with eigenvalues near the spectral edge. In addition, we prove a stronger localization result around a single vertex for eigenvectors associated with the extremal eigenvalues. This shows that the phenomenon is not tied to the homogeneous, mean-field law of the Erd\H{o}s-Rényi graph, for which it was established in the works \cite{ADK20, alt2024localized, ADK21, hiesmayr2025spectral} cited above.

A fundamental difference between this paper and the homogeneous result of \cite{ADK20} on the  Erd\H{o}s-Rényi graph is the source of the inhomogeneity. In  \cite{ADK20}, the inhomogeneity arises from a lack of concentration of the degrees in the regime where the typical degree is of order $\log N$. In contrast, in this paper, the inhomogeneity arises from the inhomogeneity of the degree sequence in the regime where the typical degree is of order 1. Our results are effective as soon as the largest degree is much larger than $\frac{\log N}{\log \log N}$, which is the order of the largest degree in the Erd\H{o}s-Rényi graph at fixed expected degree. In that sense, our results exploiting inhomogeneities of the degree sequence are sharp up to a constant.

We conclude this overview with a brief sketch of the new ideas of our proof. Obtaining sharp control on the optimal scale $\frac{\log N}{\log \log N}$ in the presence of highly inhomogeneous degrees requires several fundamental changes in the argument. This is also manifested in structure of the semilocalization profile vectors $\b u_\pm(x)$ defined in \cref{prop:ps-ev} below, which are no longer spherical, but incorporate a hierarchical structure among the the neighbours and their neighbours, arising from the presence of vertices whose degrees may of a different order of magnitude. In order to deal with such a large range of vertex degrees, we develop a new pruning procedure of the graph, based on so-called down-up paths (see \cref{def:down-up}). This results in a more economical pruning procedure than the ones employed in \cite{ADK20, alt2024localized, ADK21}, which would remove too many edges when applied to highly inhomogeneous graphs. This pruning procedure results in a carefully extracted global forest, unlike the locally tree-like pruned graphs from \cite{ADK20, alt2024localized, ADK21}. The key tool to analyse the spectrum of this forest is a coupling to a random tree, whose edges, unlike those of the forest obtained from the pruning, are independent, and which contains balls of small enough radius of the original forest. This coupling allows us to obtain sufficiently strong estimates on the operator norm of the error resulting from the pruning. In this latter step, we bypass the arguments of \cite{ADK20, alt2024localized, ADK21} relying on spectral bounds on the nonbacktracking matrix and Ihara-Bass-type formulas for norm estimates, which are not effective in the inhomogeneous setting.

\paragraph{Conventions and notations}
Every quantity that is not explicitly \emph{constant} depends on $N$. We omit this dependence in our notation. We use $C,c$ to denote generic positive constants, which may change from step to step. We write $X = O(Y)$ to mean
$\abs{X} \leq C Y$. 
We write $\N = \{0,1,2,\dots\}$.
We set $[n] \deq \{1, \ldots, n\}$ for any $n \in \N^*$. 
We write $\# X$ for the cardinality of a finite set $X$.

We use the following notations for vectors. Vectors in $\R^N$ are denoted by boldface lowercase Latin letters; we use the notation $\b v = (v_x)_{x \in [N]} \in \R^N$ for the entries of a vector. We denote by $\scalar{\b v}{\b w} = \sum_{x \in [N]} v_x w_x$ the Euclidean scalar product on $\R^N$ and by $\norm{\b v} = \sqrt{\scalar{\b v}{\b v}}$ the induced Euclidean norm. We denote by $\norm{A}$ the induced operator norm on $N \times N$ matrices $A$.
For any $x \in [N]$, we define the standard basis vector $\b 1_x \deq (\delta_{xy})_{y \in [N]} \in \R^N$.
To any subset $S \subset [N]$ we assign the vector $\b 1_S\in \R^N$ given by $\b 1_S \deq \sum_{x \in S} \b 1_x$. For $S \subset [N]$ we denote by $G \vert_S$ the subgraph of $G$ induced by $S$.

\subsection{Model and assumptions}

Let $G$ be a graph on the vertex set $[N]$, 
whereby we identify $G$ with its set of edges. It is characterized by its adjacency matrix $A = (A_{xy})_{x,y \in [N]}$, where $A_{xy} \deq \ind_{\{x,y\} \in G}$. We only consider simple graphs, so that $A_{xx} = 0$ for all $x \in [N]$. We write $x \sim y$ whenever $A_{xy} = 1$. We endow $G$ with the usual graph distance: the distance between $x$ and $y$ is the number of edges in the shortest path in the graph joining $x$ and $y$. For $x \in [N]$ and $r \in \N$, we denote by $B_r(x)$ the ball of radius $r$ around $x$, i.e.\ the set of vertices at distance at most $r$ from $x$, as well as by $S_r(x)$ the sphere of radius $r$ around $x$, i.e.\ the set of vertices at distance $r$ from $x$. We denote by $D_x = \sum_{y \in [N]} A_{xy}$ the degree of $x$.

In this paper we consider random graphs $G$ whose law is given by the Generalized Random Graph (GRG) model (see e.g.\ \cite[Chapter 6]{hofstad_random_2016}): the family $(A_{xy} \col x < y)$ is independent with
\begin{equation} \label{P_GRG}
\P(A_{xy} = 1) = p_{xy} \deq \frac{w_{x}w_{y}}{\sum_{z}w_{z} + w_{x}w_{y}}\,,
\end{equation}
where $(w_x)_{x \in [N]} \in (0,\infty)^N$ is a family of strictly positive weights.

For $k \in \N^*$, we define the empirical moment
\begin{equation}\label{eq:def-empirical-moment}
  m_k \deq \frac{1}{N}\sum_{x = 1}^{N}w_{x}^k\,.
\end{equation}
We fix two constant exponents $0 < \epsilon < 1/2$ and $0 < \delta < 1/3$ and make the following two assumptions on the weights $(w_x)$.

\begin{assumption}\label{hyp:upperbd-weights}
  For all $x \in \vertices$,
  \begin{equation*}
    w_{x} \leq N^{1/2 - \epsilon}\,.
  \end{equation*}
\end{assumption}

\begin{assumption}\label{hyp:behavior-m21}
  The first and second empirical moments satisfy
  \begin{equation*}
    m_{1} \geq N^{-\epsilon} \,, \qquad \frac{m_{2}}{m_{1}} = \order\Bigl((\log N)^{\delta}\Bigr)\,.
  \end{equation*}
\end{assumption}

\begin{remark}\label{rem:asympt-expr-p}
Another commonly used model of an inhomogeneous random graph is the Chung-Lu model \cite{chung2002connected}, where, instead of \eqref{P_GRG}, we set
\begin{equation*}
\P(A_{xy} = 1) = \tilde p_{xy} \deq \frac{w_x w_y}{\sum_z w_z} \wedge 1\,.
\end{equation*}
It follows immediately from Assumptions \ref{hyp:upperbd-weights} and \ref{hyp:behavior-m21} that $p_{xy} = \tilde p_{xy} (1 + \order(N^{-\epsilon}))$,
so that, in the regime that we consider, the GRG and Chung-Lu models are asymptotically equivalent. In particular, all of our results easily carry over to the Chung-Lu model.
\end{remark}

\begin{remark} \label{rem:mu_weights}
A natural way to construct the weights $(w_x)$ is to choose a fixed probability measure $\mu$ on $(0,\infty)$ and to choose the weights as either (i) independent random variables with law $\mu$ or (ii) the $(N+1)$-quantiles of $\mu$. We give two standard examples of $\mu$, to which our main result is applicable.
\end{remark}

\begin{example}[Power law]\label{ex:heavy-tail}
Suppose that $\mu$ is a power law with exponent $\alpha > 2$. That is,
\begin{equation} \label{mu_power}
\mu([t,\infty)) = L(t) \, t^{-\alpha}\,,
\end{equation}
where $L$ is a slowly varying function, i.e.\  there exists $u_{0} > 0$ such that
\begin{equation*}
\lim_{t \to \infty} \frac{L(tu)}{L(u)} = 1 \quad \text{for all } u \geq u_{0}\,.
\end{equation*}
It is easy to check that Assumptions \ref{hyp:upperbd-weights} and \ref{hyp:behavior-m21} hold (in case (i) of \cref{rem:mu_weights}, they hold with high probability). For instance, for $\mu([t,\infty)) = (c / t)^\alpha \wedge 1$ 
for some $c > 0$, the quantile weights from \cref{rem:mu_weights} (ii) are given by
\begin{equation} \label{w_x_power_law}
w_x = c \pbb{\frac{N+1}{x}}^{1/\alpha}\,.
\end{equation}
If the weights $(w_x)$ have a power law distribution with exponent $\alpha$, then with high probability so do the degrees $(D_x)$ of the random graph; see e.g.\ \cite[Theorem 6.12]{hofstad_random_2016}. Hence, the choice \eqref{mu_power} yields random graphs with power-law degree distribution (often also called scale-free graphs).
\end{example}

\begin{example}[Exponential distribution]\label{ex:exponential}
Let $\mu$ be the exponential distribution with parameter $\alpha > 0$. Then Assumptions
  \ref{hyp:upperbd-weights} and \ref{hyp:behavior-m21} hold (in case (i) of \cref{rem:mu_weights}, they hold with high probability). In case (ii) of \cref{rem:mu_weights}, the weights are
  \begin{equation} \label{exp_quantiles}
    w_x = \frac{1}{\alpha}\log \frac{N+1}{x}\,,
  \end{equation}
  for $x \in [N]$.
\end{example}

\begin{remark} \label{rem:clique}
The condition from \cref{hyp:upperbd-weights} ensures that every edge in the graph has a probability $o(1)$ of being included in the graph. Without it, the graph may exhibit a large subset $S$ of vertices, such that $p_{xy} \asymp 1$ for all $x,y \in S$. This would lead to the presence of large cliques, to which our analysis does not apply without significant modifications.
\end{remark}

We conclude this section with the following quantitative notion of high probability that we use throughout the paper.
\begin{definition}\label{def:very-high-p}
  Let $\nu > 0$ be a constant. An event $E \equiv E_N$ holds with \emph{$\nu$-high probability} if there exists a constant $C > 0$ such that, for all $N$,
  \begin{equation*}
    \P(E) \geq 1 - CN^{-\nu}\,.
  \end{equation*}
\end{definition}

\subsection{Semilocalization}
\label{sec:stat-semi-local}

Our main result, semilocalization for the eigenvectors of $A$, pertains to vertices whose associated energies $\sqrt{D_x}$ lie in a interval of width $\eta > 0$ centered around an energy $\lambda \in \R$. We define the set of \emph{resonant vertices} by
\begin{equation*}
  \W_{\lambda, \eta} \deq \left\{x \in [N]\col \left|\sqrt{D_{x}} - \lambda\right| \leq \eta\right\}\,.
\end{equation*}

\begin{theorem}[Semilocalization]\label{thm:main-intro}
Suppose that Assumptions \ref{hyp:upperbd-weights} and \ref{hyp:behavior-m21} hold. For any $\nu > 0$ there exists $C_{\nu} > 0$ such that the following holds with $\nu$-high probability. For any normalized eigenvector $\b q$ of $A$ with associated eigenvalue $\lambda$ and for any $\eta \leq |\lambda|/2$ we have
\begin{equation} \label{semilocalization}
    \sum_{x\in \W_{\lambda, \eta}} \scalar{\b q}{\b u_{\sgn(\lambda)}(x)}^{2} \geq 1 - \frac{C_{\nu}}{\eta^{2}}\frac{\log N}{\log\log N}\,,
  \end{equation}
  where $\b u_{\pm}(x)$ is an explicit vector supported in $B_{2}(x)$ defined in \cref{prop:ps-ev} below.
\end{theorem}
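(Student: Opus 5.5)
We follow the strategy of \cite{ADK20}, adapted to the forest obtained from the down-up-path pruning described in the overview. The idea is to write $A$ as a block-diagonal-like operator built from the approximate eigenvectors $\b u_\pm(x)$, plus an error whose norm on the relevant subspace is $O\bigl(\sqrt{\log N/\log\log N}\bigr)$, and then to run a spectral projection argument.

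The first step is to construct the pruned forest $\Tree \subset G$ by removing down-up paths, and to show that with $\nu$-high probability $\norm{A - A_\Tree} \le C$. Here $A_\Tree$ denotes the adjacency matrix of the forest. The bound should follow because each vertex loses only $O(1)$ edges, and the removed edges form a graph of bounded maximal degree. On $\Tree$, for every $x$ we take the vectors $\b u_\pm(x)$ supported in $B_2(x)$ from \cref{prop:ps-ev}. These are approximate eigenvectors, in the sense that
\begin{equation*}
\norm{(A_\Tree \mp \sqrt{D_x})\b u_\pm(x)} \le C\sqrt{\tfrac{\log N}{\log\log N}}^{\,1/2}\,,
\end{equation*}
or more precisely the residual should have norm whose square is $O(\log N/\log\log N)$. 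The vectors $\b u_\pm(x)$ for distinct $x$ in a well-separated set $\mathcal V$ of large-degree vertices are mutually orthogonal because their supports are disjoint in the forest.

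The second step handles the orthogonal complement. Let $\Pi$ be the orthogonal projection onto $\operatorname{span}\{\b u_\pm(x) : x \in \mathcal V\}$, where $\mathcal V$ consists of the vertices whose degree exceeds a threshold of order $\log N/\log\log N$. We would prove the block estimate
\begin{equation*}
\norm{(1-\Pi) A (1-\Pi)} \le C\sqrt{\tfrac{\log N}{\log\log N}}\,.
\end{equation*}
We also need the off-diagonal estimate $\norm{(1-\Pi)A\Pi}$, which follows from the approximate eigenvector bounds. The main obstacle is the block estimate. Our approach is to use the local coupling of balls of $\Tree$ to a random tree with independent edges. On such a tree, vertices of small degree contribute at most $2\sqrt{\max D}$ by the standard tree bound. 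The orthogonality to the $\b u_\pm(x)$ kills the large contribution of each high-degree vertex together with its hierarchical neighbourhood. A global bound then follows from gluing these local estimates with a partition of unity, as in \cite{ADK20}, with the Ihara–Bass step replaced by a direct moment or path-counting bound on the coupled tree.

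The final step is the spectral argument. Write $\b q = \Pi\b q + (1-\Pi)\b q$. Applying $(1-\Pi)$ to the eigenvalue equation gives
\begin{equation*}
(\lambda - (1-\Pi)A(1-\Pi))(1-\Pi)\b q = (1-\Pi)A\Pi\b q\,.
\end{equation*}
Taking components along $\b u_{\pm}(x)$ gives $(\lambda \mp \sqrt{D_x})\scalar{\b u_\pm(x)}{\b q} = \scalar{\b e_\pm(x)}{\b q}$, where $\b e_\pm(x)$ is the residual. We sum the squares over the non-resonant $x\notin\W_{\lambda,\eta}$ and over the sign opposite to $\sgn(\lambda)$. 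For these, $\abs{\lambda\mp\sqrt{D_x}}\ge \eta$; for the opposite sign this uses $\eta\le\abs{\lambda}/2$, which gives a distance of at least $\abs{\lambda}\ge 2\eta$. This bounds their total mass by $C\eta^{-2}\log N/\log\log N$, using that the residuals are almost orthogonal so that $\sum_x \scalar{\b e_\pm(x)}{\b q}^2 \le C\log N/\log\log N$.

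Similarly, $\norm{(1-\Pi)\b q}^2\le C\eta^{-2}\log N/\log\log N$. This is trivial unless $\eta^2\gg \log N/\log\log N$, and in that case $\abs{\lambda}\ge 2\eta$ exceeds the norm of the block $(1-\Pi)A(1-\Pi)$ by a margin of order $\eta$. Combining the two bounds gives \eqref{semilocalization}.
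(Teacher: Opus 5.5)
Your architecture matches the paper's: prune to a forest, use the vectors $\b u_\pm(x)$ of \cref{prop:ps-ev}, bound the complementary block, then resolve the eigenvalue equation on $(1-\Pi)\b q$ and on the components $\scalar{\b u_\pm(x)}{\b q}$. Your last step is an equivalent rearrangement of the paper's spectral-gap argument.

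\textbf{The gap is in your first step, which is where the paper's main technical work lies.} The claim that each vertex loses only $O(1)$ edges, so that $\norm{A-A_{\Tree}}\le C$, is false for the down-up pruning, and that pruning is what the vectors of \cref{prop:ps-ev} require. After removing all down-up paths, every vertex has at most one $\succ$-larger neighbour (\cref{rem:one-parent}). So a vertex $y$ with neighbours $y\prec z_1\prec\dots\prec z_m$ loses the $m-1$ edges $\{y,z_i\}$, $i<m$; indeed $y\in\Sdu(z_i)$ via $z_m$, barring short cycles. This $m$ is unbounded in general. It is only controlled at order $\log N/\log\log N$ (Lemmas \ref{lem:bound-ud-paths} and \ref{lem:bound-D+}, threshold $\xi$ in \cref{prop:pruning}), and for power-law weights it reaches that order. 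Since $A-A^{\pp}$ then dominates entrywise the adjacency matrix of a star with $m-1$ edges, $\norm{A-A^{\pp}}\ge\sqrt{m-1}$, so no $O(1)$ bound holds.

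The correct target is $\norm{A-A^{\pp}}\le C_\nu\sqrt{\log N/\log\log N}$ (\cref{prop:error-pruning}), and it does not follow from counting degrees. The maximal removed degree only gives $\norm{A-A^{\pp}}\lesssim\log N/\log\log N$. That error would enter both $\norm{(1-\Pi)A\Pi}$ and $\norm{(1-\Pi)A(1-\Pi)}$ and degrade your conclusion to $1-C\eta^{-2}(\log N/\log\log N)^{2}$.

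The paper reaches the square-root scale by splitting the removed edges according to whether the vertex has high, intermediate or low degree:
\begin{itemize}
\item For high degree it analyses $\tilde A_{\rm h}\tilde A_{\rm h}^*$ through the overlap counts $\#(\Sdu(x_1)\cap\Sdu(x_2))$. It uses an exponential-moment bound on down-up chains $x_1\sim y_1\sim x_2\sim y_2\sim x_3$, computed in the coupled tree $\Tree_{x_1}$ with independent edges (\cref{lem:bound-op-high}), and closes with a quadratic self-bounding inequality.
\item Intermediate-degree vertices are handled by \cref{lem:A-intermediate}.
\item Low-degree vertices are handled by the non-backtracking norm bound of \cite{BGBK}.
\end{itemize}
This step, not the complementary block, is where the tree coupling is needed.

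\textbf{Conversely, the block estimate you call the main obstacle is easy in the paper, and the route you sketch for it is not available.} High-degree vertices are not well separated in this model. Whenever $x$ with $D_x\ge\xi$ has a parent, $\hat x\succ x$ also has high degree, and $x$ lies in the support of $\b u_\pm(\hat x)$. Orthogonality in \cref{prop:ps-ev} comes from the sibling correction $-\b 1_{\Sib^-(x)}/\#\Sib^-(x)$ and the absence of down-up paths, not from disjoint supports. So an \cite{ADK20}-type partition of unity around separated centres does not apply.

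Instead, since $\overline\Pi\,\b u_\sigma(x)=0$, one has $\overline\Pi A^{\pp}\overline\Pi=\overline\Pi\bigl(A^{\pp}-\sum_{x,\sigma}\sigma\sqrt{D_x^{\pp -}}\,\b u_\sigma(x)\b u_\sigma(x)^*\bigr)\overline\Pi$. Replacing $\b u_\sigma$ by $\b v_\sigma$ costs $O(\sqrt{\log N/\log\log N})$ by \cref{prop:op-u-v-close}. What remains is the adjacency matrix of the global forest $G^{\pp}$ restricted to vertices of degree below $\xi$, whose norm is at most $2\sqrt{\xi}$ (\cref{lem:bound-rest-graph}).

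Finally, your bound $\sum_x\scalar{\b e_\pm(x)}{\b q}^2\lesssim\log N/\log\log N$ is correct but needs proof. In the paper it is the estimate of $B_1,\dots,B_4$ using Lemmas \ref{lem:descending-ball} and \ref{lem:bound-siblings}, together with \cref{prop:error-pruning} for the part coming from $A-A^{\pp}$.
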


The next result, \cref{prop:size-W}, shows that in the cases of
interest, the set of resonant vertices is of negligible size compared
to the size of the graph. More precise results leading to
complete localization for the extremal eigenvalues are
discussed in \cref{sec:biggest-eigenvalues}.
\begin{proposition}\label{prop:size-W}
  For any $\nu > 0$ there exists $C_{\nu} > 0$ such that with $\nu$-high
  probability
  \begin{equation*}
    \#\W_{\lambda, \eta} \leq 2 \Ec \left[ \#\W_{\lambda, \eta+1} \right] \vee \frac{2\nu \log N}{\log \log N}
  \end{equation*}
  whenever
 \begin{equation} \label{eta_condition}
  C_{\nu}\sqrt{\frac{\log N}{\log\log N}} < \eta < \frac\lambda2.
\end{equation}
Moreover, the expectation on the right-hand side can be estimated as follows.
\begin{enumerate}[label=(\roman*)]
\item
In general, a second moment method yields
  \begin{equation*}
    \Ec \left[ \#\W_{\lambda, \eta+1} \right] \leq \frac{m_{2}}{(\lambda - \eta)^{4}}N\,.
  \end{equation*}
  \item
     When the weights are taken to be the quantiles of the a power law distribution, as in Example \ref{ex:heavy-tail}, we have
       \begin{equation*}
    \Ec \left[ \#\W_{\lambda, \eta+1} \right] = \order\pbb{\frac{N\eta}{\lambda^{2\alpha+3-\iota}} + (\log N)^{2\delta}}\,,
  \end{equation*}
for any constant $\iota > 0$ if $\lim_{t\to \infty}L(t) = \infty$ and for $\iota = 0$ if $L$ is bounded.  
  \item
   When the weights are taken to be the quantiles of the exponential distribution, as in Example \ref{ex:exponential}, we have
  \begin{equation*}
    \Ec \left[ \#\W_{\lambda, \eta+1}  \right] = \order\pbb{\frac{N}{(\alpha+1)^{(\lambda-\eta)^{2}}} + (\log N)^{2\delta}}\,.
  \end{equation*}
\end{enumerate}
\end{proposition}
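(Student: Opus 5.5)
The plan is a factorial-moment (union bound over $k$-subsets) argument. The degrees of vertices in a fixed set are decoupled by removing the edges inside that set. The expectation bounds (i)–(iii) then follow from tail and local estimates for $D_x$, which is a sum of independent Bernoullis with mean $\sum_y p_{xy} = w_x(1+O(N^{-\epsilon}))$ by \cref{rem:asympt-expr-p}.

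\emph{Step 1 (decoupling).} Write $X \deq \#\W_{\lambda,\eta}$ and fix $k \in \N^*$. We have $\P(X \geq k) \leq \E\binom{X}{k} = \sum_{\#S = k} \P(S \subset \W_{\lambda,\eta})$. For $x \in S$, split $D_x = D_x^{S} + e_x$, where $D_x^S \deq \sum_{y \notin S} A_{xy}$ and $e_x$ is the number of neighbours of $x$ inside $S$. The family $(D_x^S)_{x\in S}$ is independent.

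We will show that on the event $\{x \in \W_{\lambda,\eta}\}$, the outer degree $D_x^S$ satisfies $|\sqrt{D_x^S} - \lambda| \leq \eta + 1$. This should follow from $\sqrt{D_x} - \sqrt{D_x - e_x} \leq e_x/\sqrt{D_x - e_x}$. The right-hand side is at most $1$ as soon as $e_x \leq k \leq (\lambda-\eta)^2/2$. The lower bound in \eqref{eta_condition} with $C_\nu$ large guarantees $(\lambda - \eta)^2 \geq \eta^2 \geq C_\nu^2 \log N/\log\log N$, which accommodates $k$ of order $\nu\log N/\log\log N$. Independence then gives
\[
\P(S \subset \W_{\lambda,\eta}) \leq \prod_{x\in S} \P\bigl(|\sqrt{D^S_x}-\lambda| \leq \eta+1\bigr).
\]
Next we compare each factor to $\P(x \in \W_{\lambda,\eta+1})$. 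On $\{e_x = 0\}$ we have $D_x = D_x^S$. Since $e_x$ is independent of $D_x^S$, each factor is at most $\P(x\in\W_{\lambda,\eta+1})/\P(e_x = 0)$. By \cref{hyp:upperbd-weights,hyp:behavior-m21}, $\P(e_x \neq 0) \leq \sum_{y \in S} p_{xy} \leq k N^{-2\epsilon}/m_1 = o(1)$, so $\P(e_x=0) = 1-o(1)$.

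\emph{Step 2 (choice of $k$).} Summing over $S$ gives
\[
\P(X \geq k) \leq \frac{\bigl((1+o(1))\,\mu\bigr)^k}{k!} \leq \Bigl(\frac{e(1+o(1))\,\mu}{k}\Bigr)^k, \qquad \mu \deq \E\,\#\W_{\lambda,\eta+1}.
\]
I expect this step to be the main obstacle. With $k = 2\mu \vee 2\nu\log N/\log\log N$, the bound is only $N^{-\nu}$ when $\mu \ll \log N/(\log\log N)^2$, or when $\mu \geq C\nu\log N$, where one should instead exploit the exponential factor via a Chernoff-type bound for factorial moments, $\P(X\geq 2\mu) \lesssim e^{-c\mu}$. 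The intermediate range of $\mu$ needs a sharper treatment. I would first try to replace the crude $\E\binom Xk$ bound by the generating function $\E(1+t)^X \leq \prod_x (1 + t\,\P(x \in \W_{\lambda,\eta+1})(1+o(1)))$, obtained by the same decoupling applied to all vertices via a sequential exposure. If necessary, I would adjust the constant $2$.

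\emph{Step 3 (expectations).} For (i), Chebyshev/Markov gives $\P(x\in\W_{\lambda,\eta+1}) \leq \P(D_x \geq (\lambda-\eta-1)^2) \leq \E D_x^2/(\lambda-\eta-1)^4$. Here $\E D_x^2 \leq w_x + w_x^2$ up to $1+o(1)$ factors, and summing over $x$ gives a multiple of $N(m_1+m_2)$. Absorbing constants and the shift by $1$ (using $\lambda - \eta \geq \eta \gg 1$) yields the stated form.

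For (ii) and (iii), use a Poisson approximation of $D_x$ with parameter $w_x(1+O(N^{-\epsilon}))$. The event $x \in \W_{\lambda,\eta+1}$ requires $D_x$ to lie in an interval of length $O(\lambda\eta)$ around $\lambda^2$. Vertices with $|w_x - \lambda^2| \gg \lambda\sqrt{\log N}$ contribute negligibly by Poisson tails, leaving a window of $w$-values of length $O(\lambda(\eta + \sqrt{\log N}))$. For the quantiles \eqref{w_x_power_law}, the number of $x$ with $w_x$ in such a window is about $N \alpha \lambda^{-2\alpha-2}\cdot \lambda\eta$, which gives $N\eta/\lambda^{2\alpha+1}$ up to the Poisson smoothing and the slowly varying factor; the latter costs $\lambda^{\iota}$ if $L$ is unbounded. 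The stated exponent $2\alpha+3$ should come from bounding the Poisson local probability more carefully, but I have not checked this. The additive term $(\log N)^{2\delta}$ accounts for the vertices of bounded weight, handled via \cref{hyp:behavior-m21}. For \eqref{exp_quantiles}, the count of large-weight vertices is dominated by the Poisson mixture: $\sum_x \P(\mathrm{Poi}(w_x) \geq t) \approx N\,\P(\mathrm{Poi}(W)\geq t)$ with $W\sim$ Exp$(\alpha)$, and this mixture is geometric with ratio $1/(\alpha+1)$. This gives $N(\alpha+1)^{-(\lambda-\eta)^2}$.
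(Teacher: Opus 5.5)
The gap is in Step 1, the decoupling. The paper resolves it with a global, weight-based decoupling.

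\textbf{Step 1.} The arithmetic is off. The bound $e_x/\sqrt{D_x-e_x}\le 1$ needs $e_x\lesssim\lambda-\eta$, not $e_x\le(\lambda-\eta)^2/2$; for $e_x=(\lambda-\eta)^2/2$ and $D_x=(\lambda-\eta)^2$ the ratio is $(\lambda-\eta)/\sqrt2$.

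Since your only control on the in-set degree is $e_x\le k-1$, what you actually get is a window enlargement $e_x/\sqrt{D_x}\le k/(\lambda-\eta)$, which grows with $k$. For $k\asymp\nu\log N/\log\log N$ and $\lambda-\eta$ near $C_\nu\sqrt{\log N/\log\log N}$, this is of order $\sqrt{\log N/\log\log N}/C_\nu$, comparable to what the paper's own argument gives. But a Chernoff-type bound, or your generating function $\E(1+t)^{X}$, needs $k$ of order $\mu$, and $\mu$ can be as large as $Nm_2/(\lambda-\eta)^4$. Once $k\gg\lambda-\eta$ your factors are no longer comparable to $\P(x\in\W_{\lambda,\eta+1})$. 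Once $k\ge(\lambda-\eta)^2$ the lower constraint on $D^S_x$ disappears altogether. The proposed sequential exposure inherits the same defect.

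The missing idea is a decoupling whose error does not depend on $k$. The paper orients edges by the \emph{deterministic} weights and sets $\hat D^{-}_{x}\deq\#\{y\sim x\col w_y<w_x\}$.
\begin{itemize}
\item Each edge is counted in at most one $\hat D^{-}_{x}$, so $(\hat D^{-}_{x})_{x\in[N]}$ is a globally independent family.
\item \cref{lem:bound-Dhat+} shows that $D_x-\hat D^{-}_{x}\le c_{\nu,N}=O(\log N/\log\log N)$ for all $x$ with $\nu$-high probability. It is a union bound using $\ind_{\{w_x\le w_y\}}\le w_y/w_x$, which gives $\P(\hat D^{+}_{x}\ge k)\le(1+o(1))(m_2/m_1)^k/k!$.
\item On that event $\#\W_{\lambda,\eta}$ is dominated by the sum of independent indicators $X'\deq\sum_x\ind_{\{(\lambda-\eta)^2-c_{\nu,N}\le\hat D^{-}_{x}\le(\lambda+\eta)^2\}}$. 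All factorial moments, and your generating-function bound, are then available for $X'$ simultaneously.
\end{itemize}
The resulting enlargement is about $c_{\nu,N}/(2(\lambda-\eta))$, so near the lower end of the range the paper does not literally get $\eta+1$ either. What matters is that it is uniform in $k$.

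\textbf{Step 2.} Your concern is legitimate, and the paper's last step does not settle it. The paper uses $\mu^k/k!$ with $k=2\mu\vee\frac{2\nu\log N}{\log\log N}$. But $\mu^{2\mu}/(2\mu)!\approx(e/2)^{2\mu}$ is not small. For $k=\frac{2\nu\log N}{\log\log N}$, one has $(e\mu/k)^k\le N^{-\nu}$ only when $\mu\lesssim\sqrt{\log N}/\log\log N$; your threshold $\log N/(\log\log N)^2$ is too generous.

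With $\mu'\deq\E X'$, a single factorial moment does give the Chernoff rate: taking $m\approx\mu'$, $\P(X'\ge2\mu')\le\E\binom{X'}{m}/\binom{\lceil2\mu'\rceil}{m}\lesssim(e/4)^{\mu'}$. Together with $(e\mu'/k)^k$, this yields for instance $\#\W_{\lambda,\eta}\le2\mu'\vee C\nu\log N$ with $\nu$-high probability.

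The threshold $\frac{2\nu\log N}{\log\log N}$ itself is not attainable in general, so adjusting the constant $2$ in front of $\mu$ will not help. Suppose $\#\W_{\lambda,\eta}$ is close to Poisson with mean $(\log N)^{a+o(1)}$, $\frac12<a<1$, while $\E\#\W_{\lambda,\eta+1}=(\log N)^{a+o(1)}$. Admissible weights allow this, e.g.\ about $(\log N)^{a+1/2}$ vertices of weight $(\lambda-\eta)^2-(\lambda-\eta)\sqrt{\log\log N}$ with $\lambda\asymp\log N$, all other weights $1$. Then $\P\bigl(\#\W_{\lambda,\eta}\ge\frac{2\nu\log N}{\log\log N}\bigr)\ge N^{-2\nu(1-a)-o(1)}\gg N^{-\nu}$.

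\textbf{Step 3, item (ii).} Your count $N\eta/\lambda^{2\alpha+1}$ is the correct order. Vertices whose quantile weights lie well inside $[(\lambda-\eta)^2,(\lambda+\eta)^2]$ belong to $\W_{\lambda,\eta}$ with probability $1-o(1)$, since $\eta\gg1$. So do not try to recover the exponent $2\alpha+3$. The paper obtains it by applying $\Gamma(s,x)\sim x^{s-1}e^{-x}$, which is valid for fixed $s$ as $x\to\infty$, in the regime $s\asymp x$. That replaces the Poisson probability of the whole window by a single point mass.
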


To illustrate \cref{thm:main-intro}, we apply it to the two examples from \cref{ex:heavy-tail} and \cref{ex:exponential}. For simplicity, we only focus on the largest eigenvalues, although analogous results hold for the smallest eigenvalues.

\begin{example}[\cref{ex:heavy-tail} continued]
Let $w_x$ be the $x$-th $(N+1)$-quantile of the power law distribution with exponent $\alpha > 2$, given in \eqref{w_x_power_law}. Abbreviate $t \deq N/x$ and suppose that $t \gg (\log N)^\alpha$. By \eqref{eq:d-weight} and \cref{lem:estimate-degree} below, $D_x = c \, t^{1/\alpha} (1 + o(1))$ with $\nu$-high probability for any fixed $\nu > 0$. By \cref{thm:eigenvalues} below, we conclude that the $x$-th largest eigenvalue satisfies $\lambda_x(A) = \sqrt{c} \, t^{1/2\alpha}(1 + o(1))$ with $\nu$-high probability. By \cref{thm:main-intro} and \cref{prop:size-W}, we therefore conclude that the eigenvector associated with the $x$-th largest eigenvalue is semilocalized in the sense of \eqref{semilocalization}, with $\eta = c' t^{1/2\alpha}$,  around at most
\begin{equation*}
\frac{N}{t^{1 + \frac{1}{\alpha}}} + \log N
\end{equation*}
vertices. 
In particular, we obtain nontrivial semilocalization for the $O(N (\log N)^{-\alpha})$ largest eigenvalues. We note that a slightly more careful analysis for for the largest eigenvalues allows one to upgrade semilocalization to complete localization (i.e.\ semilocalization around a single vertex), which is not expected to hold in the full semilocalized regime; see \cref{thm:localization} and \cref{ex:localization_power} below.

\end{example}

\begin{example}[\cref{ex:exponential} continued]
Let $w_x$ be the $x$-th $(N+1)$-quantile of the exponential distribution with parameter $\alpha > 0$, given in \eqref{exp_quantiles}. Let $\nu > 0$ and $\gamma \in (0,1)$. From \eqref{eq:d-weight}, \cref{lem:estimate-degree}, and a union bound, we find that with $\nu$-high probability for all $x \leq N^\gamma$ we have
\begin{equation} \label{D_x_bounds_exp}
w_x (1 - c) \leq D_x \leq w_x (1 + c)
\end{equation}
provided that
\begin{equation} \label{alpha_nu_constraint}
\frac{4\alpha (\nu + \gamma)}{1 - \gamma} < c^2 \leq \frac{1}{2}\,.
\end{equation}
From \cref{thm:eigenvalues} below, we therefore conclude that the $x$-th largest eigenvalue of $A$ is bounded from below by
\begin{equation*}
b \deq \sqrt{\frac{(1 - c - o(1))(1 - \gamma)}{\alpha}} \sqrt{\log N}\,.
\end{equation*}
Choosing $\eta = (1 - 1/\sqrt{2}) b$, we therefore conclude from \cref{thm:main-intro} and \cref{prop:size-W} that semilocalization in the sense of \eqref{semilocalization} with a right-hand side $1 - O_\epsilon\pb{\frac{1}{\log \log N}}$ holds, around at most $N^\beta$ vertices for any
\begin{equation*}
\beta > 1 - \log(1 + \alpha) \frac{(1 - c)(1 - \gamma)}{2 \alpha}\,,
\end{equation*}
under the constraint \eqref{alpha_nu_constraint}.
While this gives a nontrivial region of semilocalization for any $\alpha > 0$, the range of $\gamma$ and $\nu$ obtained is far from optimal and far from what \cref{thm:main-intro} yields for this example. Indeed, to simplify the presentation, we required \eqref{D_x_bounds_exp} to hold for each vertex $x \geq N^\gamma$, which imposes the strong constraint \eqref{alpha_nu_constraint}. Without this condition, the strong concentration of the degrees used above breaks down, but a more sparing analysis using the first and second moment method on the counting function of the degrees can still be applied, which yields a far larger region for semilocalization; for brevity, we do not carry this analysis out here. To conclude this example, we emphasize the importance of the optimal factor $\log \log N$ on the right-hand side of \eqref{semilocalization}, without which semilocalization for exponential weights could not be established.
\end{example}

\begin{remark}[Relaxation of the hypothesis $\delta < 1/3$.]
The condition $\delta < 1/3$ in \cref{hyp:behavior-m21} prevents the application of our results to graphs of high mean degree, such as $\log N$. However, we may relax the assumption that $\delta < 1/3$ and introduce a parameter
  $\beta \geq 1$ with $\delta < \beta/3$. By using the same proof as below, we may
  obtain a localization result with the term
  $\frac{\log N}{\log\log N}$ replaced by
  $\frac{(\log N)^{\beta}}{(\log \log N)^{\mu}}$ where $\mu = 1$ if $\beta = 1$ and
  any $\mu \geq 1$ if $\beta > 1$. Such an extension can for instance be used to establish semilocalization for sufficiently inhomogeneous random graphs with polylogarithmic mean degrees.
\end{remark}

\subsection{The degrees and the weights} \label{sec:degrees-weights}

We conclude the introduction with a few basic facts and tools on the relationship between weights and degrees, which we shall use throughout the proofs. In the sequel, we will mainly consider
vertices of high degree. In particular, vertices with weights greater
than $c\log N$, for a fixed $c > 0$, are easier to describe.

We denote by
\begin{equation*}
d_x \deq \E[D_x]
\end{equation*}
the expectation of the degree of $x$. Using
\begin{equation}\label{eq:proba-edge}
  p_{xy} = \frac{w_{x}w_{y}}{m_{1}N + w_{x}w_{y}} \leq \frac{w_{x}w_{y}}{m_{1}N}\,.
\end{equation}
we find
\begin{equation*}
  d_{x} = \sum_{y \in \vertices\setminus\{x\}}\frac{w_{x}w_{y}}{\sum_{z}w_{z} + w_{x}w_{y}} \leq w_{x}\,.
\end{equation*}
An asymptotically matching lower bound for $d_x$ follows from \cref{rem:asympt-expr-p}, which yields
\begin{equation}\label{eq:d-weight}
  d_{x} = \sum_{y \in \vertices\setminus \{x\}}\frac{w_{x}w_{y}}{m_{1}N}\Bigl(1+\order(N^{-\epsilon})\Bigr) = w_{x}\pbb{1-\frac{w_{x}}{m_{1}N}}\Bigl(1+\order(N^{-\epsilon})\Bigr) = w_{x}\Bigl(1 + \order(N^{-\epsilon})\Bigr)\,.
\end{equation}

As soon as the weight of a vertex $x$ is at least of order $\log N$,
the degree $D_{x}$ is as well at least of order $\log N$, with $\nu$-high probability.
\begin{lemma}\label{lem:estimate-degree}
  Let $\nu > 0$ and $x \in [N]$. With $\nu$-high probability,
  \begin{equation*}
    d_{x} - \sqrt{2\nu d_{x}\log N} \leq D_{x} \leq d_{x} + 2\sqrt{\nu\log N\left(d_{x}\vee\frac{4\nu}{9}\log N\right)}\,.
  \end{equation*}
\end{lemma}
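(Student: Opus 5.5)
The plan is to apply Bernstein's inequality (equivalently, a Chernoff bound) to $D_x$. Since $D_x = \sum_{y \neq x} A_{xy}$ is a sum of independent Bernoulli variables with parameters $p_{xy}$, the centred summands $A_{xy} - p_{xy}$ satisfy $\abs{A_{xy} - p_{xy}} \leq 1$. Their total variance is
\begin{equation*}
\sum_{y \neq x} p_{xy}(1-p_{xy}) \leq d_x .
\end{equation*}

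For the lower tail I will use the standard Chernoff bound for sums of independent Bernoulli variables, which is sub-Gaussian below the mean:
\begin{equation*}
\P(D_x \leq d_x - t) \leq \exp\pb{-t^2/(2 d_x)}.
\end{equation*}
This comes from $\log \E e^{-s(A-p)} \leq p(e^{-s}-1+s) \leq p s^2/2$ for $s \geq 0$, optimised at $s = t/d_x$. Choosing $t = \sqrt{2\nu d_x \log N}$ gives probability at most $N^{-\nu}$, which is exactly the stated lower bound.

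For the upper tail I will use Bernstein's inequality in the form
\begin{equation*}
\P(D_x \geq d_x + t) \leq \exp\pbb{-\frac{t^2}{2(d_x + t/3)}}.
\end{equation*}
Set $V \deq d_x \vee \frac{4\nu}{9}\log N$ and $t = 2\sqrt{\nu \log N \, V}$. The aim is to check that $t^2 \geq 2\nu \log N \, (d_x + t/3)$. We have $t^2 = 4\nu\log N\,V \geq 2\nu \log N\, d_x + 2\nu\log N\, V$, so it suffices to show $V \geq t/3$, i.e.\ $9V^2 \geq 4\nu\log N\, V$. This is the same as $V \geq \frac{4\nu}{9}\log N$, which holds by the definition of $V$. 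Hence the upper tail also has probability at most $N^{-\nu}$.

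A union bound over the two tails gives the event with probability at least $1-2N^{-\nu}$, which is $\nu$-high probability in the sense of \cref{def:very-high-p}. I don't expect any real obstacle; the only delicate point is the constant bookkeeping in the Bernstein step, and the $\frac{4\nu}{9}\log N$ floor in the statement is designed precisely to absorb the $t/3$ term.
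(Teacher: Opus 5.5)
Your proof is correct and takes essentially the same route as the paper. The paper also uses a Bernstein/Bennett bound $\exp\bigl(-t^2/(2(v+t/3))\bigr)$ for the upper tail and the sub-Gaussian Chernoff bound $\exp(-t^2/(2d_x))$ for the lower tail. Your choice of variance proxy $d_x$ and floor $\frac{4\nu}{9}\log N$ matches the stated inequality exactly, whereas the paper's written choice $M = 2\sqrt{\nu\log N\,(w_x \vee \frac{2}{3}\nu\log N)}$ yields a slightly weaker version with $w_x$ in place of $d_x$.
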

\begin{proof}
  This is an application of Bennett's inequality (see
  \cite[Theorem 2.9]{BLM13}). We have for $M > 0$,
  \begin{equation*}
    \P(D_{x} \geq M+d_{x})
    = \P(D_{x} - d_{x} \geq M)
    \leq \exp\Biggl(-\frac{M^{2}}{2(w_{x} + M/3)}\Biggr)\,.
  \end{equation*}
  If we choose $M = 2\sqrt{\nu\log N (w_{x} \vee (2/3)\nu\log N)}$, we get
  the upper bound.

  For the lower bound, we use the slightly sharper bound (see
  \cite[Theorem 2.21]{hofstad_random_2016}),
  \begin{equation*}
    \P(D_{x} \leq d_{x} - \sqrt{2\nu d_{x}\log N})
    \leq \exp\Biggl(-\frac{(\sqrt{2\nu d_{x}\log N})^{2}}{2d_{x}}\Biggr)
    \leq N^{-\nu}\,. \qedhere
  \end{equation*}
\end{proof}

\begin{remark}[Lower bound for $D_{x}$]\label{rem:ratio-big-dx}
\cref{lem:estimate-degree} implies that as soon as $D_{x} \geq 1$
  and $d_{x} \geq 4\nu \log N$, we have that
  \begin{equation*}
    d_{x} \leq \frac{\sqrt{2}}{\sqrt{2}-1}D_{x} \quad \text{ with $\nu$-high probability. }
  \end{equation*}
  Otherwise, if $D_{x} \geq C \geq 1$ and $d_{x} < 4\nu \log N$, we only have the crude bound
  \begin{equation*}
    d_{x} \leq \frac{4\nu}{C}\log N D_{x}\,.
  \end{equation*}
  Hence, we have in any case
  \begin{equation*}
    d_{x} \leq \left( \frac{\sqrt{2}}{\sqrt{2} - 1} \vee \frac{4\nu}{C}\log N \right) D_{x}\,.
  \end{equation*}
\end{remark}
\begin{remark}[Upper bound for $D_{x}$]\label{rem:upper-bd-degree}
\cref{lem:estimate-degree} implies that if $d_{x} > \frac{4\nu}{9}\log N$, we have
  \begin{equation*}
    D_{x} \leq d_{x} + 2\sqrt{\nu\log N d_{x}} \leq d_{x} + 2\sqrt{\frac{9}{4}d_{x}^{2}} = 4d_{x}
  \end{equation*}
  with $\nu$-high probability. However, if $d_{x} \leq \frac{4\nu}{9}\log N$, we have
  \begin{equation*}
    D_{x} \leq d_{x} + \frac{4\nu}{3}\log N \leq \frac{16\nu}{9}\log N
  \end{equation*}
  with $\nu$-high probability. Hence, we have in all cases
  \begin{equation*}
    D_{x} \leq 4d_{x} \vee \frac{16 \nu}{9}\log N \quad \text{ with \(\nu\)-high probability.}
  \end{equation*}
\end{remark}

In the sequel, it will be convenient to order the vertices in terms of
their degree.
\begin{definition}\label{def:order}
  We define the strict order relation $\prec$ on the set of vertices $[N]$ as
  follows. For any two vertices $x, y \in [N]$,
  \begin{equation*}
    x \prec y \text{ if and only if } \Bigl( (D_{x} < D_{y}) \text{ or } (D_{x} = D_{y} \text{ and } x > y) \Bigr)\,.
  \end{equation*}

  We define the (random) permutation $\pi \in \Sym_{N}$ to be the
  unique permutation such that
  \begin{equation*}
    \pi(N) \prec \pi(N-1) \prec \cdots \prec \pi(2) \prec \pi(1)\,.
  \end{equation*}
\end{definition}
Note that in particular $D_{\pi(N)} \leq D_{\pi(N-1)} \leq \cdots \leq D_{\pi(2)} \leq  D_{\pi(1)}$.
This order allows us to define two notions of neighborhood and degree:
\begin{equation}\label{eq:def-S+-}
  \begin{split}
    S_{1}^{+}(x) &= \{y \in \vertices\col x \sim y, x \prec y\}~\text{ and }~D_{x}^{+} = \# S_{1}^{+}(x)\,,\\
    S_{1}^{-}(x) &= \{y \in \vertices\col x \sim y, x \succ y\}~\text{ and }~D_{x}^{-} = \# S_{1}^{-}(x)\,.
  \end{split}
\end{equation}
Thus, $S_1^+(x)$ and $S_1^-(x)$ partition $S_1(x)$ with $D_x = D_x^+ + D_x^-$.

\section{Pruning the graph}
\label{sec:pruning-graph}

Similarly as in \cite{ADK19, ADK20}, it is more convenient to work on a
pruned version of the graph. The GRG model is inhomogeneous, compared
to the Erd\H{o}s-R{\' e}nyi model: since the laws of the degrees in
the graph are governed by the weights $(w_{x})$, there are greater
differences of degrees in the graph. Because of this greater
heterogeneity, the pruning procedure has to be more subtle. As in the
Erd\H{o}s-R\'enyi case, we first prune the graph to remove cycles in
small balls. Then, instead of removing all edges connecting two
vertices of high degree, we remove edges appearing in a very specific
pattern. This procedure is key to simplifying the computations in
\cref{sec:finding-eigenvectors}.

Most of this section is devoted to estimating the error in operator norm we make when working with the adjacency matrix of the prunded graph rather than the adjacency matrix $A$. This is one of the main technical difficulty of our argument.

Throughout the following we fix a constant $r \geq 6$. For convenience,
we will sometimes omit it from the notation.

\begin{remark}
  The pruning of \cite{ADK20} (besides removing cycles) amounts to
  removing all the edges between vertices of high degree. Because of the
  inhomogeneity in the GRG case, this would mean cutting a number of
  vertices proportional to $w_{x}$ around a vertex $x$. This would
  prevent us from obtaining good bounds on the error we make when
  replacing the adjacency matrix of the original graph by the one of
  the pruned graph.

  The new pruning presented below is asymmetric, and based on the
  order $\prec$ introduced in \cref{def:order}. We orient each edge
  $\left\{ x, y \right\}$ in $G$ from $x$ to $y$ if $x \prec y$. We
  then remove special paths we call \emph{down-up paths}, paths of
  length 2 between vertices $x$ and $z$, going through a vertex $y$ so
  that $y \prec x \prec z$.

  In the Erd{\H o}s-R{\' e}nyi model, one can prune the graph so that
  in a ball of small radius around any vertex, the graph is a tree and
  contains at most one vertex of high degree. In the GRG model, by
  removing down-up paths, we get a pruned graph which is globally a
  forest. Furthermore, each connected component is a tree that is
  naturally rooted at the vertex of greatest degree in the connected
  component.
\end{remark}

\subsection{The pruning procedure}
\label{sec:pruning-procedure}

We now explain precisely the pruning procedure. It produces a pruned graph $G^{\pp}$. To give the construction, we
introduce notation pertaining to paths.
\begin{definition}
  A \emph{path} $\gamma$ in a graph $G$ is a sequence of vertices
  $\gamma = (\gamma_{0}, \gamma_{1}, \ldots, \gamma_{l})$, with
  $\left\{ \gamma_{i-1}, \gamma_{i} \right\} \in G$ for $i\in[n]$. The length of the
  path is $l(\gamma) = l$. A path is said to be \emph{simple} if $\gamma_{i} \neq \gamma_{j}$
  for $i \neq j, \{i, j\} \neq \{0, l\}$.

  The set of paths $\gamma$ in $G$ satisfying $\gamma_0 = x$ and
  $\gamma_{l(\gamma)} = y$ is denoted by $\mathcal{P}_{xy}(G)$, and
  its subset of simple paths is denoted by $\mathcal{P}^{*}_{xy}(G)$.
\end{definition}

We mentioned that we will consider a particular set of paths, the
down-up paths, which we define below.
\begin{definition}\label{def:down-up}
  A \emph{down-up path} between two distinct vertices $x\in \vertices$
  and $z \in \vertices$ is a path $(x, y, z)$ with
  $y \prec x \prec z$.\end{definition}

Consider a vertex $x \in [N]$. We define the two sets
\begin{equation}\label{eq:def-two-pruning-sets}
  \begin{split}
    \cycV(x) &= \{y \in S_{1}(x)\col \exists \gamma \in \mathcal{P}^{*}_{xx}(G), l(\gamma) \leq 2r + 1, \gamma_{1} = y\}\,,\\
    \Sdu(x) &= \{y \in S_1(x) \setminus \cycV(x) \col \exists z \in S_1(y) \setminus \cycV(y), y \prec x \prec z\}\,.
  \end{split}
\end{equation}
i.e.\ $\cycV(x)$ is the set of vertices connected to $x$ that are part
of a cycle which is a simple loop, and $\Sdu(x)$ is the set of
vertices part of an up-down path starting at $x$.

\begin{figure}[htp]
  \centering
  \includegraphics[width=0.5\textwidth]{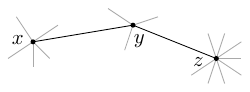}
  \caption{\label{fig:du} A down-up path.}
\end{figure}

To construct the pruned graph $G^{\pp}$, we proceed in two
steps. Firstly, we do a first pruning procedure to remove some cycles
in balls of radius $r$ around all vertices. Then, we remove the
down-up path, see \cref{def:down-up}.
\begin{definition}[Pruning procedure]\label{def:pruning}
The graph $G^\pp$ is defined by the following procedure.
  \begin{enumerate}
  \item\label{itm:remove-cycle} for each $x \in [N]$, and each $y \in \cycV(x)$, we
        remove from $G$ the edge $\{x, y\}$, and then
  \item\label{itm:remove-du} for each $x \in \vertices$, and each
        $y \in \Sdu(x)$ , we remove from $G$ the
        edge $\{x, y\}$.
\end{enumerate}
We denote the graph obtained after the first step by $\Gnc$.
\end{definition}
Note that the definition of $G^\nc$ and $G^\pp$ do not depend on the order in which the edges are removed.
We indicate by the superscript $\pp$ (respectively $\nc$) that the
adjacency matrix, degrees, spheres, balls, ... correspond to the
pruned graph $G^{\pp}$ (respectively $G^{\nc}$). For instance,
$D^{\pp}_{x}$ is the degree of the vertex $x$ in $G^{\pp}$. The event
that $\{x, y\} \in G^\nc$ is denoted by
$\{x \overset{\nc}{\sim} y\}$, and that $\{x, y\} \in G^\pp$ is denoted by
$\{x \overset{\pp}{\sim} y\}$.

\begin{proposition}\label{prop:pruning}
  Let $\nu > 0$. We define the threshold
  \begin{equation*}
    \xi = \xi_{\nu} = \frac{3(\nu + 1)(2 - 3\delta)}{(1 - \delta)(1 - 2\delta)}\frac{\log N}{\log\log N}\,.
  \end{equation*}
  The graph $G^{\pp}$ satisfies the following properties.
  \begin{enumerate}
    \item\label{item:diff-Dx} With $\nu$-high probability, for all $x \in \vertices$,
          $D_{x} - D^{\pp}_{x} \leq \xi/2$.
    \item There are no down-up paths in the graph $G^{\pp}$.
    \item The graph $G^{\pp}$ is a forest.
  \end{enumerate}
\end{proposition}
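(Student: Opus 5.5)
The three properties are of very different natures. Properties (ii) and (iii) are deterministic consequences of \cref{def:pruning}, so I would prove them first. Property \ref{item:diff-Dx} is a probabilistic counting estimate and carries all the work.

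\emph{Property (ii).} Suppose $(x,y,z)$ is a down-up path in $G^\pp$, so $y \prec x \prec z$ and $\{x,y\},\{y,z\} \in G^\pp \subset G^\nc$.
\begin{itemize}
\item The relation ``$y \in \cycV(x)$'' is symmetric in $x,y$: a simple loop through $x$ with first step $y$ is reversed into one through $y$ with last step $x$, then rotated. Hence $\{x,y\} \in G^\nc$ means $y \notin \cycV(x)$, and likewise $z \notin \cycV(y)$.
\item Together with $y \prec x \prec z$, this says exactly that $y \in \Sdu(x)$ (with witness $z$).
\item But then step \ref{itm:remove-du} removes $\{x,y\}$, a contradiction.
\end{itemize}

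\emph{Property (iii).} Suppose $G^\pp$ contains a cycle, which has length at least $3$ since $G$ is simple. Let $y$ be its $\prec$-minimal vertex and $x \neq z$ its two neighbours on the cycle. Both satisfy $x,z \succ y$; relabel so that $x \prec z$. Then $(x,y,z)$ is a down-up path in $G^\pp$, contradicting (ii).

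\emph{Property \ref{item:diff-Dx}: decomposition.} The edges at $x$ removed by the pruning fall into three classes.
\begin{enumerate}
\item[(a)] Edges to $y \in \cycV(x)$.
\item[(b)] Edges to $y \in \Sdu(x)$: neighbours $y \prec x$ having a neighbour $z \succ x$.
\item[(c)] Edges to $y$ with $x \in \Sdu(y)$: neighbours $y \succ x$ such that $x$ has a further neighbour $z \succ y$.
\end{enumerate}
Class (c) is contained in $S_1^+(x)$, so it suffices to show that, with $\nu$-high probability and for all $x$, each of $\#\cycV(x)$, $D^+_x$ and the class-(b) count is at most $\xi/6$. A union bound over $x$ costs a factor $N$, which the definition of $\xi$ absorbs (the factor $\nu+1$).

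\emph{Key tail estimate for $D_x^+$.} A neighbour of $x$ is size-biased. The expected number of neighbours $y$ with $w_y \geq t$ is therefore at most
\[
\sum_y \frac{w_x w_y}{m_1 N}\ind_{w_y \geq t} \leq \frac{w_x m_2}{m_1 t}.
\]
Taking $t \asymp w_x$ gives mean $O(m_2/m_1) = O((\log N)^\delta)$. For independent Bernoulli sums with mean $\mu$, the tail satisfies
\[
\P(\text{sum} \geq k) \leq \pb{e\mu/k}^k .
\]
With $\mu = (\log N)^{\delta}$ and $k = c \log N/\log\log N$ this is $N^{-(1-\delta)c(1+o(1))}$, which is where the factors $(1-\delta)$ in $\xi$ come from.

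\emph{Obstacle 1: the order is by degrees.} The order $\prec$ uses the random $D$'s, not the $w$'s, and $D_y$ depends on the edge $\{x,y\}$. I would decouple as follows.
\begin{itemize}
\item Use \cref{lem:estimate-degree} and Remarks \ref{rem:ratio-big-dx}--\ref{rem:upper-bd-degree} to replace ``$D_y \geq D_x$'' by ``$w_y \gtrsim w_x$''. When $w_x \gtrsim \log N$, the degrees concentrate up to constant factors.
\item When $w_x \lesssim \log N$, use instead that $D_y \geq D_x$ forces either $w_y \gtrsim D_x/\log N$ or $y$ having many neighbours outside $x$.
\item Treat the events with a first-moment bound over $k$-tuples of distinct neighbours $y_1,\dots,y_k$. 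Bound the degree of each $y_i$ using only the edges not incident to $x$, which are independent of the edges $\{x,y_i\}$; this only changes degrees by $1$.
\end{itemize}

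\emph{Obstacle 2: the class-(b) count.} It is a two-step count: the mean number of pairs $y \sim x$, $z \sim y$ with $w_z \gtrsim w_x$ is $O((m_2/m_1)^2) = O((\log N)^{2\delta})$. I would again bound $k$-tuples of neighbours $y_i$ with witnesses $z_i$, allowing the $z_i$ to coincide. The tuple probability is then $(C(\log N)^{2\delta}/k)^k$ up to combinatorial factors from coincidences, which I expect yields the factor $(1-2\delta)$ in $\xi$.

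\emph{Class (a).} Count, for each $k$, the configurations of $k$ short cycles through $x$ with distinct first steps. Each cycle of length $l \leq 2r+1$ has expected count at most
\[
\frac{w_x^2}{m_1N}\pB{\frac{m_2}{m_1}}^{l-1} \leq N^{-2\epsilon+o(1)},
\]
by \cref{hyp:upperbd-weights} and \cref{hyp:behavior-m21}. Cycles sharing edges reduce to fewer independent factors, but each new first step still costs a power of $N$. So $\#\cycV(x) = O(1)$ with $\nu$-high probability.

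I expect the main difficulty to be the decoupling of the degree-based order from the edges being counted, together with the bookkeeping of coincident witnesses in class (b), while keeping the exact constant in $\xi$.
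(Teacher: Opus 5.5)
Your proofs of (ii) and (iii), and your split of the edges removed at $x$ into $\cycV(x)$, $\Sdu(x)$ and up-neighbours, match the paper. The paper bounds the last class by $D^{\nc+}_x$, which differs from your $D^+_x$ by at most $\#\cycV(x)$. Where you differ is in the probabilistic estimates.
\begin{itemize}
\item The paper passes to $G^{\nc}$ and embeds its $r$-balls into the tree $\Tree_x$, whose edge indicators are genuinely independent (\cref{prop:embedding-Tx}, \cref{lem:T-indep}). It replaces the degree order by a threshold of the form $\chi D \geq w_x$ with $\chi \asymp \log\log N$, and applies Bennett's inequality to a sum of independent indicators (\cref{lem:bound-ud-paths}, \cref{lem:bound-D+}). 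For $\cycV(x)$ it extracts $k$ edge-disjoint cycles from $3k$ elements of $\cycV(x)$ (\cref{lem:non-edge-intersecting}).
\item You instead use a factorial-moment bound over $k$-tuples with hand-made decoupling. This is the device the paper itself uses for the weight-ordered $\hat D^+_x$ in \cref{lem:bound-Dhat+}.
\end{itemize}
Your route is more elementary and avoids constructing $\Tree_x$. The paper's coupling buys a nested conditional-independence structure that it reuses for the longer path sums in \cref{lem:bound-op-high}, \cref{lem:A-intermediate} and \cref{lem:descending-ball}. The tail exponents $(1-\delta)$ and $(1-2\delta)$ come out the same either way.

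Three points must be tightened for your route to close.

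\emph{First: removing only the edges at $x$ is not enough for independence.} The events for different $i$ also share edges $y_i \sim y_j$, and for class (b) also $z_i \sim x$, $z_i \sim y_j$, $z_i \sim z_j$, or coincidences $z_i = z_j$, $z_i = y_j$. These enter several degrees at once. When $w_x \lesssim \log N$ the only available threshold is $D_x \geq k$, and these shared edges can contribute order $k$.
\begin{itemize}
\item The fix is to restrict to $y_i \notin \cycV(x)$. This is automatic in $\Sdu(x)$ and costs $O(1)$ for up-neighbours.
\item Each such edge or coincidence closes a simple cycle through $x$ with first step $y_i$, of length at most $5 \leq 2r+1$, so none occurs. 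In particular the witnesses $z_i$ are automatically distinct and no coincidence bookkeeping is needed.
\item Then $D_{z_i} = 1 + D^{\rm out}_{z_i}$, where $D^{\rm out}_{z_i}$ counts neighbours of $z_i$ outside the configuration. These are mutually independent and independent of the configuration edges.
\item Apply Markov with a deterministic threshold $s$: take $s = k-1$ if $w_x \leq C\log N$ (since $D_x \geq k$ on the tuple event), and $s = w_x/2 - 1$ otherwise (by \cref{lem:estimate-degree}). This gives $\frac{1}{k!}\bigl(\frac{w_x}{s}(m_2/m_1)^2\bigr)^k$ with $w_x/s = O(\log\log N)$, plus a negligible error. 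The up-neighbours are handled the same way.
\end{itemize}
This is exactly the role local tree-likeness of $G^{\nc}$ plays in the paper.

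\emph{Second: the equal split into $\xi/6$ cannot be met.} Your class-(b) tail $N^{-c(1-2\delta)(1+o(1))}$ needs $c > (\nu+1)/(1-2\delta)$. But $\xi/6 = \frac12(\nu+1)\bigl(\frac{1}{1-2\delta}+\frac{1}{1-\delta}\bigr)\frac{\log N}{\log\log N}$ is smaller than that when $\delta>0$. Since $\xi/2 = \frac32(\nu+1)\bigl(\frac{1}{1-2\delta}+\frac{1}{1-\delta}\bigr)\frac{\log N}{\log\log N}$, split it unequally:
\begin{itemize}
\item give class (b) the share $\frac32\frac{\nu+1}{1-2\delta}\frac{\log N}{\log\log N}$;
\item give the up-neighbours $\frac32\frac{\nu+1}{1-\delta}\frac{\log N}{\log\log N}$;
\item absorb $\#\cycV(x)=O(1)$ in the slack.
\end{itemize}

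\emph{Third: the count for $\cycV(x)$ needs a correct mechanism.} Overlapping cycles do not factorize, and one cycle supplies two first steps, so ``each new first step costs a power of $N$'' is not right as stated. Replace it with the following.
\begin{itemize}
\item The union $H$ of the witnessing cycles has cyclomatic number at least $\#\cycV(x)/2$, because every component of $H$ minus $x$ contains at least two neighbours of $x$.
\item A shape with $n$ non-root vertices and excess $\kappa$ has expected count at most $\bigl(\max_u w_u^2/(m_1N)\bigr)^{\kappa}(m_2/m_1)^n \leq N^{-\epsilon\kappa+o(1)}$.
\end{itemize}
Also, $w_x^2/(m_1N)\leq N^{-\epsilon}$, not $N^{-2\epsilon}$, since $m_1$ may be as small as $N^{-\epsilon}$.
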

The proof of \cref{prop:pruning} is delayed to the end of
\cref{sec:removing-down-up}, and relies on the next two sections:
\cref{sec:removing-cycles} explains how many edges are removed when
constructing $G^{\nc}$ and \cref{sec:removing-down-up} explains how
many edges are removed when constructing $G^{\pp}$.

\begin{remark}\label{rem:prune-all}
  Note that we prune the graph around every vertex in the graph, and
  not only those of high degree as in \cite{ADK19,ADK20}. This ensures
  that the graph $G^{\pp}$ is globally, and not just locally, a
  forest. This proves useful in \cref{sec:bounds-overlinepi} when
  bounding the operator norm of the adjacency matrix of the graph
  $G^{\pp}$ restricted to the vertices of low degrees: we can
  immediately say that this operator norm is of the order of the
  square root of the maximal degree in this restricted graph.
\end{remark}

\subsection{Removing the cycles}
\label{sec:removing-cycles}
In this section, we estimate the number of edges to prune around each
vertex to remove all cycles in small balls centered around each
vertex. This will prove the part of \cref{prop:pruning} concerning step
\ref{itm:remove-cycle} of the pruning procedure. Recall that we fixed $r \geq 6$.

In \cref{sec:pruning-procedure}, we explained that we prune the edges
in the set $\left\{ \{x, y\}\col y \in \cycV(x) \right\}$ so as to remove
all cycles in all small balls. In this section, we give an upper bound
for the cardinality of the set $\cycV(x)$ defined in \eqref{eq:def-two-pruning-sets}.

\begin{proposition}\label{prop:edge-cyc}
  Fix $x \in \vertices$ and $\nu > 0$. There exists a constant $C_{\nu} \geq 0$
  such that with $\nu$-high probability,
  \begin{equation*}
    \#\cycV(x) \leq C_{\nu}\,.
  \end{equation*}
\end{proposition}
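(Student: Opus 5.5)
We bound the number of neighbours $y$ of $x$ lying on short simple cycles through $x$ by counting such cycles. If $\#\cycV(x) \geq k$, then there are $k$ distinct neighbours $y_1,\dots,y_k$ of $x$, each lying on a simple cycle through $x$ of length at most $2r+1$. Let $H$ be the union of these cycles. It is a connected subgraph containing $x$, with at most $k(2r+1)$ edges. Its excess (edges minus vertices plus one) grows with $k$: each cycle uses two edges at $x$, so the $k$ neighbours force the degree of $x$ in $H$ to be at least $k$. Hence $H$ has at least about $k/2$ independent cycles. It therefore suffices to show that, with $\nu$-high probability, no connected subgraph $H \ni x$ with at most $C_r k$ edges and excess at least $m \deq \lceil k/2\rceil - 1$ exists, for $k$ a large constant depending on $\nu$.

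The core step is a first-moment bound. We reduce $H$ to a bounded family of shapes: $H$ is a union of at most $k$ cycles of bounded length through $x$. Equivalently, we take a minimal subgraph with excess exactly $m$, which consists of a spanning tree of bounded size plus $m$ extra edges. For a fixed shape (a labelled graph $F$ on vertex set $\{v_0=x,v_1,\dots,v_s\}$ with $s+m$ edges, $s \leq C_{r,k}$), the expected number of embeddings is
\[
\sum_{v_1,\dots,v_s} \prod_{\{a,b\}\in F} p_{v_av_b} \leq \sum_{v_1,\dots,v_s}\prod_{\{a,b\}\in F}\frac{w_{v_a}w_{v_b}}{m_1N}.
\]
Each vertex $v_a$ contributes $w_{v_a}^{\deg_F(a)}$, and summing over it gives $N m_{\deg_F(a)}$. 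Higher moments must be controlled using $w_v \leq N^{1/2-\epsilon}$, through $m_j \leq N^{(1/2-\epsilon)(j-2)} m_2$. A cleaner route is to explore the cycles sequentially, exposing edges along paths. A path of length $\ell$ from $u$ to $u'$ through fresh vertices has weight at most
\[
\frac{w_u w_{u'}}{m_1N}\Bigl(\frac{m_2}{m_1}\Bigr)^{\ell-1},
\]
using $\sum_v w_v^2/(m_1N) = m_2/m_1$. So each added cycle (or ear) costs a factor of order $\frac{w_u w_{u'}}{m_1 N}(\log N)^{\delta(2r)}$.

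The main obstacle is the endpoint weights: when ears attach at high-weight vertices, such as $x$ itself, the factor $w_uw_{u'}/(m_1N)$ can be as large as $N^{1-2\epsilon}/(m_1 N)\leq N^{-\epsilon}$ after using $m_1\geq N^{-\epsilon}$. This needs care, since $N^{-2\epsilon}\cdot N^{\epsilon}$ just barely works. We will use Assumption \ref{hyp:upperbd-weights} together with $m_1 \geq N^{-\epsilon}$ to ensure that every closing edge contributes at most $N^{-\epsilon'}$ for some constant $\epsilon'>0$; if needed we adjust exponents, for instance by bounding $p_{uu'}\leq \min(1, w_uw_{u'}/(m_1N))$. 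Then each of the $m$ independent cycles contributes $N^{-c}(\log N)^{C}$, while the tree part contributes at most $(\log N)^{C}$ by the $m_2/m_1$ bound. The total probability is thus at most $C_{r,k}N^{-cm}(\log N)^{C_{r,k}}$.

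Finally, we sum over the finitely many shapes (their number depends only on $r$ and $k$), choose $k$ so that $cm>\nu$, and conclude that $\#\cycV(x)\leq C_\nu \deq k$ with $\nu$-high probability.
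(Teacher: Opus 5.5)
Your proposal is correct, but it takes a genuinely different route from the paper.

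\textbf{The paper's route.} It first proves a purely combinatorial statement, \cref{lem:non-edge-intersecting}. From $3k$ elements of $\cycV(x)$ it extracts, by matching endpoints and then uncrossing shared edges, $k$ \emph{edge-disjoint} short cycles through $x$. Edge-disjointness makes the probability of each configuration a plain product. After dropping the disjointness constraint, the union bound factorizes into $k$ copies of $\sum p_{xy_1}\cdots p_{y_{l-1}x}\leq \frac{w_x^2}{m_1N}(m_2/m_1)^{l-1}\leq N^{-\epsilon}(\log N)^{C}$. No moments beyond $m_2$ ever appear, since summing each cycle separately already dominates configurations with shared vertices. The weight cap is used only at $x$.

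\textbf{Your route.} You replace that lemma by the cyclomatic number of the union $H$ of the cycles. You then pay for a first-moment bound over the boundedly many shapes of $H$, controlling every attachment vertex with the global cap $W\deq N^{1/2-\epsilon}$. This closes. Build $H$ cycle by cycle: the new edges split into ears whose endpoints are already present and whose interior vertices are fresh. The first ear is a cycle through $x$, and the number of ears equals the excess. Each ear costs at most $\frac{w_uw_{u'}}{m_1N}(m_2/m_1)^{\ell-1}\leq N^{-\epsilon}(\log N)^{C}$. Nothing here is borderline: $W^2/(m_1N)\leq N^{-\epsilon}$ is the same computation as in the paper.

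Your first route closes as well. Suppose every vertex of a shape $F$ other than $x$ has degree at least $2$. This holds for $H$, and for any inclusion-minimal connected subgraph containing $x$ with excess at least $m$. Then $m_j\leq W^{j-2}m_2$ and $\sum_a\deg_F(a)=2(s+m)$ give exactly $\bigl(W^2/(m_1N)\bigr)^{m}(m_2/m_1)^{s}$.

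\textbf{Comparison.} Your approach loses only a factor $2$ instead of $3$, needs no uncrossing argument, and yields the more robust conclusion that the short cycles at $x$ span boundedly many independent cycles. The paper's approach gives a one-line probability estimate with no shape enumeration.

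\textbf{Two steps to tighten.} First, a large degree of $x$ in $H$ does not by itself force large excess; a star has none. The real reason is that every edge of $H$ at $x$ lies on one of the cycles, so each component of $H-x$ receives at least two edges from $x$. Hence the excess is at least $\deg_H(x)-\#\{\text{components of }H-x\}\geq \deg_H(x)/2\geq k/2$.

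Second, the weights must not be split as the expected number of copies of a spanning tree times $p_{uu'}\leq N^{-\epsilon}$ for each extra edge. For a single cycle through $x$ that split gives $w_xN^{-\epsilon}(\log N)^{C}$, which need not be small. The ``tree part is $(\log N)^C$'' claim holds only in the ear or degree bookkeeping above. There, each fresh vertex carries exactly $w_v^2$, and all remaining weight powers are charged to the $m$ closing factors.
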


To prove \cref{prop:edge-cyc}, we will use the notion of edge-disjoint paths.
\begin{definition}\label{def:edge-inter}
  Two paths $\gamma$ and $\gamma'$ are \emph{edge-intersecting} if there exists $i$ and
  $j$ such that $\gamma_{i} = \gamma'_{j}$ and either $\gamma_{i-1} = \gamma'_{j-1}$ or
  $\gamma_{i-1} = \gamma'_{j+1}$. Two paths $\gamma$ and $\gamma'$ are
  \emph{edge-disjoint} if they are not edge-intersecting.
\end{definition}

\begin{figure}[htp]
  \centering
  \includegraphics[width=0.4\textwidth]{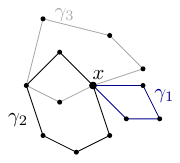}
  \caption{\label{fig:NEkr} A representation of the event $\mathrm{ED}_{k, r}$.
  The simple paths can share a vertex, but cannot be edge-intersecting.}
\end{figure}

We shall consider the following event concerning edge-disjoint paths
\begin{equation*}
  \mathrm{ED}_{k, r}(x) = \left\{\exists \gamma^{(1)}, \ldots, \gamma^{(k)} \in \mathcal{P}^{*}_{xx}(G\vert_{B_{r}(x)})\col~ \begin{aligned}
                                                                                                                                   &\forall i \neq j, \gamma^{(i)} \text{ and } \gamma^{(j)} \text{ are edge-disjoint},\\
                                                                                                                                                                                                                       &\sum_{i=1}^{k}l(\gamma^{(i)}) \leq 3k^{2}(2r+1)\\
                                                                                                              \end{aligned}
\right\}\,.
\end{equation*}
It is depicted in Figure \ref{fig:NEkr}.

We argue that $\#\cycV(x)$ can be bounded by the
number of non edge-intersecting paths in the graph
$G\vert_{B_{r}(x)\setminus\{x\}}$ between pairs of points of
$S_{1}(x)$.
\begin{lemma}\label{lem:non-edge-intersecting}
  Let $k \geq 1$ and $x \in \vertices$. We have the inclusion of events
  \begin{equation*}
      \{\#\cycV(x) \geq 3k\} \subset \mathrm{ED}_{k, r}(x)\,.
  \end{equation*}
\end{lemma}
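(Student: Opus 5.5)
We prove the inclusion by a greedy construction. Assume $\#\cycV(x) \geq 3k$. We will produce $k$ pairwise edge-disjoint simple cycles through $x$ in $G\vert_{B_r(x)}$ whose total length is at most $3k^2(2r+1)$.

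Every $y \in \cycV(x)$ lies on some simple cycle $\gamma \in \mathcal{P}^*_{xx}(G)$ with $l(\gamma) \leq 2r+1$ and $\gamma_1 = y$. Every vertex of such a cycle is within distance $\lfloor (2r+1)/2\rfloor = r$ of $x$, so $\gamma$ lies in $G\vert_{B_r(x)}$. A single cycle uses exactly two edges at $x$, namely $\{x,\gamma_1\}$ and $\{x,\gamma_{l-1}\}$. Hence it \emph{witnesses} at most two elements of $\cycV(x)$.

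The naive approach is to pick cycles one by one, each time choosing a witness for a not-yet-covered neighbour. This would give about $3k/2$ cycles, but they need not be edge-disjoint. We therefore proceed iteratively and surgically.

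Suppose we hold edge-disjoint cycles $\gamma^{(1)},\dots,\gamma^{(j)}$ with $j<k$. Let $U$ be the set of their edges; it contains at most $2j$ edges at $x$. Choose $y \in \cycV(x)$ with $\{x,y\}\notin U$, and take a witness cycle $\gamma$ through $y$. If $\gamma$ avoids $U$, add it to the collection.

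Otherwise, follow $\gamma$ from $x$ through $y$ until it first hits a vertex $v$ on some $\gamma^{(i)}$. Splice the initial segment $x\to y\to v$ with one of the two arcs of $\gamma^{(i)}$ from $v$ back to $x$. Then replace $\gamma^{(i)}$ by the cycle formed from the initial segment and one arc, and add a new cycle formed from the other arc together with a suitable modification. More cleanly: the union of $\gamma^{(i)}$ and the segment $x\to v$ is a theta graph. It contains two edge-disjoint cycles through $x$, both avoiding the other $\gamma^{(l)}$, with total length at most $l(\gamma^{(i)}) + 2(2r+1)$.

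Each step thus increases the number of cycles by one and the total length by at most $3(2r+1)$. Summing over $k$ steps keeps the total below $3k^2(2r+1)$. This slack is why the constant $3k^2$ appears, and the budget $3k$ guarantees an uncovered edge $\{x,y\}$ at every stage.

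Simplicity must be checked after each splice. A cycle assembled from a segment and an arc meeting only at $v$ and $x$ is simple. Choosing $v$ as the first hitting point ensures the segment is internally disjoint from the existing cycles.

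The main obstacle is the theta-graph surgery. The segment from $x$ to $v$ may first meet $\gamma^{(i)}$ at $v=x$ itself. It may also meet several existing cycles, and the step must stay within $B_r(x)$ as lengths grow. The containment in $B_r(x)$ is automatic, since every piece lies on a cycle of length at most $2r+1$ through $x$. The multiple-intersection issue is handled by stopping at the first hit. A segment returning to $x$ without hitting any existing cycle is just $\gamma$ itself, which is then edge-disjoint from $U$.

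The bookkeeping of lengths, showing that each replaced cycle grows by at most $2(2r+1)$ per modification and that each cycle is modified at most $k$ times, is routine. It yields the bound $\sum_i l(\gamma^{(i)}) \leq 3k^2(2r+1)$.
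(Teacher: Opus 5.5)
\textbf{There is a genuine gap: the splicing step rests on a false claim.}

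The union of $\gamma^{(i)}$ with the segment $P$ from $x$ to $v$ is indeed a theta graph, with three internally disjoint $x$--$v$ paths $P$, $A_1$, $A_2$. But every cycle in a theta graph uses exactly two of its three paths, so any two of its cycles share a path and hence an edge. Equivalently, $\gamma^{(i)}\cup P$ has only three edges at $x$, while two edge-disjoint cycles through $x$ need four.

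The defect is not confined to this particular surgery. An uncovered edge $\{x,y\}$ does not guarantee that your collection can be extended using $\gamma$ at all. Concretely:
\begin{itemize}
\item Let $x$ be adjacent to $a,b,c$, and let a vertex $w$ be adjacent to $a,b,c$.
\item Add a disjoint copy of this gadget on $d,e,f,w'$, so that $\#\cycV(x)=6$ and $k=2$.
\item Suppose your first cycle is $(x,a,w,b,x)$ and you then pick $y=c$.
\end{itemize}
Every witness cycle for $c$ returns to $x$ through $a$ or $b$. So $\gamma^{(1)}\cup\gamma$ has only three edges at $x$ and contains no two edge-disjoint cycles through $x$. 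Your claim that each step adds one cycle therefore fails. Your length bookkeeping is also inconsistent: it uses $3(2r+1)$ per step in one place and $2(2r+1)$ per modification in another. That issue is secondary.

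\textbf{What is missing is a global pairing of neighbours.} This is also where the factor $3$ actually comes from; it is not about leaving an uncovered edge at $x$. The paper proceeds as follows.
\begin{enumerate}
\item Take $3k$ distinct $y_i\in\cycV(x)$, each with a witness path avoiding $x$ from $y_i$ to some $z_i\in\cycV(x)$.
\item Form the auxiliary graph on $\{y_i,z_i\}$ whose edges are these endpoint pairs, and match vertices inside each connected component. Each component has $m\ge 2$ vertices and $\lfloor m/2\rfloor\ge m/3$, so this yields at least $k$ disjoint pairs of neighbours of $x$. Each pair is joined by a path avoiding $x$. Odd clusters such as $\{a,b,c\}$ above are exactly what wastes a neighbour.
\item Remove shared edges by swapping tails. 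If paths from $y$ to $z$ and from $y'$ to $z'$ both traverse $\{v,v'\}$ from $v$ to $v'$, replace them by the walks $y\to v\to y'$ and $z\to v'\to z'$. The four endpoints stay distinct, no new edges are introduced, and the shared edge disappears.
\item Iterate, extract simple paths, and close each with its two edges at $x$.
\end{enumerate}
This gives $k$ edge-disjoint simple cycles through $x$ in $G\vert_{B_r(x)}$. Their total length is at most the number of edges in the union of the $3k$ witness cycles. That is at most $3k(2r+1)$, comfortably within $3k^2(2r+1)$.
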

\begin{proof}
  \begin{figure}[ht]
  \centering
  \includegraphics[width=0.6\textwidth]{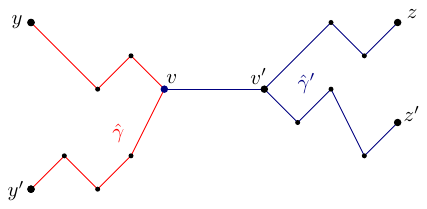}
  \caption{\label{fig:create-NEkr} Construction of the paths $\hat{\gamma}$ and $\hat{\gamma}'$.}
  The paths $\gamma$ and $\gamma'$ are edge-intersecting, respectively from $y$
  to $z$ and $y'$ to $z'$.
\end{figure}

Let $y_{1}, \ldots, y_{3k} \in \cycV(x)$ be distinct vertices. By definition,
there exists $z_{1}, \ldots, z_{3k} \in \cycV(x)$, with $z_{i} \neq y_{i}$, and
simple paths $\gamma^{(i)}, i = 1, \ldots, 3k$, each of them connecting $y_{i}$
to $z_{i}$ (without going through $x$). We now argue that we can find
a family of vertices
$\left\{ \tilde{y}_{i}, \tilde{z}_{i} \right\}_{1 \leq i \leq k}$ and a family
of paths $(\tilde{\gamma}_{1}, \ldots, \tilde{\gamma}_{k})$ such that
$\tilde{\gamma}^{(i)}$ is a path from $\tilde{y}_{i} \in S_{1}(x)$ to
$\tilde{z}_{i} \in S_{1}(x)$ and the vertices
$\left\{ \tilde{y}_{i}, \tilde{z}_{i} \right\}$ are all distinct. To see
this, consider the graph $\tilde{G}$ whose vertex set is
$\tilde{V} = \left\{ y_{i}, z_{i}; 1 \leq i \leq 3k \right\}$ and the edge set
is made of the pairs $\left\{ v_{1}, v_{2} \right\}$ such that
$v_{1} \in \tilde{V}$ and $v_{2} \in \tilde{V}$ are the endpoints of
one of the path in the family $(\gamma_{i})$. In each connected component
of $\tilde{G}$ made of the vertices $v_{1}, \ldots, v_{m}$ we choose a
perfect matching of the vertices $\left\{ v_{1}, \ldots, v_{m} \right\}$ if
$m$ is even, and a perfect matching of the vertices
$\left\{ v_{1}, \ldots, v_{m-1} \right\}$ if $m$ is odd. Note that $m \geq 2$ by
definition of the $\gamma_{i}$'s. For each couple $\left\{ v, v' \right\}$ in
a matching we can choose a simple path in $G$ going from $v$ to $v'$.
Each such path has length at most $3k(2r+1)$. Hence the total number
of pairs in the matching is
\begin{equation*}
  \frac{1}{2}\sum_{\tilde{G}_{c}\text{ connected component of } \tilde{G}} \# \tilde{V}_{c} - \ind_{\left\{ \# \tilde{V}_{c} \text{ odd } \right\}} = \frac{3k}{2} - \frac{1}{2}\# \left\{ \tilde{G}_{c} \text{ connected component with } \tilde{V}_{c} \text{ odd } \right\}\,,
\end{equation*}
where $\tilde{V}_{c}$ is the vertex set of the connected component
$\tilde{G}_{c}$ of $\tilde{G}$. Since each connected component has at least two vertices, we have
\begin{equation*}
  \# \left\{ \tilde{G}_{c} \text{ connected component with } \tilde{V}_{c} \text{ odd } \right\} \leq k.
\end{equation*}
This means that doing so, we find at least $k$ pairs: $2k$ distinct
vertices $\left\{ \tilde{y}_{i}, \tilde{z}_{i} \right\}$ and $k$ simple
paths $(\tilde{\gamma}_{i})$, with $\tilde{\gamma}_{i}$ going from
$\tilde{y}_{i}$ to $\tilde{z}_{i}$.

We now argue that from $(\tilde{\gamma}^{(i)})_{1 \leq i \leq k}$ we can produce
a family $(\hat{\gamma}^{(i)})_{1 \leq i \leq k}$ of edge-disjoint paths. To see
this, we explain how to produce from two paths $\tilde{\gamma}$ and
$\tilde{\gamma}'$ that do not share an endpoint a pair of paths $\hat{\gamma}$
and $\hat{\gamma}'$ that are edge-disjoint and do not share an endpoint.
Assume that $\left\{ v, v' \right\}$ is an edge shared by $\tilde{\gamma}$
and $\tilde{\gamma}'$. Without loss of generality, we can assume that
$\tilde{\gamma}$ and $\tilde{\gamma}'$ both first encounter $v$ and then
$v'$. Otherwise we replace $\tilde{\gamma}'$ by
\begin{equation*}
  (\tilde{\gamma}'_{l(\tilde{\gamma}')}, \tilde{\gamma}'_{l(\tilde{\gamma}')-1}, \ldots, \tilde{\gamma}'_{1}, \tilde{\gamma}'_{0}).
\end{equation*}
In that case, we can write
\begin{equation*}
  \begin{cases}
    \tilde{\gamma} &= (\tilde{\gamma}_{0}=y, \tilde{\gamma}_{1}, \tilde{\gamma}_{2}, \ldots, \tilde{\gamma}_{i}=v, \tilde{\gamma}_{i+1}=v', \tilde{\gamma}_{i+2}, \ldots, \tilde{\gamma}_{l-1}, z)\\
    \tilde{\gamma}' &= (\tilde{\gamma}'_{0}=y', \tilde{\gamma}'_{1}, \tilde{\gamma}'_{2}, \ldots, \tilde{\gamma}'_{j}=v, \tilde{\gamma}'_{j+1}=v', \tilde{\gamma}'_{j+2}, \ldots, \tilde{\gamma}'_{l'-1}, z').
  \end{cases}
\end{equation*}
We then set
\begin{equation*}
  \begin{cases}
    \hat{\gamma} &= (y, \tilde{\gamma}_{1}, \tilde{\gamma}_{2}, \ldots, \tilde{\gamma}_{i}=v = \tilde{\gamma}'_{j}, \tilde{\gamma}'_{j-1}, \ldots, \tilde{\gamma}'_{1}, y')\\
    \hat{\gamma}' &= (z, \tilde{\gamma}_{l-1}, \tilde{\gamma}_{l-2}, \ldots, \tilde{\gamma}_{i+1}=v' = \tilde{\gamma}'_{j+1}, \tilde{\gamma}'_{j+2}, \ldots, \tilde{\gamma}'_{l'-1}, z').
  \end{cases}
\end{equation*}
These new paths are depicted in \cref{fig:create-NEkr}. The new paths
$\hat{\gamma}$ and $\hat{\gamma}'$ do not share the edge
$\left\{ v, v' \right\}$, have disjoint endpoints, and all the edges in
$\hat{\gamma}$ and $\hat{\gamma}'$ appear in $\tilde{\gamma}$ or $\tilde{\gamma}'$ the
same number of times.

To construct a family of non edge-intersecting paths from the paths
$(\tilde{\gamma}^{(i)})_{1 \leq i \leq k}$, we consider first the path
$\tilde{\gamma}^{(1)}$ . We consider the first edge $e$ in $\gamma^{(1)}$ that
intersect another path $\gamma^{(i)}$, $i \neq 1$. We apply the procedure
described above, and obtain a new family of paths of size $k$:
\begin{equation*}
  (\hat{\gamma}^{(1)}, \tilde{\gamma}^{(2)}, \ldots, \tilde{\gamma}^{(i-1)}, \hat{\gamma}^{(i)}, \tilde{\gamma}^{(i+1)}, \ldots, \tilde{\gamma}^{(k)}),
\end{equation*}
such that the endpoints of all the paths appearing in the family are
distinct, and there is one less edge appearing in two paths than in
$(\tilde{\gamma}^{(i)})_{1 \leq i \leq k}$.

We keep applying this procedure on the first path of the family until
it is no longer edge-intersecting with any other path of the family.
We then consider the second path, and proceed as previously until all
the paths are edge-disjoint. Notice that this procedure terminates as
there is a finite number of edges that are part of two or more paths.
We end up with a family of $k$ edge-disjoint paths. Note that the
paths thus created are not necessarily simple but contain a simple
path between their endpoints (as their endpoints are distinct).
Replacing each $\hat{\gamma}^{(i)}$ by the simple path it contains yield
the result.

Notice that we have not added any edges in the procedure, thus the
total length of the non edge-intersecting paths thus created is less
than $3k^{2}(2r-1)$. This shows that the required inclusion holds.
\end{proof}
\cref{lem:non-edge-intersecting} then implies
\cref{prop:edge-cyc}.
\begin{proof}[Proof of \cref{prop:edge-cyc}]
  Let $k \geq 1$. \cref{lem:non-edge-intersecting} implies that
  \begin{equation*}
    \P \left( \#\cycV(x) \geq 3k\right) \leq \P \left( \mathrm{ED}_{k, r}(x)\right)\,.
  \end{equation*}

  The union bound then implies
  \begin{equation*}
    \P \left( \mathrm{ED}_{k, r}(x)\right)
    \leq \sum_{\substack{l_{1}, \ldots, l_{k}\geq 1\\\sum_{i} l_{i} \leq 3k^{2}(2r+1)}}\prod_{i=1}^{k} \left( \sum_{y_{1}, \ldots, y_{l_{i}-1}}p_{xy_{1}}p_{y_{1}y_{2}}\cdots p_{y_{l_{i}-2}y_{l_{i}-1}}p_{y_{l_{i-1}}x}\right)\,.
  \end{equation*}
Notice that we have independence of the edges because we ensured
  that the paths are edge-disjoint.

  We have using \eqref{eq:proba-edge} and then \eqref{eq:def-empirical-moment} that
  \begin{equation*}
    \sum_{y_{1}, \ldots, y_{l_{i}-1}}p_{xy_{1}}p_{y_{1}y_{2}}\cdots p_{y_{l_{i}-2}y_{l_{i}-1}}p_{y_{l_{i-1}}x}
    \leq \sum_{y_{1}, \ldots, y_{l_{i}-1}}\frac{w_{x}^{2}w_{y_{1}}^{2}\cdots w_{l_{i-1}}^{2}}{m_{1}^{l_{i}}N^{l_{i}}}
    = \frac{w_{x}^{2}}{m_{1}N} \left( \frac{m_{2}}{m_{1}} \right)^{l_{i}-1}\,.
  \end{equation*}

Assumptions \ref{hyp:upperbd-weights} and \ref{hyp:behavior-m21} then imply
  \begin{equation*}
    \sum_{y_{1}, \ldots, y_{l_{i}-1}}p_{xy_{1}}p_{y_{1}y_{2}}\cdots p_{y_{l_{i}-2}y_{l_{i}-1}}p_{y_{l_{i-1}}x} = \order\p{N^{-\epsilon/2}}
  \end{equation*}
  Finally, we have
  \begin{equation*}
    \P \left( \#\cycV(x) \geq 3k\right) \leq \order\p{N^{-k\epsilon/2}}.
  \end{equation*}
  Choosing $k$ big enough gives the result.
\end{proof}
\subsection{Coupling with a tree}
\label{sec:coupling-with-tree}

The problem of the graph $G^\nc$ is that its edges are not
independent. To solve this problem, we are going to introduce a new
graph $\Tree_x$, whose edges are independent. This graph $\Tree_x$
will be a forest. Actually, we introduce two versions of the forest,
$\Tree_{x}$ and $\check{\Tree}_{x}$ with
$\check{\Tree}_{x} \subset \Tree_{x}$. The reason is that while $\Tree_{x}$
has independent edges and is more convenient, it may have too many
edges for some purposes (see \cref{corol:bound-Dnc} below). The graph
$\check{\Tree}_{x}$ has fewer edges but its edges are independent
conditionally on an appropriate $\sigma$-algebra. The balls of small radius
in the graph $G^{\nc}$ around a fixed vertex $x \in \vertices$ may be
coupled with $\Tree_x$ and $\check{\Tree}_{x}$.

Let us explain briefly the construction of the enlarged graphs
$\Tree_{x}$ and $\check{\Tree}_{x}$. Their vertices are indexed by
families of vertices $\gamma = (x, y_{1}, \ldots, y_{d})$ for any length $d \geq 0$
of $G$. To define the edge set, we introduce Bernoulli random
variables $Z_{\gamma}$ such that there is an edge between
$(x, y_{1}, \ldots, y_{d-1})$ and $\gamma$ in $\Tree_{x}$ (respectively, in
$\check{\Tree}_{x}$) if and only if $Z_{\gamma} = 1$ (respectively,
$\check{Z}_{\gamma} = 1$). For $\Tree_{x}$ and $\check{\Tree}_{x}$ to be
coupled in a convenient way with the balls centered on $x$ in
$G^{\nc}$, we will have to make choice as to which edges of $G$ we
keep in $\Tree_{x}$ and $\check{\Tree}_{x}$. The precise construction
is as follows.

Let $x \in \vertices$. We introduce the family of independent random
variables
$(\hat{Z}_{xy_{1}\cdots y_{k}}; k \geq 2, y_{1}, \ldots, y_{k} \in \vertices)$ such
that for all $k \geq 2$ and $y_{1}, \ldots, y_{k} \in \vertices$,
$\hat{Z}_{xy_{1}\cdots y_{k}}$ is a Bernoulli random variable with
parameter $p_{y_{k-1}y_{k}}$. For $k \geq 2$ and $y_{k} \in S_{k}(x)$ we
define the path
\begin{equation*}
  \gamma^{*}(y_{k}) = \min \left\{ \gamma \in \mathcal{P}^{*}_{xy_{k}}(G) \colon l(\gamma) = k \right\}\,,
\end{equation*}
where the minimum is with respect to the lexicographic order (for the
usual order on $[N]$). Said otherwise, it is the path of length $k$
between $x$ and $y_k$ that is minimal for the lexicographic order. We
are now ready to introduce the variables describing the edges of the
random tree. We define $\check{Z}_{xy_{1}}$ and $Z_{xy_{1}}$ by
\begin{equation*}
  \check{Z}_{xy_{1}} = Z_{xy_{1}} = \ind_{\left\{ x \sim y_{1} \right\}} \quad \text{ for } y_{1} \in \vertices \setminus \left\{ x \right\}\,.
\end{equation*}
and for $k \geq 1$ and $y_{1}, \ldots, y_{k+1} \in \vertices$, $Z_{xy_{1}\cdots y_{k+1}}$ is defined by
\begin{equation*}
  Z_{xy_{1}\cdots y_{k +1}} \deq
  \begin{cases}
    \ind_{\left\{ y_{k} \sim y_{k+1} \right\}} & \text{ if } y_{k+1} \notin B_{k}(x), y_{k} \in S_{k}(x), \text{ and } (x, y_{1}, \ldots, y_{k}) = \gamma^{\star}(y_{k})\\
    \hat{Z}_{xy_{1}\cdots y_{k}y_{k+1}} &\text{ otherwise. }
  \end{cases}
\end{equation*}
The corresponding version for $\check{\Tree}_{x}$ is
\begin{equation*}
  \check{Z}_{xy_{1}\cdots y_{k +1}} \deq
  \begin{cases}
    \ind_{\left\{ y_{k} \sim y_{k+1} \right\}} & \text{ if } y_{k+1} \notin B_{k}(x), y_{k} \in S_{k}(x), \text{ and } (x, y_{1}, \ldots, y_{k}) = \gamma^{\star}(y_{k})\\
    \hat{Z}_{xy_{1}\cdots y_{k}y_{k+1}} & \text{ if } y_{k+1} \notin B_{k}(x), y_{k} \in S_{k}(x), \text{ and } (x, y_{1}, \ldots, y_{k}) \neq \gamma^{\star}(y_{k})\\
    0 &\text{ otherwise. }
  \end{cases}
\end{equation*}
We introduce the filtration $(\mathcal{F}_{k}(x))_{k \geq 1}$ given by
\begin{equation}\label{eq:def-filtration-F}
  \mathcal{F}_{k}(x) = \sigma \left( \left\{ \left\{ \gamma \text{ is a path in $G$} \right\} \colon \gamma \in \bigcup_{i=1}^{k} \left\{ x \right\}\times [N]^{i} \right\} \cup \left\{ \hat{Z}_{\gamma} \colon \gamma \in \bigcup_{i=1}^{k} \left\{ x \right\}\times [N]^{i} \right\} \right)\,,
\end{equation}
i.e.\ it the $\sigma$-algebra generated by the events that paths starting
from $x$ of length smaller that $k$ belong in $G$, and the random
variables $\hat{Z}_{\gamma}$ with $\gamma$ of size at most $k$. Note that given
$\gamma = (x, y_{1}, \ldots, y_{k})$, the event
$\left\{ y_{k} \in S_{k}(x), \gamma = \gamma^{*}(y_{k}) \right\}$ belongs to
$\mathcal{F}_{k}(x)$. Hence, we see that given $k \geq 1$, the random
variables $Z_{xy_{1}\ldots y_{k+1}}, y_{1}, \ldots, y_{k+1} \in \vertices$
(respectively, the random variables
$\check{Z}_{xy_{1}\ldots y_{k+1}}, y_{1}, \ldots, y_{k+1} \in \vertices$) are
independent conditionally to $\mathcal{F}_{k}(x)$. Furthermore, given
$y_{1}, \ldots, y_{k+1} \in \vertices$ and conditionally on
$\mathcal{F}_{k}(x)$,
\begin{itemize}
  \item $Z_{xy_{1}\cdots y_{k+1}}$ is a Bernoulli random variable of
        parameter $p_{y_{k}y_{k+1}}$.
  \item $\check{Z}_{xy_{1}\cdots y_{k+1}}$ is a Bernoulli random
        variable of parameter
        $p_{y_{k}y_{k+1}}\ind_{\left\{ y_{k+1}\notin B_{k}(x), y_k \in S_k(x)\right\}}$.
\end{itemize}

\begin{definition}[Forests $\mathcal{T}_{x}$ and $\check{\Tree}_{x}$]\label{def:tree-Tx}
  The graph $\Tree_{x}$ (respectively $\check{\Tree}_{x}$) is the
  graph
  \begin{itemize}
    \item with vertex set
          $V_{x} = \{x\} \cup \bigcup_{d \geq 1} \left\{ x \right\}\times \vertices^{d}$,
          and
    \item such that given
          $\gamma = (x, y_{1}, \ldots, y_{d})\in V_{x}$ and
          $\gamma' = (x, y'_{1}, \ldots, y'_{d'}) \in V_{x}$ with
          $d \leq d'$, we have
          $\left\{ \gamma, \gamma' \right\} \in \Tree_{x}$ if and only
          if $d' = d + 1$, $y_{1}=y'_{1}, \ldots, y_{d} = y'_{d}$, and
          $Z_{xy'_{1}\cdots y'_{d'}} = 1$ (respectively
          $\check{Z}_{xy'_{1}\cdots y'_{d'}} = 1$).
  \end{itemize}
\end{definition}
Note that both $\check{\Tree}_{x}$ and $\Tree_x$ are infinite forests,
while the connected components of $\check{\Tree}_x$ and $\Tree_{x}$
containing $x$ are trees naturally rooted at $x$. We indicate by an
exponent $\Tree_{x}$ (or $\check{\Tree}_{x}$) the spheres, degrees, etc.\ associated with $\Tree_{x}$ (or $\check{\Tree}_{x}$). For instance, we
write $D^{\Tree_{x}}_{\gamma}$ the degree of $\gamma \in V_{x}$ in $\Tree_{x}$.
For convenience, we also define for all
$\gamma = (x, y_{1}, \ldots, y_{d}, y_{d+1}) \in V_{x}$ the number of children of
$\gamma$:
\begin{equation}\label{eq:def-Dup}
  D^{\Tree_{x}\uparrow}_{\gamma} \deq D^{\Tree_{x}}_{\gamma} - \ind_{\bigl\{\gamma \neq (x) \simT{x} \gamma\bigr\}}\,.
\end{equation}

\begin{remark}
  Introducing the graph $\Tree_x$ is motivated by the fact that the
  edges of $G^\nc$ are not independent. At the cost of introducing a
  small number of additional edges, the graph $\check{\Tree}_x$ has
  edges at depth $k$ that are independent conditionally to the
  $\sigma$-algebra $\mathcal{F}_k(x)$. The graph $\Tree_{x}$ enjoys a
  stronger property: the edges of $\Tree_{x}$ are all independent as
  we show in \cref{lem:T-indep}.
\end{remark}

\begin{lemma}\label{lem:T-indep}
  The random variables $Z_{\gamma}$ for $\gamma \in V_{x}$ are all independent.
\end{lemma}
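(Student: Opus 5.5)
The plan is to prove, by induction on the depth $k \geq 1$, two facts:
\begin{enumerate}[label=(\alph*)]
\item every $Z_\gamma$ with $\gamma$ of length at most $k$ is $\mathcal{F}_k(x)$-measurable;
\item conditionally on $\mathcal{F}_k(x)$, the family $(Z_{xy_1\cdots y_{k+1}})_{y_1,\dots,y_{k+1}}$ is independent, with $Z_{xy_1\cdots y_{k+1}}$ Bernoulli of parameter $p_{y_ky_{k+1}}$.
\end{enumerate}
These parameters are deterministic. Hence (b) says that the depth-$(k+1)$ family is independent of $\mathcal{F}_k(x)$ and has product law. Combined with (a), the joint law of all $Z_\gamma$ with $\gamma$ of length at most $k+1$ is the product of the law up to depth $k$ and this product law. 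Iterating from depth $1$ then gives full independence of all $Z_\gamma$, $\gamma \in V_x$, since independence of an infinite family only concerns finite subfamilies. The base case is depth $1$: $Z_{xy_1} = \ind_{\{x\sim y_1\}}$, and these are independent Bernoulli$(p_{xy_1})$ by the model \eqref{P_GRG}.

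Part (a) is immediate. The event $\{y_k \in S_k(x),\ (x,y_1,\dots,y_k)=\gamma^*(y_k)\}$ lies in $\mathcal{F}_k(x)$, as already noted in the text. The $\hat Z$ of length at most $k$ are among the generators of $\mathcal{F}_k(x)$. In the first case of the definition, the edge indicator $\ind_{\{y_{k-1}\sim y_k\}}$ is the indicator that the path $(x,y_1,\dots,y_k)$ belongs to $G$, given that $(x,y_1,\dots,y_{k-1})$ does, so it is also $\mathcal{F}_k(x)$-measurable.

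For (b), I would split the depth-$(k+1)$ variables into two groups.
\begin{itemize}
\item Type~1 variables are those where $y_{k+1}\notin B_k(x)$, $y_k\in S_k(x)$ and $(x,y_1,\dots,y_k)=\gamma^*(y_k)$, so that $Z$ is an edge indicator of $G$.
\item Type~2 variables are the remaining ones, equal to $\hat Z_{xy_1\cdots y_{k+1}}$. The $\hat Z$ of length $k+1$ are independent of everything else and are not in $\mathcal{F}_k(x)$, so Type~2 variables are clearly fine.
\end{itemize}
For Type~1, two checks are needed.
\begin{itemize}
\item Distinct Type~1 indices use distinct edges. The unordered pair $\{y_k,y_{k+1}\}$ determines the orientation, because exactly one endpoint lies in $S_k(x)$ and the other lies outside $B_k(x)$. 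The prefix is then forced to be $\gamma^*(y_k)$.
\item These edges are ``unexplored'' by $\mathcal{F}_k(x)$. Any path of length at most $k$ starting at $x$ only visits vertices at graph distance at most $k$ from $x$. Such a path can use the edge $\{y_k,y_{k+1}\}$ only if $y_{k+1}\in B_k(x)$, which is excluded.
\end{itemize}

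The main obstacle is to make this ``unexplored edges remain independent'' statement rigorous, because $B_k(x)$ and $S_k(x)$ are themselves random. I would do it by decomposing over realizations. First I would show that $\mathcal{F}_k(x)$ is generated, up to the $\hat Z$'s, by the random set $B_{k-1}(x)$ together with the family of edge indicators $\{A_{uv}\colon u\in B_{k-1}(x)\}$. Indeed, existence of a path of length at most $k$ from $x$ only depends on edges incident to $B_{k-1}(x)$; conversely, all these edges are revealed by paths of length at most $k$. Then I would fix a deterministic set $U\ni x$ and a deterministic edge configuration $E$ on pairs incident to $U$. The event $\{B_{k-1}(x)=U,\ (A_{uv})_{u\in U}=E\}$ depends only on the edges incident to $U$. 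The Type~1 edges join $S_k(x)$ to $[N]\setminus B_k(x)$, and both sets are determined on this event and disjoint from $U$. So these edges are not incident to $U$, and by independence of the $A_{uv}$ they are, conditionally on this event, independent Bernoulli$(p_{y_ky_{k+1}})$ and independent of the $\hat Z$'s. Summing over $(U,E)$ yields (b) and closes the induction.
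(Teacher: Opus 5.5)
Your proof is correct and follows the same route as the paper. Both arguments use three ingredients: the variables $Z_\gamma$ of length at most $k$ are $\mathcal{F}_k(x)$-measurable; conditionally on $\mathcal{F}_k(x)$, the depth-$(k+1)$ layer is a family of independent Bernoulli$(p_{y_ky_{k+1}})$ variables with deterministic parameters; and an induction over depth combines these. The paper only asserts the conditional-independence step, citing the measurability of $\{y_k\in S_k(x),\ \gamma=\gamma^*(y_k)\}$, whereas your decomposition over the atoms $\{B_{k-1}(x)=U,\ (A_{uv})_{u\in U}=E\}$ actually proves it.
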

\begin{proof}
  It suffices to show that for every $k \geq 2$ and
  $\gamma_{1}, \ldots, \gamma_{k} \in V_{x}$ distinct vertices of $\Tree_{x}$, we have
  \begin{equation*}
    \P \Bigl(Z_{\gamma_{1}} = 1; \cdots; Z_{\gamma_{k}} = 1\Bigr) = \prod_{i=1}^{k}p_{y_{i,d_{i}}y_{i,d_{i}+1}}\,,
  \end{equation*}
  where for all $i$,
  $\gamma_{i} = (x, y_{i,1}, \ldots, y_{i, d_{i}}, y_{i, d_{i}+1})$. We may
  assume that $d_{1} \leq d_{2} \leq \cdots \leq d_{k}$, and that
  $d_{p} = d_{p+1} = \cdots = d_{k}$ for some $1 \leq p \leq k$. Since
  $\bigl\{Z_{\gamma_{1}} = 1; \cdots; Z_{\gamma_{p-1}} = 1\bigr\}$ is
  $\mathcal{F}_{d_{k}}(x)$-measurable, we have by conditional
  independence
  \begin{equation*}
    \P \Bigl(Z_{\gamma_{1}} = 1; \cdots; Z_{\gamma_{k}} = 1 \mid \mathcal{F}_{d_{k}}(x)\Bigr)
    = \ind_{\bigl\{Z_{\gamma_{1}} = 1; \cdots; Z_{\gamma_{p-1}} = 1\bigr\}}\prod_{i=p}^{k}p_{y_{i,d_{i}}y_{i,d_{i}+1}}\,.
  \end{equation*}
  Proceeding by induction, we get the result.
\end{proof}

\begin{remark}
  The forests $\Tree_{x}$ and $\check{\Tree}_{x}$ are constructed in
  such a way that if $\gamma$ and $\gamma$ are two distinct elements of $V_{x}$
  of the same length $d = d(x, y)$ from $x$ to $y$ that are present in
  $G$, we have that
  \begin{itemize}
    \item conditionally to $\mathcal{F}_{d}(x)$,
          $D^{\Tree_{x}\uparrow}_{\gamma}$ and
          $D^{\Tree_{x}\uparrow}_{\gamma'}$ are independent and
          identically distributed;
    \item $D^{\Tree_{x}\uparrow}_{\gamma}$ and
          $D^{\Tree_{x}\uparrow}_{\gamma'}$ are independent and
          identically distributed.
  \end{itemize}
\end{remark}

\begin{lemma}\label{lem:choice-gamma-star}
  Let $y \in B_{r}^{\nc}(x)$. Then, there exists a unique path
  $\gamma \in \mathcal{P}^{*}_{xy}(G)$ with $l(\gamma) \leq r$. Furthermore,
  $y \in S_{l(\gamma)}^{\nc}(x)$ and $\gamma = \gamma^{*}(y)$.
\end{lemma}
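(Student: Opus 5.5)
Since $y \in B_r^{\nc}(x)$, there is a path from $x$ to $y$ in $\Gnc$ of length at most $r$; let $\gamma$ be a shortest one, of length $k \leq r$. It is simple, and since $\Gnc \subset G$ it lies in $\mathcal{P}^{*}_{xy}(G)$. This gives existence.

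For uniqueness I argue by contradiction. Suppose $\gamma' \neq \gamma$ is another simple path in $G$ from $x$ to $y$ with $l(\gamma') \leq r$. Trace $\gamma$ and $\gamma'$ backwards from $y$ until they first differ, and forwards from $x$ until they first differ. This extracts a simple cycle $\sigma$ in $G$ of length at most $2r$. It passes through some edge $\{\gamma_i,\gamma_{i+1}\}$ of $\gamma$ where the two paths diverge. More carefully: take the first index $i$ with $\gamma_{i+1} \neq \gamma'_{i+1}$ (or at which $\gamma'$ leaves $\gamma$), then follow $\gamma$ from $\gamma_i$ until it next meets $\gamma'$, and return along $\gamma'$.

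Put $u = \gamma_i$. Then $u$ lies on a simple closed path in $G$ of length at most $2r < 2r+1$, and this path starts with the edge $\{u,\gamma_{i+1}\}$. By \eqref{eq:def-two-pruning-sets}, $\gamma_{i+1} \in \cycV(u)$, so step \ref{itm:remove-cycle} of \cref{def:pruning} removes the edge $\{u,\gamma_{i+1}\}$. This contradicts $\gamma \subset \Gnc$. The closed path is simple in the sense of the definition (only the endpoints coincide), since it is assembled from two vertex-disjoint segments of simple paths. Hence $\gamma$ is the unique simple path of length at most $r$ in $G$.

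It remains to show $y \in S^{\nc}_{k}(x)$ and $\gamma = \gamma^*(y)$. We already know $d^{\nc}(x,y) = k$, since $\gamma$ is a shortest path in $\Gnc$. Next, any shortest path in $G$ from $x$ to $y$ has length $d_G(x,y) \leq k \leq r$ and is simple, so by uniqueness it equals $\gamma$. Therefore $d_G(x,y) = k$, i.e.\ $y \in S_k(x)$, which is the sphere appearing in the definition of $\gamma^*$. The set $\{\gamma'' \in \mathcal{P}^*_{xy}(G) : l(\gamma'')=k\}$ is then the singleton $\{\gamma\}$, so its lexicographic minimum is $\gamma^*(y) = \gamma$.

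The main obstacle is the cycle extraction: the two paths must be cut at their first point of divergence, so that the resulting cycle is simple and genuinely contains an edge of $\gamma$ adjacent to a cycle vertex. The length bound $2r \leq 2r+1$ is then immediate.
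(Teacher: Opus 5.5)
Your proof is correct and follows essentially the same route as the paper. A second simple path $\gamma'$ of length at most $r$ combines with $\gamma$ into a simple closed path of length at most $2r$ through an edge of $\gamma$, which step 1 of the pruning would have deleted, and $d(x,y)=l(\gamma)$ and $\gamma=\gamma^{*}(y)$ then follow from uniqueness. Your version is more careful than the paper's: you fix $\gamma$ as a shortest path in $G^{\nc}$ and cut at the first divergence, so the cycle is genuinely simple and starts with an edge of $\gamma$. This tacitly uses $y \neq x$, so that neither path is a prefix of the other; that is also the only case in which the paper applies the lemma.
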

\begin{proof}
  The fact that there exists a unique path
  $\gamma \in \mathcal{P}^{*}_{xy}(G)$ with $l(\gamma) \leq r$ follows
  by construction of $G^{\nc}$. Indeed, if there existed
  $\gamma' \in \mathcal{P}^{*}_{xy}(G)$ with $\gamma' \neq \gamma$, we could
  consider the concatenation $\tilde{\gamma}$ of $\gamma$ and $\gamma'$ with its
  order reversed and construct a path contained in $B_{r}(x)$ from $x$
  to $x$. While $\tilde{\gamma}$ would not be necessarily simple, there
  would exists $x'$ such that $\tilde{\gamma}$ contains a simple path in
  $\mathcal{P}^{*}_{x'x'}(G)$ which is contained in $B_{r}(x')$.
  Hence, one vertex $y'$ in $\gamma$ would belong to $\cycV(x')$ and thus
  the edge $\left\{ x', y' \right\}$ would not appear in $G^{\nc}$.
  Hence, the subset of $\mathcal{P}^{*}_{xy}(G^{\nc})$ comprising
  paths of length at most $r$ must be empty, contradicting
  $y \in B_{r}^{\nc}$.

  Hence, we get that $d(x, y) = l(\gamma)$, i.e.\ $y \in S_{l(\gamma)}(x)$ and
  $y \in S_{l(\gamma)}^{\nc}(x)$. It then follows that $\gamma = \gamma^{*}(y)$.
\end{proof}

The graph $G^{\nc}\vert_{B^{\nc}_{r}(x)}$ can be embedded in
$\check{\Tree}_{x}$, and hence in $\Tree_{x}$, as follows. We define a
mapping $\iota \colon B^{\nc}_{r}(x) \to V_{x}$ by
\begin{equation*}
  \iota(x) = (x)\,.
\end{equation*}
Then, for all $y \in B^{\nc}_{r}(x) \setminus \left\{ x \right\}$ we use
\cref{lem:choice-gamma-star} and set
\begin{equation*}
  \iota(y) = \gamma^{*}(y)\,.
\end{equation*}
\begin{proposition}\label{prop:embedding-Tx}
  The mapping $\iota$ defines a graph embedding of
  $G^{\nc}\vert_{B^{\nc}_{r}(x)}$ into $\check{\Tree}_{x}$ and
  $\Tree_{x}$.
\end{proposition}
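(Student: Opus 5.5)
To prove \cref{prop:embedding-Tx}, we first note that $\iota$ is injective. Then we check that every edge $\{y,y'\}$ of $G^{\nc}\vert_{B^{\nc}_{r}(x)}$ is mapped to an edge of $\check{\Tree}_{x}$. Since $\check{Z}_\gamma \leq Z_\gamma$ for every $\gamma$ (on the first branch the two coincide, and otherwise $\check Z_\gamma$ is either $\hat Z_\gamma = Z_\gamma$ or $0$), we have $\check{\Tree}_x \subset \Tree_x$, so the statement for $\Tree_x$ follows from the one for $\check{\Tree}_x$.

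\emph{Injectivity.} The path $\iota(y) = \gamma^*(y)$ ends at $y$, so $y$ can be read off from its last entry, and $\iota(x)=(x)$ is the only image of length $0$. Hence $\iota$ is injective.

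\emph{Edges.} Let $y \sim y'$ in $G^{\nc}$ with $y,y' \in B^{\nc}_r(x)$. By \cref{lem:choice-gamma-star}, $y \in S^{\nc}_k(x)\cap S_k(x)$ and $y' \in S^{\nc}_{k'}(x)\cap S_{k'}(x)$, where $k,k' \le r$. I would first show that $|k-k'| = 1$. The case $k=k'$ is impossible: the unique short paths from $x$ to $y$ and to $y'$, together with the edge $\{y,y'\}$, would contain a simple cycle of length at most $2r+1$. This cycle lies in a small ball around any of its vertices, so some edge on it would have been removed in step~\ref{itm:remove-cycle}. That would contradict either $y\simnc y'$ or the fact that these $G^{\nc}$-geodesics survive. (The uniqueness argument of \cref{lem:choice-gamma-star} already runs along these lines.)

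So suppose without loss of generality that $k' = k+1$. The $G^{\nc}$-path $\iota(y)$ followed by $y'$ is a simple path of length $k+1\le r$ in $G$ from $x$ to $y'$. By the uniqueness in \cref{lem:choice-gamma-star}, it equals $\gamma^*(y') = \iota(y')$. Hence $\iota(y')$ extends $\iota(y)$ by exactly one entry. Writing $\iota(y)=(x,y_1,\dots,y_k)$ with $y_k = y$, we have $y' \notin B_k(x)$, $y_k\in S_k(x)$, and $(x,y_1,\dots,y_k)=\gamma^*(y_k)$. We are therefore in the first case of the definition of $\check Z$, so $\check Z_{\iota(y')} = \ind_{\{y\sim y'\}} = 1$, which is an edge of $\check{\Tree}_x$ by \cref{def:tree-Tx}. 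The case $k=0$ is handled in the same way, since $\check Z_{xy_1}=\ind_{\{x\sim y_1\}}$.

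\emph{Main obstacle.} The delicate step is excluding $k=k'$ and verifying that concatenating $\iota(y)$ with $y'$ reproduces $\gamma^*(y')$. Both require carefully transferring the absence of short cycles in $G^{\nc}$ to uniqueness of short paths in $G$. I would rely directly on \cref{lem:choice-gamma-star}, which asserts uniqueness among \emph{all} simple $G$-paths of length at most $r$, so only the cycle-exclusion for equal depths needs a fresh argument.

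If one also wants non-edges to be preserved (an induced embedding), the same depth argument gives it in one direction. Any $\check{\Tree}_x$-edge between two images joins $\gamma^*(y)$ to its one-step extension $\gamma^*(y')$ via the first case, which forces $y\sim y'$ in $G$. Upgrading this to $y \simnc y'$ follows because that edge lies on the $G^{\nc}$-geodesic to $y'$. I expect the intended notion to be edge-preservation plus injectivity, which is what the argument above establishes.
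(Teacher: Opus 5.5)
Your proof is correct and follows essentially the paper's argument. You establish injectivity, then take an edge $\{y,y'\}$ with $d(x,y)\leq d(x,y')$ and show that $\gamma^*(y')$ is the one-step extension of $\gamma^*(y)$, so that the first case of the definition gives $\check Z_{\gamma^*(y')}=Z_{\gamma^*(y')}=\ind_{\{y\sim y'\}}=1$. The only differences are in detail: the paper simply invokes that $G^{\nc}\vert_{B^{\nc}_r(x)}$ is a tree, whereas you spell this out using the uniqueness in \cref{lem:choice-gamma-star} and a short-cycle argument that excludes equal depths; and you obtain the $\Tree_x$ statement from $\check Z_\gamma\leq Z_\gamma$ rather than checking both variables directly.
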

\cref{prop:embedding-Tx} will be used several times in the sequel. An
important implication of this result is the following corollary.
\begin{corollary}\label{corol:bound-Dnc}
  Let $y \in B_{r-1}^{\nc}(x)$. Then, we have
  \begin{equation*}
    D_{y}^{\nc} \leq D^{\check{\Tree}_{x}}_{\iota(y)} \leq D^{\Tree_{x}}_{\iota(y)}\,.
  \end{equation*}
  Furthermore, there exists a constant $C_{\nu} > 0$ such that with $\nu$-high probability
  \begin{equation*}
    D^{\check{\Tree}_{x}}_{\iota(y)} \leq D_{y}^{\nc} + C_{\nu}\,.
  \end{equation*}
\end{corollary}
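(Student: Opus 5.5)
The first chain of inequalities is deterministic and comes from \cref{prop:embedding-Tx}. Since $\check Z_\gamma \leq Z_\gamma$ pointwise, we have $\check{\Tree}_x \subset \Tree_x$, which gives $D^{\check{\Tree}_{x}}_{\iota(y)} \leq D^{\Tree_{x}}_{\iota(y)}$. For the first inequality, take $y \in B^{\nc}_{r-1}(x)$. Every $G^{\nc}$-neighbour $z$ of $y$ lies in $B^{\nc}_r(x)$. Because $\iota$ is a graph embedding of $G^{\nc}\vert_{B^{\nc}_{r}(x)}$ into $\check{\Tree}_x$, the map $z \mapsto \iota(z)$ sends $S^{\nc}_1(y)$ injectively into the $\check{\Tree}_x$-neighbours of $\iota(y)$. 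Hence $D^{\nc}_y \leq D^{\check{\Tree}_{x}}_{\iota(y)}$.

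For the second bound, write $\iota(y) = \gamma^*(y) = (x,y_1,\dots,y_k)$ with $k = d(x,y) \leq r-1$; \cref{lem:choice-gamma-star} ensures $y \in S_k(x)$. The neighbours of $\iota(y)$ in $\check{\Tree}_x$ are its parent (if $k\geq 1$) and the children $(x,y_1,\dots,y_k,w)$ with $\check Z = 1$. Since $\iota(y)=\gamma^*(y)$, the first case in the definition of $\check Z$ applies. Therefore the children are exactly the $w \notin B_k(x)$ with $y\sim w$ in $G$, i.e.\ $w \in S_{k+1}(x)\cap S_1(y)$, and the parent corresponds to $y_{k-1}$. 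This gives
\begin{equation*}
D^{\check{\Tree}_{x}}_{\iota(y)} \leq \#\bigl(S_1(y)\setminus B_k(x)\bigr) + 1 \leq D_y + 1\,.
\end{equation*}
Step 1 of \cref{def:pruning} only removes edges $\{y,w\}$ with $w \in \cycV(y)$ or $y \in \cycV(w)$. So $D_y - D_y^{\nc} \leq \#\cycV(y) + \#\{w \sim y : y \in \cycV(w)\}$. The first term is at most $C_\nu$ with high probability by \cref{prop:edge-cyc}, after a union bound over $y \in [N]$; this costs a factor $N$, absorbed by increasing $\nu$.

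The main obstacle is the second term, the number of $w\sim y$ with $y\in\cycV(w)$. My plan is to observe that each such $w$ has the edge $\{y,w\}$ lying on a cycle of length at most $2r+1$. That cycle passes through $y$, so $w \in \cycV(y)$ as well, because the same cycle read from $y$ is a simple loop of length at most $2r+1$ starting with the edge $\{y,w\}$. The relation is therefore symmetric, and $D_y - D_y^{\nc} \leq \#\cycV(y) \leq C_\nu$. Combined with the display above, this yields $D^{\check{\Tree}_{x}}_{\iota(y)} \leq D^{\nc}_y + C_\nu + 1$, and the $+1$ is absorbed into $C_\nu$.

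If $\cycV$ is meant to be measured only inside $B_r$, I would fall back on \cref{lem:non-edge-intersecting} and \cref{prop:edge-cyc} applied around $y$ with radius $r$. I would also check that the union over the at most $N$ choices of $(x,y)$ remains of $\nu$-high probability.
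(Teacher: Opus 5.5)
Your proof is correct and follows the paper's argument. The first chain comes from the embedding $\iota$ together with $\check Z_\gamma \le Z_\gamma$, and the second bound comes from reading off the $\check{\Tree}_x$-neighbours of $\gamma^*(y)$ as its parent plus the $G$-neighbours of $y$ outside $B_k(x)$, followed by \cref{prop:edge-cyc}. Your symmetry observation ($w \in \cycV(y)$ iff $y \in \cycV(w)$, so $D_y - D^{\nc}_y = \#\cycV(y)$) and the union bound over $y$ supply exactly the details the paper leaves implicit. Also, since the parent $y_{k-1}$ already lies in $S_1(y)\cap B_k(x)$, you get $D^{\check{\Tree}_x}_{\iota(y)} \le D_y$ directly, without the $+1$.
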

\begin{proof}[Proof of \cref{corol:bound-Dnc}]
  Let $y \in B_{r-1}^{\nc}(x)$. The first inequality is a direct consequence of
  \cref{prop:embedding-Tx}: since $\iota$ is a graph embedding the set
  of neighbors of $\iota(y)$ in $B_{r-1}^{\nc}(x)$ contains the image
  of the set of neighbors of $y$.

  Let us prove the second inequality. By \cref{lem:choice-gamma-star}, we have that
  $d \deq d(x, y) = l(\gamma^{*}(y))$. Hence, $y \in S_{d}(x)$. We have
  \begin{equation*}
    D^{\check{\Tree}_{x}}_{\iota(y)} = \ind_{\{( \gamma^{*}(y))_{d-1} \sim y \}} + \sum_{z \notin B_{d}(x)} \ind_{\{ y \sim z \}} \leq D_{y} \leq D^{\nc}_{y} + C_{\nu}\,,
  \end{equation*}
  where the last inequality holds with $\nu$-high probability by
  \cref{prop:edge-cyc}.
\end{proof}

\begin{proof}[Proof of \cref{prop:embedding-Tx}]
  It is clear that $\iota$ is injective. Let
  $\left\{ y, y' \right\} \in G^{\nc}\vert_{B_{r}^{\nc}(x)}$, and let
  us show that $\left\{ \gamma^{*}(y), \gamma^{*}(y') \right\} \in \Tree_{x}$. We can
  assume without loss of generality that $d \deq l(\gamma^{*}(y)) \leq l(\gamma^{*}(y'))$. Since
  $G^{\nc}\vert_{B_{r}^{\nc}(x)}$ is a tree,
  \begin{equation*}
    \gamma^{*}(y') = \Bigl(x, \gamma^{*}(y)_{1}, \ldots, \gamma^{*}(y)_{l(\gamma^{*}(y))}, y'\Bigr)\,,
  \end{equation*}
  and in particular $l(\gamma^{*}(y')) = d+1$. It remains to show that
  $Z_{\gamma^{*}(y')} = \check{Z}_{\gamma^{*}(y')} = 1$. This follows from
  \cref{lem:choice-gamma-star}: indeed, we have $y \in S_{d}(x)$ and
  $y' \notin B_{d}(x)$ so
  $Z_{\gamma^{*}(y')} = \check{Z}_{\gamma^{*}(y')} = \ind_{\left\{ y \sim y' \right\}} = 1$.
\end{proof}

\subsection{Removing the down-up paths}
\label{sec:removing-down-up}

As explained in \cref{sec:pruning-procedure}, step \ref{itm:remove-du} of the pruning
procedure consists in removing the edges $\left\{ x, y \right\} \in G$
where $x \in \vertices$ and $y \in \Sdu(x) \setminus \cycV(x)$. When removing such
an edge, we change the degree of both $x$ and $y$. Controlling how
these two degrees change is the content of Lemmas \ref{lem:bound-ud-paths} and \ref{lem:bound-D+} below.
More precisely, if we consider a vertex $x \in \vertices$ and
$y \in S_{1}(x) \setminus \cycV(x)$, then
\begin{itemize}
  \item either $y \prec x$, and if $y \in \Sdu(x)$, we remove $\left\{ x, y \right\}$ from $G$;
  \item or $y \succ x$, and if there exists $y' \in S_{1}^{+}(x) \setminus \cycV(x)$ with $y \prec y'$, we remove $\left\{ x, y \right\}$ from $G$.
\end{itemize}
It implies that
\begin{equation*}
  D_{x}^{\pp} = D_{x} - \# \cycV(x) - \#\Sdu(x) - \left( \# \left( S_{1}^{+}(x) \setminus \cycV(x) \right) - 1 \right) \vee 0\,.
\end{equation*}
We have upper bounded the second term in \cref{prop:edge-cyc}. \cref{lem:bound-ud-paths} bounds the third term, and
\cref{lem:bound-D+} bounds the fourth one, as $\# S_1^+(x) = D_x^+$.
\begin{lemma}\label{lem:bound-ud-paths}
  Let $\nu > 0$, $c > 1$, and $x \in \vertices$. With \(\nu\)-high
  probability,
  \begin{equation*}
    \# \Sdu(x) \leq c\frac{\nu}{1-2\delta}\frac{\log N}{\log\log N}\,.
  \end{equation*}
\end{lemma}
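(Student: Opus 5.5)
The plan is to turn the event $\{\#\Sdu(x)\geq k\}$ into the existence of $k$ vertex-disjoint length-two paths $x\sim y_i\sim z_i$ whose far endpoints all have degree at least $D_x$. Then I bound the probability of that by a first-moment (union) bound over such configurations, in the style of the proof of \cref{prop:edge-cyc}, and finally choose $k$ of order $\frac{\nu}{1-2\delta}\frac{\log N}{\log\log N}$.

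\emph{Step 1 (combinatorial reduction).} For each $y_i\in\Sdu(x)$ pick a witness $z_i\in S_1(y_i)\setminus\cycV(y_i)$ with $y_i\prec x\prec z_i$. The witnesses are pairwise distinct. Indeed, if $z_i=z_j$ with $y_i\neq y_j$, then $(x,y_i,z_i,y_j,x)$ is a simple closed path of length $4\leq 2r+1$, which would put $y_i$ in $\cycV(x)$; this is excluded by the definition of $\Sdu(x)$. Also $z_i\neq x$, and $z_i\notin\{y_j\}$ because $y_j\prec x\prec z_i$. Hence $\#\Sdu(x)\geq k$ implies that there exist distinct $y_1,\dots,y_k,z_1,\dots,z_k$ with $x\sim y_i\sim z_i$ and $D_{z_i}\geq D_x$. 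The $2k$ edges involved are distinct, so their indicators are independent.

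\emph{Step 2 (from degrees to weights).} The constraint $D_{z_i}\geq D_x$ correlates with the edges of the configuration, so I replace it by a deterministic weight condition. Set the cutoff $t\deq w_x/C\vee C_\nu\log N$. By \cref{lem:estimate-degree}, \cref{rem:ratio-big-dx} and \cref{rem:upper-bd-degree}, together with a union bound over all vertices, the following holds with $\nu$-high probability: every $z$ with $D_z\geq D_x$ satisfies $w_z\gtrsim t$, or else $D_x=O(\log N)$. The latter regime, where both degrees are only logarithmic, I treat separately (see Step 4).

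In the main regime I bound
\begin{equation*}
\P(\#\Sdu(x)\geq k)\leq \frac{1}{k!}\Bigl(\sum_{y}\sum_{z:\,w_z\gtrsim t}p_{xy}p_{yz}\Bigr)^{k}+O(N^{-\nu}).
\end{equation*}
Using \eqref{eq:proba-edge}, the inner sum is at most $\frac{w_x m_2}{m_1}\cdot\frac{1}{m_1N}\sum_{z:\,w_z\gtrsim t}w_z$. By Markov's inequality on the second moment, $\sum_{w_z\gtrsim t}w_z\leq m_2N/t$. This gives a bound of order $\frac{w_x}{t}\frac{m_2^2}{m_1^2}=O\bigl((\log N)^{2\delta}\bigr)$ by \cref{hyp:behavior-m21}, since $w_x/t=O(1)$.

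\emph{Step 3 (choice of $k$).} With $\lambda\deq C(\log N)^{2\delta}$, Stirling gives $\lambda^k/k!\leq \exp\bigl(-k\log k+2\delta k\log\log N+O(k)\bigr)$. Take $k=c\frac{\nu}{1-2\delta}\frac{\log N}{\log\log N}$ with $c>1$. Then $\log k=(1-o(1))\log\log N$, so the exponent is $-(1-2\delta-o(1))k\log\log N=-c(1-o(1))\nu\log N\leq-\nu\log N$. This explains the constant in the statement.

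\emph{Step 4 (main obstacle).} I expect the main obstacle to be making Step 2 rigorous. One issue is the dependence between $D_z$ and the edge $\{y,z\}$; I would handle it by using $D_z-1$, i.e.\ the degree with the configuration edges removed, which is independent of those edges, and absorbing the shift. The other issue is the regime where $w_x$ is small but $D_x$ is atypically large, so that the ratio $w_x/t$ is not $O(1)$. There, $z\succ x$ only gives $D_z\geq D_x$, which is at most logarithmic. What I would try first is to split according to whether $D_x\leq C\frac{\log N}{\log\log N}$, in which case the bound holds trivially since $\#\Sdu(x)\leq D_x$. Otherwise I would use the crude bound $w_x\leq C\log N$ from \cref{rem:ratio-big-dx} and take the cutoff $t\gtrsim\frac{\log N}{\log\log N}$. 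This loses at most a $\log\log N$ factor inside $\lambda$, which does not change the leading $k\log k$ term.
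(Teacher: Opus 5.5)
Your Step 1 and the arithmetic of Step 3 are correct. Step 2 also works when $D_x\gtrsim\log N$: there, with $\nu$-high probability, $w_z\gtrsim w_x$ for every $z\succ x$.

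The gap is the regime $D_x\asymp\frac{\log N}{\log\log N}$, which is typical for instance when $w_x\asymp\frac{\log N}{\log\log N}$. The fix you propose in Step 4 does not work there. It is false that, with $\nu$-high probability, every $z$ with $D_z\ge D_x$ has $w_z\gtrsim\frac{\log N}{\log\log N}$. Take $s=C\frac{\log N}{\log\log N}$ and $w_z=s/K$. Then $\P(D_z\ge s)\approx(e/K)^{s}$, which is $N^{-o(1)}$ as long as $\log K=o(\log\log N)$. The assumptions allow $N(\log N)^{-2}$ vertices of weight $\log N/(\log\log N)^2$, so typically many of them have $D_z\ge D_x$.

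A union bound over $z\in[N]$ only yields the cutoff $t\asymp s(\log N)^{-(\nu+1)/C}$. So $\lambda$ picks up a factor of order $(\log N)^{(\nu+1)/C}$, a polylogarithmic loss rather than a $\log\log N$ one. Your trivial case forces $C\le c\nu/(1-2\delta)$. Step 3 then only gives $\exp\bigl(-(c\nu-\nu-1-o(1))\log N\bigr)$. This is vacuous for $c<1+1/\nu$ and reaches $N^{-\nu}$ only if $c\ge 2+1/\nu$.

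The missing idea is to keep the degree condition and pay for it with Markov's inequality, instead of converting it into a deterministic weight cutoff.

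First, for $D_x\ge\frac{\log N}{\log\log N}$, \cref{rem:ratio-big-dx} gives $w_x\le\chi D_x$ with $\chi=4\nu\log\log N$. This is a statement about $x$ alone, and it implies every witness satisfies $D_{z_i}\ge w_x/\chi$.

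Second, in your framework the needed independence comes for free. On the event, $z_i$ has no neighbour among $x$ and the $y_j,z_j$ with $j\neq i$. Any such edge would close a simple cycle of length $3$, $4$ or $5$ through $x$ and $y_i$, forcing $y_i\in\cycV(x)$. Hence $D_{z_i}-1$ counts only edges from $z_i$ to vertices outside the configuration. These counts are independent over $i$ and independent of the $2k$ configuration edges.

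Markov then gives $\P(D_{z_i}-1\ge w_x/\chi-1)\le\chi(d_{z_i}+1)/w_x$, and
\begin{equation*}
\lambda\le\sum_{y,z}p_{xy}p_{yz}\frac{\chi(d_z+1)}{w_x}\le\chi\frac{m_2}{m_1}\Bigl(\frac{m_2}{m_1}+1\Bigr)=O\bigl(\log\log N\,(\log N)^{2\delta}\bigr)\,.
\end{equation*}
The factor $w_x$ cancels, and your Step 3 goes through unchanged.

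This is the mechanism of the paper's proof. The paper obtains the same mean $v\le2\chi\frac{m_2}{m_1}\bigl(\frac{m_2}{m_1}+C_\nu+1\bigr)$ as follows: it passes to $G^{\nc}$, embeds into the tree $\Tree_x$ with independent edges (\cref{prop:embedding-Tx}, \cref{lem:T-indep}), applies Markov to $D^{\Tree_x}_z$, and concludes with Bennett's inequality over $y$. Your corrected first-moment count over vertex-disjoint configurations replaces the tree coupling with the cycle exclusions already built into $\Sdu(x)$.

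Your worry in Step 4 about dependence between $D_z$ and the edge $\{y,z\}$ only arises in this Markov route. With a deterministic cutoff there was no dependence issue; the problem was that such a cutoff is not available with high probability.
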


\begin{lemma}\label{lem:bound-D+}
  Let $x \in [N]$, $c > 1$, and $\nu > 0$. Introduce
  \begin{equation}\label{eq:def-Dnc+}
    D^{\nc +}_{x} \deq \# \left( S_{1}^{+}(x)\setminus \cycV(x) \right)\,.
  \end{equation}
  With $\nu$-high probability,
  \begin{equation*}
    D_{x}^{\nc +} \leq c\frac{\nu}{1-\delta}\frac{\log N}{\log \log N}\,.
  \end{equation*}
\end{lemma}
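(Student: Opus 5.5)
The statement to prove is \cref{lem:bound-D+}: with $\nu$-high probability, $D_x^{\nc+} \leq c\frac{\nu}{1-\delta}\frac{\log N}{\log\log N}$.

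The plan is a union bound over sets of $k$ neighbours of $x$ that all have degree at least $D_x$. If $D_x^{\nc+} \geq k$, there are distinct $y_1,\dots,y_k \in S_1(x)\setminus\cycV(x)$ with $y_i \succ x$, so $D_{y_i} \geq D_x \geq k$ for each $i$. By \cref{prop:edge-cyc} and \cref{lem:estimate-degree} we may lose only a constant in degrees. So $y_i$ has at least $k-1$ neighbours other than $x$.

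The key structural input is that the $y_i$ lie outside $\cycV(x)$. Hence their neighbourhoods in $G\setminus\{x\}$ (and in $G^\nc$) are disjoint and contain no short cycles. This is what gives independence. To make it rigorous I would use the coupling of \cref{sec:coupling-with-tree}: by \cref{prop:embedding-Tx} and \cref{corol:bound-Dnc}, the relevant degrees are dominated by the numbers of children $D^{\Tree_x\uparrow}_{(x,y_i)}$ in $\Tree_x$. Conditionally on $\mathcal F_1(x)$, these are independent, with $D^{\Tree_x\uparrow}_{(x,y)}$ a sum of independent Bernoullis with mean at most $w_y$.

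The probability bound then reads
\begin{equation*}
\P(D_x^{\nc+}\geq k) \leq \sum_{\{y_1,\dots,y_k\}} \prod_{i=1}^k p_{xy_i}\,\P\bigl(\mathrm{Bin}\text{-like}(w_{y_i}) \geq k-C\bigr).
\end{equation*}
Using $p_{xy}\leq w_xw_y/(m_1N)$ and the Poisson-type tail $\P(\text{sum}\geq k)\leq (e w_y/k)^{k}\wedge 1$, each factor becomes $\frac{w_x}{m_1N}\sum_y w_y \min(1,(ew_y/k)^{k-C})$. This can be bounded by $\frac{w_x}{m_1 N}\cdot \frac{C}{k}\sum_y w_y^2$. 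The estimate uses $w_y\min(1,(ew_y/k)^{k'}) \leq C w_y^2/k$: for $w_y\geq k/e$ bound by $w_y \le e w_y^2/k$, and for $w_y<k/e$ the factor is already smaller. So each factor is at most $C w_x m_2/(m_1 k)$, and dividing by $k!$ for unordered sets gives
\begin{equation*}
\P(D_x^{\nc+}\geq k)\leq \frac{1}{k!}\Bigl(\frac{C w_x (\log N)^\delta}{k}\Bigr)^k.
\end{equation*}

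This is not yet enough when $w_x$ is large. The fix is to also use that each $y_i$ has degree at least $D_x\gtrsim w_x$ whenever $w_x\geq C\log N$, by \cref{lem:estimate-degree}. Then the tail becomes $\P(\text{sum}\geq c w_x)$, and the factor is at most $C\frac{w_x}{m_1N}\sum_y w_y\,\min(1, (e w_y/(cw_x))^{cw_x}) \leq C m_2/m_1$. This also uses $D_x\ge k$.

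So in all cases, with $K=\max(k,w_x)$ setting the tail threshold, the bound is roughly $\P \leq (C(\log N)^\delta)^k/k!$. By Stirling this is $\exp(-k\log k + \delta k\log\log N + Ck)$. Setting $k=c\frac{\nu}{1-\delta}\frac{\log N}{\log\log N}$ gives $k\log k=(1+o(1))c\frac{\nu}{1-\delta}\log N$. The exponent is then $-(1-\delta)k\log\log N(1+o(1)) = -c\nu\log N(1+o(1))\leq -\nu\log N$ since $c>1$.

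The main obstacle is the independence of the $k$ neighbour degrees given $x\sim y_i$, including handling the common vertex $x$ and the possible edges among the $y_i$ or between their neighbourhoods. I would try first to handle this via $\Tree_x$ and \cref{lem:T-indep}, with the pruning of $\cycV(x)$ ensuring that the embedded neighbourhoods are disjoint. A second delicate point is the case split in $w_x$ relative to $\log N$, needed to avoid a $w_x^k$ factor.
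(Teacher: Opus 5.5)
Your proposal is correct and is essentially the paper's argument: reduce via \cref{prop:edge-cyc}, \cref{prop:embedding-Tx} and \cref{lem:T-indep} to the independent indicators $\ind_{\{x \simT{x} y\}}\ind_{\{D^{\Tree_x\uparrow}_{(x,y)} \geq \text{threshold}\}}$. The threshold is chosen proportional to $D_x$, so that the factor $w_x$ in $p_{xy}$ cancels and the total mean is $O((\log N)^{\delta}\log\log N)$, and one concludes with a Poisson-type tail at $k = c\frac{\nu}{1-\delta}\frac{\log N}{\log\log N}$.

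The differences are only technical. The paper uses the single threshold $d_x/\chi$ from \cref{rem:ratio-big-dx}, together with Markov's and Bennett's inequalities. You split according to $w_x$: threshold $k$ via $D_x \geq D^{\nc+}_x$ when $w_x \lesssim \log N$, and $c'w_x$ via \cref{lem:estimate-degree} when $w_x \gtrsim \log N$. You then use a union bound over $k$-subsets, which gives the same $v^k/k!$ estimate. This incidentally avoids the paper's preliminary reduction to $D_x \geq \frac{\log N}{\log\log N}$.
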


Thus, Lemmas \ref{lem:bound-ud-paths} and \ref{lem:bound-D+} show that we remove roughly
$\log N/\log\log N$ edges around each vertex when removing the down-up
paths.

\begin{proof}[Proof of \cref{lem:bound-ud-paths}]
  We start by noticing that if $D_{x} < \frac{\log N}{\log\log N}$ the
  result is immediate as $\# \Sdu(x) \leq D_{x}$. We thus assume that
  $D_{x} \geq \frac{\log N}{\log\log N}$. Assuming this, we introduce
  $\chi = 4\nu\log \log N$ and use \cref{rem:ratio-big-dx} to get
  $\chi D_{x} \geq w_{x}$ with $\nu$-high probability. This gives us
  \begin{equation*}
    \# \Sdu(x)
    = \sum_{y \in \vertices} \ind_{\bigl\{x \simnc y\bigr\}} \ind_{\bigl\{ \exists z \neq x, y \simnc z, D_{z} \geq D_{x} \geq D_{y}\}}
    \leq \sum_{y \in \vertices} \ind_{\bigl\{x \simnc y\bigr\}} \ind_{\bigl\{ \exists z \neq x, y \simnc z, \chi D_{z} \geq w_{x}\}}\,,
  \end{equation*}
  with $\nu$-high probability.

  We now relate the quantity $\# \Sdu(x)$ to the tree $\Tree_{x}$. To
  do so we use \cref{prop:edge-cyc}: there exists a constant
  $C_{\nu} > 0$ such that with $\nu$-high probability
  \begin{equation*}
    \# \Sdu(x)
    \leq \sum_{y \in \vertices} \ind_{\bigl\{x \simnc y\bigr\}} \ind_{\bigl\{ \exists z \neq x, y \simnc z, \chi \bigl(D^{\nc}_{z}+C_{\nu}\bigr) \geq  w_{x}\bigr\}}\,.
  \end{equation*}
  \cref{prop:embedding-Tx} implies that
  \begin{equation*}
    \# \Sdu(x) \leq \sum_{y \in \vertices} \ind_{\bigl\{x \simT{x} y\bigr\}} \ind_{\bigl\{ \exists z \neq x, y \simT{x} z, \chi \bigl(D^{\Tree_{x}}_{z}+C_{\nu}\bigr) \geq w_{x}\bigr\}}\,.
  \end{equation*}
  To get the result, we are going to use Bennett's inequality
  \cite[Theorem 2.9]{BLM13}. The Bernoulli random variables
  \begin{equation*}
    \Bigl(\ind_{\bigl\{x \simT{x} y\bigr\}} \ind_{\bigl\{ \exists z \neq x, y \simT{x} z, \chi\bigl(D^{\Tree_{x}}_{z}+C_{\nu}\bigr) \geq  w_{x}\bigr\}}\Bigr)_{y \in \vertices}
  \end{equation*}
  are independent, since by \cref{lem:T-indep} the edges in
  $\Tree_{x}$ are independent. We compute
  \begin{equation*}
    v
    \deq \sum_{y \in \vertices}\E \left[ \ind_{\bigl\{x \simT{x} y\bigr\}} \ind_{\bigl\{ \exists z \neq x, y \simT{x} z, \chi\bigl(D^{\Tree_{x}}_{z}+C_{\nu}\bigr) \geq w_{x}\}} \right]
    = \sum_{y \in \vertices}p_{xy} \P \left( \exists z \neq x, y \simT{x} z, \chi \bigl(D^{\Tree_{x}}_{z}+C_{\nu}\bigr) \geq w_{x} \right)\,.
  \end{equation*}
  The union bound and Markov's inequality yield
  \begin{equation*}
    \begin{split}
      \P \left( \exists z \neq x, y \simT{x} z, \chi\bigl(D^{\Tree_{x}}_{z}+C_{\nu}\bigr) \geq  w_{x} \right)
      &\leq \sum_{z \neq x}p_{yz} \P \left( \chi\bigl(D^{\Tree_{x}}_{z} - \ind_{\bigl\{ y \simT{x} z \bigr\}}+1+C_{\nu}\bigr) \geq w_{x} \right)\\
      &\leq \sum_{z \neq x}p_{yz} \chi\frac{d_{z} +1 + C_{\nu}}{d_{x}}\,.
    \end{split}
  \end{equation*}
  Using \eqref{eq:def-empirical-moment}, \eqref{eq:proba-edge}, and
  the crude bound $w_{x}/2 \leq d_{x} \leq w_{x}$, we get
  \begin{equation*}
    \P \left( \exists z \neq x, y \simT{x} z, \chi\bigl(D^{\Tree_{x}}_{z}+C_{\nu}\bigr) \geq d_{x} \right)
    \leq \sum_{z} \frac{w_{y}w_{z}^{2} + (1+C_{\nu})w_{y}w_{z}}{m_{1}N} \frac{\chi}{d_{x}}
    \leq 2\chi\frac{w_{y}}{w_{x}} \frac{m_{2} + (C_{\nu}+1)m_{1}}{m_{1}}\,.
  \end{equation*}
  Using \eqref{eq:def-empirical-moment} and \eqref{eq:proba-edge} again, we get
  \begin{equation*}
    v \leq 2\chi \frac{m_{2}}{m_{1}} \left(\frac{m_{2}}{m_{1}} + C_{\nu} + 1\right)\,.
  \end{equation*}
  Bennett's inequality then implies the result:
  \begin{equation*}
    \begin{split}
      \P \Bigl( \# \Sdu &\geq \frac{c\nu}{1-2\delta} \frac{\log N}{\log\log N} \Bigr)\\
                          &\leq\P \Bigl( \sum_{y \in \vertices} \ind_{\bigl\{x \simT{x} y\bigr\}} \ind_{\bigl\{ \exists z \neq x, y \simT{x} z, \chi\bigl(D^{\Tree_{x}}_{z}+C_{\nu}\bigr) \geq w_{x}\}}\geq \frac{c \nu}{1-2\delta} \frac{\log N}{\log\log N} \Bigr) + \order\p{N^{-\nu}}\\
      &\leq \exp\pB{-(v + \frac{c \nu}{1 - 2\delta} \frac{\log N}{\log\log N})\ln(1 + \frac{c \nu \log N}{v(1-2\delta)\log\log N})(1 + o(1))} + \order\p{N^{-\nu}}\\
      &= \exp\p{-c\nu\log N(1 + o(1))} + \order\p{N^{-\nu}} = \order\p{N^{-\nu}}\,. \qedhere
    \end{split}
  \end{equation*}
\end{proof}

\begin{proof}[Proof of \cref{lem:bound-D+}]
  Notice that if $D_{x} < \frac{\log N}{\log\log N}$ the result is
  immediate. Hence, we assume that
  $D_{x} \geq \frac{\log N}{\log\log N}$. We introduce
  $\chi = 4\nu \log \log N$. By \cref{rem:ratio-big-dx} we have that
  with $\nu$-high probability that $\chi D_{x} \geq d_{x}$.

  Consider the random variable
  \begin{equation*}
    D^{\nc +}_{x} = \# \left( S_{1}^{+}(x)\setminus \cycV(x) \right) = \sum_{y \in \vertices}\ind_{\{x \simnc y, D_{x}\leq D_{y}\}}\,.
  \end{equation*}
  By the previous discussion, we have with $\nu$-high probability
  \begin{equation*}
    D^{\nc +}_{x} \leq \sum_{y \in \vertices}\ind_{\bigl\{x \simnc y, \chi D_{y} \geq d_{x}\bigr\}}\,.
  \end{equation*}
  By \cref{prop:edge-cyc}, there exists a constant $C_{\nu}$
  such that with $\nu$-high probability, we have
  \begin{equation*}
    D^{\nc +}_{x} \leq \sum_{y \in \vertices}\ind_{\bigl\{x \simnc y, \chi\bigl(D_{y}^{\nc} + C_{\nu}\bigr) \geq d_{x}\bigr\}}\,.
  \end{equation*}
  \cref{prop:embedding-Tx} allow us to bound this using the tree
  $\Tree_{x}$:
  \begin{equation*}
    D^{\nc +}_{x} \leq D^{\Tree_{x}+}_{x} \deq \sum_{y \in V_{x}}\ind_{\bigl\{x \simT{x} y, \chi\bigl(D_{y}^{\Tree_{x}\uparrow} + 1 + C_{\nu}\bigr) \geq d_{x}\bigr\}}\,.
  \end{equation*}

  As in the proof of \cref{lem:bound-ud-paths}, we conclude using
  Bennett's inequality (see \cite[Theorem 2.9]{BLM13}). The random variables
  \begin{equation*}
    \left( \ind_{\bigl\{x \simT{x} y\bigr\}}\ind_{\bigl\{\chi\bigl(D_{y}^{\Tree_{x}\uparrow} +1 + C_{\nu}\bigr) \geq d_{x}\bigr\}} \right)_{y \in V_{x}}
  \end{equation*}
  are independent by \cref{lem:T-indep} and bounded by $1$. Markov's
  inequality gives us:
  \begin{equation*}
      v
      \deq \sum_{y \in V_{x}} \E\Bigl[\ind_{\bigl\{x \simT{x} y, \chi\bigl(D_{y}^{\Tree_{x}\uparrow} +1 + C_{\nu}\bigr) \geq d_{x}\bigr\}}\Bigr]
      \leq \sum_{y \in V_{x}} p_{xy} \chi\frac{d_{y} + 1 + C_{\nu}}{d_{x}}\,.
  \end{equation*}
  Using \eqref{eq:def-empirical-moment}, \eqref{eq:proba-edge}, and
  the crude bound $w_{x}/2 \leq d_{x} \leq w_{x}$, we get
  \begin{equation*}
    v \leq 2\chi \frac{m_{2} + (C_{\nu}+1)m_{1}}{m_{1}}\,.
  \end{equation*}
  Bennett's inequality then implies
  \begin{equation*}
      \P \left( D^{\nc +}_{x} \leq \frac{c \nu}{1 - \delta}\frac{\log N}{\log\log N} \right)
      \leq \P \left( D^{\Tree_{x} +}_{x} \leq \frac{c \nu}{1 - \delta}\frac{\log N}{\log\log N} \right) + \order\p{N^{-\nu}}
      \leq \order\p{N^{-\nu}}\,.\qedhere
  \end{equation*}
\end{proof}

With the preceding results, \cref{prop:edge-cyc} and Lemmas \ref{lem:bound-ud-paths} and \ref{lem:bound-D+}, we are ready to prove \cref{prop:pruning}.
\begin{proof}[Proof of \cref{prop:pruning}.]
  Let $x \in \vertices$. During the pruning procedure, we remove at most
  \begin{equation*}
    D_{x}^{\nc +} + \# \Sdu(x) + \#\cycV(x)
  \end{equation*}
  edges around $x$. \cref{prop:edge-cyc} together with Lemmas \ref{lem:bound-D+} and \ref{lem:bound-ud-paths} yield
  claim \ref{item:diff-Dx}, since for every $2 < c \leq 3$ we have
  \begin{equation*}
    c\frac{\nu}{1 - 2\delta}\frac{\log N}{\log \log N} + c \frac{\nu}{1 - \delta} \frac{\log N}{\log\log N} = \frac{c \nu (2 - 3\delta)}{(1 - \delta)(1 - 2\delta)}\frac{\log N}{\log\log N} \leq \xi / 2\,.
  \end{equation*}
  The two other claims are consequence of the construction of
  $G^{\pp}$. Let us detail the third one. Assume that there is a
  simple loop in $G^{\pp}$ composed of the vertices
  $(\gamma_{0}, \gamma_{1}, \ldots, \gamma_{k} = \gamma_{0})$ with $\{\gamma_{i-1}, \gamma_{i}\} \in G^{\pp}$
  for all $i \in [k]$. Then there is a vertex, say $\gamma_{0}$, which is
  minimal for the total order $\prec$. Then
  $\gamma_{k-1} \succ \gamma_{k} = \gamma_{0} \prec \gamma_{1}$, and either
  $(\gamma_{k-1}, \gamma_{0}, \gamma_{1})$ or $(\gamma_{1}, \gamma_{0}, \gamma_{k-1})$ is a down-up
  path. As there is no such path in $G^{\pp}$, there are no cycle in
  $G^{\pp}$.
\end{proof}

\subsection{Estimate of $\|A - A^{\pp}\|$}
\label{sec:estimate-a-axi}

We now give estimates for the error we make when working with the
adjacency matrix of the pruned graph $A^{\pp}$ rather than the adjacency
matrix of the original graph $A$.

\begin{proposition}\label{prop:error-pruning}
  Let $\nu > 0$. There exists a constant $C_{\nu} > 0$ such that with
  $\nu$-high probability,
  \begin{equation*}
      \|A - A^{\pp}\| \leq C_{\nu} \sqrt{\frac{\log N}{\log\log N}}\,.
  \end{equation*}
\end{proposition}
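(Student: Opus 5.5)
The plan is to write $A - A^{\pp} = (A - A^{\nc}) + (A^{\nc} - A^{\pp})$ and bound each piece separately. Both differences are adjacency matrices of subgraphs $H$ of $G$: write $H^{\rm cyc} \deq G \setminus G^{\nc}$ and $H^{\du} \deq G^{\nc} \setminus G^{\pp}$. For the adjacency matrix $M$ of a graph, I will bound $\norm{M}$ by orienting the edges: if every edge of $H$ is assigned to one of its endpoints so that each vertex receives at most $k$ edges, then $M = O + O^{*}$ with $O$ having at most $k$ nonzero entries per row. This gives $\norm{M} \leq 2\sqrt{k \cdot \Delta}$ only if I also control column sums. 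So I prefer the cleaner arboricity-type bound: if $H$ is a forest, or more generally has an orientation with out-degree at most $k$ everywhere, then $\norm{M} \leq 2\sqrt{k\,\max_x D^{H}_x}$. That is too lossy when degrees are large. I will therefore instead aim directly at the bound $\norm{M} \leq C\sqrt{\max_x D^H_x}$ for forests, plus a bounded-multiplicity argument.

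\textbf{Step 1: the cycle part.} By \cref{prop:edge-cyc} and a union bound over $x$ (taking $\nu+1$ in place of $\nu$), with $\nu$-high probability every vertex has $\#\cycV(x) \leq C_\nu$. Every edge of $H^{\rm cyc}$ is of the form $\{x,y\}$ with $y \in \cycV(x)$, so orienting it from $x$ to $y$ gives out-degrees at most $C_\nu$. Moreover, the degree of $x$ in $H^{\rm cyc}$ is also at most $C_\nu$, since $\{x,y\}$ removed forces $y\in\cycV(x)$ and $x\in\cycV(y)$ (the cycle condition is symmetric). Hence $\norm{A-A^{\nc}} \leq \max_x D^{H^{\rm cyc}}_x \leq C_\nu$ by the Schur test.

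\textbf{Step 2: the down-up part.} $H^{\du}$ consists of edges $\{x,y\}$ with $y \in \Sdu(x)$, $y\prec x$. Its degree at $x$ splits into the edges going down, at most $\#\Sdu(x)$, and the edges going up, at most $D^{\nc+}_x$. By \cref{prop:pruning}\ref{item:diff-Dx}, the maximal degree of $H^{\du}$ is at most $\xi/2 = O(\log N/\log\log N)$ with $\nu$-high probability. The Schur test alone would give only $O(\log N/\log\log N)$, so I need the square root. The key observation is that $H^{\du}$ is itself a forest, or at least decomposes into few forests. It is a subgraph of $G^{\nc}$, and I would argue that $G^{\nc}$ has no short cycles and that any long cycle in $H^{\du}$ can be handled. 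Simpler: any graph with an orientation of out-degree $\leq k$ splits as $O+O^*$. Orient each edge of $H^{\du}$ downward, from $x$ to $y\prec x$, so the out-degree at $x$ is at most $\#\Sdu(x) \leq C\log N/\log\log N$ by \cref{lem:bound-ud-paths}. The in-degree at $y$ counts the $x\succ y$ with $y\in\Sdu(x)$; this is at most $D^{\nc+}_y \leq C\log N/\log\log N$ by \cref{lem:bound-D+}. That gives no gain, so I use the structure of the orientation. If $y \in \Sdu(x)$, then $y$ has an up-neighbour $z\succ x$. Hence among the up-neighbours of $y$, all except the $\prec$-maximal one are removed, and the in-edges of $y$ are exactly those to its non-maximal up-neighbours. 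Thus $O$ is a matrix in which each column $y$ has entries in rows $S_1^+(y)$ minus one vertex, and each row $x$ has at most $\#\Sdu(x)$ entries. For $\norm{O}$ I use $\norm{O}^2 \leq \norm{O^*O}$ and the Schur test on $O^*O$: $(O^*O)_{yy'}$ counts common up-neighbours $x$ of $y,y'$. Since $G^{\nc}$ has no cycles of length $\leq 2r+1$, two distinct $y,y'$ have at most one common neighbour. Therefore
\[
\sum_{y'} (O^*O)_{yy'} \leq \sum_{x\in S^+_1(y)} \#\Sdu(x) \leq D^{\nc+}_y \max_x \#\Sdu(x),
\]
which is again of order $(\log N/\log\log N)^2$.

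\textbf{Main obstacle.} The crude Schur bounds thus lose a square root, and the real difficulty is to exploit that $H^{\du}$ is locally a tree. The intended fix is the following. $H^{\du}\subset G^{\nc}$, which contains no cycle within radius $r$, so each ball $B^{H^{\du}}_r(x)$ is a tree. For a tree with maximal degree $\Delta$, the adjacency norm is at most $2\sqrt{\Delta-1}$. I would combine this with a path-counting (moment) estimate: bound $\norm{M}^{2r} \leq \max_x \sum_y (M^{2r})_{xy}$, where, on a tree, $(M^{2r})_{xx}$ counts closed walks. The number of tree walks of length $2r$ from $x$ is at most $C_r\Delta^{r}$ by Catalan-type counting, and off-diagonal walks are controlled similarly. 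Taking $2r$-th roots gives $\norm{M}\leq C_r'\sqrt{\Delta}$, since the row sums of $M^{2r}$ are bounded by the number of length-$2r$ walks. Pure walk counting, however, gives $\Delta^{2r}$, not $\Delta^r$. So I will instead use the tree-walk bound via a local version of the Kesten bound. Concretely, I apply a vector $\b v$, decompose the walks in the local tree into non-backtracking parts, and use the fact that on a forest the Perron vector trick gives $\norm{M}\leq 2\sqrt{\Delta}$ globally. This is where the global statement matters: if $H^{\du}$ contains long cycles, I would cut one edge per long cycle, which costs $O(1)$ in norm by an argument as in Step 1. Thus $\norm{A^{\nc}-A^{\pp}} \leq C\sqrt{\xi}$, and the triangle inequality with Step 1 concludes.
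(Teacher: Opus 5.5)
Your Step 1 is correct and is exactly how the paper handles $A - A^{\nc}$. The gap is in Step 2, at the point you flag yourself: none of your proposed fixes produces the square-root gain.

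(a) Local tree structure cannot do it. The only deterministic facts you use about $H^{\mathrm{du}} \deq G^{\nc}\setminus G^{\pp}$ are that its girth exceeds $2r+1$ for a \emph{fixed} $r$, and that its maximal degree is at most $\xi/2$. A $\Delta$-regular graph of girth larger than $2r+1$ has adjacency norm exactly $\Delta$. So no moment or ``local Kesten'' argument built on these two facts can beat the Schur bound. Concretely, with fixed $r$ the row sums of $M^{2r}$ can be of order $\Delta^{2r}$, and passing to closed walks via the trace costs a factor $N^{1/(2r)}$.

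(b) The forest bound $\norm{M}\leq 2\sqrt{\Delta}$ is fine, but $H^{\mathrm{du}}$ is not a forest. Each edge of $G^{\nc}$ is an up-edge of its lower endpoint $y$, and exactly $(D^{\nc+}_y-1)_+$ of those are removed. Hence $H^{\mathrm{du}}$ has at least $\#E(G^{\nc})-N$ edges, and its cyclomatic number is at least $\#E(G^{\nc})-2N$. This is of order $N$ already for constant weights $w_x\equiv w>4$.

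(c) Cutting ``one edge per long cycle at cost $O(1)$'' therefore means deleting order $N$ edges along heavily overlapping cycles. Nothing bounds how many of these deleted edges meet a given vertex. Step 1 worked only because \cref{prop:edge-cyc} controls \emph{short} cycles through a fixed vertex, and there is no analogue for long cycles. Indeed, in the regular example any edge set whose removal leaves a forest has average degree larger than $\Delta-2$.

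What is missing is a probabilistic, global input. The paper writes
$A^{\nc}-A^{\pp}=\sum_\circ(\tilde A_\circ+\tilde A_\circ^*)$ with $\tilde A_\circ=\sum_{x\in\hcV^{(\circ)}_\nu}\b 1_x\b 1_{\Sdu(x)}^*$, splitting according to whether the upper endpoint has high, intermediate or low degree.

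For the high and intermediate classes it bounds the Gram matrices $\tilde A_\circ\tilde A_\circ^*$. These have the same norm as your $O^*O$, whose off-diagonal part defeated your Schur test. The bound uses Laplace-transform estimates for paths $x_1y_1x_2y_2x_3$ in the coupled tree $\Tree_{x_1}$, where independence of the tree edges is essential (\cref{lem:bound-op-high} and \cref{lem:A-intermediate}, followed by a quadratic inequality for the off-diagonal part).

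For the low-degree class, where long cycles genuinely occur, it compares with a graph with independent edges and small weights. It then invokes the non-backtracking spectral radius bound of \cite{BGBK}. This is precisely the kind of global estimate your argument would need in place of the cycle-cutting step.
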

The proof of \cref{prop:error-pruning} relies on
\cref{lem:bound-op-high} and \cref{lem:A-intermediate} stated below.
They are stated using a partition of the set of vertices
$\vertices = \hcV^{(\rm l)} \sqcup \hcV^{(\rm i)} \sqcup \hcV^{(\rm h)}$,
where
\begin{equation}\label{eq:def-set-vertices}
  \begin{aligned}
    \hcV^{(\rm l)}_{\nu} &= \left\{ x \in \vertices \col D_{x} < \xi_{\nu}, w_{x} \leq 4\xi_{\nu} \right\}&\quad \text{ (vertices of low degree) }\\
    \hcV^{(\rm i)}_{\nu} &= \left\{ x \in \vertices \col D_{x} < \xi_{\nu}, w_{x} > 4\xi_{\nu} \right\}&\quad \text{ (vertices of intermediate degree) }\\
    \hcV^{(\rm h)}_{\nu} &= \left\{ x \in \vertices \col \xi_{\nu} \leq D_{x} \right\}&\quad \text{ (vertices of high degree) }\,.
  \end{aligned}
\end{equation}
Recall that the threshold $\xi_{\nu}$ was defined in \cref{prop:pruning}.
The reason why we partition $[N]$ into three set of vertices will
become apparent in the proof of \cref{prop:error-pruning}. The key
point is that treating the vertices belonging in different sets
$\hcV_{\nu}^{(\circ)}, \circ \in \left\{ \rm l, \rm i, \rm h \right\}$ requires
different techniques. In particular, treating vertices in
$\hcV^{(\rm h)}_{\nu}$ requires using in a fine way the properties of
down-up paths, while treating vertices in $\hcV^{(\rm l)}_{\nu}$ is done
by using results in \cite{BGBK} about sparse random matrices with small
weights. The two lemmas concerning vertices of high and intermediate
degree are the following.
\begin{lemma}\label{lem:bound-op-high}
  Let $\nu > 0$. Then, for all
  $x_{1} \in \hcV_{\nu}^{(\rm h)}$,
  \begin{equation*}
    \sum_{\substack{x_{2}, x_{3} \in \hcV_{\nu}^{(\rm h)}\\x_{2} \succ x_{1}, x_{3}}} \left< \b 1_{\Sdu(x_{1})}, \b 1_{\Sdu(x_{2})} \right>\left< \b 1_{\Sdu(x_{2})\setminus \Sdu(x_{1})}, \b 1_{\Sdu(x_{3})} \right> \leq 2\nu \left( \frac{\log N}{\log\log N} \right)^{2}\,,
  \end{equation*}
  with $\nu$-high probability.
\end{lemma}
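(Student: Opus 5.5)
The plan is to turn the double sum into a count of paths of length four issued from $x_1$, and to bound that count with the tree coupling of \cref{sec:coupling-with-tree} and Bennett's inequality, as in the proofs of \cref{lem:bound-ud-paths} and \cref{lem:bound-D+}.

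\emph{Step 1: reduction to paths.} Recall that $\Sdu(x)\subset S_1^-(x)\setminus\cycV(x)$. Suppose $y\in\Sdu(x_1)\cap\Sdu(x_2)$ and $y'$ is a second such vertex. Then $(x_1,y,x_2,y',x_1)$ is a simple loop of length $4\leq 2r+1$, so $y\in\cycV(x_1)$, which is a contradiction. Hence $\scalar{\b 1_{\Sdu(x_1)}}{\b 1_{\Sdu(x_2)}}\in\{0,1\}$, and the same holds for the second factor. The sum is therefore bounded by the number of paths $(x_1,y,x_2,y',x_3)$ in $G^{\nc}$ satisfying the following conditions:
\begin{itemize}
\item $y\prec x_1\prec x_2$ and $y'\prec x_3\prec x_2$;
\item $x_2,x_3\in\hcV^{(\rm h)}_\nu$, so that $D_{x_2},D_{x_3}\geq\xi_\nu$;
\item $y'\neq y$, because $y'\notin\Sdu(x_1)$ while $y\in\Sdu(x_1)$.
\end{itemize}
Since $G^{\nc}$ has no cycles of length at most $2r+1$ and $r\geq 6$, the ball $B_4^{\nc}(x_1)$ is a tree. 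By \cref{prop:embedding-Tx}, all these paths embed into $\Tree_{x_1}$ as non-backtracking paths from the root. By \cref{corol:bound-Dnc}, the conditions $D\geq\xi_\nu$ can be replaced by $D^{\Tree_{x_1}\uparrow}+1+C_\nu\geq\xi_\nu$, at the cost of an $\order(N^{-\nu})$ probability.

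\emph{Step 2: factorization.} I write the count as
\[
\sum_{y} \ind_{\{x_1\simT{x_1}y\}}\sum_{x_2}\ind_{\{y\simT{x_1}x_2,\ x_2\text{ high}\}}\, M(x_2),
\]
where $M(x_2)$ is the number of grandchildren $x_3$ of $x_2$ that pass through a child $y'$ and are of high degree. The root factor is controlled by \cref{lem:bound-ud-paths}: at most $c\nu(1-2\delta)^{-1}\log N/\log\log N$ vertices $y$ occur.

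For each such $y$, the number of high-degree children $x_2$ is a sum of independent Bernoulli variables. Its mean is at most $\sum_z p_{yz}\,\P(D_z\gtrsim\xi_\nu)$. Bounding the tail $\P(D_z\geq\xi)$ by the Chernoff bound $(e\,w_z/\xi)^{\xi}$ when $w_z\leq 4\xi$, and by the trivial bound otherwise, I expect this mean to be $\order\pb{w_y (m_2/m_1)/\xi}$.

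Symmetrically, $M(x_2)$ is a sum of independent terms conditionally on $\mathcal F_3(x_1)$. Its conditional mean is at most
\[
\sum_{y'}p_{x_2y'}\sum_{x_3}p_{y'x_3}\,\P\bigl(D_{x_3}\gtrsim\xi_\nu\bigr)\lesssim w_{x_2}\,\frac{m_2}{m_1}\cdot\frac{m_2/m_1}{\xi_\nu}.
\]
The factor $w_{x_2}$ here is large, so this term needs the $\prec$ constraints to be exploited.

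\emph{Step 3: using the ordering, the main obstacle.} Crude Markov bounds on $M(x_2)$ are too weak, since $w_{x_2}$ may be as large as $N^{1/2-\epsilon}$. I will therefore use the down-up constraint at $x_2$ in the same way as in \cref{lem:bound-ud-paths}. Each $y'$ in $\Sdu(x_2)$ must have a neighbour $x_3\neq x_2$ with $\chi(D^{\Tree}_{x_3}+C_\nu)\geq w_{x_2}$, where $\chi=4\nu\log\log N$. This gives a conditional mean for $M(x_2)$ of order $\chi\,(m_2/m_1)^2$, uniformly in $w_{x_2}$. Bennett's inequality conditional on $\mathcal F_3(x_1)$ then bounds $M(x_2)$ by $c'\nu\log N/\log\log N$, and a union bound over the at most polynomially many $x_2$ keeps this with $\nu$-high probability.

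The remaining difficulty is the number of $x_2$ attached to a given $y$. The product of (number of $y$) $\times$ (number of $x_2$ per $y$) $\times$ $M(x_2)$ must be at most of order $(\log N/\log\log N)^2$, not of cube order. I would bound the total number of pairs $(y,x_2)$ directly, in one Bennett estimate over the depth-2 level of $\Tree_{x_1}$, as a single independent sum. Its mean is $\order\pb{\chi (m_2/m_1)^2}=\order\pb{(\log N)^{2\delta}\log\log N}$, so it is $\order(\log N/\log\log N)$ with $\nu$-high probability.

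Multiplying the two Bennett bounds gives the claim up to constants. Matching the exact constant $2\nu$ would require tuning $c$ and $\xi_\nu$. This is where I expect most of the work, together with checking that the conditional independence given $\mathcal F_k(x_1)$ survives the $\prec$-ordering, which depends on the global degrees. That dependence is handled by replacing the degrees with tree degrees through \cref{corol:bound-Dnc}.
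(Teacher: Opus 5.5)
Your Step 1 is sound and matches the paper. Since there are no short cycles in $G^{\nc}$, each overlap is $0$ or $1$, and the sum becomes a count of paths $x_1\sim y\sim x_2\sim y'\sim x_3$ that embed into $\Tree_{x_1}$.

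\textbf{The gap is the factorisation in Steps 2--3.} You bound (number of pairs $(y,x_2)$)$\,\times\max_{x_2}M(x_2)$ and claim each factor is $\order(\log N/\log\log N)$ by Bennett. Neither factor is a sum of independent bounded variables with a deterministic small mean.
\begin{itemize}
\item All pairs with the same $y$ share the indicator $\ind_{\{x_1\simT{x_1}y\}}$ (and the witness of $y$), so the pair count is a two-generation compound sum.
\item Conditioning on $\mathcal F_1(x_1)$ makes the depth-2 indicators independent, but then the conditional mean $\sum_y \ind_{\{x_1\simT{x_1}y\}}\mu_y$ is itself random.
\item The same happens for $M(x_2)$ given $\mathcal F_3(x_1)$.
\end{itemize}
The two claimed bounds are in fact false at the $N^{-\nu}$ scale. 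Take weights $\xi_\nu$, $2\xi_\nu$, $4\xi_\nu$ on three classes of size $\asymp N(\log N)^{\delta}/\xi_\nu^2$, weight $1$ elsewhere; the assumptions hold. Let $x_1$ have weight $2\xi_\nu$. With probability $N^{-a(1-\delta)+o(1)}$, $x_1$ has $a\log N/\log\log N$ neighbours of weight $\xi_\nu$, all below $x_1$. Each of them typically has $\asymp(\log N)^{\delta}$ neighbours of weight $4\xi_\nu$, and all but the largest of these are admissible $x_2$'s. So for $a(1-\delta)<\nu$, the number of pairs exceeds $c\,a(\log N)^{1+\delta}/\log\log N$ with probability $\gg N^{-\nu}$. $M(x_2)$ behaves the same way. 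The best your product can give is therefore of order $(\log N)^{2+2\delta}/(\log\log N)^2$, which overshoots the claim.

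\textbf{The missing idea is to control all four generations jointly.} Then a large deviation paid at one generation is multiplied only by typical sizes of the other generations, never by a second large deviation. The paper does this as follows.
\begin{itemize}
\item It takes a nested Laplace transform of the whole count at the small parameter $\lambda=\chi^2/\log N$, with $\chi=\log\log N$, conditioning successively on $\mathcal F_4,\mathcal F_3,\mathcal F_2,\mathcal F_1$ of $\Tree_{x_1}$.
\item Markov's bound turns the order constraints into ratios such as $w_{y_2}/(w_{x_2}\vee w_{y_1}\vee w_{y_2})$ and $w_{x_2}/(w_{x_2}\vee w_{y_1})$. These absorb the weights coming from $p_{xy}$.
\item Each generation then contributes only a factor $1+\order\pb{\chi^{3}(\log N)^{j\delta-1}}$, $j\le 3$. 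The assumption $\delta<1/3$ is exactly what makes these factors $1+o(1)$, so the exponentials can be linearised.
\item This gives $\E\,\ee^{\lambda\mathcal P^{(1)}_{x_1}}\le N^{o(1)}$, and Chernoff's bound with $\lambda k=2\nu\log N$ finishes.
\end{itemize}

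A side remark on orientation: the paper's proof, and the use of the lemma in $B_\succ B_\succ^*$, concern $x_2\prec x_1,x_3$, so the $\succ$ in the statement is a typo. In that orientation the needed witness is a neighbour $z\succ x_1$ of $y_1$. The factor $1/w_{x_2}$ that you tried to extract from a witness at $y'$ comes instead from $x_3\succ x_2$.
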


\begin{lemma}\label{lem:A-intermediate}
  For any $\nu > 0$ and
  $x_{1} \in \hcV^{(\rm i)}_{\nu}$ we have
  \begin{equation*}
    \sum_{\substack{x_{2} \in \hcV^{(\rm i)}_{\nu}\\x_{2} \neq x_{1}}} \left< \b 1_{\Sdu(x_{1})}, \b 1_{\Sdu(x_{2})} \right> \leq 2\nu \frac{\log N}{\log\log N}
  \end{equation*}
  with $\nu$-high probability.
\end{lemma}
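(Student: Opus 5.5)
The sum counts the pairs $(x_2,y)$ with $x_2\in\hcV^{(\rm i)}_\nu\setminus\{x_1\}$ and $y\in\Sdu(x_1)\cap\Sdu(x_2)$, that is, the paths $(x_1,y,x_2)$ in which $y$ is a down-neighbour of both $x_1$ and $x_2$. By \cref{def:down-up} and \eqref{eq:def-two-pruning-sets}, membership $y\in\Sdu(x_1)$ forces $y\prec x_1$ and $x_1\notin\cycV(y)$. Likewise $y\in\Sdu(x_2)$ forces $y\prec x_2$ and $x_2\notin\cycV(y)$.

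Writing the sum as
\[
\sum_{y}\ind_{\{y\in \Sdu(x_1)\}}\,\#\{x_2\in\hcV^{(\rm i)}_\nu\setminus\{x_1\}\col y\in\Sdu(x_2)\},
\]
I will bound it by
\[
\sum_{y\in S_1^{\nc}(x_1)}\#\bigl\{x_2\in S_1^{\nc}(y)\setminus\{x_1\}\col x_2\in\hcV^{(\rm i)}_\nu\bigr\}.
\]
This is the number of length-two non-backtracking paths from $x_1$ in $G^{\nc}$ whose endpoint is intermediate. The key point is that vertices of $\hcV^{(\rm i)}_\nu$ have weight $w>4\xi_\nu$ but degree $D<\xi_\nu$, and the latter is atypically small. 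Since $d_{x_2}\geq w_{x_2}/2>2\xi_\nu$, \cref{lem:estimate-degree} gives, with $\nu'$-high probability,
\[
D_{x_2}\geq d_{x_2}-\sqrt{2\nu' d_{x_2}\log N}.
\]
For this to fall below $\xi_\nu$ we need $d_{x_2}\lesssim \nu'\log N$, so $\hcV^{(\rm i)}_\nu$ only involves weights in the window $4\xi_\nu<w\lesssim \log N$. Even there, each such vertex lies in $\hcV^{(\rm i)}_\nu$ only with probability $\P(D_{x_2}<\xi_\nu)\le \exp(-c\,w_{x_2})$ for some constant $c>0$, by the lower tail in \cite[Theorem 2.21]{hofstad_random_2016}.

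Next I transfer to the coupling tree. By \cref{prop:embedding-Tx} (with $r\geq 6$), the paths $(x_1,y,x_2)$ in $G^{\nc}$ embed into paths $(x),(x,y),(x,y,x_2)$ of $\Tree_{x_1}$. By \cref{corol:bound-Dnc} and \cref{prop:edge-cyc}, the degree of $x_2$ is within $C_\nu$ of $D^{\Tree_{x_1}\uparrow}_{\iota(x_2)}+1$. Hence the sum is bounded, with $\nu$-high probability, by
\[
\sum_{y}Z_{x_1y}\,W_y,\qquad W_y\deq\sum_{x_2}Z_{x_1yx_2}\,\ind_{\{w_{x_2}>4\xi_\nu,\ D^{\Tree_{x_1}\uparrow}_{(x_1,y,x_2)}<\xi_\nu+C_\nu\}}.
\]
By \cref{lem:T-indep}, the summands for different $y$ are independent, and inside $W_y$ the indicators are independent as well. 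Their means satisfy
\[
\E W_y\leq\sum_{x_2}\frac{w_yw_{x_2}}{m_1N}\,e^{-c w_{x_2}}\leq w_y\,e^{-c\xi_\nu}.
\]
The children count $D^{\uparrow}$ is a sum of Bernoullis with mean about $w_{x_2}\ge4\xi_\nu$, so falling below $\xi_\nu+C_\nu$ is a large deviation of order $e^{-c\xi_\nu}$. Summing over $y$ gives
\[
\E\sum_y Z_{x_1y}W_y\leq \frac{w_{x_1}m_2}{m_1}\,e^{-c\xi_\nu}.
\]
Since $\xi_\nu\asymp \log N/\log\log N$ and $w_{x_1}\lesssim\log N$ (as $x_1$ is itself intermediate), this mean is $N^{-o(1)}$, and in fact $o(1)$.

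The final step is a concentration bound at the level $2\nu\log N/\log\log N$. I will use a two-level Bennett/Chernoff argument. First, with $\nu$-high probability each $W_y$ is at most a constant, since its mean is tiny and Bennett applies. Then, conditionally, $\sum_y Z_{x_1y}W_y$ is a sum of independent bounded variables with tiny mean. Bennett's inequality gives tails of the form
\[
\exp\bigl(-t\log(t/v)(1+o(1))\bigr),
\]
and taking $t=2\nu\log N/\log\log N$ and $v\le (\log N)^{C}$ yields $N^{-2\nu(1+o(1))}$ or better, exactly as in \cref{lem:bound-ud-paths}.

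The main obstacle is getting the constant $2\nu$ rather than some $C_\nu$. This requires $\log(t/v)\geq(1/2+o(1))\log\log N$, which holds as long as $v\leq (\log N)^{1/2}$. The $e^{-c\xi_\nu}$ factor gives far more room than that, so it should be fine. A secondary care point is the case where the down-neighbour $y$ is a leaf or has small degree; this does not matter, because the bound never uses any property of $y$ beyond its edges.
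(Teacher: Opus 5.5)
Your architecture matches the paper's: pass to the coupling tree, use that an intermediate $x_2$ is an $\mathrm e^{-c\xi_\nu}$-rare lower deviation, and finish with a Chernoff-type bound. The gap is in your reduction, which discards the one constraint that makes concentration possible. Replacing $y\in\Sdu(x_1)$ by $y\in S_1^{\nc}(x_1)$ drops $y\prec x_1$. Since $D_{x_1}<\xi_\nu$, that constraint forces $D_y<\xi_\nu$, so $y\in\hcV^{(\rm l)}_\nu\cup\hcV^{(\rm i)}_\nu$. By \cref{lem:estimate-degree} and a union bound, this gives $w_y\le 4\nu\log N$ with $\nu$-high probability. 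The paper restricts the $y$-sum to these weights and needs this precisely to linearize $\exp\bigl(w_y(\log N)^{1-\beta}\bigr)$ in its nested Laplace transform. Without it, the quantity you reduce to is genuinely not small at the required level. Weights may be as large as $N^{1/2-\epsilon}$, and for $w_y\gg \mathrm e^{c\xi_\nu}$ your own bound $\E W_y\le w_y\mathrm e^{-c\xi_\nu}$ is large. For instance, take power-law quantile weights of exponent $\alpha$. Then $x_1$ is joined to the top hub with probability of order $\xi_\nu N^{1/\alpha-1}$, and the hub has $N^{1/\alpha-o(1)}$ intermediate neighbours. So $\P\bigl(\sum_y Z_{x_1y}W_y\ge 2\nu\log N/\log\log N\bigr)$ is not $O(N^{-\nu})$ once $\nu>1-1/\alpha$. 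The true sum is unaffected, because the hub is not in $\Sdu(x_1)$. Your small first moment only yields a Markov bound of size $N^{-o(1)}$. Your closing remark that nothing about $y$ beyond its edges is used is exactly where the argument fails.

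Two further steps need repair even after restoring $w_y\le 4\nu\log N$.

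First, the claim ``each $W_y$ is at most a constant with $\nu$-high probability'' is false. We have $\E W_y\le 4\nu\log N\,\mathrm e^{-c\xi_\nu}$ with $\mathrm e^{-c\xi_\nu}=N^{-O(1/\log\log N)}$. So for fixed $K$, the probability that some neighbour $y$ has $W_y\ge K$ is of order $N^{-O(K/\log\log N)}$, far larger than $N^{-\nu}$; this order is attained, e.g., for exponential weights. You need $K\asymp\log\log N$. The outer Bennett step then still closes, but only if you use the true mean $v\lesssim(\log N)^{C}\mathrm e^{-c\xi_\nu}$, so that $\log(t/v)\gtrsim\xi_\nu$ compensates the factor $1/K$. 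Your criterion $v\le(\log N)^{1/2}$ only suffices for summands bounded by $1$. The paper avoids truncation altogether. It computes the exponential moment level by level with $\lambda=\log\log N$, needing only that the lower deviation has probability at most $(\log N)^{-\beta}$ with $\beta>3$. This gives $\E\,\mathrm e^{\lambda\mathcal P^{(2)}_{x_1}}=O(1)$.

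Second, to dominate $\ind_{\{D_{x_2}<\xi_\nu\}}$ by a tree event, you need the tree degree to be at most $D_{x_2}+C_\nu$. \cref{corol:bound-Dnc} provides this for $\check{\Tree}_{x_1}$ but not for $\Tree_{x_1}$, which has extra $\hat Z$-edges into $B_2(x_1)$. That is why the paper uses $\check{\Tree}_{x_1}$, with conditional independence given $\mathcal F_2(x_1)$ and $\mathcal F_1(x_1)$, instead of \cref{lem:T-indep}. Staying in $\Tree_{x_1}$ requires a separate argument that these extra edges number $O(1)$ with $\nu$-high probability.
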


The two previous lemmas are based on probabilistic estimates, while the proof of \cref{prop:error-pruning} below contains mainly algebraic arguments.

\begin{proof}[Proof of \cref{prop:error-pruning}]
  The matrix $A - A^{\pp}$ is the adjacency matrix of the graph made
  of the edges removed during the pruning. Consider the adjacency
  matrix $\Anc$ of the graph $\Gnc$ obtained after step
  \ref{itm:remove-cycle} of the pruning procedure \cref{def:pruning}. By \cref{prop:edge-cyc}, the maximum degree
  of a vertex in the graph described by $A - \Anc$ is bounded by a
  constant $C_{\nu}$, with $\nu$-high probability. It implies that
  $\|A -\Anc\| \leq C_{\nu}$, with $\nu$-high probability. Thus, it
  suffices to bound $\|\Anc - A^{\pp}\|$.

  Let us introduce for convenience the matrices
  \begin{equation*}
     \tilde{A}_{\circ} = \sum_{x \in \hcV^{(\circ)}}\b 1_{x}\b 1_{\Sdu(x)}^{*} \quad \text{ for } \circ \in \left\{ \rm l, \rm i, \rm h \right\}\,,
  \end{equation*}
  so that
  \begin{equation*}
    A^{\pp} - \Anc = \sum_{\circ \in \left\{ \rm l, \rm i, \rm h \right\}}\tilde{A}_{\circ} + \tilde{A}_{\circ}^{*}\,.
  \end{equation*}
  Let us now bound the norms of the operators $\tilde{A}_{\circ}$,
  starting with $\circ = \rm h$. Let us consider the matrix
  $\tilde{A}_{\rm h}\tilde{A}_{\rm h}^{*}$. We have
  \begin{equation*}
    \begin{split}
      \tilde{A}_{\rm h}\tilde{A}_{\rm h}^{*}
      &= \sum_{x_{1}, x_{2} \in\hcV_{\nu}^{(\rm h)}} \left< \b 1_{\Sdu(x_{1})}, \b 1_{\Sdu(x_{2})} \right> \b 1_{x_{1}} \b 1_{x_{2}}^{*}\\
      &= \sum_{x \in\hcV_{\nu}^{(\rm h)}} \left( \# \Sdu(x) \right) \b 1_{x} \b 1_{x}^{*} + \sum_{\substack{x_{1}, x_{2} \in\hcV_{\nu}^{(\rm h)}\\x_{1} \neq x_{2}}}\left< \b 1_{\Sdu(x_{1})}, \b 1_{\Sdu(x_{2})} \right> \b 1_{x_{1}} \b 1_{x_{2}}^{*}\,.
    \end{split}
  \end{equation*}
  The first term, a diagonal matrix, has its operator norm bounded by
  $\frac{2(\nu + 1)}{1-2\delta} \frac{\log N}{\log\log N}$ with $\nu$-high probability
  by \cref{lem:bound-ud-paths}. Let us concentrate on the second term,
  which we call $B$. We introduce $B_{\succ}$:
  \begin{equation*}
    B_{\succ} \deq \sum_{\substack{x_{1}, x_{2} \in\hcV_{\nu}^{(\rm h)}\\x_{1} \succ x_{2}}}\left< \b 1_{\Sdu(x_{1})}, \b 1_{\Sdu(x_{2})} \right> \b 1_{x_{1}} \b 1_{x_{2}}^{*} = \sum_{\substack{x_{1}, x_{2} \in\hcV_{\nu}^{(\rm h)}\\x_{1} \succ x_{2}}} \left( \# \Sdu(x_{1}) \cap \Sdu(x_{2}) \right) \b 1_{x_{1}} \b 1_{x_{2}}^{*}\,,
  \end{equation*}
  so that $B = B_{\succ} + B_{\succ}^{*}$.
  We then have
  \begin{equation*}
    \begin{split}
      B_{\succ}B_{\succ}^{*}
      &= \sum_{\substack{x_{1}, x_{2}, x_{3} \in\hcV_{\nu}^{(\rm h)}\\x_{2} \prec x_{1}, x_{3}}}\left< \b 1_{\Sdu(x_{1})}, \b 1_{\Sdu(x_{2})} \right>\left< \b 1_{\Sdu(x_{2})}, \b 1_{\Sdu(x_{3})} \right> \b 1_{x_{1}} \b 1_{x_{3}}^{*}\\
      &= \sum_{\substack{x_{1}, x_{2}, x_{3} \in\hcV_{\nu}^{(\rm h)}\\x_{2} \prec x_{1}, x_{3}}}\left< \b 1_{\Sdu(x_{1})}, \b 1_{\Sdu(x_{2})} \right>\left< \b 1_{\Sdu(x_{2})\setminus\Sdu(x_{1})}, \b 1_{\Sdu(x_{3})} \right> \b 1_{x_{1}} \b 1_{x_{3}}^{*}\\&\quad+\sum_{\substack{x_{1}, x_{2}, x_{3} \in\hcV_{\nu}^{(\rm h)}\\x_{2} \prec x_{1}, x_{3}}}\left< \b 1_{\Sdu(x_{1})}, \b 1_{\Sdu(x_{2})} \right>\left< \b 1_{\Sdu(x_{1})\cap\Sdu(x_{2})}, \b 1_{\Sdu(x_{3})} \right> \b 1_{x_{1}} \b 1_{x_{3}}^{*}\,.
    \end{split}
  \end{equation*}
  We thus have
  \begin{equation}\label{eq:prop-norm-intermediate}
    \begin{split}
      \| B_{\succ} \|^{2}
      &\leq \max_{\| \b u \| = 1}\sum_{\substack{x_{1}, x_{2}, x_{3} \in\hcV_{\nu}^{(\rm h)}\\x_{2} \prec x_{1}, x_{3}}}\left< \b 1_{\Sdu(x_{1})}, \b 1_{\Sdu(x_{2})} \right>\left< \b 1_{\Sdu(x_{2})\setminus\Sdu(x_{1})}, \b 1_{\Sdu(x_{3})} \right> u_{x_{1}} u_{x_{3}}\\
      &\quad+\max_{\| \b u \| = 1}\sum_{\substack{x_{1}, x_{2}, x_{3} \in\hcV_{\nu}^{(\rm h)}\\x_{2} \prec x_{1}, x_{3}}}\left< \b 1_{\Sdu(x_{1})}, \b 1_{\Sdu(x_{2})} \right>\left< \b 1_{\Sdu(x_{1})\cap\Sdu(x_{2})}, \b 1_{\Sdu(x_{3})} \right>u_{x_{1}} u_{x_{3}}\,.
    \end{split}
  \end{equation}
  The first term of \eqref{eq:prop-norm-intermediate} can be bounded using Young's inequality:
  \begin{equation*}
    \begin{split}
      \max_{\| \b u \| = 1}\sum_{\substack{x_{1}, x_{2}, x_{3} \in\hcV_{\nu}^{(\rm h)}\\x_{2} \prec x_{1}, x_{3}}}&\left< \b 1_{\Sdu(x_{1})}, \b 1_{\Sdu(x_{2})} \right>\left< \b 1_{\Sdu(x_{2})\setminus\Sdu(x_{1})}, \b 1_{\Sdu(x_{3})} \right> u_{x_{1}} u_{x_{3}}\\
      &\leq \max_{\| \b u \| = 1}\sum_{x_{1} \in \hcV^{(\rm h)}}u_{x_{1}}^{2}\sum_{\substack{x_{2}, x_{3} \in\hcV_{\nu}^{(\rm h)}\\x_{2} \prec x_{1}, x_{3}}}\left< \b 1_{\Sdu(x_{1})}, \b 1_{\Sdu(x_{2})} \right>\left< \b 1_{\Sdu(x_{2})\setminus\Sdu(x_{1})}, \b 1_{\Sdu(x_{3})} \right>\,.
    \end{split}
  \end{equation*}
  Then, \cref{lem:bound-op-high} allows to bound this by
  $2\nu \left( \frac{\log N}{\log\log N} \right)^{2}$. The second term of \eqref{eq:prop-norm-intermediate}
  can be treated as follows:
  \begin{equation*}
    \begin{split}
      \max_{\| \b u \| = 1}\sum_{\substack{x_{1}, x_{2}, x_{3} \in\hcV_{\nu}^{(\rm h)}\\x_{2} \prec x_{1}, x_{3}}}&\left< \b 1_{\Sdu(x_{1})}, \b 1_{\Sdu(x_{2})} \right>\left< \b 1_{\Sdu(x_{1})\cap\Sdu(x_{2})}, \b 1_{\Sdu(x_{3})} \right>u_{x_{1}} u_{x_{3}}\\
      &= \max_{\| \b u \| = 1}\sum_{\substack{x_{1}, x_{2}, x_{3} \in\hcV_{\nu}^{(\rm h)}\\x_{2} \prec x_{1}, x_{3}}} \sum_{y_{1}, y_{2}} \ind_{\Sdu(x_{1})\cap \Sdu(x_{2})}(y_{1})\ind_{\Sdu(x_{1})\cap \Sdu(x_{2})\cap \Sdu(x_{3})}(y_{2})u_{x_{1}} u_{x_{3}}\\
      &= \max_{\| \b u \| = 1}\sum_{\substack{x_{1}, x_{2}, x_{3} \in\hcV_{\nu}^{(\rm h)}\\x_{2} \prec x_{1}, x_{3}}} \sum_{y} \ind_{\Sdu(x_{1})\cap \Sdu(x_{2})\cap \Sdu(x_{3})}(y)u_{x_{1}} u_{x_{3}},
    \end{split}
  \end{equation*}
  where we could remove one of the sum on $y$ as if $y_{1} \neq y_{2}$
  we would be considering the case of having a cycle $x_1 \simnc y_1 \simnc x_2 \simnc y_2 \simnc x_1$
  in $G^{\nc}$, which contradicts the definition of $G^\nc$. We then notice that
  $\# \left\{ x_{2} \colon y \in \Sdu(x_{2}) \right\} \leq D_{y}^{\nc +}$, so that
  with $\nu$-high probability \cref{lem:bound-D+} gives
  \begin{equation*}
    \begin{split}
      \max_{\| \b u \| = 1}\sum_{\substack{x_{1}, x_{2}, x_{3} \in\hcV_{\nu}^{(\rm h)}\\x_{2} \prec x_{1}, x_{3}}}&\left< \b 1_{\Sdu(x_{1})}, \b 1_{\Sdu(x_{2})} \right>\left< \b 1_{\Sdu(x_{1})\cap\Sdu(x_{2})}, \b 1_{\Sdu(x_{3})} \right>u_{x_{1}} u_{x_{3}}\\
      &\leq \max_{\| \b u \| = 1}\sum_{x_{1}, x_{3} \in\hcV_{\nu}^{(\rm h)}}\sum_{y} D^{\nc +}_{y}\ind_{\Sdu(x_{1})\cap\Sdu(x_{3})}(y)u_{x_{1}} u_{x_{3}}\\
      &\leq \frac{2(\nu + 1)}{1-\delta} \frac{\log N}{\log\log N}\max_{\| \b u \| = 1}\sum_{x_{1}, x_{3} \in\hcV_{\nu}^{(\rm h)}}\# \Bigl(\Sdu(x_{1})\cap\Sdu(x_{3})\Bigr)u_{x_{1}} u_{x_{3}}\,.
    \end{split}
  \end{equation*}
  Note that since the entries of the matrices we consider are
  non-negative, we can assume that $\b u$ has positive coefficients.
  This allows the upper bound in the latter expression. We then
  recognize the norm of the symmetric matrix $B$ whose definition is
  \begin{equation*}
      B = \sum_{\substack{x_1, x_3 \in \hcV_\nu^{(\rm h)}\\x_1 \neq x_3}} \langle \b 1_{\Sdu(x_1)}, \b 1_{\Sdu(x_1)} \rangle \b 1_{x_1} \b 1_{x_3}^* = \sum_{\substack{x_1, x_3 \in \hcV_\nu^{(\rm h)}\\x_1 \neq x_3}} \# \Bigl(\Sdu(x_1)\cap \Sdu(x_3)\Bigr) \b 1_{x_1} \b 1_{x_3}^*\,,
  \end{equation*}
  so that with $\nu$-high probability
  \begin{equation*}
  \begin{split}
      \max_{\| \b u \| = 1}\sum_{\substack{x_{1}, x_{2}, x_{3} \in\hcV_{\nu}^{(\rm h)}\\x_{2} \prec x_{1}, x_{3}}}&\left< \b 1_{\Sdu(x_{1})}, \b 1_{\Sdu(x_{2})} \right>\left< \b 1_{\Sdu(x_{1})\cap\Sdu(x_{2})}, \b 1_{\Sdu(x_{3})} \right>u_{x_{1}} u_{x_{3}}\\
    &\leq \frac{2(\nu + 1)}{1-\delta}\frac{\log N}{\log\log N} \|B\| + \frac{2(\nu + 1)}{1-\delta} \frac{\log N}{\log\log N}\max_{\| \b u\| = 1}\sum_{x \in \hcV^{(\rm h)}}\pB{\# \Sdu(x)} u_x^2\\
    &\leq \frac{2(\nu + 1)}{1-\delta}\frac{\log N}{\log\log N} \|B\| + \frac{4(\nu + 1)^2}{(1-\delta)(1-2\delta)} \pB{\frac{\log N}{\log\log N}}^2\,.
  \end{split}
  \end{equation*}
  where we used a second time \cref{lem:bound-ud-paths}.
  Putting the two bounds together in \eqref{eq:prop-norm-intermediate} we have:
  \begin{equation*}
    \| B \|^{2} \leq 4 \| B_{\succ} \|^{2} \leq \pB{8\nu + \frac{16(\nu + 1)^2}{(1-\delta)(1-2\delta)}} \left( \frac{\log N}{\log\log N} \right)^{2} + \frac{8\nu}{1 - \delta} \frac{\log N}{\log\log N} \|B\|\,.
  \end{equation*}
  After solving a quadratic equation, it implies that for some constant $C_\nu > 0$, we have with $\nu$-high probability
  \begin{equation*}
    \| B\| \leq C_\nu\frac{\log N}{\log\log N}.
  \end{equation*}
  Hence, we have finally that there exists a constant $C_{\nu} > 0$ such that with $\nu$-high probability
  \begin{equation*}
    \| \tilde{A}_{\rm h} \|^{2} \leq C_{\nu} \frac{\log N}{\log\log N}\,.
  \end{equation*}

  Let us now consider $\tilde{A}_{\rm i}$. We have
  \begin{equation*}
    \left\| \tilde{A}_{\rm i} \right\|^{2}
    = \max_{\| \b u \| = 1}\sum_{x_{1}, x_{2} \in \hcV^{(\rm i)}_{\nu}} \left< \b 1_{\Sdu(x_{1})}, \b 1_{\Sdu(x_{2})} \right> u_{x_{1}} u_{x_{2}}\,.
  \end{equation*}
  Young's inequality followed by a use of \cref{lem:A-intermediate} then yields:
  \begin{equation*}
    \left\| \tilde{A}_{\rm i} \right\|^{2}
    \leq \max_{\| \b u \| = 1}\sum_{x_{1}, x_{2} \in \hcV^{(\rm i)}_{\nu}} \left< \b 1_{\Sdu(x_{1})}, \b 1_{\Sdu(x_{2})} \right>  u^{2}_{x_{1}}
    \leq 2\nu \frac{\log N}{\log\log N}
  \end{equation*}
  with $\nu$-high probability.

  It remains to bound the norm of $\tilde{A}_{\rm l}$. First, notice
  that this is the adjacency matrix of a graph $\tilde{G}_{l}$. Let us
  introduce the graph $G_{\rm l}$, the sub-graph of $G$ containing
  only edges between vertices $x$ and $y$ whose weights satisfy
  $w_{x}, w_{y} \leq 4\xi_\nu$. Denote by $A_{\rm l}$ the adjacency matrix of
  $G_{\rm l}$. We also introduce $\hat{A}_{\rm l}$, the adjacency
  matrix of the sub-graph $\hat{G}_{\rm l}$ of $G_{\rm l}$ in which we
  only kept edges between vertices $x, y \in \hcV^{(\rm l)}_{\nu}$. We
  see that we have the inclusion of graphs $\tilde{G}_{\rm l} \subset \hat{G}_{\rm l} \subset G_{\rm l}$. It implies in particular
  \begin{equation*}
    \| \tilde{A}_{\rm l} \| \leq \| \hat{A}_{\rm l} \| \leq \| A_{\rm l}\|.
  \end{equation*}
  We are going to bound $\| \hat{A}_{\rm l} \|$, the bound on
  $\| \tilde{A}_{\rm l} \|$ will immediately follow.

 Note that the entries of the matrix $A_{\rm l}$ are independent. We can thus use the results of \cite{BGBK} to bound the backtracking matrix of $G_{\rm l}$. This will in turn allow us to bound $\| \hat{A}_{\rm l}\|$. First, we observe that
  \begin{multline*}
    \| \E \hat{A}_{\rm l} \|^{2}
    \leq \max_{\| \b u \| = 1} \sum_{x_{1}, x_{2}, y}p_{x_{1}y}p_{x_{2}y}u_{x_{1}}u_{x_{2}}
    \leq \max_{\| \b u \| = 1} \sum_{x_{1}, x_{2}}\frac{w_{x_{1}}w_{x_{2}}}{m_{1}N}\frac{m_{2}}{m_{1}}u_{x_{1}}u_{x_{2}}
    \\
    \leq \max_{\| \b u \| = 1} \sum_{x_{1}, x_{2}}\frac{w_{x_{2}}^{2}}{m_{1}N}\frac{m_{2}}{m_{1}}u_{x_{1}}^{2}
    \leq \left( \frac{m_{2}}{m_{1}} \right)^{2}\,,
  \end{multline*}
  where we used Young's inequality and \eqref{eq:def-empirical-moment}. It remains to bound
  $\| \hat{A}_{\rm l} - \E \hat{A}_{\rm l}\|$. Set
  $\hat{H} = \frac{1}{\sqrt{4\xi}} \p{\hat{A}_{\rm l} - \E \hat{A}_{\rm l}}$
  and denote by $\hat{B}_{\rm l}$ and $B_{\rm l}$ the non-backtracking
  matrices of $\hat{A}_{\rm l}$ and $A_{\rm l}$ respectively. We refer
  to \cite{BGBK} for its definition and only notice that the inclusion of
  graphs implies $\rho(\hat{B}_{\rm l}) \leq \rho(B_{\rm l})$, and since
  \begin{equation*}
    \begin{split}
      \max_{x; w_{x} \leq 4\xi_\nu}\sum_{y}\E |\hat{H}_{xy}|^{2} &\leq \max_{x} \frac{w_{x}}{4\xi_\nu} \leq 1\\
      \max_{x,y; w_{x}, w_{y} \leq 4\xi_\nu} \E |\hat{H}_{xy}|^{2} &\leq  \max_{x,y; w_{x}, w_{y} \leq 4\xi_\nu} \frac{w_{x}w_{y}}{m_{1}N4\xi_\nu} \leq \frac{4\xi_\nu}{m_{1}} \frac{1}{N}\\
      \max_{x,y; w_{x}, w_{y} \leq 4\xi_\nu} |\hat{H}_{xy}| &\leq \frac{1}{\sqrt{4\xi_\nu}}\,,
    \end{split}
  \end{equation*}
  we have that \cite[Assumption 2.4]{BGBK} is satisfied. Hence, \cite[Theorem 2.5]{BGBK}
  implies that with $\nu$-high probability, the spectral radius
  $\rho(B_{\rm l})$ of $B_{\rm l}$ is bounded by $2$. This implies that
  \begin{equation*}
    \rho(\hat{B}_{\rm l}) \leq 2 \text{ with $\nu$-high probability. }
  \end{equation*}

  Then, introducing the norms
  \begin{equation*}
    \begin{split}
      \| \hat{H} \|_{2 \to \infty} &\deq \max_{x}\sqrt{\sum_{y}|\hat{H}_{xy}|^{2}} \leq \max_{x} \sqrt{\frac{D_{x}(1 + \order\p{N^{-\epsilon}})}{4\xi_\nu}}\\
      \| \hat{H} \|_{1 \to \infty} &\deq \max_{x, y}|\hat{H}_{xy}| \leq 1\,,
    \end{split}
  \end{equation*}
  we have by \cite[Theorem 2.2]{BGBK}:
  \begin{equation*}
    \| \hat{H} \| \leq \|\hat{H} \|_{2 \to \infty} f\Biggl(\frac{\rho(\hat{B}_{\rm l})}{\|\hat{H} \|_{2 \to \infty}}\Biggr) + 7\| \hat{H} \|_{1 \to \infty}\,,
  \end{equation*}
  where $f(x) = 2$ if $0 \leq x \leq 1$ and $f(x) = x + 1/x$ if $x > 1$.
  Since the degrees in $\hat{G}_{\rm l}$ are bounded by $\xi_\nu$, we get
  that $\| \hat{H} \|_{2 \to \infty} \leq 1$ so that
  \begin{equation*}
    \| \hat{A}_{\rm l} - \E \hat{A}_{\rm l} \| \leq
    \begin{cases}
      16\sqrt{\xi_\nu} & \text{ if } \rho(\hat{B}_{\rm l}) \leq \| \hat{H} \|_{2 \to \infty}\\
      2\sqrt{\xi_\nu} \left( \rho(\hat{B}_{\rm l}) + \frac{\| \hat{H} \|_{2 \to \infty}^{2}}{\rho(\hat{B}_{\rm l})}+ 7 \right) \leq 2\sqrt{\xi_\nu} \left( 2 + 1 + 7 \right) & \text{ if } \rho(\hat{B}_{\rm l}) > \| \hat{H} \|_{2 \to \infty}.
    \end{cases}
  \end{equation*}
  Hence, we get that
  \begin{equation*}
    \| \hat{A}_{\rm l} - \E \hat{A}_{\rm l} \| \leq 20 \xi\,.
  \end{equation*}
  This concludes the proof that
  \begin{equation*}
    \| \tilde{A}_{\rm l} \| = \order\Biggl(\sqrt{\frac{\log N}{\log\log N}}\Biggr)\,.
  \end{equation*}
  Putting the three bounds for $\circ \in \{ \rm h, \rm i, \rm l\}$ together,
  we get the result.
\end{proof}

\begin{proof}[Proof of \cref{lem:bound-op-high}]
We are going to bound the quantity
  \begin{equation*}
    \mathcal{P}^{(1)}_{x_{1}} \deq \sum_{\substack{x_{2}, x_{3} \in \hcV_{\nu}^{(\rm h)}\\x_{2} \prec x_{1}, x_{3}}} \left< \b 1_{\Sdu(x_{1})}, \b 1_{\Sdu(x_{2})} \right>\left< \b 1_{\Sdu(x_{2})\setminus \Sdu(x_{1}) }, \b 1_{\Sdu(x_{3})} \right>\,.
  \end{equation*}
  with $\nu$-high probability. By using the definition \eqref{eq:def-set-vertices} of
  the sets $\Sdu(x_{i}), i = 1, 2, 3$, $\mathcal{P}_{x_{1}}$ may be
  rewritten as
  \begin{equation*}
    \begin{split}
      \mathcal{P}^{(1)}_{x_{1}}
      = \sum_{\substack{x_{2}, x_{3} \in \hcV_{\nu}^{(\rm h)}\\x_{1}, x_{2}, x_{3}\text{ distinct }}}&\sum_{\substack{y_{1}, y_{2} \in \vertices\\ y_{1} \neq y_{2}}} \ind_{\left\{ x_{1} \simnc y_{1} \simnc x_{2} \simnc y_{2} \simnc x_{3} \right\}}\ind_{\bigl\{D_{y_{1}} \leq D_{x_{2}}\leq D_{x_{1}} , D_{y_{2}} \leq D_{x_{2}} \leq D_{x_{3}}\bigr\}}\\
      &\times\ind_{\left\{ \exists z_{1} \notin \{ x_{1}, x_{2}\}, y_{1} \simnc z_{1}, D_{x_{1}} \leq D_{z_{1}}  \right\}}\ind_{\left\{ \exists z_{2} \notin \{ x_{2}, x_{3}\}, y_{2} \simnc z_{2}, D_{x_{3}} \leq D_{z_{2}}  \right\}}\,.
    \end{split}
  \end{equation*}
  Note that it is so in particular because if
  $y_{1} \in \Sdu(x_{1}) \cap \Sdu(x_{2})$ and
  $y_{2} \in \p{\Sdu(x_{2}) \setminus \Sdu(x_{1})} \cap \Sdu(x_{3})$,
  we have necessarily $y_{1} \neq y_{2}$ and $x_{1}, x_{2}$, and
  $x_{3}$ must be distinct: indeed if we had $x_{1} = x_{3}$ there
  would be a cycle
  $x_{1} \simnc y_{1} \simnc x_{2} \simnc y_{2} \simnc x_{3}=x_{1}$ in
  $G^{\nc}$, in contradiction with the properties of $G^{\nc}$.
  Introduce the notation $\chi = \log\log N$. \cref{rem:ratio-big-dx}
  implies that with $\nu$-high probability, we have for every
  $x \in \vertices$:
  \begin{equation*}
    d_{x}  \leq \chi (D_{x}\vee \xi_{\nu})\,.
  \end{equation*}
  We use this fact and discard some unneeded events. We get that with $\nu$-high probability,
  \begin{equation*}
    \begin{split}
      \mathcal{P}^{(1)}_{x_{1}}
      \leq \sum_{\substack{x_{2}, x_{3} \in \vertices\\x_{1}, x_{2}, x_{3}\text{ distinct }}}&\sum_{\substack{y_{1}, y_{2} \in \vertices\\y_{1}\neq y_{2}}} \ind_{\bigl\{ x_{1} \simnc y_{1} \simnc x_{2} \simnc y_{2} \simnc x_{3}\bigr\}}\ind_{\bigl\{ d_{y_{1}} \vee \xi_{\nu} \leq \chi D_{x_{2}}, d_{y_{1}}\vee d_{y_{2}}\vee d_{x_{2}} \leq \chi D_{x_{3}} \bigr\}}\\
                                                                                 &\times\ind_{\left\{ \exists z \neq x_{1}, x_{2}, y_{1} \simnc z, d_{y_{1}} \vee d_{x_{1}} \leq \chi D_{z} \right\}}\,.
    \end{split}
  \end{equation*}
  The indicator function on the last line can be further simplified
  using \cref{rem:upper-bd-degree}. Indeed, if we assume that
  $d_{1} > 8\nu\log N$ and $D_{x_{1}} \leq D_{z}$,
  \cref{lem:estimate-degree} implies that with $\nu$-high probability
  \begin{equation*}
    4\nu\log N < \frac{d_{x_{1}}}{2} < d_{x_{1}} - \sqrt{2\nu\log N d_{1}} \leq D_{x_{1}} \leq D_{z} \leq d_{z} + 2\sqrt{\nu\log N \left( d_{z} \vee \frac{4\nu}{9}\log N \right)} \leq 4d_{z}\vee \frac{16\nu}{9}\log N\,.
  \end{equation*}
  The last inequality has been discussed in
  \cref{rem:upper-bd-degree}. This implies that with $\nu$-high
  probability, we have $D_{z} \leq 4d_{z}$. We thus have with $\nu$-high probability
  \begin{equation*}
    \begin{split}
      \mathcal{P}^{(1)}_{x_{1}}
      \leq \sum_{\substack{x_{2}, x_{3} \in \vertices\\x_{1}, x_{2}, x_{3}\text{ distinct }}}&\sum_{\substack{y_{1}, y_{2} \in \vertices\\y_{1}\neq y_{2}}} \ind_{\bigl\{ x_{1} \simnc y_{1} \simnc x_{2} \simnc y_{2} \simnc x_{3}\bigr\}}\ind_{\bigl\{ d_{y_{1}} \vee \xi_{\nu} \leq \chi D_{x_{2}}, d_{y_{1}}\vee d_{y_{2}}\vee d_{x_{2}} \leq \chi D_{x_{3}} \bigr\}}\\
                                                                                             &\times \left( \ind_{\bigl\{d_{1} \leq 8\nu \log N\bigr\}} + \ind_{\bigl\{d_{1} > 8\nu \log N\bigr\}}\ind_{\left\{ \exists z \neq x_{1}, y_{1} \simnc z, d_{y_{1}} \vee d_{x_{1}} \leq 4\chi d_{z} \right\}} \right)\,.
    \end{split}
  \end{equation*}

  Recall that by \cref{prop:edge-cyc}, there exists
  $C_{\nu} > 0$ such that with $\nu$-high probability, we have for all
  $x \in \vertices$, $D_{x} \leq D_{x}^{\nc} + C_{\nu}$. We thus get
  that with $\nu$-high probability
  \begin{equation*}
    \begin{split}
      \mathcal{P}^{(1)}_{x_{1}}
      \leq \sum_{\substack{x_{2}, x_{3} \in \vertices\\x_{1}, x_{2}, x_{3}\text{ distinct }}}&\sum_{\substack{y_{1}, y_{2} \in \vertices\\y_{1} \neq y_{2}}} \ind_{\bigl\{ x_{1} \simnc y_{1} \simnc x_{2} \simnc y_{2} \simnc x_{3}\bigr\}}\ind_{\bigl\{ d_{y_{1}} \vee \xi_{\nu} \leq \chi (D_{x_{2}}^{\nc} + C_{\nu}), d_{y_{1}}\vee d_{y_{2}}\vee d_{x_{2}} \leq \chi (D_{x_{3}}^{\nc} + C_{\nu}) \bigr\}}\\
                                                                                 &\times\Bigl( \ind_{\bigl\{d_{1} \leq 8\nu \log N\bigr\}} + \ind_{\bigl\{d_{1} > 8\nu \log N\bigr\}}\ind_{\bigl\{ \exists z \neq x_{1}, y_{1} \simnc z, d_{y_{1}} \vee d_{x_{1}} \leq 4\chi d_{z} \bigr\}} \Bigr)\,.
    \end{split}
  \end{equation*}
  We can now use the coupling with a rooted tree introduced in
  \cref{sec:coupling-with-tree}. To make notation lighter we write $\Tree \deq \Tree_{x_1}$. \cref{prop:embedding-Tx} implies that
  for all $\tilde{x} \in B^{\nc}_{r}(x)$,
  \begin{equation*}
    D_{\tilde{x}}^{\nc} \leq D_{\tilde{x}}^{\Tree} \leq D^{\Tree\uparrow}_{\tilde{x}} + 1\,,
  \end{equation*}
  so up to replacing $C_{\nu}$ by $C_{\nu} +1$ we have with $\nu$-high probability:
  \begin{equation*}
    \begin{split}
      \mathcal{P}^{(1)}_{x_{1}}
      \leq \sum_{\substack{x_{2}, x_{3} \in V_{x_1}\\x_{1}, x_{2}, x_{3}\text{ distinct }}}&\sum_{\substack{y_{1}, y_{2} \in V_{x_1}\\y_{1}\neq y_{2}}} \ind_{\bigl\{ x_{1} \simT{} y_{1} \simT{} x_{2} \simT{} y_{2} \simT{} x_{3}\bigr\}}\ind_{\bigl\{ d_{y_{1}} \vee \xi_{\nu} \leq \chi (D_{x_{2}}^{\Tree\uparrow} + C_{\nu}), d_{y_{1}} \vee d_{y_{2}}\vee d_{x_{2}} \leq \chi (D_{x_{3}}^{\Tree\uparrow} + C_{\nu}) \bigr\}}\\
                                                                                 &\times\Bigl( \ind_{\bigl\{d_{1} \leq 8\nu \log N\bigr\}} + \ind_{\bigl\{d_{1} > 8\nu \log N\bigr\}}\ind_{\bigl\{ \exists z \neq x_{1}, y_{1} \simT{} z, d_{y_{1}} \vee d_{x_{1}} \leq 4\chi d_{z} \bigr\}} \Bigr)\,.
    \end{split}
  \end{equation*}
  In the sequel, we are going to use the filtration
  $(\mathcal{F}_{k}(x_{1}))_{k \geq 1}$ defined in
  \eqref{eq:def-filtration-F}.

  Bounding $\mathcal{P}^{(1)}_{x_{1}}$ is based on a Chernoff-type
  bound. We have for all $\lambda > 0$ and $k \geq 0$:
  \begin{equation*}
    \P \left( \mathcal{P}^{(1)}_{x_{1}} \geq k \right)
    \leq \ee^{-\lambda k}\E \left[ \exp(\lambda \mathcal{P}^{(1)}_{x_{1}}) \right]\,.
  \end{equation*}
  We are going to use this bound with the particular choice $\lambda = \frac{\chi^{2}}{\log N}$ and $k = 2\nu \bigl(\frac{\log N}{\chi}\bigr)^2$. Let us bound the Laplace transform in the right-hand term with this choice $\lambda = \frac{\chi^{2}}{\log N}$.
  We write
  \begin{equation*}
    \mathcal{P}^{(1)}_{x_{1}} = \sum_{y_{1}\in V_{x}}X_{x_{1}y_{1}}^{(1)}\,,
  \end{equation*}
  with
  \begin{equation*}
    \begin{split}
      X_{x_{1}y_{1}x_{2}y_{2}x_{3}}^{(4)} &\deq \ind_{\bigl\{ y_{2} \simT{} x_{3}, d_{x_{2}}\vee d_{y_{1}}\vee d_{y_{2}} \leq \chi \left( D^{\Tree\uparrow}_{x_{3}} + C_{\nu} \right) \bigr\}}\\
      X_{x_{1}y_{1}x_{2}y_{2}}^{(3)} &\deq \ind_{\bigl\{ x_{2} \simT{} y_{2} \bigr\}}\sum_{x_{3} \neq x_{1}, x_{2}}X_{x_{1}y_{1}x_{2}y_{2}x_{3}}^{(4)}\\
      X_{x_{1}y_{1}x_{2}}^{(2)} &\deq \ind_{\bigl\{ y_{1} \simT{} x_{2} \bigr\}}\sum_{y_{2} \neq y_{1}}X^{(3)}_{x_{1}y_{1}x_{2}y_{2}}\\
      X_{x_{1}y_{1}}^{(1)} &\deq \ind_{\bigl\{ x_{1} \simT{} y_{1}\bigr\}}\Bigl( \ind_{\bigl\{d_{1} \leq 8\nu \log N\bigr\}} + \ind_{\bigl\{d_{1} > 8\nu \log N\bigr\}}\ind_{\bigl\{ \exists z \neq x_{1}, y_{1} \simT{} z, d_{y_{1}} \vee d_{x_{1}} \leq 4\chi d_{z} \bigr\}} \Bigr)\sum_{x_{2} \neq x_{1}}X^{(2)}_{x_{1}y_{1}x_{2}}\,.
    \end{split}
  \end{equation*}
  We will consider these different random variables starting from
  $X^{(4)}_{x_{1}y_{1}x_{2}y_{2}x_{3}}$ and use the independence
  properties of $\Tree$ to estimate the Laplace transform of each of
  those. In this computation, the tree structure of $\Tree$ is
  critical.

  Let us fix $y_{1}, x_{2}, y_{2}, x_{3}$ for now, and consider
  $X_{x_{1}y_{1}x_{2}y_{2}x_{3}}^{(4)}$. Conditionally to
  $\mathcal{F}_{4}(x_{1})$, we have
  \begin{equation*}
    \E \Bigl[ \ee^{\lambda X_{x_{1}y_{1}x_{2}y_{2}x_{3}}^{(4)}} \mid \mathcal{F}_{4}(x_{1})\Bigr] = 1 + \P \Bigl(y_{2} \simT{} x_{3}, d_{x_{2}}\vee d_{y_{1}}\vee d_{y_{2}} \leq \chi \bigl( D^{\Tree\uparrow}_{x_{3}} + C_{\nu} \bigr) \mid \mathcal{F}_{4}(x_{1})\Bigr) \left( \ee^{\lambda} - 1 \right).
  \end{equation*}
  Markov's inequality and the fact that $\E \Bigl[D^{\Tree\uparrow}_{x_{3}}\Bigr] \leq \sum_z p_{x_3 z}$ give us
  \begin{equation*}
    \E \Bigl[ \ee^{\lambda X_{x_{1}y_{1}x_{2}y_{2}x_{3}}^{(4)}} \mid \mathcal{F}_{4}(x_{1})\Bigr] = 1 + \ind_{\bigl\{y_{2} \simT{} x_{3}\bigr\}}\chi\frac{d_{x_{3}} + C_{\nu}}{d_{x_{2}}\vee d_{y_{1}}\vee d_{y_{2}}} \left( \ee^{\lambda} - 1 \right).
  \end{equation*}
  We choose for $\lambda$ the small value $\lambda = \frac{\chi^{2}}{\log N}$. Hence, we
  have for $1 < c < 2$ that for $N$ big enough:
  \begin{equation*}
    \ee^{\lambda} - 1 \leq c \frac{\chi^{2}}{\log N}.
  \end{equation*}
  This means that
  \begin{equation*}
    \E \Bigl[ \ee^{\lambda X_{x_{1}y_{1}x_{2}y_{2}x_{3}}^{(4)}} \mid \mathcal{F}_{4}(x_{1})\Bigr] \leq 1 + \ind_{\bigl\{y_{2} \simT{} x_{3}\bigr\}}c \frac{\chi^{3}}{\log N}\frac{d_{x_{3}} + C_{\nu}}{d_{x_{2}}\vee d_{y_{1}}\vee d_{y_{2}}}.
  \end{equation*}

  Still keeping $x_{2}, y_{1}, y_{2}$ fixed, we take the product on
  $x_{3} \neq x_{1}, x_{2}$ and condition on $\mathcal{F}_{3}(x_{1})$.
  The conditional independence property of $\Tree$ gives us
  \begin{equation*}
    \E \Bigl[ \ee^{\lambda \sum_{x_{3}\neq x_{1}, x_{2}} X_{x_{1}y_{1}x_{2}y_{2}x_{3}}^{(4)}} \mid \mathcal{F}_{3}(x_{1})\Bigr]
    \leq \prod_{x_{3}\neq x_{1}, x_{2}} \Biggl(1 + c p_{y_{2}x_{3}} \frac{\chi^{3}}{\log N}\frac{d_{x_{3}} + C_{\nu}}{d_{x_{2}}\vee d_{y_{1}}\vee d_{y_{2}}}\Biggr)\,.
  \end{equation*}
  Using \eqref{eq:proba-edge}, and $\ln (1 + u) \leq u$, we get
  \begin{equation*}
    \begin{split}
      \E \Bigl[ \ee^{\lambda \sum_{x_{3}\neq x_{1}, x_{2}} X_{x_{1}y_{1}x_{2}y_{2}x_{3}}^{(4)}} \mid \mathcal{F}_{3}(x_{1})\Bigr]
      &\leq \prod_{x_{3}\neq x_{1}, x_{2}} \Biggl(1 + c \frac{\chi^{3}}{m_{1}N\log N}\frac{w_{y_{2}}w_{x_{3}}\bigl(w_{x_{3}} + C_{\nu}\bigr)}{d_{x_{2}}\vee d_{y_{1}}\vee d_{y_{2}}}\Biggr)\\
      &\leq \exp\Biggl(\sum_{x_{3}\neq x_{1}, x_{2}} c \frac{\chi^{3}}{m_{1}N\log N}\frac{w_{y_{2}}w_{x_{3}}\bigl(w_{x_{3}} + C_{\nu}\bigr)}{d_{x_{2}}\vee d_{y_{1}}\vee d_{y_{2}}}\Biggr)\,.
    \end{split}
  \end{equation*}
  We can make the ratio of empirical moments $m_{2}/m_{1}$ appear
  using \eqref{eq:def-empirical-moment} and get with \cref{hyp:behavior-m21}
  \begin{equation*}
    \begin{split}
      \E \Bigl[ \ee^{\lambda \sum_{x_{3}\neq x_{1}, x_{2}} X_{x_{1}y_{1}x_{2}y_{2}x_{3}}^{(4)}} \mid \mathcal{F}_{3}(x_{1})\Bigr]
      &\leq \exp\Biggl(\frac{c \chi^{3}}{\log N} \frac{m_{2} + C_{\nu}m_{1}}{m_{1}}\frac{w_{y_{2}}}{d_{x_{2}}\vee d_{y_{1}}\vee d_{y_{2}}}\Biggr)\\
      &\leq \exp\Biggl(\frac{c^{2} \chi^{3}}{(\log N)^{1-\delta}} \frac{w_{y_{2}}}{w_{x_{2}}\vee w_{y_{1}}\vee w_{y_{2}}}\Biggr)\,.
    \end{split}
  \end{equation*}
  We could replace the average degrees
  $d_{x_{2}}\vee d_{y_{1}}\vee d_{y_{2}}$ by the weights at the cost
  of adding a small constant $c$ in the product. Finally, we can
  linearize the exponential again and get
  \begin{equation}\label{eq:P1-X4}
    \E \Bigl[ \ee^{\lambda \sum_{x_{3}\neq x_{1}, x_{2}} X_{x_{1}y_{1}x_{2}y_{2}x_{3}}^{(4)}} \mid \mathcal{F}_{3}(x_{1})\Bigr]
    \leq 1 + \frac{c^{3} \chi^{3}}{(\log N)^{1-\delta}} \frac{w_{y_{2}}}{w_{x_{2}}\vee w_{y_{1}}\vee w_{y_{2}}}.
  \end{equation}

  We are going to apply a similar argument to the one we used on
  $X^{(4)}_{x_{1}y_{1}x_{2}y_{2}x_{3}}$ on
  $X^{(3)}_{x_{1}y_{1}x_{2}y_{2}}$. We fix $y_{1}, x_{2}, y_{2}$. We have
  \begin{equation*}
    \ee^{\lambda X^{(3)}_{x_{1}y_{1}x_{2}y_{2}}} = 1 + \ind_{\bigl\{x_{2} \simT{} y_{2}\bigr\}} \left( \ee^{\lambda \sum_{x_{3}\neq x_{1}, x_{2}} X_{x_{1}y_{1}x_{2}y_{2}x_{3}}^{(4)}} - 1 \right).
  \end{equation*}
  Taking the expectation conditional to $\mathcal{F}_{3}(x_{1})$ and using \eqref{eq:P1-X4}, we get
  \begin{equation*}
    \E \Bigl[ \ee^{\lambda X_{x_{1}y_{1}x_{2}y_{2}}^{(3)}} \mid \mathcal{F}_{3}(x_{1})\Bigr] \leq 1 + \ind_{\bigl\{x_{2} \simT{} y_{2}\bigr\}}\frac{c^{3} \chi^{3}}{(\log N)^{1-\delta}} \frac{w_{y_{2}}}{w_{x_{2}}\vee w_{y_{1}}\vee w_{y_{2}}}.
  \end{equation*}
  Again, we take the product on $y_{2} \neq y_{1}$ and the expectation
  conditionally on $\mathcal{F}_{2}(x_{1})$. We get
  \begin{equation*}
    \E \Bigl[ \ee^{\lambda \sum_{y_{2} \neq y_{1}}X_{x_{1}y_{1}x_{2}y_{2}}^{(3)}} \mid \mathcal{F}_{2}(x_{1})\Bigr]
    \leq \prod_{y_{2} \neq y_{1}}\Biggl(1+ \frac{c^{3} \chi^{3}}{(\log N)^{1-\delta}}p_{x_{2}y_{2}}\frac{w_{y_{2}}}{w_{x_{2}}\vee w_{y_{1}}\vee w_{y_{2}}}\Biggr)\,.
  \end{equation*}
  Using $\log(1+u) \leq u$, \eqref{eq:proba-edge}, and \eqref{eq:def-empirical-moment}, we get
  \begin{equation*}
    \begin{split}
      \E \Bigl[ \ee^{\lambda \sum_{y_{2} \neq y_{1}}X_{x_{1}y_{1}x_{2}y_{2}}^{(3)}} \mid \mathcal{F}_{2}(x_{1})\Bigr]
      &\leq \exp\Bigl(\sum_{y_{2} \neq y_{1}}\frac{c^{3} \chi^{3}}{(\log N)^{1-\delta}}p_{x_{2}y_{2}}\frac{w_{y_{2}}}{w_{x_{2}}\vee w_{y_{1}}\vee w_{y_{2}}}\Bigr)\\
      &\leq \exp\Bigl(\sum_{y_{2} \neq y_{1}}\frac{c^{3} \chi^{3}}{m_{1}N(\log N)^{1-\delta}}\frac{w_{x_{2}}w_{y_{2}}^{2}}{w_{x_{2}}\vee w_{y_{1}}\vee w_{y_{2}}}\Bigr)\\
      &\leq \exp\Bigl(\frac{c^{3} \chi^{3}}{(\log N)^{1-2\delta}}\frac{w_{x_{2}}}{w_{x_{2}}\vee w_{y_{1}}}\Bigr)\,.
    \end{split}
  \end{equation*}
  By linearizing the exponential, we get
  \begin{equation*}
    \E \Bigl[ \ee^{\lambda \sum_{y_{2} \neq y_{1}}X_{x_{1}y_{1}x_{2}y_{2}}^{(3)}} \mid \mathcal{F}_{2}(x_{1})\Bigr]
    \leq 1 + \frac{c^{3} \chi^{3}}{(\log N)^{1-2\delta}}\frac{w_{x_{2}}}{w_{x_{2}}\vee w_{y_{1}}}\,.
  \end{equation*}
  Since we have
  \begin{equation*}
    \ee^{\lambda X^{(2)}_{x_{1}y_{1}x_{2}}} = 1 + \ind_{\bigl\{y_{1}\simT{}x_{2}\bigr\}} \left( \ee^{\lambda \sum_{y_{2}\neq y_{1}}X^{(3)}_{x_{1}y_{1}x_{2}y_{2}}} - 1\right)\,,
  \end{equation*}
  We finally get
  \begin{equation}\label{eq:P1-X2}
    \E\Bigl[\ee^{\lambda X^{(2)}_{x_{1}y_{1}x_{2}}}\mid \mathcal{F}_{2}(x_{1})\Bigr] \leq 1 + \ind_{\bigl\{y_{1}\simT{}x_{2}\bigr\}}\frac{c^{3} \chi^{3}}{(\log N)^{1-2\delta}}\frac{w_{x_{2}}}{w_{x_{2}}\vee w_{y_{1}}}\,.
  \end{equation}

  We may take the product on $x_{2} \neq x_{1}$ and the expectation
  conditionally on $\mathcal{F}_{1}(x_{1})$. We get using the conditional independence of the $\ind_{\bigl\{ y_{1} \simT{} x_{2}\bigr\}}$:
  \begin{equation*}
    \E\Bigl[\ee^{\sum_{x_{2}\neq x_{1}}\lambda X^{(2)}_{x_{1}y_{1}x_{2}}}\mid \mathcal{F}_{2}(x_{1})\Bigr] \leq \prod_{x_{2}\neq x_{1}} \Bigl(1 + p_{y_{1}x_{2}}\frac{c^{3} \chi^{3}}{(\log N)^{1-2\delta}}\frac{w_{x_{2}}}{w_{x_{2}}\vee w_{y_{1}}}\Bigr)\,.
  \end{equation*}
  Using again $\log(1 + u) \leq u$, \eqref{eq:proba-edge}, and \eqref{eq:def-empirical-moment}, we get
  \begin{equation*}
    \E\Bigl[\ee^{\sum_{x_{2}\neq x_{1}}\lambda X^{(2)}_{x_{1}y_{1}x_{2}}}\mid \mathcal{F}_{2}(x_{1})\Bigr] \leq \exp\Bigl( \frac{c^{3} \chi^{3}}{(\log N)^{1-3\delta}}\Bigr)\,.
  \end{equation*}
  Linearizing the exponential, we get
  \begin{equation}\label{eq:P1-prod-X2}
    \E\Bigl[\ee^{\sum_{x_{2}\neq x_{1}}\lambda X^{(2)}_{x_{1}y_{1}x_{2}}}\mid \mathcal{F}_{2}(x_{1})\Bigr] \leq 1 + \frac{c^{3} \chi^{3}}{(\log N)^{1-3\delta}}\,.
  \end{equation}

  The computation of the Laplace transform of $X^{(1)}_{x_{1}y_{1}}$
  is more involved than the preceding ones. By definition, we have for $x_{1}$ and $y_{1}$ fixed that if $d_{1} \leq 8\nu\log N$
  \begin{equation*}
    \ee^{\lambda X^{(1)}_{x_{1}y_{1}}} = 1 + \ind_{\bigl\{ x_{1} \simT{} y_{1}\bigr\}}\Bigl( \ee^{\lambda\sum_{x_{2}\neq x_{1}}X^{(2)}_{x_{1}y_{1}x_{2}}} - 1 \Bigr)\,.
  \end{equation*}
  In this case, it is straightforward to compute the expectation
  conditionally on $\mathcal{F}_{1}(x_{1})$ using \eqref{eq:P1-prod-X2}:
  \begin{equation*}
    \E\Bigl[\ee^{\lambda X^{(1)}_{x_{1}y_{1}}}\mid \mathcal{F}_{1}(x_{1})\Bigr] \leq 1 + \ind_{\bigl\{ x_{1} \simT{} y_{1}\bigr\}}\frac{c^{3} \chi^{3}}{(\log N)^{1-3\delta}}\,.
  \end{equation*}
  The more involved case is when $d_{x_{1}} > 8\nu\log N$. In this case:
  \begin{equation*}
    \ee^{\lambda X^{(1)}_{x_{1}y_{1}}} = 1 + \ind_{\bigl\{ x_{1} \simT{} y_{1}\bigr\}}\ind_{\bigl\{ \exists z \neq x_{1}, y_{1} \simT{} z, d_{y_{1}} \vee d_{x_{1}} \leq 4\chi d_{z} \bigr\}}\Bigl( \ee^{\lambda\sum_{x_{2}\neq x_{1}}X^{(2)}_{x_{1}y_{1}x_{2}}} - 1\Bigr)\,.
  \end{equation*}
  At this point, we notice the union bound
  \begin{equation*}
    \ind_{\bigl\{ \exists z \neq x_{1}, y_{1} \simT{} z, d_{y_{1}} \vee d_{x_{1}} \leq 4\chi d_{z}\bigr\}}
    \leq \sum_{z \neq x_{1}}\ind_{\bigl\{ y_{1} \simT{} z, d_{y_{1}} \vee d_{x_{1}} \leq 4\chi d_{z}\bigr\}}\,.
  \end{equation*}
  Together with
  $\ind_{\bigl\{ d_{y_{1}} \vee d_{x_{1}} \leq 4\chi d_{z}\bigr\}} \leq \frac{4\chi d_{z}}{d_{y_{1}} \vee d_{x_{1}}}$,
  this allows the bound
  \begin{equation*}
    \ee^{\lambda X^{(1)}_{x_{1}y_{1}}} \leq 1 + \sum_{z\neq x_{1}}\ind_{\bigl\{ x_{1} \simT{} y_{1}\simT{}z\bigr\}} \frac{4\chi d_{z}}{d_{y_{1}} \vee d_{x_{1}}}\Bigl( \ee^{\lambda\sum_{x_{2}\neq x_{1}}X^{(2)}_{x_{1}y_{1}x_{2}}} - 1\Bigr)\,.
  \end{equation*}
  We first take the expectation conditionally on
  $\mathcal{F}_{2}(x_{1})$ and use \eqref{eq:P1-X2}
  \begin{equation*}
    \begin{split}
      \E\Bigl[\ee^{\lambda X^{(1)}_{x_{1}y_{1}}}&\mid \mathcal{F}_{2}(x_{1})\Bigr]
      \leq 1 + \sum_{z\neq x_{1}}\ind_{\bigl\{ x_{1} \simT{} y_{1}\simT{}z\bigr\}} \frac{4\chi d_{z}}{d_{y_{1}} \vee d_{x_{1}}}\Biggl( \E\Bigl[\ee^{\lambda X^{(2)}_{x_{1}y_{1}z} + \lambda\sum_{x_{2}\neq x_{1}, z}X^{(2)}_{x_{1}y_{1}x_{2}}}\mid \mathcal{F}_{2}(x_{1})\Bigr] - 1\Biggr)\\
      &\leq 1 + \sum_{z\neq x_{1}}\ind_{\bigl\{ x_{1} \simT{} y_{1}\simT{}z\bigr\}} \frac{4\chi d_{z}}{d_{y_{1}} \vee d_{x_{1}}}\Biggl( \E\Bigl[\ee^{\lambda\sum_{x_{2}\neq x_{1}, z}X^{(2)}_{x_{1}y_{1}x_{2}}}(1 + \frac{c^{3}\chi^{3}}{(\log N)^{1 - 2\delta}})\mid \mathcal{F}_{2}(x_{1})\Bigr] - 1\Biggr)\,.
    \end{split}
  \end{equation*}
  We can now take the expectation conditionally on
  $\mathcal{F}_{1}(x_{1})$ and use the conditional independence and
  \eqref{eq:P1-prod-X2} to get
  \begin{equation*}
    \begin{split}
      \E\Bigl[\ee^{\lambda X^{(1)}_{x_{1}y_{1}}}&\mid \mathcal{F}_{1}(x_{2})\Bigr]\\
      &\leq 1 + \sum_{z\neq x_{1}}\ind_{\bigl\{ x_{1} \simT{} y_{1}\bigr\}}p_{y_{1}z} \frac{4\chi d_{z}}{d_{y_{1}} \vee d_{x_{1}}}\Bigl( (1 + \frac{c^{3}\chi^{3}}{(\log N)^{1-3\delta}})(1 + \frac{c^{3}\chi^{3}}{(\log N)^{1 - 2\delta}}) - 1\Bigr)\\
      &\leq 1 + \sum_{z\neq x_{1}}\ind_{\bigl\{ x_{1} \simT{} y_{1}\bigr\}}p_{y_{1}z} \frac{d_{z}}{d_{y_{1}} \vee d_{x_{1}}}\frac{4c^{4}\chi^{4}}{(\log N)^{1-3\delta}}\,.
    \end{split}
  \end{equation*}
  We use \eqref{eq:proba-edge} and \eqref{eq:def-empirical-moment} to get
  \begin{equation*}
    \E \Bigl[\ee^{\lambda X^{(1)}_{x_{1}y_{1}}}\mid \mathcal{F}_{1}(x_{1})\Bigr]
    \leq 1 + \ind_{\bigl\{ x_{1} \simT{} y_{1}\bigr\}}\frac{4c^{4}\chi^{4}}{(\log N)^{1-4\delta}}\frac{w_{y_{1}}}{d_{y_{1}} \vee d_{x_{1}}}\,.
  \end{equation*}
  Up to adding a factor $c$, we can replace $d_{y_{1}} \vee d_{x_{1}}$
  by $w_{y_{1}} \vee w_{x_{1}}$. We finally have
  \begin{equation}\label{eq:P1-X1}
    \E\Bigl[\ee^{\lambda X^{(1)}_{x_{1}y_{1}}}\mid \mathcal{F}_{1}(x_{1})\Bigr]
    \leq 1 + \ind_{\bigl\{ x_{1} \simT{} y_{1}\bigr\}}\frac{4c^{5}\chi^{4}}{(\log N)^{1-4\delta}}\frac{w_{y_{1}}}{w_{y_{1}} \vee w_{x_{1}}}\,.
  \end{equation}

  It remains to take the product on $y_{1}$ and the expectation conditional to $\mathcal{F}_{1}(x_{1})$. We consider first the case $d_{x_{1}} \leq 8\nu\log N$. We have by independence
  \begin{equation*}
    \E \Bigl[\ee^{\lambda \mathcal{P}^{(1)}_{x_{1}}}\Bigr] \leq \prod_{y_{1}} \Bigl(1 + p_{x_{1}y_{1}}\frac{c^{3} \chi^{3}}{(\log N)^{1-3\delta}}\Bigr)\,.
  \end{equation*}
  Using as before \eqref{eq:proba-edge} and \eqref{eq:def-empirical-moment} we get
  \begin{equation*}
    \E \Bigl[\ee^{\lambda \mathcal{P}^{(1)}_{x_{1}}}\Bigr] \leq \exp \Bigl(w_{x_{1}}\frac{c^{3} \chi^{3}}{(\log N)^{1-3\delta}}\Bigr) \leq \exp\Bigl(c^{4}\nu\chi^{3}(\log N)^{3\delta}\Bigr)\,.
  \end{equation*}
  In the case $d_{x_{1}} > 8\nu \log N$, we have by independence,
  \begin{equation*}
    \E \Bigl[\ee^{\lambda \mathcal{P}^{(1)}_{x_{1}}}\Bigr] \leq \prod_{y_{1}} \Bigl(1 + p_{x_{1}y_{1}}\frac{4c^{5}\chi^{4}}{(\log N)^{1-4\delta}}\frac{w_{y_{1}}}{w_{y_{1}} \vee w_{x_{1}}}\Bigr)\,.
  \end{equation*}
  We proceed as in the previous case and get
  \begin{equation*}
    \E \Bigl[\ee^{\lambda \mathcal{P}^{(1)}_{x_{1}}}\Bigr] \leq \exp\Biggl( \frac{4c^{5}\chi^{4}}{(\log N)^{1-5\delta}}\Biggr)\,.
  \end{equation*}
  We have shown in both cases that
  \begin{equation*}
    \E \Bigl[\ee^{\lambda \mathcal{P}^{(1)}_{x_{1}}}\Bigr] \leq N\,.
  \end{equation*}
  This implies
  \begin{equation*}
    \P \left( \mathcal{P}^{(1)}_{x_{1}} \geq 2\nu \left( \frac{\log N}{\log\log N} \right)^{2} \right) = \order\p{N^{-\nu}}\,. \qedhere
  \end{equation*}
\end{proof}
\begin{proof}[Proof of \cref{lem:A-intermediate}]
  Let $x_{1} \in \hcV^{(\rm i)}_{\nu}$. We are going to bound the quantity
  \begin{equation*}
    \mathcal{P}^{(2)}_{x_{1}} \deq \sum_{\substack{x_{2} \in \hcV^{(\rm i)}_{\nu}\\x_{2} \neq x_{1}}} \left< \b 1_{\Sdu(x_{1})}, \b 1_{\Sdu(x_{2})} \right>\,.
  \end{equation*}

  Let us start by noticing that by \cref{lem:estimate-degree}, we have
  the following property with $\nu$-high probability: if
  $z \in \hcV^{(\rm l)}_{\nu} \cup \hcV^{(\rm i)}_{\nu}$ then
  $w_{z} < 4\nu\log N$.

  We thus observe that with $\nu$-high probability, we have $w_{x_{1}} <  4\nu\log N$ and
  \begin{equation*}
    \mathcal{P}^{(2)}_{x_{1}} \leq \sum_{\substack{x_{2} \in \vertices\\x_{2}\neq x_{1}\\4\xi \leq w_{x_{2}} \leq 4\nu\log N}}\sum_{\substack{y \in \vertices\\w_{y} \leq 4\nu\log N}} \ind_{\left\{ x\simnc y \simnc z \right\}}\ind_{\bigl\{ D_{z} < \xi \bigr\}}\,.
  \end{equation*}
  Let us relate the right-hand side to the rooted tree
  $\check{\Tree} \deq \check{\Tree}_{x_{1}}$ of
  \cref{sec:coupling-with-tree}. We have by \cref{prop:embedding-Tx}
  and \cref{corol:bound-Dnc} that there exists a constant $C_{\nu} > 0$
  such that
  \begin{equation*}
    \begin{split}
      \mathcal{P}^{(2)}_{x_{1}}
      &\leq \sum_{\substack{x_{2} \in \vertices\\x_{2}\neq x_{1}\\4\xi \leq w_{x_{2}} \leq 4\nu\log N}}\sum_{\substack{y \in \vertices\\w_{y} \leq 4\nu\log N}} \ind_{\left\{ x\simnc y \simnc z \right\}}\ind_{\bigl\{ D^{\nc}_{z} < \xi \bigr\}}\\
      &\leq \sum_{\substack{x_{2} \in V_{x_{1}}\\x_{2}\neq x_{1}\\4\xi \leq w_{x_{2}} \leq 4\nu\log N}}\sum_{\substack{y \in V_{x_{1}}\\w_{y} \leq 4\nu\log N}} \ind_{\bigl\{ x\simTc{} y \simTc{} z \bigr\}}\ind_{\bigl\{ D^{\check{\Tree}\uparrow}_{z} - C_{\nu} < \xi \bigr\}}\,,
    \end{split}
  \end{equation*}
  with $\nu$-high probability.

  Before computing $\E[\exp(\lambda \mathcal{P}^{(2)}_{x_{1}})]$, let us now
  compute
  $\P \left( D^{\check{\Tree}\uparrow}_{x_{2}} - C_{\nu} \leq \xi \right)$
  using the version of Bennett's inequality given in \cite[Theorem
  2.21]{hofstad_random_2016}. Since $\sum_{z \in V_{x_{1}}} \E \Bigl[\ind_{\bigl\{ x \simTc{} z\bigr\}}\Bigr]
    \leq w_{x}$, we have
  \begin{equation*}
    \P \left( D^{\check{\Tree}\uparrow}_{x_{2}} - C_{\nu} \leq \xi\right)
    = \P \left( D^{\check{\Tree}\uparrow}_{x_{2}} - w_{x_{2}} \leq -(w_{x_{2}} - \xi - C_{\nu})\right)
    \leq \exp\biggl(-\frac{(w_{x_{2}} - \xi - C_{\nu})^{2}}{2w_{x_{2}}}\biggr)\,.
  \end{equation*}
  Hence, we get that
  \begin{equation*}
    \P \left( D^{\check{\Tree}\uparrow}_{x_{2}} + C_{\nu} \leq \xi\right) \leq \exp\bigl(-\xi(1+o(1))\bigr)\,.
  \end{equation*}
  For us it suffices that for all $\beta > 0$ we have for $N$ big enough
  \begin{equation}\label{eq:bound-DT-inter}
    \P \left( D^{\check{\Tree}\uparrow}_{x_{2}} + C_{\nu} \leq \xi\right) \leq \frac{1}{(\log N)^{\beta}}\,.
  \end{equation}

  We have that for $x_{1}$ and $y$ fixed
  \begin{equation*}
    \exp\biggl(\lambda \sum_{\substack{x_{2} \neq x_{1}\\4\xi_{\nu} \leq w_{x_{2}} \leq 4\nu \log N}} \ind_{\bigl\{y \simTc{} x_{2} \bigr\}}\ind_{\bigl\{D^{\check{\Tree}\uparrow}_{x_{2}} - C_{\nu} \leq \xi\bigr\}}\biggr)
    = \prod_{\substack{x_{2} \neq x_{1}\\4\xi_{\nu} \leq w_{x_{2}} \leq 4\nu \log N}}\Biggl( 1 + \ind_{\bigl\{y \simTc{} x_{2} \bigr\}}\ind_{\bigl\{D^{\check{\Tree}\uparrow}_{x_{2}} - C_{\nu} \leq \xi\bigr\}} (\ee^{\lambda} - 1)\Biggr)\,.
  \end{equation*}
  Taking the expectation conditionally on $\mathcal{F}_{2}(x_{1})$, using the conditional independence, and injecting \eqref{eq:bound-DT-inter} yields
  \begin{equation*}
    \E\biggl[\exp\Bigl(\lambda \sum_{x_{2} \neq x_{1}} \ind_{\bigl\{y \simT{} x_{2} \bigr\}}\ind_{\bigl\{D^{\check{\Tree}\uparrow}_{x_{2}} - C_{\nu} \leq \xi\bigr\}}\Bigr) \,\bigg\vert\, \mathcal{F}_{2}(x_{1})\biggr]
    \leq \prod_{x_{2} \neq x_{1}}\Biggl( 1 + \ind_{\bigl\{y \simTc{} x_{2} \bigr\}}\frac{1}{(\log N)^{\beta}} (\ee^{\lambda} - 1)\Biggr)\,,
  \end{equation*}
  where the sum on the left-hand size and product on the right-hand
  side are over $x_{2} \in V_{x_{1}}$ satisfying $x_{2}\neq x_{1}$ and
  $4\xi \leq w_{x_{2}} \leq 4\nu\log N$.

  We choose $\lambda = \log\log N$ and take the expectation conditionally on $\mathcal{F}_{1}(x_{1})$:
  \begin{equation*}
    \E\biggl[\exp\Bigl(\lambda \sum_{x_{2} \neq x_{1}} \ind_{\bigl\{y \simT{} x_{2} \bigr\}}\ind_{\bigl\{D^{\check{\Tree}\uparrow}_{x_{2}} - C_{\nu} \leq \xi\bigr\}}\Bigr) \,\bigg\vert\, \mathcal{F}_{1}(x_{1})\biggr]
    \leq \prod_{x_{2} \neq x_{1}}\Biggl( 1 + p_{yx_{2}}\frac{1}{(\log N)^{\beta-1}}\Biggr)\,.
  \end{equation*}
  Using $\log(1 + u) \leq u$ and \eqref{eq:proba-edge}, we get
  \begin{equation*}
    \E\biggl[\exp\Bigl(\lambda \sum_{x_{2} \neq x_{1}} \ind_{\bigl\{y \simT{} x_{2} \bigr\}}\ind_{\bigl\{D^{\check{\Tree}\uparrow}_{x_{2}} - C_{\nu} \leq \xi\bigr\}}\Bigr) \,\bigg\vert\, \mathcal{F}_{1}(x_{1})\biggr]
    \leq \exp\Biggl(\frac{w_{y}}{(\log N)^{\beta-1}}\Biggr)\,.
  \end{equation*}
  Assuming $w_{y} \leq 4\nu \log N$ and $\beta > 2$, we can linearize the exponential and get
  \begin{equation*}
    \E\biggl[\exp\Bigl(\lambda \sum_{x_{2} \neq x_{1}} \ind_{\bigl\{y \simT{} x_{2} \bigr\}}\ind_{\bigl\{D^{\check{\Tree}\uparrow}_{x_{2}} - C_{\nu} \leq \xi\bigr\}}\Bigr) \,\bigg\vert\, \mathcal{F}_{1}(x_{1})\biggr]
    \leq 1 + \frac{c w_{y}}{(\log N)^{\beta-1}}
  \end{equation*}
  for $1 < c < 2$.
  We thus have
  \begin{equation*}
    \E \Bigl[\ee^{\lambda \mathcal{P}^{(2)}_{x_{1}}}\Bigr] \leq \E \Biggl[\prod_{y} \left( 1 + \ind_{\bigl\{x_{1} \simT{} y\bigr\}} \frac{c w_{y}}{(\log N)^{\beta-1}} \right)\Biggr]\,.
  \end{equation*}
  By independence, we get
  \begin{equation*}
    \E \Bigl[\ee^{\lambda \mathcal{P}^{(2)}_{x_{1}}}\Bigr] \leq \E \Biggl[\prod_{y} \left( 1 + p_{x_{1}y} \frac{c w_{y}}{(\log N)^{\beta-1}} \right)\Biggr]\,.
  \end{equation*}
  Using $\log (1 + u) \leq u$, \eqref{eq:proba-edge}, and
  \cref{hyp:behavior-m21}, we get
  \begin{equation*}
    \E \Bigl[\ee^{\lambda \mathcal{P}^{(2)}_{x_{1}}}\Bigr]
    \leq \exp \Biggl( \frac{c w_{x_{1}}}{(\log N)^{\beta-1-\delta}} \Biggr)\,.
  \end{equation*}
  Since we assumed that $w_{x_{1}} \leq 4\nu \log N$, we have for $\beta > 3$,
  \begin{equation*}
    \E \Bigl[\ee^{\lambda \mathcal{P}^{(2)}_{x_{1}}}\Bigr] \leq \exp \Biggl(\frac{4c\nu}{(\log N)^{\beta-2-\delta}} \Biggr) \leq N\,.
  \end{equation*}
  Chernoff's bound implies
  \begin{equation*}
    \P \left( \mathcal{P}^{(2)}_{x_{1}} \geq 2\nu \frac{\log N}{\log\log N} \right) = \order\p{N^{-\nu}}\,,
  \end{equation*}
  which is the wanted result.
\end{proof}

\section{Construction of approximate eigenvectors}
\label{sec:finding-eigenvectors}

In this section, we construct a family of orthonormal vectors that
will be close to eigenvectors of $A$. The main intuition to show that
the eigenvectors associated to the greatest eigenvalues are
(semi-)localized, is that these eigenvectors are with high probability close to the
vectors
\begin{equation*}
 \b v_{\sigma}(x) = \frac{1}{\sqrt{2}}\left(\b 1_{x} + \frac{\sigma}{\sqrt{D^{\pp -}_{x}}} \, \b 1_{S^{\pp -}_{1}(x)}\right)\,,
\end{equation*}
where $x \in \vertices$ and $\sigma \in \{\pm 1\}$. This particular
choice is motivated by the fact that such vectors are eigenvectors of
the adjacency matrix of a star of degree $D^{\pp -}_{x}$, i.e.\ a tree
with one vertex connected to $D_{x}^{\pp -}$ leaves.

Notice that these vectors are normalized, but in the pruned graph
$G^{\pp}$, we have (recall \cref{def:order})
\begin{equation*}
\scalar{\b v_{\rho}(x)}{\b v_{\sigma}(y)} = \frac{\delta_{xy}}{2}\left(1 + \rho\sigma\right) + \frac{\ind_{\{x\sim y\}}}{2}\left(\frac{\rho\ind_{\{x \prec y\}}}{\sqrt{D_{x}^{\pp -}}} + \frac{\sigma\ind_{\{x \succ y\}}}{\sqrt{D^{\pp -}_{y}}}\right)\,,
\end{equation*}
that is, we only have $\scalar{\b v_{-}(x)}{\b v_{+}(x)} = 0$ for all $x \in \vertices$, but
in general $\scalar{\b v_{\rho}(x)}{\b v_{\sigma}(y)}$ is not zero for $x \sim y$.

We are going to define a family of pseudo-eigenvectors that are
supported near the vertices in $\hcV_{\nu}^{(\rm h)}$, with the greatest
degrees in the graph $G$.
\begin{remark}\label{rem:one-parent}
  In the pruned graph $G^{\pp}$, for each vertex $x$, there is at most
  one vertex in $S_{1}^{\pp +}(x)$. Indeed, if it were not the case, i.e.\
  if there existed two distinct vertices $y, z \in S_{1}^{\pp +}(x)$, then
  one of $(y, x, z)$ or $(z, x, y)$ would be a down-up path. There are
  no such paths in the pruned graph.
\end{remark}

This remark allows the following definition.
\begin{definition}
  Let $x \in \vertices$. The unique element $y \in S_{1}^{\pp +}(x)$, if it
  exists, is called the \emph{parent} of $x$, and denoted by $\hat{x}$.
  Conversely, the \emph{children} of $x$ are the vertices in
  $S_{1}^{\pp -}(x)$. The elements of $S_{1}^{\pp -}(\hat{x})$ are called
  the \emph{siblings} of $x$. The set of siblings of a vertex $x$ is denoted
  by $\Sib(x)$ and is empty if $x$ has no parent. The set of smaller
  siblings is
  \begin{equation*}
    \Sib^{-}(x) = \Sib(x) \cap \{y \in \vertices \col y \prec x\}.
  \end{equation*}
\end{definition}

We use the following conventions.
  \begin{enumerate}
    \item $\frac{\b 1_{\emptyset}}{0} = 0$,
    \item Any term where the symbol $\hat{x}$ appears is $0$ if the
          vertex $x$ has no parent.
  \end{enumerate}

The orthonormal family we shall use is defined in the following proposition.
\begin{proposition}\label{prop:ps-ev}
  For all $x \in \hcV^{(\rm h)}_{\nu}$, $\sigma \in \{\pm 1\}$, define
  \begin{equation*}
    Z_{x} = 2 + \frac{2}{\#\Sib^{-}(x)}\ind_{\{\#\Sib^{-}(x) \neq 0\}}\,,
  \end{equation*}
  and
  \begin{equation*}
    \b u_{\sigma}(x) = \frac{1}{\sqrt{Z_{x}}} \Biggl( \b 1_{x} + \sigma\frac{\b 1_{S_{1}^{\pp -}(x)}}{\sqrt{D_{x}^{\pp -}}} - \frac{\b 1_{\Sib^{-}(x)}}{\#\Sib^{-}(x)}\Biggr)\,.
  \end{equation*}
  The family $(\b u_{\sigma}(x))_{x \in \hcV^{(\rm h)}_{\nu}, \sigma\in \{\pm 1\}}$ is orthonormal.
\end{proposition}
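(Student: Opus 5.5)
The plan is a direct computation of all inner products. It uses only the forest structure of $G^{\pp}$ from \cref{prop:pruning} and the unique-parent property of \cref{rem:one-parent}. First I check that every piece of $\b u_\sigma(x)$ is nondegenerate. For $x\in\hcV^{(\rm h)}_\nu$ we have $D_x\ge\xi_\nu$, and \cref{prop:pruning}\ref{item:diff-Dx} gives $D^{\pp}_x\ge \xi_\nu/2$. By \cref{rem:one-parent}, $D^{\pp+}_x\le 1$, so $D^{\pp-}_x\ge \xi_\nu/2-1>0$ and the children term is a genuine unit vector. Next, the three supports $\{x\}$, $S_1^{\pp-}(x)$ and $\Sib^-(x)$ are pairwise disjoint. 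Indeed $x\notin\Sib^-(x)$ since $x\not\prec x$, and a child of $x$ that were also a sibling of $x$ would close a triangle with $\hat x$, contradicting that $G^{\pp}$ is a forest. Hence $\norm{\b u_\sigma(x)}^2$ is $Z_x^{-1}$ times $1+1+\#\Sib^-(x)\cdot\#\Sib^-(x)^{-2}$. I will match this against the stated normalisation $Z_x$, whose sibling contribution is written as $2/\#\Sib^-(x)$ rather than $1/\#\Sib^-(x)$. This bookkeeping has to be checked, and I will adjust the reading of the constant if needed.

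For $x\neq y$ I split according to the relative position of $x$ and $y$ in the forest. The supports lie in $\{x\}\cup S_1^{\pp-}(x)\cup \Sib^-(x)$, so the inner product vanishes unless $x$ and $y$ are at distance at most $2$ in $G^{\pp}$, or share a parent. The relevant cases, up to symmetry, are the following.
\begin{enumerate}
\item $y$ is a child of $x$.
\item $y$ is a grandchild of $x$.
\item $x$ and $y$ are siblings with $y\prec x$.
\item $y$ is a child of a sibling of $x$, or similar configurations at distance $2$ or $3$.
\end{enumerate}
In case 2 and the cases of 4, uniqueness of parents makes the supports disjoint. In case 1, the coordinate $y$ contributes $\sigma/\sqrt{D^{\pp-}_x}$. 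Every element of $\Sib^-(y)\subset S_1^{\pp-}(x)$ contributes $(\sigma/\sqrt{D^{\pp-}_x})\cdot(-1/\#\Sib^-(y))$, and these sum to $-\sigma/\sqrt{D^{\pp-}_x}$. The children of $y$ are disjoint from the support of $\b u(x)$. So the total is $0$, which is exactly why the sibling correction is present. In case 3, the coordinate $y\in\Sib^-(x)$ gives $-1/\#\Sib^-(x)$. The overlap $\Sib^-(y)\subset\Sib^-(x)$ gives $\#\Sib^-(y)\cdot \#\Sib^-(x)^{-1}\#\Sib^-(y)^{-1}=1/\#\Sib^-(x)$. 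Children are again disjoint by acyclicity, so the total is $0$. The empty-set conventions cover the degenerate situations, for instance $\#\Sib^-(y)=0$.

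The step I expect to be the main obstacle is the same-vertex pairing $\scalar{\b u_+(x)}{\b u_-(x)}$. The $\b 1_x$ and children parts cancel as for $\b v_\pm(x)$, but the sibling part contributes $+1/\#\Sib^-(x)$ to both, so it does not cancel as written. I would first try to resolve this by checking whether the sibling correction is meant to carry a $\sigma$-dependence, or to be attached differently, and fix the convention consistently with $Z_x$. Once that convention is settled, the remaining verification is the case analysis above, which closes cleanly using only the tree structure.
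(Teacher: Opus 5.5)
You are right that the displayed formula cannot be orthonormal as printed. Using the disjoint supports you identify, as soon as some $x\in\hcV^{(\rm h)}_\nu$ has $s=\#\Sib^-(x)\geq 1$ (the typical situation for vertices with a parent, by \cref{lem:bound-siblings}), one gets $\norm{\b u_\sigma(x)}^2=(2+1/s)/(2+2/s)$ and $\scalar{\b u_+(x)}{\b u_-(x)}=1/(sZ_x)\neq 0$. The genuine gap is that your proposal stops there: you never settle on a correct family, and the repairs you mention do not work. Replacing $Z_x$ by $2+1/s$ fixes the norm but leaves $\scalar{\b u_+(x)}{\b u_-(x)}=1/(sZ'_x)$. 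Making the sibling term $\sigma$-dependent only flips the defect to $-1/(sZ_x)$.

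The missing idea, which is what the paper's proof does, is to build $\b u_\sigma(x)$ from two orthonormal pieces of equal weight:
\begin{equation*}
\hat{\b U}_0(x)=\sqrt{\tfrac{2}{Z_x}}\Bigl(\b 1_x-\frac{\b 1_{\Sib^-(x)}}{\#\Sib^-(x)}\Bigr),\qquad \b V_1(x)=\frac{\b 1_{S_1^{\pp -}(x)}}{\sqrt{D^{\pp -}_x}},\qquad \b u_\sigma(x)=\frac{\hat{\b U}_0(x)+\sigma\b V_1(x)}{\sqrt 2}.
\end{equation*}
With this definition, $Z_x$ is exactly right, because $\norm{\b 1_x-\b 1_{\Sib^-(x)}/s}^2=1+1/s=Z_x/2$. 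What changes relative to the display is that the children term carries $1/\sqrt 2$ instead of $1/\sqrt{Z_x}$. Then $\scalar{\b u_\sigma(x)}{\b u_\rho(x)}=(1+\sigma\rho)/2$ is immediate. Orthonormality of the whole family reduces to orthonormality of $(\hat{\b U}_0(x),\b V_1(x))_x$. Your case 1 is exactly the condition \eqref{eq:ortho+-}, and your case 3 together with the norm computation is \eqref{eq:ortho++}. So once the weighting is corrected, your case analysis is essentially the paper's argument.

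Second, your claim that the empty-set conventions cover $\#\Sib^-(y)=0$ is wrong. Suppose $y\in\hcV^{(\rm h)}_\nu$ has a parent but no smaller sibling. In case 1, the coordinate $y$ of $\b u_\sigma(\hat y)$ has nothing to cancel it, so $\scalar{\b u_\sigma(\hat y)}{\b u_\rho(y)}=\sigma/(2\sqrt{D^{\pp-}_{\hat y}})\neq 0$. In case 3, every larger sibling $x\in\hcV^{(\rm h)}_\nu$ gives $\scalar{\b u_\sigma(x)}{\b u_\rho(y)}=-1/(\sqrt{2Z_x}\,\#\Sib^-(x))\neq0$. Equivalently, $\hat{\b U}_0(y)=\b 1_y$ is not orthogonal to $\b V_1(\hat y)$. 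This configuration must be excluded, not absorbed by conventions. On the $\nu$-high-probability event of \cref{lem:bound-siblings}, every $y\in\hcV^{(\rm h)}_\nu$ with a parent satisfies $\#\Sib^-(y)\geq\frac12 D^{\pp}_{\hat y}>0$. So, just as with your use of \cref{prop:pruning} to ensure $D^{\pp-}_x>0$, the proposition holds on a high-probability event rather than deterministically. The paper's proof does not address this point either, but it is needed.
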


The family $(\b u_{\sigma}(x))$ is convenient as it is an orthonormal
family of vectors that are localized around the vertices of
$\hcV^{(\rm h)}_{\nu}$. In the sequel, we will consider the operator
\begin{equation}\label{eq:op-u}
  \sum_{\substack{x \in \hcV^{(\rm h)}_{\nu}\\\sigma\in\{\pm 1\}}}\sigma\sqrt{D^{\pp -}_{x}}\b u_{\sigma}(x)\b u_{\sigma}(x)^{*}\,.
\end{equation}
In fact, \cref{prop:approx-hat} stated in the sequel implies that this
is a good approximation of $A^{\pp}$ restricted to the eigenvectors of
its greatest eigenvalues. The operator \eqref{eq:op-u} will be further
approximated by
\begin{equation}\label{eq:op-v}
  \sum_{\substack{x \in \hcV^{(\rm h)}_{\nu}\\\sigma\in\{\pm 1\}}}\sigma\sqrt{D^{\pp -}_{x}}\b v_{\sigma}(x)\b v_{\sigma}(x)^{*}\,,
\end{equation}
in \cref{prop:op-u-v-close}. It is convenient to define
\eqref{eq:op-v} as it has good geometric properties: it is the
adjacency matrix of the pruned graph restricted to the neighborhood of
the vertices in $\hcV^{(\rm h)}$. This geometric insight will be useful in
\cref{sec:bounds-overlinepi}. We now show that the matrices \eqref{eq:op-u} and \eqref{eq:op-v}
are similar.
\begin{proposition}\label{prop:op-u-v-close}
  Let $\nu > 0$. With $\nu$-high probability,
  \begin{equation*}
    \left\|\sum_{x \in \hcV^{(\rm h)}_{\nu}} \sum_{\sigma\in\{\pm 1\}} \sigma\sqrt{D^{\pp -}_{x}}\Bigl(\b u_{\sigma}(x)\b u_{\sigma}(x)^{*} - \b v_{\sigma}(x)\b v_{\sigma}(x)^{*}\Bigr)\right\| \leq \sqrt{9\nu \frac{\log N}{\log\log N}}\,.
  \end{equation*}
\end{proposition}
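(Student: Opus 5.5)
By direct algebra, the difference of the two operators in \cref{prop:op-u-v-close} is a sum of explicit, sparse matrices, one per vertex $x$. I would compute it exactly and then bound its norm using the forest structure of $G^{\pp}$ (\cref{prop:pruning}, \cref{rem:one-parent}).

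\emph{Step 1: exact formula for each summand.} Fix $x\in\hcV^{(\rm h)}_\nu$ and abbreviate $s\deq\#\Sib^-(x)$, $\b a\deq\b 1_x$, $\b b\deq\b 1_{S_1^{\pp-}(x)}/\sqrt{D_x^{\pp-}}$ and $\b w\deq\b 1_{\Sib^-(x)}/s$. Expanding the rank-one products, the terms even in $\sigma$ cancel and
\begin{equation*}
\sum_{\sigma}\sigma\,\b v_\sigma(x)\b v_\sigma(x)^*=\b a\b b^*+\b b\b a^*\,,\qquad
\sum_{\sigma}\sigma\,\b u_\sigma(x)\b u_\sigma(x)^*=\frac{2}{Z_x}\bigl((\b a-\b w)\b b^*+\b b(\b a-\b w)^*\bigr)\,.
\end{equation*}
If $s=0$, then $Z_x=2$ and $\b w=0$, so the summand vanishes. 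If $s\geq1$, then $2/Z_x-1=-1/(s+1)$ and $2s/Z_x=s/(s+1)$. Multiplying by $\sqrt{D_x^{\pp-}}$, the summand becomes
\begin{equation*}
-\frac{1}{s+1}\Bigl(\b 1_{\{x\}\cup\Sib^-(x)}\,\b 1_{S_1^{\pp-}(x)}^*+\b 1_{S_1^{\pp-}(x)}\,\b 1_{\{x\}\cup\Sib^-(x)}^*\Bigr)\,.
\end{equation*}
Thus the whole difference is $-(M+M^*)$, where $M\deq\sum_{x:\,\Sib^-(x)\neq\emptyset}\b f_x\b g_x^*$ with $\b f_x\deq\b 1_{\{x\}\cup\Sib^-(x)}/(s_x+1)$ and $\b g_x\deq\b 1_{S_1^{\pp-}(x)}$.

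\emph{Step 2: reduction to a diagonal-type bound.} In the forest $G^{\pp}$ every vertex has at most one parent, so the sets $S_1^{\pp-}(x)$ are pairwise disjoint. Hence the $\b g_x$ are orthogonal and
\begin{equation*}
MM^*=\sum_x D_x^{\pp-}\,\b f_x\b f_x^*\,.
\end{equation*}
For a nonnegative unit vector $\b u$, Cauchy--Schwarz gives
\begin{equation*}
\scalar{\b u}{MM^*\b u}\leq\sum_x\frac{D_x^{\pp-}}{s_x+1}\sum_{y\in\{x\}\cup\Sib^-(x)}u_y^2\,.
\end{equation*}
So it suffices to show that, with $\nu$-high probability and for every $y$,
\begin{equation*}
\sum_{x\in\{y\}\cup\{x'\,:\,y\in\Sib^-(x')\}}\frac{D_x^{\pp-}}{s_x+1}\leq\frac{9\nu}{4}\frac{\log N}{\log\log N}\,.
\end{equation*}
The sum runs over $y$ and the siblings of $y$ that are larger than $y$ and have $s\geq1$. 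The factor $1/(s_x+1)$ helps here: the $j$-th largest-ranked such sibling has $s_x\geq j$. With this bound, $\|M\|\leq\tfrac32\sqrt{\nu\log N/\log\log N}$ and $\|M+M^*\|\leq2\|M\|$, which gives the constant $\sqrt{9\nu\log N/\log\log N}$ of the statement.

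\emph{Step 3 (main obstacle).} The hard part is the probabilistic input. A vertex $x$ with a parent $\hat x$ and a smaller sibling $y$ has $\hat x\succ x\succ y$, and here $D_x^{\pp-}$ must itself be of order at most $\log N/\log\log N$. Heuristically this holds because $x$ is then a non-maximal neighbour of $\hat x$ in $G^{\nc}$. First I would bound $D_x\leq D_{\hat x}$ and show that $\hat x$ cannot have many children of comparable degree, by the argument of \cref{lem:bound-D+} read from $\hat x$'s point of view. Next I would bound the count of pairs (child of $\hat x$ with degree $\geq\xi$, with many children of its own). For this I would follow \cref{lem:bound-op-high}: couple $B^{\nc}_2(\hat x)$ with $\Tree_{\hat x}$ via \cref{prop:embedding-Tx}, then apply a Chernoff bound with $\lambda\asymp\log\log N$ and Markov bounds on $D^{\Tree\uparrow}$, using \cref{hyp:behavior-m21} to absorb the powers of $m_2/m_1$. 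If this direct route fails, I would fall back to bounding the part of $M$ supported on such $x$ by the removed-edge operator already controlled in \cref{prop:error-pruning}.
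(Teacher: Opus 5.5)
Your Steps 1 and 2 are correct, and they repackage the paper's computation neatly. The identity $\sum_\sigma\sigma\sqrt{D_x^{\pp-}}\,(\b u_\sigma(x)\b u_\sigma(x)^*-\b v_\sigma(x)\b v_\sigma(x)^*)=-\frac{1}{s_x+1}\bigl(\b 1_{\{x\}\cup\Sib^-(x)}\b 1_{S_1^{\pp-}(x)}^*+\b 1_{S_1^{\pp-}(x)}\b 1_{\{x\}\cup\Sib^-(x)}^*\bigr)$, the orthogonality of the $\b 1_{S_1^{\pp-}(x)}$, and the Cauchy--Schwarz step do reduce the statement to bounding $\sup_y\sum_x D_x^{\pp-}/(s_x+1)$ over $x\in\{y\}\cup\{x'\col y\in\Sib^-(x')\}$.

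\textbf{The gap is Step 3.} You claim that a vertex $x\in\hcV^{(\rm h)}_\nu$ with a parent and a smaller sibling has $D_x^{\pp-}=O(\log N/\log\log N)$. This is false; since $D_x\geq\xi_\nu$ on $\hcV^{(\rm h)}_\nu$, it would force $D_x\asymp\xi_\nu$ for all such vertices.

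\emph{Counterexample.} In the power-law setting of \cref{ex:heavy-tail}, the top vertex $z$ typically has polynomially many neighbours $x$ of degree $N^{c}$, for some small $c>0$. Each such $x$:
\begin{itemize}
\item keeps its edge to $z$, since $z$ is its largest up-neighbour and $x\notin\Sdu(z)$ (nothing lies above $z$);
\item keeps most of its own neighbours as children;
\item has many smaller siblings.
\end{itemize}
So $D_x^{\pp-}\asymp N^{c}$.

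Controlling how many children of $\hat x$ have degree comparable to $D_{\hat x}$ (essentially what the proof of \cref{lem:bound-siblings} does) does not bound the individual $D_x^{\pp-}$. Moreover, even a per-vertex bound $K$, combined with your lower bound $s_x\geq j$, would only give $K\sum_j 1/(j+1)\asymp K\log\#\Sib(y)$, which loses a logarithm. The fallback through \cref{prop:error-pruning} does not apply either: $M$ is built from edges of $G^{\pp}$, not from removed edges.

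\textbf{The missing idea} is that the denominators, not the numerators, are large, and the numerators only need to be controlled on average. By \cref{lem:bound-siblings}, $s_x\geq\frac12 D_{\hat x}^{\pp}$ for every $x\in\hcV^{(\rm h)}_\nu$ with a parent. The reason is that $\hat x$ has at most $D_{\hat x}/4$ neighbours in $\hcV^{(\rm h)}_\nu$, and all its other children lie below $x$ in $\prec$. In particular $s_y\geq D_y/4$. Hence your sum is at most
\begin{equation*}
\frac{D_y^{\pp-}}{s_y+1}+\frac{2}{D^{\pp}_{\hat y}}\sum_{x\in S_1^{\pp-}(\hat y)}D_x^{\pp-}\,.
\end{equation*}
The first term is $O(1)$. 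The second is $O(\nu\log N/\log\log N)$ by the averaged estimate of \cref{lem:descending-ball}, applied at $\hat y\in\hcV^{(\rm h)}_\nu$. This is exactly how the paper bounds the $\Sib^-$-term; the $(2/Z_x-1)$-term is $O(\xi_\nu^{-1/2})$. With this replacement for Step 3 your argument closes, up to the exact numerical constant.
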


We now turn to the proofs. Before showing that the vectors
$({\b u}_\sigma(x))$ form an orthogonal family, we explain heuristically
how their expression was found. Below, we discuss a Gram-Schmidt
orthonormalization procedure. The vector obtained by this procedure
will provide us with the ansatz that allowed us to define the
$(\b u_{\sigma}(x))_{x\in\hcV^{(\rm h)}_{\nu}, \sigma \in \left\{ \pm 1 \right\}}$.

Introduce the families of vectors
$(\b V_{0}(x))_{x \in \hcV^{(\rm h)}_{\nu}}, (\b V_{1}(x))_{x \in \hcV^{(\rm h)}_{\nu}}$, defined
by
\begin{equation*}
  \b V_{0}(x) = \b 1_{x}\,, \quad \text{ and }\quad  \b V_{1}(x) = \frac{\b 1_{S_{1}^{\pp -}(x)}}{\sqrt{D^{\pp -}_{x}}}\,.
\end{equation*}
In particular, we have $\b v_{\sigma}(x) = (\b V_{0}(x) + \sigma \b V_{1}(x))/\sqrt{2}$.

The family of vectors we shall consider is the family
$(\b V_{1}(x), \b U_{0}(x))_{x \in \hcV^{(\rm h)}_{\nu}}$ obtained after applying
the Gram-Schmidt orthonormalization procedure on
$(\b V_{1}(x), \b V_{0}(x))_{x \in \hcV^{(\rm h)}_{\nu}}$, starting with the vectors
of $(\b V_{1}(x))$, and ordering the vectors of $(\b V_{0}(x))$
decreasingly according to the strict order $\prec$. Working in the pruned
graph makes this procedure simpler. \cref{prop:pruning} implies that
the family $(\b V_{1}(x))_{x\in\hcV^{(\rm h)}_{\nu}}$ is orthonormal. Indeed, if
$x \neq y$, then $\left< \b V_{1}(x), \b V_{1}(y) \right>$ is nonzero if
and only if there is a down-up path between $x\in \hcV^{(\rm h)}_\nu$ and $y\in \hcV^{(\rm h)}_\nu$,
and in the pruned graph there are no such paths. Thus, we only need to consider the vectors $\b U_0(x), x \in \hcV^{(\rm h)}_{\nu}$.

The vectors $\b U_{0}(x), x \in \hcV^{(\rm h)}_{\nu}$ resulting from the
Gram-Schmidt procedure, are defined by
\begin{equation*}
    \b U_{0}(x) \deq \frac{\tilde{\b U}_{0}(x)}{\|\tilde{\b U}_{0}(x)\|}\,, \qquad
    \tilde{\b U}_{0}(x) \deq \b V_{0}(x) - \sum_{y \in \hcV^{(\rm h)}_\nu} \left< \b V_{0}(x), \b V_{1}(y) \right> \b V_{1}(y) - \sum_{\substack{y \in \hcV^{(\rm h)}_\nu\\y \succ x}} \left< \b V_{0}(x), \b U_{0}(y) \right> \b U_{0}(y)\,.
    \end{equation*}

We then have
\begin{equation}\label{eq:U-tilde}
  \begin{split}
    \tilde{\b U}_{0}(x)
    &= \b 1_{x} - \sum_{y \in \hcV^{(\rm h)}_\nu} \frac{ \left< \b 1_{x}, \b 1_{S_{1}^{\pp -}(y)} \right>}{D^{\pp -}_{y}}\b 1_{S_{1}^{\pp -}(y)} - \sum_{\substack{y \in \hcV^{(\rm h)}_\nu\\y \succ x}} \left< \b 1_{x}, \b U_{0}(y) \right>\b U_{0}(y)\\
    &= \b 1_{x} - \sum_{y \in \hcV^{(\rm h)}_\nu} \frac{\ind_{\{x \sim y, x \prec y\}}}{D^{\pp -}_{y}}\b 1_{S_{1}^{\pp -}(y)} - \sum_{\substack{y \in \hcV^{(\rm h)}_\nu\\y \succ x}} \left< \b 1_{x}, \b U_{0}(y) \right>\b U_{0}(y)\\
    &= \b 1_{x} - \frac{1}{D^{\pp -}_{\hat{x}}}\b 1_{S_{1}^{\pp -}(\hat{x})} - \sum_{y \succ x} \left< \b 1_{x}, \b U_{0}(y) \right>\b U_{0}(y)\,.
  \end{split}
\end{equation}

\begin{lemma}\label{lem:support-U+}
  For all $x \in \hcV^{(\rm h)}_{\nu}$, the vector $\b U_{0}(x)$ is supported on
  $\{x\} \cup \Sib(x)$.
\end{lemma}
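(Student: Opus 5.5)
We argue by strong induction along the order $\prec$ restricted to $\hcV^{(\rm h)}_{\nu}$, going from the largest vertex down. This matches the order of the Gram--Schmidt procedure: $\b U_0(x)$ is defined in terms of the $\b U_0(y)$ with $y \succ x$. The inductive claim is that for every $y \in \hcV^{(\rm h)}_{\nu}$ with $y \succ x$, the vector $\b U_0(y)$ is supported on $\{y\} \cup \Sib(y)$. Given this, we look at the last line of \eqref{eq:U-tilde}. The term $\b 1_x$ is supported on $\{x\}$. The term $\b 1_{S_1^{\pp -}(\hat x)}/D^{\pp -}_{\hat x}$ is supported on $S_1^{\pp-}(\hat x) = \Sib(x)$, and by convention it is absent if $x$ has no parent; note $x \in \Sib(x)$ in that case. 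So it only remains to control the last sum, $\sum_{y \succ x} \scalar{\b 1_x}{\b U_0(y)}\, \b U_0(y)$.

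In this sum only those $y$ with $\scalar{\b 1_x}{\b U_0(y)} \neq 0$ contribute. By the induction hypothesis this forces $x \in \{y\} \cup \Sib(y)$. Since $y \succ x$ we have $y \neq x$, so $x \in \Sib(y)$. Then $x$ and $y$ have the same parent, i.e.\ $\hat y = \hat x$, and hence $\Sib(y) = S_1^{\pp-}(\hat x) = \Sib(x)$; in particular $y \in \Sib(x)$. Again by the induction hypothesis, every contributing $\b U_0(y)$ is supported on $\{y\} \cup \Sib(y) \subset \Sib(x)$. If $x$ has no parent, then $\Sib(x) = \emptyset$, no $y$ contributes, and $\tilde{\b U}_0(x) = \b 1_x$. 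In both cases $\tilde{\b U}_0(x)$, and therefore $\b U_0(x)$, is supported on $\{x\} \cup \Sib(x)$.

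For this argument to work, the notion of sibling has to be well defined, i.e.\ each vertex must have a unique parent. This holds in $G^{\pp}$ by \cref{rem:one-parent}. The second sum in the first line of \eqref{eq:U-tilde} collapses to the single parent term for the same reason: the only $y$ with $x \in S_1^{\pp-}(y)$ is $y = \hat x$. Here we need $\hat x \in \hcV^{(\rm h)}_\nu$; otherwise that term is simply absent, which only shrinks the support. The base case is the $\prec$-largest vertex of $\hcV^{(\rm h)}_\nu$, for which the last sum is empty and the conclusion follows directly.

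The main point needing care is the normalization: we must have $\tilde{\b U}_0(x) \neq 0$ so that $\b U_0(x)$ is defined. This holds because the $\b 1_x$ coefficient survives. Indeed, $\scalar{\b 1_x}{\b U_0(y)}\b U_0(y)$ with $y \in \Sib(x)$ removes only the component of $\b 1_x$ along $\b U_0(y)$, which lies in the span of $\{\b 1_z : z \in \Sib(x)\}$. The vector $\b 1_x$ is linearly independent from $\b 1_{S_1^{\pp-}(\hat x)}$ together with the previous $\b U_0(y)$ as long as $\#\Sib(x)$ exceeds the number of earlier vectors. This is automatic whenever $\Sib(x) \cap \hcV^{(\rm h)}_\nu$ does not exhaust $\Sib(x)$. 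I expect that degenerate case to be the only subtle step, and the support claim itself is unaffected by it.
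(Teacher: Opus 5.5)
Your proof is correct and follows the paper's argument. It uses downward induction along $\prec$ on the last line of \eqref{eq:U-tilde}: the induction hypothesis forces every $y \succ x$ with $\scalar{\b 1_x}{\b U_0(y)} \neq 0$ to be a sibling of $x$, and $\Sib(y) = \Sib(x)$ then confines the support. Two of your side remarks can be sharpened. First, $\hat x \in \hcV^{(\rm h)}_\nu$ is automatic, since $D_{\hat x} \geq D_x \geq \xi_\nu$. Second, $\tilde{\b U}_0(x) = 0$ happens exactly when $x$ has a parent and $\Sib^{-}(x) = \emptyset$, so the $\b 1_x$ coefficient does not survive in general; this case is excluded with $\nu$-high probability by \cref{lem:bound-siblings}, and it does not affect the support claim.
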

\begin{proof}
  We proceed by induction, starting from the vertices that are the
  greatest for the order $\prec$. We first notice that for all
  $x \in \hcV^{(\rm h)}_{\nu}$ which has no parent, we have
  $\tilde{\b U}_{0}(x) = \b 1_{x}$.

  Then, considering \eqref{eq:U-tilde}, we see that
  \begin{equation*}
    \b 1_{x} -  \frac{1}{D^{\pp -}_{\hat{x}}}\b 1_{S_{1}^{\pp -}(\hat{x})}
  \end{equation*}
  is supported on $\{x\} \cup S_{1}^{\pp -}(\hat{x})$.

  By the induction hypothesis $\left\langle \b 1_{x}, \b U_{0}(y)\right\rangle$ with $y \succ x$, is
  non-zero only if $x \in S_{1}^{\pp -}(\hat{y})$, i.e.\ $x$ and $y$ are
  siblings. Thus,
  \begin{equation*}
    \sum_{\substack{y \in \hcV^{(\rm h)}_{\nu}\\y \succ x}} \left< \b 1_{x}, \b U_{0}(y) \right>\b U_{0}(y)
  \end{equation*}
  is supported on $\{x\} \cup S_{1}^{-}(\hat{x})$. Indeed, the siblings of
  the siblings $y$ of $x$ are the siblings of $x$.
\end{proof}

\cref{lem:support-U+} serves as heuristics to prove \cref{prop:ps-ev}.
\begin{proof}[Proof of \cref{prop:ps-ev}]
  Motivated by \cref{lem:support-U+}, we look for vectors
  $\hat{\b U}_{0}(x), x \in \hcV^{(\rm h)}_{\nu}$ of the form
  \begin{equation*}
    \hat{\b U}_{0}(x) = a_{x}\b 1_{x} + \sum_{y \in S_{1}^{\pp -}(\hat{x})}b_{x}(y)\b 1_{y}\,.
  \end{equation*}
  We use again the vectors
  \begin{equation*}
    \b V_{1}(x) = \frac{1}{\sqrt{D^{\pp -}_{x}}}\b 1_{S_{1}^{\pp -}(x)}\,.
  \end{equation*}
  We are going to find necessary and sufficient conditions for the family
  $(\hat{\b U}_{0}(x), \b V_{1}(x))_{x\in \hcV^{(\rm h)}_{\nu}}$ to be
  orthogonal. If it were the case then, for all $x, y \in \hcV^{(\rm h)}_{\nu}$,
  \begin{equation}\label{eq:ortho+-}
    0 = \left\langle\hat{\b U}_{0}(x), \b V_{1}(y)\right\rangle = \delta_{y \hat{x}} \left(\frac{a_{x}}{\sqrt{D_{y}^{\pp -}}} + \sum_{z \in S_{1}^{\pp -}(\hat{x})}\frac{b_{x}(z)}{\sqrt{D_{y}^{\pp -}}}\right)\,.
  \end{equation}
  Furthermore, for all $x, y\in \hcV^{(\rm h)}_{\nu}$,
  \begin{equation}\label{eq:ortho++}
    \delta_{x y} = \left< \hat{\b U}_{0}(x), \hat{\b  U}_{0}(y) \right> = a_{x}^{2}\delta_{xy} + \delta_{\hat{x} \hat{y}} \left( a_{x}b_{y}(x) + a_{y}b_{x}(y) \right) + \delta_{\hat{x} \hat{y}}\sum_{z \in S_{1}^{\pp -}(\hat{x})}b_{x}(z)b_{y}(z)\,.
  \end{equation}
  This implies than if $x = y$
  \begin{equation*}
    1 = a_{x}^{2} + 2a_{x}b_{x}(x) +  \sum_{z \in S_{1}^{\pp -}(\hat{x})}b_{x}(z)^{2}\,,
  \end{equation*}
  and if $x \neq y$ with $\hat{x} = \hat{y}$,
  \begin{equation*}
    0 = a_{x}b_{y}(x) + a_{y}b_{x}(y) + \sum_{z \in S_{1}^{\pp -}(\hat{x})}b_{x}(z)b_{y}(z)\,.
  \end{equation*}

 Consider the particular choice
  \begin{equation*}
    a_{x} = \frac{1}{\sqrt{Z_{x}/2}} \quad \text{ and } \quad b_{x}(y) = -\frac{\ind_{\{y \prec x\}}}{\sqrt{Z_{x}/2}\#\Sib^{-}(x)},
  \end{equation*}
  if $x$ has a parent and $b_{y}(x) = 0$ otherwise. With this choice
  of coefficients, \eqref{eq:ortho+-} and \eqref{eq:ortho++} are satisfied. The family
  \begin{equation*}
    (\b V_{1}(x), \hat{\b U}_{0}(x))_{x\in \hcV^{(\rm h)}_{\nu}} = \left(\frac{\b 1_{S_{1}^{-}(x)}}{\sqrt{D^{\pp -}_{x}}}\,,\, a_{x}\b 1_{x} + \sum_{y \in S_{1}^{\pp -}(\hat{x})}b_{x}(y)\b 1_{y}\right)_{x \in \hcV^{(\rm h)}_{\nu}}
  \end{equation*}
  is then orthonormal. Note however that we do not claim that it is
  the family obtained from the orthonormalization of
  $(\b V_{1}(x), \b V_{0}(x))$. This does not matter: defining
  \begin{equation*}
    \b u_{\sigma}(x) = \frac{\hat{\b U}_{0}(x) + \sigma \b V_{1}(x)}{\sqrt{2}}
  \end{equation*}
  yields an orthonormal family.
\end{proof}

We now turn to the proof of \cref{prop:op-u-v-close}. We first need to prove
Lemmas \ref{lem:descending-ball} and \ref{lem:bound-siblings} to
bound the number of vertices in a ball around a vertex $x$, and of
siblings of a vertex $x$. \cref{lem:descending-ball} will also be
used in the proof of \cref{prop:approx-hat}.
\begin{lemma}\label{lem:descending-ball}
  Let $x \in \vertices$, and $\eta > 0$ and $\nu > 0$ two constants. With
  $\nu$-high probability, for all $x \in \hcV^{(\rm h)}_{\nu}$ we have
  \begin{equation*}
    \frac{1}{D^{\pp}_{x}}\sum_{y \in S_{1}^{\pp -}(x)}D_{y}^{\pp -} \leq 3\nu \frac{\log N}{\log\log N}\,.
  \end{equation*}
\end{lemma}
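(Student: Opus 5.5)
We bound the numerator by a quantity in the original graph and then in the coupled tree $\Tree_x$, and conclude with a Chernoff bound of the type used in the proof of \cref{lem:bound-op-high}. Since $D^{\pp -}_y \leq D_y$ and $S_1^{\pp -}(x) \subset S_1(x)$, it suffices to control
\[
\sum_{y \in S_1^{\pp -}(x)} D_y^{\pp -}.
\]
The key structural input is the absence of down-up paths in $G^{\pp}$. Let $y \in S_1^{\pp -}(x)$, so that $y \prec x$, and let $z \in S_1^{\pp -}(y)$, so that $z \prec y$. Then $(x,y,z)$ is a path going down twice. The ordering forces $D_z \leq D_y \leq D_x$. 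Moreover, all these edges survive the pruning, so $y \notin \Sdu(x)$; for the original graph this means that every neighbour $z'$ of $y$ in $G^{\nc}$ other than $x$ satisfies $z' \prec x$. This rules out large-degree contributions hanging below $y$ but not the total count, so the count is the real content of the lemma.

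I would therefore bound $\sum_{y \in S_1^{\nc}(x),\, y \prec x} \#\{z \in S_1^{\nc}(y)\setminus\{x\}\}$. Using \cref{prop:embedding-Tx}, this is at most
\[
\sum_{y} \ind_{\{x \simT{x} y\}}\, \ind_{\{y \prec x\}}\, D^{\Tree_x\uparrow}_y .
\]
The condition $y \prec x$ gives $D_y \leq D_x$. Via \cref{rem:ratio-big-dx}, with $\chi = \log\log N$, this allows the restriction $d_y \leq \chi D_x$. I would then apply the Chernoff bound with parameter $\lambda = \chi/\log N$. Conditionally on $\mathcal F_1(x)$, each $D_y^{\Tree\uparrow}$ is a sum of independent Bernoulli variables with mean at most $w_y$. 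Hence
\[
\E\bigl[\ee^{\lambda D^{\Tree\uparrow}_y}\,\big|\,\mathcal F_1(x)\bigr] \leq \exp\bigl(w_y(\ee^\lambda - 1)\bigr).
\]
Here $\lambda w_y \lesssim \chi^2 D_x/\log N$. Taking the product over $y$ with $\P(x\simT{x} y)=p_{xy}$ gives the exponent
\[
\sum_y p_{xy}\,\bigl(\ee^{c\lambda w_y}-1\bigr).
\]

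The main obstacle is the size of $w_y$. When $w_y \lambda = O(1)$, the exponent linearises to $\lambda w_x m_2/m_1 \lesssim \lambda D_x \chi (\log N)^{\delta}$. This is small compared with $\lambda \cdot 3\nu D_x \log N/\log\log N$, so Markov/Chernoff yields that the sum is at most $3\nu D_x \log N/\log\log N$ with $\nu$-high probability. Note that $D_x \geq \xi_\nu \gtrsim \log N/\log\log N$ since $x \in \hcV^{(\rm h)}_\nu$.

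For $w_y \gg \log N/\chi$, I would instead use the truncation $D_y \leq D_x$ together with \cref{lem:estimate-degree}. Such $y$ have $D_y \leq 4 w_y$ with high probability, and their number among the neighbours of $x$ is controlled by the bound on $D^{\nc +}$-type quantities (\cref{lem:bound-D+}). Each of them contributes at most $D_x$, so their total is at most $D_x \cdot O(\log N/\log\log N)$.

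Finally, I would replace $D^{\pp}_x$ by $D_x$ in the denominator using \cref{prop:pruning}: $D_x - D^{\pp}_x \leq \xi/2 \leq D_x/2$. Adjusting constants then gives the factor $3\nu$, and a union bound over $x$ completes the proof.
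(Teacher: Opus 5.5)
There is a genuine gap in your Chernoff step: it does not give $\nu$-high probability over the whole of $\hcV^{(\rm h)}_\nu$. With $\lambda=\chi/\log N$ and threshold $k=3\nu D_x\log N/\log\log N$ you have $\lambda k=3\nu D_x$. Since $S\ge 0$ gives $\log\E e^{\lambda S}\ge 0$, the resulting bound is never better than $e^{-3\nu D_x}$. This is $O(N^{-\nu})$ only when $D_x\gtrsim\frac13\log N$. But $\hcV^{(\rm h)}_\nu$ only requires $D_x\ge\xi_\nu\asymp\log N/\log\log N$, and for such $x$ you get merely $\exp(-c\log N/\log\log N)$.

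Taking $\lambda\asymp\chi/D_x$ instead would repair the exponent. However, $\lambda w_y$ is then no longer $O(1)$, and the moment generating function of the untruncated $D^{\Tree_x\uparrow}_y$ can no longer be linearised.

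Your large-weight part has a separate problem. \cref{lem:bound-D+} counts neighbours $y$ with $D_y\ge D_x$ (i.e.\ $y\succ x$), not neighbours $y\prec x$ with $w_y>\log N/\chi$. The expected number of the latter can be of order $\chi w_x(\log N)^{\delta-1}$, which is much larger than $\log N/\log\log N$ when $w_x$ is a large power of $\log N$. So ``count times $D_x$'' does not yield $O(D_x\log N/\log\log N)$ there.

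The missing idea is to use the cap $D^{\nc}_y\le D_y\le D_x$ for \emph{every} $y\prec x$, and to normalise by $D_x$ before concentrating. The paper uses $w_x\le\chi D_x$ (\cref{rem:ratio-big-dx}) and the coupling to bound $\frac1{D_x}\sum_{y\in S_1^-(x)\setminus\cycV(x)}(D^{\nc}_y-1)$ by $\sum_y\ind_{\{x\simT{x}y\}}\bigl(1\wedge\chi D^{\Tree_x\uparrow}_y/w_x\bigr)$. This is a sum of independent $[0,1]$-valued variables (\cref{lem:T-indep}) with total mean $v\le\chi m_2/m_1\lesssim\chi(\log N)^\delta$.

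Bennett's inequality then gives the bound $\exp(-(v+t)\log(t/v)+t)$. For $t\asymp\frac{\nu}{1-\delta}\frac{\log N}{\log\log N}$, the gain $\log(t/v)\approx(1-\delta)\log\log N$ yields a tail $N^{-c\nu}$ uniformly in $D_x$. Finally, $D^{\pp}_x\ge D_x-\xi/2\ge D_x/2$ (\cref{prop:pruning}) converts this into the statement.

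In your language this amounts to $\lambda\asymp\log\log N/D_x$, which is affordable precisely because each summand is capped at $D_x$. No split according to $w_y$ is needed.
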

\begin{lemma}\label{lem:bound-siblings}
  Let $\nu > 0$. With $\nu$-high probability, for all $x \in \hcV^{(\rm h)}_{\nu}$ that
  has a parent, we have
  \begin{equation*}
    \#\Sib^{-}(x) \geq \frac{1}{2}D_{\hat{x}}^{\pp}\,.
  \end{equation*}
\end{lemma}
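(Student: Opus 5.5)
The statement to prove is \cref{lem:bound-siblings}. The plan is to bound $D^{\pp}_{\hat x} - \#\Sib^{-}(x)$ from above by $\xi_\nu$ and to use $D^{\pp}_{\hat x}\geq 2\xi_\nu$.

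\emph{Step 1: reduce to a count of larger siblings.} Let $x\in\hcV^{(\rm h)}_\nu$ have a parent $\hat x$. By \cref{rem:one-parent}, $\hat x$ has at most one neighbour in $S_1^{\pp+}(\hat x)$. Hence
\begin{equation*}
\#\Sib^{-}(x) = D^{\pp -}_{\hat x} - \#\{y\in S_1^{\pp-}(\hat x)\col y\succeq x\} \geq D^{\pp}_{\hat x} - 1 - \#\{y \in S_1^{\pp-}(\hat x)\col y \succeq x\}.
\end{equation*}
So it suffices to show that, for every $x \in \hcV^{(\rm h)}_\nu$, the number of children $y$ of $\hat x$ with $y \succeq x$ is at most of order $\log N/\log\log N$. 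We also need $D^{\pp}_{\hat x}$ to be large compared with this count.

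\emph{Step 2: lower-bound the parent's degree.} Since $\hat x\succ x$, we have $D_{\hat x}\geq D_x\geq \xi_\nu$. By \cref{prop:pruning}\ref{item:diff-Dx}, $D^{\pp}_{\hat x}\geq D_{\hat x}-\xi_\nu/2$. This alone only gives $D^{\pp}_{\hat x}\geq \xi_\nu/2$. If the Step 3 bound on the larger siblings is $\leq \xi_\nu/4-1$, we get $\#\Sib^-(x)\geq D^{\pp}_{\hat x}-\xi_\nu/4$. That is $\geq \tfrac12 D^{\pp}_{\hat x}$ exactly when $D^{\pp}_{\hat x}\geq \xi_\nu/2$, which is what we have. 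So the constants close, provided the count in Step 3 is bounded by $\xi_\nu/4 - 1$. If the constants turn out to be off, I would raise the high-degree threshold by a constant factor, or sharpen via \cref{lem:bound-D+}, whose constant $c\nu/(1-\delta)$ is well below $\xi_\nu/4$.

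\emph{Step 3: count children of $\hat x$ at least as large as $x$ (main obstacle).} Each such child $y$ satisfies $D_y\geq D_x\geq \xi_\nu$, and $y\prec \hat x$. The key observation is that in $G^{\nc}$ the path $(y,\hat x,\cdot)$ would be down-up if $\hat x$ had a larger neighbour. Since $\hat x$ kept its edges to $y$, no such neighbour survives outside the cycle set. This does not directly bound the count, however. Instead, I would bound the number of neighbours $y$ of a vertex $z$ with $D_y\geq D_x$, for $z=\hat x$ and uniformly in $x$, by mimicking the proof of \cref{lem:bound-D+}. Take $z=\hat x$ and fix the threshold $t=D_x$. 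Note that $D_x\leq D_{\hat x}$ while $D_x\geq \xi_\nu$. Via \cref{rem:ratio-big-dx} and the tree $\Tree_{\hat x}$, the count $\sum_y \ind_{\{\hat x\sim y,\ \chi (D^{\Tree\uparrow}_y+C_\nu)\geq \xi_\nu\}}$ has mean $\lesssim \chi\, w_{\hat x}\,(m_2/m_1)/\xi_\nu$. That mean is not small when $w_{\hat x}$ is large, which is the real difficulty.

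To handle it, I would first try to exploit the order on siblings. The children $y\succeq x$ of $\hat x$ all lie in $\hcV^{(\rm h)}_\nu$. Each of them, being a child of $\hat x$, has $\hat x$ as its unique larger neighbour in $G^{\pp}$. Consider instead the dichotomy on $x$ itself: either $x$ is among the $\xi_\nu/4$ largest children of $\hat x$, or not. In the first case the count is trivially small. The difficulty is only to show $D^{\pp}_{\hat x}\gg$ this count, i.e.\ that $\hat x$ has many small children. For that I would apply Bennett's inequality, through \cref{lem:estimate-degree} and the coupling \cref{prop:embedding-Tx}, to the number of neighbours of $\hat x$ with degree $<\xi_\nu$. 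That number has mean $\approx d_{\hat x}(1-o(1))$ because $m_2/m_1=O((\log N)^\delta)\ll \xi_\nu$. So with $\nu$-high probability, at least $(1-o(1))D_{\hat x}$ of the children of $\hat x$ are smaller than every high-degree vertex, hence smaller than $x$. Finally, a union bound over $x$ and $\hat x$ costs a factor $N^2$, which is absorbed by replacing $\nu$ with $\nu+2$.
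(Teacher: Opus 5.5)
Your reduction is the right one, and it matches the paper's. Every $G^{\pp}$-neighbour of $\hat x$ outside $\Sib^{-}(x)$ (the siblings $y\succeq x$ and the parent of $\hat x$) has degree at least $D_x\geq\xi_\nu$, so it suffices to show that $\hat x$ has few high-degree neighbours. The gap is in the estimate that carries the proof. As you noticed, the absolute target of Step~2 (at most $\xi_\nu/4-1$ children $\succeq x$) fails in general when $w_{\hat x}$ is large. The fallback in Step~3 does not work either. Bennett's inequality for the number of low-degree neighbours of $\hat x$, centred at its mean $\approx d_{\hat x}$, is a lower-tail bound with fluctuations of order $\sqrt{\nu d_{\hat x}\log N}$. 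This is useless unless $d_{\hat x}\gg\log N$, and it compares the count with $d_{\hat x}$, not with $D_{\hat x}$. The delicate parents are exactly those with $D_{\hat x}$ of order $\xi_\nu\asymp\log N/\log\log N$. For them $d_{\hat x}$ may be even smaller, because $D_{\hat x}$ can be atypically large, and your argument gives nothing. Two further problems: a $(1-o(1))$ fraction is more than a large-deviation bound at level $N^{-\nu}$ yields at this scale (one gets a constant fraction, which suffices). And counting low-degree \emph{children} in $G^{\pp}$ directly would lose up to $\xi_\nu/2$ pruned edges at $\hat x$, which is comparable with $D_{\hat x}$.

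The missing idea, which is what the paper does, is to bound the complementary count $H_{\hat x}\deq\#\{y\in S_1^{\nc}(\hat x)\col D_y\geq\xi_\nu\}$ through its \emph{upper} tail, at a threshold proportional to $D_{\hat x}$. By \cref{prop:edge-cyc}, \cref{prop:embedding-Tx} and \cref{corol:bound-Dnc}, $H_{\hat x}$ is dominated by $\sum_{y}\ind_{\{\hat x\simT{\hat x}y,\ \xi_\nu\leq D^{\Tree_{\hat x}\uparrow}_{y}+C_\nu\}}$. This is a sum of independent indicators (\cref{lem:T-indep}) with mean $v\lesssim\frac{w_{\hat x}}{\xi_\nu}\frac{m_2}{m_1}$. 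Take $\chi=\log\log N$ and $t=\frac{w_{\hat x}}{4\chi}\vee c_\nu\frac{\log N}{\log\log N}$. Then $t/v\geq(\log N)^{1-\delta-o(1)}$, so Bennett gives $\P(H_{\hat x}\geq t)\leq\exp(-(1-\delta)(1+o(1))\,t\log\log N)\leq N^{-(1-\delta)c_\nu(1+o(1))}$. This is $O(N^{-\nu-1})$ once $(1-\delta)c_\nu>\nu+1$. Both terms of $t$ are at most $D_{\hat x}/4$. The first is, because $w_{\hat x}\leq\chi D_{\hat x}$ by \cref{rem:ratio-big-dx}. The second is, because the constant in $\xi_\nu$ leaves room for $c_\nu\frac{\log N}{\log\log N}\leq\xi_\nu/4$. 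Hence $\#\Sib^{-}(x)\geq D^{\pp}_{\hat x}-H_{\hat x}\geq D^{\pp}_{\hat x}-\frac14D_{\hat x}\geq\frac12D^{\pp}_{\hat x}$, using $D^{\pp}_{\hat x}\geq D_{\hat x}-\xi_\nu/2\geq\frac12D_{\hat x}$. No dichotomy on the rank of $x$ is needed. Two smaller points. First, the edge $\{y,\hat x\}$ with $y\prec\hat x$ is pruned when $y$, not $\hat x$, has another neighbour $\succ\hat x$, so the down-up observation in Step~3 is off. Second, \cref{lem:bound-D+} counts neighbours \emph{larger} than the centre, so it cannot sharpen this count.
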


\begin{proof}[Proof of \cref{lem:descending-ball}]
  We start by noticing that by the union bound
  \begin{equation*}
    \begin{split}
    \P \biggl( \forall x \in \hcV^{(\rm h)}_{\nu}, \frac{1}{D^{\pp}_{x}}\sum_{y \in S_{1}^{\pp -}(x)}D_{y}^{\pp -} &\leq (\log N)^{\eta + \delta} \biggr)
    = 1 - \P \left( \exists x \in \hcV^{(\rm h)}_{\nu}, \frac{1}{D^{\pp}_{x}}\sum_{y \in S_{1}^{\pp -}(x)}D_{y}^{\pp -} > (\log N)^{\eta + \delta} \right)\\
    &\geq 1 - \sum_{x \in \vertices}\P \biggl( x \in \hcV^{(\rm  h)}_{\nu}, \frac{1}{D^{\pp}_{x}}\sum_{y \in S_{1}^{\pp -}(x)}D_{y}^{\pp -} > (\log N)^{\eta + \delta} \biggr).
    \end{split}
  \end{equation*}
  It suffices to upper bound the term in the sum by $CN^{-\nu-1}$ to get the result.

  We introduce the notation
  \begin{equation*}
    \mathcal{P}^{(3)}_{x} \deq \frac{1}{D_x}\sum_{y \in S_{1}^{-}(x)\setminus\cycV(x)}(D^{\nc}_{y} - 1)\,.
  \end{equation*}
  It suffices to bound $\mathcal{P}^{(3)}_{x}$ to bound
  $\sum_{y \in S_{1}^{\pp -}(x)}D_{y}^{\pp -}$. We prove below that
    \begin{equation}\label{eq:intermediate-obj}
      \mathcal{P}^{(3)}_{x} \leq \frac{5\nu}{4(1-\delta)}\frac{\log N}{\log \log N} \quad \text{ with $\nu$-high probability.}
  \end{equation}
    Assuming \eqref{eq:intermediate-obj}, by
  \cref{prop:pruning} we get
  \begin{equation*}
    \frac{1}{D^{\pp}_{x}}\sum_{y \in S_{1}^{\pp -}(x)}D_{y}^{\pp -}
    \leq 1 + \mathcal{P}^{(3)}_{x}\frac{D_{x}}{D_{x}^{\pp}} \leq 1 + \frac{5\nu\log N}{4(1-\delta)\log\log N} \frac{D_{x}}{D_{x} - \xi/2 } \leq  1+2\nu\frac{\log N}{\log\log N}\,.
  \end{equation*}
  which gives the result.

  We turn to the proof of \eqref{eq:intermediate-obj}. Firstly, recall that using
  \cref{rem:ratio-big-dx}, we have with $\nu$-high probability that
  $w_{x} \leq \chi D_{x}$ where $\chi = \log\log N$. If furthermore
  $y \prec x$, we have
  \begin{equation*}
    \frac{D^{\nc}_{y}}{D_{x}} \leq 1 \wedge \frac{\chi D^{\nc}_{y}}{D_{x}}\,.
  \end{equation*}
  Secondly, using \cref{prop:embedding-Tx} and
  \cref{corol:bound-Dnc}, we have
  \begin{equation*}
    \mathcal{P}^{(3)}_{x} \leq \sum_{y \neq x}\ind_{\bigl\{ x \simT{x} y\bigr\}} \left( 1 \wedge \frac{\chi D^{\Tree_{x}\uparrow}_{y}}{w_{x}} \right)\,,
  \end{equation*}
  with $\nu$-high probability.

  We are going to prove \eqref{eq:intermediate-obj} using Bennett's inequality \cite[Theorem 2.9]{BLM13}. Note
  that the variables
  \begin{equation*}
    \ind_{\bigl\{ x \simT{x} y\bigr\}} \left( 1 \wedge \frac{\chi D^{\Tree_{x}\uparrow}_{y}}{w_{x}} \right)
  \end{equation*}
  are independent by \cref{lem:T-indep}, and bounded by $1$. Furthermore, by using \cref{lem:T-indep}  and noticing that $D_y^{\Tree_x\uparrow}$ and $D_y - \ind_{\{ x \sim y\}}$ have the same distribution by construction, we have
  \begin{equation*}
    v \deq \sum_{y\in\vertices}\E\Biggl[\ind_{\bigl\{ x \simT{x} y\bigr\}}\Bigl( 1 \wedge \frac{\chi D^{\Tree_{x}\uparrow}_{y}}{w_{x}} \Bigr) \Biggr] \leq \sum_{y\in\vertices}p_{xy}\frac{\chi w_{y}}{w_{x}} \leq \chi \frac{m_{2}}{m_{1}}\,.
  \end{equation*}
  Hence, we have for $t > 0$
  \begin{equation*}
    \P \left( \mathcal{P}^{(3)}_{x} \geq t + v \right) \leq \exp \left( -(v + t)\log \frac{t}{v} + t \right) + \order\p{N^{-\nu}}\,.
  \end{equation*}
  Taking $t = \frac{5\nu}{4(1 - \delta)} \frac{\log N}{\log\log N}$ proves
  the claim \eqref{eq:intermediate-obj}, and thus the result.
\end{proof}
\begin{proof}[Proof of \cref{lem:bound-siblings}]
  To show this, we shall rather show that with $\nu$ high probability,
  we have the following property. For all $x \in \hcV^{(\rm h)}_{\nu}$, the random
  variable
  \begin{equation*}
    D_{x}^{\nc, \hcV} \deq \# \left( S^{\nc}_{1}(x)\cap \hcV^{(\rm h)}_{\nu} \right)= \sum_{y \neq x}\ind_{\{x \simnc y, \xi < D_{y}\}}
  \end{equation*}
  is bounded by $D_{x}/4$.

  We see that by using \cref{prop:edge-cyc} and then \cref{prop:embedding-Tx}, we get
  \begin{equation*}
    D_{x}^{\nc, \hcV}
    \leq \sum_{y \neq x}\ind_{\{x \simnc y, \xi < D^{\nc}_{y} + C_{\nu}\}}
    \leq \sum_{y \neq x}\ind_{\{x \simT{x} y, \xi \leq D^{\Tree_{x}\uparrow}_{y} + C_{\nu}\}}\,,
  \end{equation*}
  with $\nu$-high probability for some constant $C_{\nu} > 0$.

  Let us use Bennett's inequality to bound the right-hand side. We
  proceed as in the proof of \cref{lem:bound-D+}. The random variables
  \begin{equation*}
    \left( \ind_{\{x \simT{x} y\}}\ind_{\{\xi \leq D^{\Tree_{x}\uparrow}_{y} + C_{\nu}\}} \right)_{y \in \vertices}
  \end{equation*}
  are independent by \cref{lem:T-indep}, and bounded by $1$. We compute
  \begin{equation*}
    v \deq \sum_{y \neq x}\E \left[ \ind_{\{x \simT{x} y, \xi \leq D^{\Tree_{x}\uparrow}_{y} + C_{\nu}\}} \right]
    \leq \sum_{y \neq x} p_{xy} \frac{d_{y}}{\xi - C_{\nu}}
    \leq 2\frac{w_{x}}{\xi}\frac{m_{2}}{m_{1}}\,.
  \end{equation*}
  We get that
  \begin{equation*}
    \P \left( D_{x}^{\nc, \hcV} \geq t\right) \leq \exp\pB{-t\ln \frac{t}{v} + t} + \order\p{N^{-\nu}}\quad \text{ where } t = \frac{w_{x}}{4\chi} \vee \frac{2\nu \log N}{\log\log N}\,.
  \end{equation*}

  If $\frac{w_{x}}{4\chi} \geq \frac{2\nu\log N}{\log\log N}$,
  \begin{equation*}
    \P \left( D_{x}^{\nc, \hcV} \geq t \right)
    \leq \exp\pB{-2\nu(1-\delta)\ln N(1 + o(1))}  + \order\p{N^{-\nu}} = \order\p{N^{-\nu}}\,,
  \end{equation*}
  as $2(1-\delta) \geq 4/3 > 1$. Recall that by \cref{rem:ratio-big-dx} we
  have with $\nu$-high probability that $w_{x} \leq \chi D_{x}$. We then
  have
  \begin{equation*}
    D_{x}^{\nc, \hcV} \leq \frac{w_{x}}{4\chi} \leq \frac{D_{x}}{4}
  \end{equation*}
  with $\nu$-high probability.

  If $\frac{w_{x}}{4\chi} \leq \frac{2\nu\log N}{\log\log N}$,
  \begin{equation*}
    \P \left( D_{x}^{\nc, \hcV} \geq t \right)
    \leq \exp\pB{-2\nu\ln N(1 + o(1))}  + \order\p{N^{-\nu}} = \order\p{N^{-\nu}}\,,
  \end{equation*}
  and then
  \begin{equation*}
    D_{x}^{\nc, \hcV} \leq 2\nu \frac{\log N}{\log\log N} \leq \frac{\xi}{4} \leq \frac{D_{x}}{4}\,,
  \end{equation*}
  with $\nu$-high probability. We have proved the claim.

  For $x \in \hcV^{(\rm h)}_{\nu}$, the neighbors of $\hat{x}$ are in $\Sib^{-}(x)$,
  or in $S_{1}(\hat{x})\cap \hcV^{(\rm h)}_{\nu}$. Thus,
  \begin{equation*}
    \#\Sib^{-}(x) + D_{\hat{x}}^{\nc, \hcV} \geq D_{\hat{x}}^{\pp}\,.
  \end{equation*}
  The previous result on $D_{\hat{x}}^{\nc, \hcV}$ implies that with
  $\nu$-high probability,
  \begin{equation*}
    \#\Sib^{-}(x) \geq D_{\hat{x}}^{\pp} - D_{\hat{x}}^{\nc, \hcV} \geq  D_{\hat{x}}^{\pp} - \frac{1}{4}D_{\hat{x}} \geq \frac{1}{2}D_{\hat{x}}^{\pp}\,,
  \end{equation*}
  where we used that when $x \in \hcV^{(\rm h)}_{\nu}$, by
  \cref{prop:pruning}, $D_{x} - D_{x}^{\pp} \leq \xi/2$ which implies
  $\frac{1}{2}D^{\pp}_{\hat{x}} - \frac{1}{4}D_{\hat{x}}\geq \frac{1}{4}D_{\hat{x}} - \frac{1}{4}\xi\geq 0$.
\end{proof}

\begin{proof}[Proof of \cref{prop:op-u-v-close}]
  First, notice that
\begin{equation*}
    \sum_{\sigma \in \{\pm 1\}}\sigma\sqrt{D_{x}^{\pp -}}\b u_{\sigma}(x)\b u_{\sigma}(x)^{*}
    = \frac{2}{Z_{x}} \left(\sum_{\sigma \in \{\pm 1\}}\sigma\sqrt{D_{x}^{\pp -}}\b v_{\sigma}(x)\b v_{\sigma}(x)^{*} - \frac{\b 1_{\Sib^{-}(x)}\b 1_{S_{1}^{-}(x)}^{*} + \b 1_{S_{1}^{-}(x)}\b 1_{\Sib^{-}(x)}^{*}}{\#\Sib^{-}(x)} \right)\,,
  \end{equation*}
  because of the cancellations occurring when summing the contributions of $\sigma = +1$ and $\sigma = -1$.

  We consider first
  \begin{equation*}
    \sum_{x \in \hcV^{(\rm h)}_{\nu}}\left(\frac{2}{Z_{x}} - 1 \right) \sum_{\sigma \in \{\pm 1\}}\sigma\sqrt{D_{x}^{\pp -}}\b v_{\sigma}(x)\b v_{\sigma}(x)^{*}\,,
  \end{equation*}
  which is zero if $x$ has no parent. Otherwise, its operator norm satisfies
  \begin{equation*}
    \begin{split}
      \Biggl\|\sum_{x \in \hcV^{(\rm h)}_{\nu}}&\left(\frac{2}{Z_{x}} - 1 \right) \sum_{\sigma \in \{\pm 1\}}\sigma\sqrt{D_{x}^{\pp -}}\b v_{\sigma}(x)\b v_{\sigma}(x)^{*}\Biggr\|^{2}
      = \Biggl\|\sum_{x \in \hcV^{(\rm h)}_{\nu}} \frac{\b 1_{x}\b 1_{S_{1}^{\pp -}(x)}^{*} + \b 1_{S_{1}^{\pp -}(x)}\b 1_{x}^{*} }{\#\Sib^{-}(x) + 1}\Biggr\|^{2}\\
                                       &\leq\max_{\|\b u\| = 1}\sum_{x, x' \in \hcV^{(\rm h)}_{\nu}} \frac{1}{(\#\Sib^{-}(x) + 1)^{2}}\Biggl( \delta_{x x'} \left< \b 1_{S_{1}^{\pp -}(x)}, \b u \right>^{2} + \delta_{x x'}D_{x}^{\pp -}u_{x}^{2}\\
      &\quad\quad\quad+ 2\ind_{\{x \sim x', x' \prec x\}}u_{x} \left< \b 1_{S_{1}^{\pp -}(x')}, \b u \right> \Biggr)\,.
    \end{split}
  \end{equation*}

  \cref{lem:bound-siblings} implies that with $\nu$-high probability,
  for all $x \in \hcV^{(\rm h)}_{\nu}$,
  $\frac{1}{2}D_{x}^{\pp -} \leq \#\Sib^{-}(x)$. Together with Young's
  lemma and the fact that $\b 1_{S_{1}^{\pp -}(x)}$ form an orthogonal
  family, it implies
  \begin{equation*}
    \left\|\sum_{x \in \hcV^{(\rm h)}_{\nu}}\left(\frac{2}{Z_{x}} - 1 \right) \sum_{\sigma \in \{\pm 1\}}\sigma\sqrt{D_{x}^{\pp -}}\b v_{\sigma}(x)\b v_{\sigma}(x)^{*}\right\|^{2} = \order\p{1/\xi}\,.
  \end{equation*}

  Consider now the operator norm
  \begin{equation*}
    \begin{split}
      \Bigg\|\sum_{x \in \hcV^{(\rm h)}_{\nu}}\frac{\b 1_{\Sib^{-}(x)}\b 1_{S_{1}^{\pp -}(x)}^{*}}{\#\Sib^{-}(x)}\Bigg\|^{2}
                                   &\leq \max_{\|\b u\| = 1}\sum_{x, x' \in \hcV^{(\rm h)}_{\nu}}\delta_{x x'}\frac{D_{x}^{\pp -} \left<  \b 1_{\Sib^{-}(x)}, \b u\right> \left<  \b 1_{\Sib^{-}(x')}, \b u \right>}{\left( \#\Sib^{-}(x) \right)^{2}}\\
                                   &+ \max_{\|\b u\| = 1}2\sum_{x, x' \in \hcV^{(\rm h)}_{\nu}}\delta_{\hat{x} \hat{x}'}\ind_{\Sib^{-}(x)}(x')\frac{ \left<  \b 1_{S_{1}^{\pp -}(x)}, \b u\right> \left< \b 1_{S_{1}^{\pp -}(x')}, \b u  \right>}{\#\Sib^{-}(x)}.
    \end{split}
  \end{equation*}
The first term is bounded by
  \begin{equation*}
    \begin{split}
      \sum_{x, x' \in \hcV^{(\rm h)}_{\nu}}\delta_{x x'}\frac{D_{x}^{\pp -} \left< \b 1_{\Sib^{-}(x)}, \b u \right> \left< \b 1_{\Sib^{-}(x')}, \b u  \right>}{\left( \#\Sib^{-}(x) \right)^{2}}
      &= \sum_{x \in \hcV^{(\rm h)}_{\nu}}\frac{D_{x}^{\pp -}}{\#\Sib^{-}(x)^{2}}\sum_{y, y' \in \Sib^{-}(x)}u_{y}u_{y'}\\
      &\leq 2\sum_{x \in \hcV^{(\rm h)}_{\nu}}\frac{D_{x}^{\pp -}}{D_{\hat{x}}^{\pp -}}\sum_{y\in \Sib^{-}(x)}u_{y}^{2}\,,
    \end{split}
  \end{equation*}
  with $\nu$-high probability. We used \cref{lem:bound-siblings} and Young's
  inequality. This can be bounded as follows:
  \begin{equation*}
      \sum_{x, x' \in \hcV^{(\rm h)}_{\nu}}\delta_{x x'}\frac{D_{x}^{\pp -} \left< \b 1_{\Sib^{-}(x)}, \b u \right> \left< \b 1_{\Sib^{-}(x')}, \b u \right>}{\left( \#\Sib^{-}(x) \right)^{2}}
      \leq 2\sum_{y}u_{y}^{2}\sum_{x \in \hcV^{(\rm h)}_{\nu}\cap S_{1}^{\pp -}(\hat{y})}\frac{D_{x}^{\pp -}}{D_{\hat{y}}^{\pp -}}\,.
  \end{equation*}
  \cref{lem:descending-ball} then gives that this is upper bounded by
  $2\nu \frac{\log N}{\log\log N}$ with $\nu$-high probability.

  Similarly, using Young's inequality, \cref{lem:bound-siblings},
  and then \cref{lem:descending-ball} we see that the second term
  is bounded by
  \begin{equation*}
    \begin{split}
      \sum_{x, x' \in \hcV^{(\rm h)}_{\nu}}\delta_{\hat{x} \hat{x}'}\ind_{\Sib^{-}(x)}(x')\frac{ \left< \b 1_{S_{1}^{\pp -}(x)}, \b u \right> \left<  \b 1_{S_{1}^{\pp -}(x')}, \b u \right>}{\#\Sib^{-}(x)}
      &\leq 2\sum_{x\in \hcV^{(\rm h)}_{\nu}}\left( \frac{ \left< \b 1_{S_{1}^{\pp -}(x)}, \b u \right>}{\sqrt{D_{x}^{\pp -}}} \right)^{2}\sum_{x' \in S_{1}^{\pp -}(\hat{x})\cap\hcV^{(\rm h)}_{\nu}}\frac{D_{x'}^{\pp -}}{D_{\hat{x}}^{\pp -}}\\
      &\leq 2\nu \frac{\log N}{\log\log N}\sum_{x\in \hcV^{(\rm h)}_{\nu}}\left( \frac{ \left< \b 1_{S_{1}^{\pp -}(x)}, \b u \right>}{\sqrt{D_{x}^{\pp -}}} \right)^{2}\,,
    \end{split}
  \end{equation*}
  with $\nu$-high probability. The fact
  that $(V_{1}(x))_{x \in \hcV^{(\rm h)}_{\nu}}$ is an orthonormal family allow us
  to conclude that
  \begin{equation*}
    \Bigg\|\sum_{x \in \hcV^{(\rm h)}_{\nu}}\frac{\b 1_{\Sib^{-}(x)}\b 1_{S_{1}^{\pp -}(x)}^{*} + \b 1_{S_{1}^{\pp -}(x)}\b 1_{\Sib^{-}(x)}^{*}}{\#\Sib^{-}(x)}\Bigg\| \leq 2\Bigg\|\sum_{x \in \hcV^{(\rm h)}_{\nu}}\frac{\b 1_{\Sib^{-}(x)}\b 1_{S_{1}^{\pp -}(x)}^{*}}{\#\Sib^{-}(x)}\Bigg\| \leq \sqrt{8\nu \frac{\log N}{\log\log N}}
  \end{equation*}
  with $\nu$-high probability. Putting together the two bounds, we get
  the result.
\end{proof}

\section{The spectral gap and the semilocalization phenomenon}
\label{sec:spectral-gap-semi}

In this Section, we prove \cref{thm:main-intro}. To do so, we construct a
block-diagonal approximation $\hat{A}$ of $A$, whose eigenvectors
associated with the extremal eigenvalues are $\b u_{\sigma}(x)$ for
$\sigma \in \{\pm 1\}$ and $x \in \hcV^{(\rm h)}_{\nu}$. A spectral gap property is then proved
for $\hat{A}$, and can be transferred to a spectral gap property for
$A$.

\subsection{The block-diagonal approximation}
\label{sec:block-diag-appr}

We introduce some notation. We define the orthogonal
projections
\begin{equation}\label{eq:projections-u}
    \Pi^{\pp} = \sum_{\substack{x\in\hcV^{(\rm h)}_{\nu}\\\sigma \in \{\pm 1\}}}\b u_{\sigma}(x)\b u_{\sigma}(x)^{*} \quad \text{ and }\quad
    \overline{\Pi}^{\pp} = \Id - \Pi^{\pp}\,,
\end{equation}
and the block-diagonal approximation of $A^{\pp}$,
\begin{equation}\label{eq:block-approx}
  \hat{A} = \sum_{\substack{x\in\hcV^{(\rm h)}_{\nu}\\\sigma \in \{\pm 1\}}}\sigma\sqrt{D^{\pp -}_{x}}\b u_{\sigma}(x)\b u_{\sigma}(x)^{*} + \overline{\Pi}A^{\pp}\overline{\Pi}\,.
\end{equation}

\begin{proposition}\label{prop:approx-hat}
Let $\nu > 0$. There exists $C_{\nu} > 0$ such that with $\nu$-high probability,
  \begin{equation*}
      \|\hat{A} - A^{\pp}\| \leq C_{\nu} \sqrt{\frac{\log N}{\log \log N}} \quad \text{ and } \quad \|\hat{A} - A\| \leq C_{\nu}\sqrt{\frac{\log N}{\log \log N}}\,.
  \end{equation*}
\end{proposition}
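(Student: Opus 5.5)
The second estimate follows from the first: by the triangle inequality, $\|\hat A - A\| \leq \|\hat A - A^{\pp}\| + \|A^{\pp} - A\|$, and the last term is handled by \cref{prop:error-pruning}. So the plan focuses on $\|\hat A - A^{\pp}\|$.

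Write $\Pi \equiv \Pi^{\pp}$ and $\overline\Pi \equiv \overline\Pi^{\pp}$. Decompose
\begin{equation*}
A^{\pp} = \Pi A^{\pp}\Pi + \Pi A^{\pp}\overline\Pi + \overline\Pi A^{\pp}\Pi + \overline\Pi A^{\pp}\overline\Pi .
\end{equation*}
Set $M \deq \sum_{x,\sigma}\sigma\sqrt{D^{\pp -}_x}\,\b u_\sigma(x)\b u_\sigma(x)^*$, and note that $M = \Pi M \Pi$. Then
\begin{equation*}
\hat A - A^{\pp} = \Pi(M - A^{\pp})\Pi - \Pi A^{\pp}\overline\Pi - \overline\Pi A^{\pp}\Pi ,
\end{equation*}
and both off-diagonal terms equal $\Pi(M-A^{\pp})\overline\Pi$ or its adjoint, because $M\overline\Pi = 0$. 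Hence
\begin{equation*}
\|\hat A - A^{\pp}\| \leq 3\,\|(M - A^{\pp})\Pi\| .
\end{equation*}
By \cref{prop:op-u-v-close}, $M$ can be replaced by $M' \deq \sum_{x,\sigma}\sigma\sqrt{D^{\pp-}_x}\,\b v_\sigma(x)\b v_\sigma(x)^*$ at cost $O(\sqrt{\log N/\log\log N})$. This $M'$ is exactly the adjacency matrix of the star graph $\sum_{x\in\hcV^{(\rm h)}_\nu}\bigl(\b 1_x\b 1_{S_1^{\pp-}(x)}^* + \b 1_{S_1^{\pp-}(x)}\b 1_x^*\bigr)$, which is a subgraph of $G^{\pp}$.

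It therefore suffices to bound $\|(A^{\pp} - M')\b u\|$ for $\b u$ in the range of $\Pi$. The matrix $E \deq A^{\pp} - M'$ is the adjacency matrix of the edges of $G^{\pp}$ that do not join a high-degree vertex to one of its children. Write $\b u = \sum_{x,\sigma} c_{x,\sigma}\b u_\sigma(x)$, where each $\b u_\sigma(x)$ is supported on $\{x\}\cup S_1^{\pp-}(x)\cup\Sib^-(x)$. Because $G^{\pp}$ is a forest without down-up paths, $E$ acts on these vectors in a controlled way, and I would estimate the pieces separately.

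\textbf{Root component.} $E\b 1_x$ is $\b 1_{\hat x}$ plus possibly some children of $x$ lying in $\hcV^{(\rm h)}_\nu$. These children carry total weight $O(1)$, since there are at most $D^{\pp}_x/4$ of them (as in the proof of \cref{lem:bound-siblings}) and the coefficient is normalized.

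\textbf{Children component.} $E\,\b 1_{S_1^{\pp-}(x)}/\sqrt{D^{\pp-}_x}$ collects the grandchildren of $x$. Its squared norm is at most
\begin{equation*}
\frac{1}{D^{\pp-}_x}\sum_{y\in S_1^{\pp-}(x)} D^{\pp-}_y ,
\end{equation*}
which is $O(\log N/\log\log N)$ by \cref{lem:descending-ball}.

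\textbf{Sibling component.} The vector $\b 1_{\Sib^-(x)}/\#\Sib^-(x)$ has norm $(\#\Sib^-(x))^{-1/2}$. Applying $E$ sends it to the children of the siblings, plus the parent whenever $\hat x\notin\hcV^{(\rm h)}_\nu$. \cref{lem:bound-siblings} and \cref{lem:descending-ball} applied at $\hat x$ should again give a bound of order $\log N/\log\log N$ for the squared norm.

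The main obstacle is summing these estimates over all $x$ with almost orthogonality, so that no factor counting the number of vertices appears. I would handle this with the same technique as in the proof of \cref{prop:op-u-v-close}: write $\|E\Pi\|^2 = \max_{\b u}\langle \Pi\b u, E^2\Pi\b u\rangle$, expand, and use Young's inequality. Supports of the images overlap only along parent–child and sibling relations, and in a forest each vertex has a unique parent (\cref{rem:one-parent}). Consequently each vertex is counted $O(1)$ times, and the orthonormality of $(\b V_1(x))$ and of $(\b u_\sigma(x))$ from \cref{prop:ps-ev} controls the cross terms. This would give $\|E\Pi\|\leq C_\nu\sqrt{\log N/\log\log N}$. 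If the sibling terms become troublesome, I would first absorb them by replacing $\Pi$ with the projection onto $\mathrm{span}(\b v_\sigma(x))$, since the error is small by the same $1/\#\Sib^-(x)$ bounds.
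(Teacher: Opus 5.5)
Your reduction to $\|(M-A^{\pp})\Pi^{\pp}\|$ is the same as the paper's, but from there you take a different route. The paper bounds $B=(A^{\pp}-M)\Pi^{\pp}=\sum_{x,\sigma}\delta_\sigma(x)\b u_\sigma(x)^*$ directly. It expands $\delta_\sigma(x)$ via \cref{lem:expr-error} into four operators $B_1,\dots,B_4$ and bounds each by hand, using the orthogonality of $(\b V_1(x))$, \cref{lem:descending-ball} and \cref{lem:bound-siblings} (see \cref{sec:bounds-proof-prop}). You instead pass to the star matrix $M'$ via \cref{prop:op-u-v-close}, which is legitimate, and are left with $E\Pi^{\pp}$.

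Your analysis of $E$, however, is off, and the main obstacle you describe does not actually arise. If $x\in\hcV^{(\rm h)}_\nu$ has a parent, then $D_{\hat x}\geq D_x\geq\xi_\nu$, so $\hat x\in\hcV^{(\rm h)}_\nu$ and the edge $\{x,\hat x\}$ lies in the star of $\hat x$. Thus $E\b 1_x=0$, not $\b 1_{\hat x}+\cdots$. More generally, an edge $\{a,\hat a\}$ of $G^{\pp}$ is absent from $M'$ iff $\hat a\notin\hcV^{(\rm h)}_\nu$, and this forces $a\notin\hcV^{(\rm h)}_\nu$ as well. So $E$ is precisely the matrix $A'$ of \cref{lem:bound-rest-graph}: the adjacency matrix of the forest $G^{\pp}\vert_{(\hcV^{(\rm h)}_\nu)^c}$, whose degrees are below $\xi_\nu$. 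Hence $\|E\Pi^{\pp}\|\leq\|E\|\leq 2\sqrt{\xi_\nu}$, with no almost-orthogonality bookkeeping at all.

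The bookkeeping you sketched would need repair anyway. It is not true that each vertex is counted $O(1)$ times: the sibling parts of all high-degree children of a common parent $p$ are supported on the same grandchildren of $p$. That multiplicity is only offset by the normalization $\#\Sib^{-}(x')\geq D^{\pp}_p/2$ from \cref{lem:bound-siblings}.

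Completed this way, your route is considerably shorter than the paper's. Since $M\overline\Pi^{\pp}=0$, one has $\hat A-A^{\pp}=(M-A^{\pp})-\overline\Pi^{\pp}(M-A^{\pp})\overline\Pi^{\pp}$. Hence $\|\hat A-A^{\pp}\|\leq 2\|M-A^{\pp}\|\leq 2(\|M-M'\|+\|A'\|)$. The same bound also gives \cref{prop:bound-block-error} directly, through $\overline\Pi^{\pp}A^{\pp}\overline\Pi^{\pp}=\overline\Pi^{\pp}(A^{\pp}-M)\overline\Pi^{\pp}$, so \cref{lem:expr-error} and the appendix computations become unnecessary. What the paper's explicit expansion buys in return is direct control of the residuals $\delta_\sigma(x)$. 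It shows how good each $\b u_\sigma(x)$ is as an approximate eigenvector of $A^{\pp}$, without routing through \cref{prop:op-u-v-close}.
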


To prove \cref{prop:approx-hat} we use the following Lemma.
\begin{lemma}\label{lem:expr-error}
  Define for all $x \in \hcV^{(\rm h)}_{\nu}, \sigma\in\{\pm 1\}$,
  \begin{equation*}
    \delta_{\sigma}(x) = A^{\pp}\b u_{\sigma}(x) - \sigma\sqrt{D_{x}^{\pp-}}\b u_{\sigma}(x)\,.
  \end{equation*}
  This vector can also be expressed as
  \begin{equation*}
    \delta_{\sigma}(x) = \frac{1}{\sqrt{Z_{x}}} \left(\sigma\sum_{y \in S_{1}^{\pp -}(x)}\frac{\b 1_{S_{1}^{\pp}(y)\setminus \{x\}}}{\sqrt{D_{x}^{\pp -}}} - \sum_{y\in S_{1}^{\pp -}(\hat{x})}\frac{\ind_{\{y \prec x\}}}{\#\Sib^{-}(x)}\b 1_{S^{\pp-}_{1}(y)} +  \sigma\sqrt{D_{x}^{\pp -}}\frac{\b 1_{\Sib^{-}(x)}}{\#\Sib^{-}(x)}\right)\,.
  \end{equation*}
\end{lemma}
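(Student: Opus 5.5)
This is a direct computation: apply $A^{\pp}$ to each of the three pieces of $\sqrt{Z_x}\,\b u_\sigma(x)$ and subtract $\sigma\sqrt{D_x^{\pp-}}\sqrt{Z_x}\,\b u_\sigma(x)$. The structural input is that $G^{\pp}$ is a forest without down-up paths (\cref{prop:pruning}). By \cref{rem:one-parent}, every vertex $y$ therefore has at most one parent, and
\begin{equation*}
S_1^{\pp}(y) = S_1^{\pp-}(y) \cup \{\hat y\}.
\end{equation*}

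\emph{First term.} We have $A^{\pp}\b 1_x = \b 1_{S_1^{\pp}(x)} = \b 1_{S_1^{\pp-}(x)} + \b 1_{\hat x}$.

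\emph{Second term.} For $y \in S_1^{\pp-}(x)$ we have $\hat y = x$, so
\begin{equation*}
A^{\pp}\b 1_{S_1^{\pp-}(x)} = \sum_{y\in S_1^{\pp-}(x)}\bigl(\b 1_x + \b 1_{S_1^{\pp}(y)\setminus\{x\}}\bigr) = D_x^{\pp-}\b 1_x + \sum_{y\in S_1^{\pp-}(x)}\b 1_{S_1^{\pp}(y)\setminus\{x\}}.
\end{equation*}
Multiplying by $\sigma/\sqrt{D_x^{\pp-}}$ gives $\sigma\sqrt{D_x^{\pp-}}\,\b 1_x$ plus the first sum in the claimed formula.

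\emph{Third term.} Every $y\in\Sib^{-}(x)$ has parent $\hat x$, and $\Sib^-(x)$ consists exactly of the $y\in S_1^{\pp-}(\hat x)$ with $y\prec x$. Hence
\begin{equation*}
A^{\pp}\frac{\b 1_{\Sib^{-}(x)}}{\#\Sib^{-}(x)} = \b 1_{\hat x} + \sum_{y\in S_1^{\pp-}(\hat x)}\frac{\ind_{\{y\prec x\}}}{\#\Sib^{-}(x)}\b 1_{S_1^{\pp-}(y)}.
\end{equation*}

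\emph{Combining.} Subtracting the third contribution from the first two, the $\b 1_{\hat x}$ terms cancel. What remains is
\begin{equation*}
\b 1_{S_1^{\pp-}(x)} + \sigma\sqrt{D_x^{\pp-}}\,\b 1_x + \sigma\sum_{y \in S_1^{\pp -}(x)}\frac{\b 1_{S_{1}^{\pp}(y)\setminus \{x\}}}{\sqrt{D_{x}^{\pp -}}} - \sum_{y\in S_1^{\pp-}(\hat x)}\frac{\ind_{\{y\prec x\}}}{\#\Sib^{-}(x)}\b 1_{S_1^{\pp-}(y)}.
\end{equation*}
Now subtract
\begin{equation*}
\sigma\sqrt{D_x^{\pp-}}\Bigl(\b 1_x + \sigma\frac{\b 1_{S_1^{\pp-}(x)}}{\sqrt{D_x^{\pp-}}} - \frac{\b 1_{\Sib^-(x)}}{\#\Sib^-(x)}\Bigr) = \sigma\sqrt{D_x^{\pp-}}\,\b 1_x + \b 1_{S_1^{\pp-}(x)} - \sigma\sqrt{D_x^{\pp-}}\frac{\b 1_{\Sib^-(x)}}{\#\Sib^-(x)},
\end{equation*}
using $\sigma^2=1$. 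The $\b 1_x$ and $\b 1_{S_1^{\pp-}(x)}$ terms cancel, leaving exactly the three terms of the claim. Dividing by $\sqrt{Z_x}$ finishes the proof.

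\emph{Edge cases.} These need some care with the stated conventions. If $x$ has no parent, then $\Sib^-(x)=\emptyset$ and all terms involving $\hat x$ vanish, consistently with $\b 1_\emptyset/0=0$. If $D_x^{\pp-}=0$, the second and first-sum terms vanish by the same convention.

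The main point requiring attention is the sibling term. There one must use that the siblings' parent is $\hat x$ (so their image produces $\b 1_{\hat x}$ exactly $\#\Sib^-(x)$ times), and that no sibling is adjacent to any other vertex in the support, which follows from the forest property. Otherwise the argument is routine bookkeeping.
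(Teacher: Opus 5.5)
Your computation is the same as the paper's. You expand $A^{\pp}$ on the three pieces of $\sqrt{Z_x}\,\b u_\sigma(x)$ using $S_1^{\pp}(y)=S_1^{\pp-}(y)\cup\{\hat y\}$ (\cref{rem:one-parent}). You then cancel the $\b 1_{\hat x}$ coming from $A^{\pp}\b 1_x$ against the $\b 1_{\hat x}$ produced by the sibling term.

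That cancellation needs $\Sib^{-}(x)\neq\emptyset$, and your edge-case paragraph only treats the case where $x$ has no parent. Suppose instead that $x$ has a parent but is the $\prec$-smallest element of $S_1^{\pp-}(\hat x)$. Then:
\begin{itemize}
\item $\Sib^{-}(x)=\emptyset$, so the sibling part of $\b u_\sigma(x)$ is $\b 1_\emptyset/0=0$ and $Z_x=2$;
\item your identity $A^{\pp}\b 1_{\Sib^{-}(x)}/\#\Sib^{-}(x)=\b 1_{\hat x}+\cdots$ fails, since the left side is $0$.
\end{itemize}
A direct computation then gives
\begin{equation*}
\delta_\sigma(x)=\frac{1}{\sqrt2}\biggl(\b 1_{\hat x}+\sigma\sum_{y\in S_1^{\pp-}(x)}\frac{\b 1_{S_1^{\pp-}(y)}}{\sqrt{D_x^{\pp-}}}\biggr)\,,
\end{equation*}
which differs from the claimed expression by $\b 1_{\hat x}/\sqrt2$. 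So in this case the statement itself is false, not just your argument.

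The paper's proof makes the same silent assumption, and the repair is cheap. The identity holds verbatim provided $\Sib^{-}(x)\neq\emptyset$ whenever $x$ has a parent; in general one must add the term $\ind_{\{\Sib^{-}(x)=\emptyset\}}\b 1_{\hat x}/\sqrt{Z_x}$. For $x\in\hcV^{(\rm h)}_\nu$ with a parent, $D_{\hat x}\geq D_x\geq\xi$. So on the $\nu$-high-probability events of \cref{prop:pruning} and \cref{lem:bound-siblings},
\begin{equation*}
\#\Sib^{-}(x)\geq\tfrac12D^{\pp}_{\hat x}\geq\tfrac12\bigl(D_{\hat x}-\xi/2\bigr)\geq\xi/4>0\,.
\end{equation*}
The lemma is only used inside the $\nu$-high-probability statement \cref{prop:approx-hat}, where \cref{lem:bound-siblings} is invoked anyway. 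You should therefore state and prove it on that event.

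The rest of your bookkeeping is correct. Your closing remark that siblings are not adjacent to the rest of the support is true by the forest property, but it is unused: only $A^{\pp}\b 1_y=\b 1_{S_1^{\pp}(y)}$ enters the computation.
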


\begin{proof}
  We compute $A^{\pp}\b u_{\sigma}(x)$:
  \begin{equation*}
    \begin{split}
      A^{\pp}\b u_{\sigma}(x)
      &= \frac{1}{\sqrt{Z_{x}}}\left(\b 1_{S_{1}^{\pp}(x)} + \frac{\sigma}{\sqrt{D^{\pp -}_{x}}}\Biggl(D_{x}^{\pp -}\b 1_{x} + \sum_{y \in S_{1}^{\pp -}(x)}\b 1_{S_{1}^{\pp}(y)\setminus \{x\}}\Biggr) - \sum_{y \in S_{1}^{\pp -}(\hat{x})}\frac{\ind_{\{y \prec x\}}}{\#\Sib^{-}(x)}\b 1_{S_{1}^{\pp}(y)}\right)\\
      &= \sigma\sqrt{D_{x}^{-}}\b u_{\sigma}(x)\\ &+ \frac{\sigma}{\sqrt{Z_{x}}} \left(  \sum_{y \in S_{1}^{-}(x)}\frac{\b 1_{S_{1}^{\pp}(y)\setminus \{x\}}}{\sqrt{D^{-}_{x}}} - \sigma\sum_{y \in S_{1}^{\pp -}(\hat{x})}\frac{\ind_{\{y \prec x\}}}{\#\Sib^{-}(x)}\b 1_{S_{1}^{\pp}(y)\setminus\{\hat{x}\}} +\sqrt{D_{x}^{\pp -}}\frac{\b 1_{\Sib^{-}(x)}}{\# \Sib^{-}(x)}\right)\,.
    \end{split}
  \end{equation*}
  The result follows when noticing that in the pruned graph
  $S_{1}^{\pp}(y)\setminus \{\hat{y}\} = S_{1}^{\pp -}(y)$.
\end{proof}

\begin{proof}[Proof of \cref{prop:approx-hat}]
  The triangular inequality yields
  \begin{equation*}
    \|\hat{A} - A\| \leq \|\hat{A} - A^{\pp}\| + \|A^{\pp} - A\|\,.
  \end{equation*}
\cref{prop:error-pruning} allows us to bound the second
  part. The first part can be bounded as
  \begin{equation*}
    \|\hat{A} - A^{\pp}\|
    \leq \Biggl\|\sum_{\substack{x\in\hcV^{(\rm h)}_{\nu}\\\sigma \in \{\pm 1\}}}\sigma\sqrt{D^{-}_{x}}\b u_{\sigma}(x)\b u_{\sigma}(x)^{*} - \Pi^{\pp}A^{\pp}\Pi^{\pp}\Biggr\|\\
    + \|\overline{\Pi}^{\pp}A^{\pp}\Pi^{\pp} + \Pi^{\pp}A^{\pp}\overline{\Pi}^{\pp}\|\,.
  \end{equation*}

Recall that in \cref{lem:expr-error}, we defined the vector
  \begin{equation*}
    \delta_{\sigma}(x) = A^{\pp}\b u_{\sigma}(x) - \sigma\sqrt{D^{\pp -}_{x}}\b u_{\sigma}(x) \quad \text{ for all }x\in \hcV^{(\rm h)}_{\nu}, \sigma \in \{\pm 1\}\,.
  \end{equation*}
  We have
\begin{equation*}
    A^{\pp}\Pi^{\pp}
    = \sum_{\substack{x\in\hcV^{(\rm h)}_{\nu}\\\sigma \in \{\pm 1\}}}A^{\pp}\b u_{\sigma}(x)\b u_{\sigma}(x)^{*}
    = \sum_{\substack{x\in\hcV^{(\rm h)}_{\nu}\\\sigma \in \{\pm 1\}}}\sigma\sqrt{D^{\pp -}_{x}}\b u_{\sigma}(x)\b u_{\sigma}(x)^{*} + B\,,
  \end{equation*}
  where $B = \sum_{\substack{x\in\hcV^{(\rm h)}_{\nu}\\\sigma \in \{\pm 1\}}}\delta_{\sigma}(x)\b u_{\sigma}(x)^{*}$. We
  now bound the operator norm of $B$. Using the expression of
  $\delta_{\sigma}(x)$ from \cref{lem:expr-error}, we write the operator as a sum
  $B = B_{1} + B_{2} + B_{3} + B_{4}$, with
  \begin{equation*}
    \begin{split}
      B_{1} &= \sum_{x\in\hcV^{(\rm h)}_{\nu}} \frac{2}{Z_{x}}\sum_{y \in S_{1}^{-}(x)}\frac{\b 1_{S_{1}^{\pp -}(y)}\b 1_{S_{1}^{\pp -}(x)}^{*}}{D_{x}^{\pp -}}\\
      B_{2} &= \sum_{x\in\hcV^{(\rm h)}_{\nu}} \frac{2}{Z_{x}}\sum_{\substack{y \in S_{1}^{\pp -}(\hat{x})\\y \prec x}}\frac{\b 1_{y}\b 1_{S_{1}^{\pp -}(x)}^{*}}{\#\Sib^{-}(x)}\\
      B_{3} &= -\sum_{x\in\hcV^{(\rm h)}_{\nu}} \frac{2}{Z_{x}}\sum_{\substack{y \in S_{1}^{\pp -}(\hat{x})\\y \prec x}}\frac{\b 1_{S_{1}^{\pp -}(y)}\b 1_{x}^{*}}{\#\Sib^{-}(x)}\\
      B_{4} &= \sum_{x\in\hcV^{(\rm h)}_{\nu}} \frac{2}{Z_{x}}\sum_{\substack{y, z \in S_{1}^{\pp -}(\hat{x})\\y, z \prec x}}\frac{\b 1_{S_{1}^{\pp -}(y)}\b 1_{z}^{*}}{(\#\Sib^{-}(x))^{2}}\,.
    \end{split}
  \end{equation*}
  We now bound the operator
  norm of each of the four operators. The first one is
  \begin{equation*}
    \begin{split}
      \|B_{1}\|^{2}
      &= \max_{\|\b u\| = 1}(\b u^{*}B_{1}^{*}B_{1}\b u)\\
      &= \max_{\|\b u\| = 1}\sum_{x, x'\in\hcV^{(\rm h)}_{\nu}}\frac{4}{Z_{x} Z_{x'}}\frac{\left< \b u, \b 1_{S_{1}^{\pp -}(x)} \right> \left< \b 1_{S_{1}^{\pp -}(x')}, \b u \right>}{D_{x}^{\pp -}D_{x'}^{\pp -}} \sum_{\substack{y \in S_{1}^{\pp -}(x)\\y' \in S_{1}^{\pp -}(x')}} \left< \b 1_{S_{1}^{\pp -}(y)}, \b 1_{S_{1}^{\pp -}(y')} \right>\,.
    \end{split}
  \end{equation*}
  The orthogonality of the vectors
  $(\b 1_{S_{1}^{\pp -}(y)})_{y \in \vertices} = (\sqrt{D_{y}^{\pp -}}V_{1}(y))_{y \in \vertices}$
  implies that we must take $y = y'$ in the sum above. As
  $x = \hat{y}$ and $x' = \hat{y}'$ this means that the contributions
  for $x \neq x'$ vanish. We are left with
  \begin{equation*}
    \|B_{1}\|^{2} = \max_{\|\b u\| = 1}\sum_{x\in\hcV^{(\rm h)}_{\nu}}\frac{4}{Z_{x}^{2}}\frac{\left< \b u, \b 1_{S_{1}^{\pp -}(x)} \right>^{2}}{D_{x}^{\pp -}} \leq  \max_{\|\b u\| = 1}\sum_{x\in\hcV^{(\rm h)}_{\nu}}\left< \b u, \b V_{1}(x) \right>^{2} \leq 1\,.
  \end{equation*}

  The other bounds are proved similarly, in some cases with the help
  of \cref{lem:descending-ball}, in \cref{sec:bounds-proof-prop}.
  Thus, there exists a constant $C_{\nu}' > 0$ such that with $\nu$-high
  probability,
  \begin{equation*}
    \|A^{\pp}\Pi^{\pp}\| \leq C'_{\nu} \sqrt{\frac{\log N}{\log\log N}}\,.
  \end{equation*}
  The result follows from the fact that $\Pi^{\pp}$ and
  $\overline{\Pi}^{\pp}$ are orthogonal projections.
\end{proof}

\subsection{Bounds on $\overline{\Pi}A^{\pp}\overline{\Pi}$}
\label{sec:bounds-overlinepi}

The last step before proving \cref{thm:main-intro} is to bound the operator norm
of $\overline{\Pi}A^{\pp}\overline{\Pi}$. To do so, we use the fact that the
pruned graph is actually a forest.

Let us consider $\overline{\Pi}A^{\pp}\overline{\Pi}$. By definition,
see \eqref{eq:projections-u}, we have
\begin{multline}\label{eq:three-terms}
    \overline{\Pi}A^{\pp}\overline{\Pi}
    = \hat{A} - \sum_{\substack{x\in\hcV^{(\rm h)}_{\nu}\\\sigma \in \{\pm 1\}}}\sigma\sqrt{D^{\pp -}_{x}}\b u_{\sigma}(x)\b u_{\sigma}(x)^{*}
    = (\hat{A} - A^{\pp})\\
    - \sum_{\substack{x\in\hcV^{(\rm h)}_{\nu}\\\sigma \in \{\pm 1\}}}\sigma\sqrt{D^{\pp -}_{x}}\Bigl(\b u_{\sigma}(x)\b u_{\sigma}(x)^{*} - \b v_{\sigma}(x)\b v_{\sigma}(x)^{*}\Bigr)
    + \Bigl(A^{\pp} - \sum_{\substack{x\in\hcV^{(\rm h)}_{\nu}\\\sigma \in \{\pm 1\}}}\sigma\sqrt{D^{\pp -}_{x}}\b v_{\sigma}(x)\b v_{\sigma}(x)^{*}\Bigr)\,.
\end{multline}
\cref{prop:approx-hat} implies that the error
$\|A^{\pp} - \hat{A}\|$ is at most of order
$\sqrt{\frac{\log N}{\log \log N}}$, and \cref{prop:op-u-v-close} that
the second part is at most of order
$\sqrt{\frac{\log N}{\log\log N}}$. We now prove that the third part
is of order $\sqrt{\frac{\log N}{\log\log N}}$.

\begin{lemma}\label{lem:bound-rest-graph}
  We have
  \begin{equation*}
    \Biggl\|A^{\pp} - \sum_{\substack{x\in\hcV^{(\rm h)}_{\nu}\\\sigma \in \{\pm 1\}}}\sigma\sqrt{D^{\pp -}_{x}}\b v_{\sigma}(x)\b v_{\sigma}(x)^{*}\Biggr\| \leq 2\sqrt{\xi_{\nu}}\,.
  \end{equation*}
\end{lemma}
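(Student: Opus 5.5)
The plan is to identify the subtracted operator as an adjacency matrix, and then bound the norm of the remaining forest by its maximal degree.

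\emph{Step 1: identify the subtracted operator.} For fixed $x$, write $\b v_\sigma(x) = (\b V_0(x) + \sigma \b V_1(x))/\sqrt 2$. Expanding $\b v_\sigma(x)\b v_\sigma(x)^*$ and summing over $\sigma \in \{\pm 1\}$ with weight $\sigma\sqrt{D^{\pp -}_x}$, the diagonal terms $\b V_0\b V_0^*$ and $\b V_1\b V_1^*$ cancel. What remains is
\begin{equation*}
\sqrt{D^{\pp -}_x}\,\bigl(\b V_0(x)\b V_1(x)^* + \b V_1(x)\b V_0(x)^*\bigr) = \b 1_x\b 1_{S_1^{\pp -}(x)}^* + \b 1_{S_1^{\pp -}(x)}\b 1_x^*.
\end{equation*}
Summing over $x \in \hcV^{(\rm h)}_\nu$, the subtracted operator is the adjacency matrix of the subgraph of $G^{\pp}$ formed by the edges joining a vertex of $\hcV^{(\rm h)}_\nu$ to its children. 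Consequently, the operator in the statement is the adjacency matrix of the graph $G'$. This is $G^{\pp}$ with every edge $\{x,y\}$, $y \in S_1^{\pp -}(x)$, $x \in \hcV^{(\rm h)}_\nu$, removed. Since each edge of the forest $G^{\pp}$ joins a vertex to its parent (\cref{rem:one-parent}), $G'$ consists exactly of the edges $\{y,\hat y\}$ with $\hat y \notin \hcV^{(\rm h)}_\nu$.

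\emph{Step 2: degree bound in $G'$.} Take $x \in \hcV^{(\rm h)}_\nu$. All of its child edges are removed. If $x$ has a parent $\hat x \succ x$, then $D_{\hat x} \geq D_x \geq \xi_\nu$, so $\hat x \in \hcV^{(\rm h)}_\nu$ and that edge is removed too. Hence every vertex of $\hcV^{(\rm h)}_\nu$ is isolated in $G'$. Any other vertex $x$ has degree in $G'$ at most $D^{\pp}_x \leq D_x < \xi_\nu$. So $G'$ is a forest, by \cref{prop:pruning}, with maximal degree $\Delta < \xi_\nu$. This step is deterministic and needs no probabilistic input.

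\emph{Step 3: norm of a forest of bounded degree.} It remains to show that a forest with maximal degree $\Delta$ has adjacency norm at most $2\sqrt{\Delta}$. I expect this to be the only step needing care, though it is classical; I would prove it directly. Root each tree and orient each edge from child $y$ to parent $\hat y$. For $\b u \in \R^N$, apply Young's inequality edge by edge:
\begin{equation*}
2|u_y u_{\hat y}| \leq \sqrt{\Delta}\, u_y^2 + u_{\hat y}^2/\sqrt{\Delta}.
\end{equation*}
Summing over edges, each vertex occurs at most once as a child and at most $\Delta$ times as a parent. This gives $|\scalar{\b u}{A'\b u}| \leq (\sqrt\Delta + \Delta/\sqrt\Delta)\norm{\b u}^2 = 2\sqrt{\Delta}\,\norm{\b u}^2$, where $A'$ denotes the adjacency matrix of $G'$. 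Since $A'$ is symmetric, this bounds $\|A'\|$ by $2\sqrt\Delta \leq 2\sqrt{\xi_\nu}$, which is the claim.
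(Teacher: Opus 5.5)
Your proof is correct and follows the same route as the paper. The paper likewise identifies the operator as the adjacency matrix of the forest $G^{\pp}\vert_{(\hcV^{(\rm h)}_{\nu})^{c}}$, whose degrees are below $\xi_\nu$, and concludes with the standard forest norm bound, which it cites as \cite[Lemma A.4]{ADK20}; you instead verify the identification explicitly (including that vertices of $\hcV^{(\rm h)}_\nu$ become isolated) and prove the $2\sqrt{\Delta}$ bound yourself with an edgewise weighted Young inequality.
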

\begin{proof}
  The matrix
  \begin{equation*}
    A' = A^{\pp} - \sum_{\substack{x\in\hcV^{(\rm h)}_{\nu}\\\sigma \in \{\pm 1\}}}\sigma\sqrt{D^{\pp -}_{x}}\b v_{\sigma}(x)\b v_{\sigma}(x)^{*}
  \end{equation*}
  is the adjacency matrix of the graph
  $\tilde{G} = G^{\pp}\vert_{( \hcV^{(\rm h)}_{\nu})^{c}}$. The degree of the vertices of
  this graph are bounded by $\xi_{\nu}$, and the graph is actually a forest by
\cref{prop:pruning}.
Thus, a standard estimate -- see for instance \cite[Lemma
  A.4]{ADK20} -- implies that
  \begin{equation*}
    \|A'\| \leq 2\sqrt{\xi_{\nu}}\,. \qedhere
  \end{equation*}
\end{proof}

Together with \cref{prop:approx-hat}, \cref{prop:op-u-v-close}, and \eqref{eq:three-terms},  \cref{lem:bound-rest-graph} yields the following bound on
$\|\overline{\Pi}^{\pp}A^{\pp}\overline{\Pi}^{\pp}\|$.
\begin{proposition}\label{prop:bound-block-error}
  Let $\nu > 0$. There exists a constant $C_{\nu} > 0$ such that with
  $\nu$-high probability, we have
  \begin{equation*}
    \|\overline{\Pi}^{\pp}A^{\pp}\overline{\Pi}^{\pp}\| \leq C_{\nu}\sqrt{\frac{\log N}{\log\log N}}\,.
  \end{equation*}
  \end{proposition}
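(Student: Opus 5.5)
The plan is to read off the bound directly from the decomposition \eqref{eq:three-terms}, which expresses $\overline{\Pi}^{\pp}A^{\pp}\overline{\Pi}^{\pp}$ as the sum of three operators. The first is $\hat A - A^{\pp}$. The second is the difference
\begin{equation*}
\sum_{x,\sigma}\sigma\sqrt{D^{\pp -}_{x}}\bigl(\b u_{\sigma}(x)\b u_{\sigma}(x)^{*} - \b v_{\sigma}(x)\b v_{\sigma}(x)^{*}\bigr).
\end{equation*}
The third is $A^{\pp} - \sum_{x,\sigma}\sigma\sqrt{D^{\pp -}_{x}}\b v_{\sigma}(x)\b v_{\sigma}(x)^{*}$. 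We apply the triangle inequality and bound each operator norm separately, each on an event of $\nu$-high probability.

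\textbf{First term.} \cref{prop:approx-hat} gives $\|\hat A - A^{\pp}\| \leq C_\nu\sqrt{\log N/\log\log N}$ with $\nu$-high probability.

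\textbf{Second term.} \cref{prop:op-u-v-close} bounds its norm by $\sqrt{9\nu\log N/\log\log N}$ with $\nu$-high probability.

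\textbf{Third term.} \cref{lem:bound-rest-graph} bounds this term deterministically by $2\sqrt{\xi_\nu}$. This uses that $\tilde G$ is a forest, which holds by \cref{prop:pruning}, and that its degrees are bounded by $\xi_\nu$. Since $\xi_\nu$ is a constant multiple of $\log N/\log\log N$, this is again $O(\sqrt{\log N/\log\log N})$.

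\textbf{Combining.} We intersect the events on which the bounds hold. Each has probability at least $1-CN^{-\nu}$, so a union bound over finitely many of them keeps the intersection of $\nu$-high probability, with an enlarged constant $C$. Summing the three bounds then yields the claim with a suitable $C_\nu$.

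\textbf{Main point of care.} The only subtle step is checking that \eqref{eq:three-terms} is a genuine identity. By definition \eqref{eq:block-approx}, $\hat A$ minus the $\b u$-operator equals $\overline{\Pi}A^{\pp}\overline{\Pi}$, with $\overline{\Pi}=\overline{\Pi}^{\pp}$. Adding and subtracting $A^{\pp}$ and the $\b v$-operator gives the three pieces. A secondary point is that the degree bound used in \cref{lem:bound-rest-graph} for vertices outside $\hcV^{(\rm h)}_\nu$ holds: such vertices have $D_x<\xi_\nu$, and pruning only removes edges, so their degrees in $\tilde G$ are also below $\xi_\nu$.
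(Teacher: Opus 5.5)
Your proposal is correct and matches the paper's argument: the paper also applies the triangle inequality to the identity \eqref{eq:three-terms} and bounds the three pieces by \cref{prop:approx-hat}, \cref{prop:op-u-v-close}, and \cref{lem:bound-rest-graph}, respectively. Your two additional checks are sound and only make explicit what the paper leaves implicit. These are that \eqref{eq:three-terms} is a genuine identity, and that vertices outside $\hcV^{(\rm h)}_\nu$ keep pruned degree below $\xi_\nu$.
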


\subsection{Semilocalization}
\label{sec:semilocalization}

We now prove the main result. First, we introduce
  some notation.
  Let $\eta > 0$ and $\lambda > 0$. We define the sets
\begin{equation*}
  \W^{\pp}_{\lambda, \eta} = \Bigl\{x \in [N]\col \bigl|\sqrt{D_{x}^{\pp -}} - \lambda\bigr| \leq \eta\Bigr\} \quad \text{ and } \quad \W_{\lambda, \eta} = \Bigl\{x \in [N]\col \bigl|\sqrt{D_{x}} - \lambda\bigr| \leq \eta\Bigr\}\,,
\end{equation*}
and the orthogonal projections
\begin{equation*}
  \Pi_{\lambda, \eta}^{\pp} = \sum_{x \in \W^{\pp}_{\lambda, \eta}}\b u_{+}(x)\b u_{+}(x)^{*} \quad \text{ and }\quad \overline{\Pi}_{\lambda, \eta}^{\pp} = 1 - \Pi_{\lambda, \eta}^{\pp}\,.
\end{equation*}

The proof of our main theorem, \cref{thm:main-intro}, is from now on very close
to the one of \cite[Theorem 3.4]{ADK20}.
\begin{proof}[Proof of \cref{thm:main-intro}]
  As in \cite[Theorem 3.4]{ADK20}, the core of the proof is the spectral
  gap property of the form
  \begin{equation*}
    \Spec\Bigl(\overline{\Pi}^{\pp}_{\lambda, \eta}A\overline{\Pi}^{\pp}_{\lambda, \eta}\Bigr) \subset \Real \setminus [\lambda - \eta, \lambda + \eta]\,.
  \end{equation*}

Consider first the block-diagonal approximation $\hat{A}$. The
  orthogonal projections $\Pi^{\pp}$ and $\Pi^{\pp}_{\lambda, \eta}$ commute and we
  have the inclusion property
  \begin{equation*}
    \Pi^{\pp}\Pi^{\pp}_{\lambda, \eta} = \Pi^{\pp}_{\lambda, \eta}\,.
  \end{equation*}
  Note that we also have
  \begin{equation*}
    \overline{\Pi}^{\pp}_{\lambda, \eta} = \Id - \Pi^{\pp}\Pi^{\pp}_{\lambda, \eta} = \overline{\Pi}^{\pp} + \overline{\Pi}^{\pp}_{\lambda, \eta}\Pi^{\pp}\,.
  \end{equation*}
  These properties allow us to rewrite $\overline{\Pi}^{\pp}_{\lambda, \eta}\hat{A}\overline{\Pi}^{\pp}_{\lambda, \eta}$ as
  \begin{equation}\label{eq:split-hat}
    \overline{\Pi}^{\pp}_{\lambda, \eta}\hat{A}\overline{\Pi}^{\pp}_{\lambda, \eta} = \overline{\Pi}^{\pp}_{\lambda, \eta}\Pi^{\pp}\hat{A}\Pi^{\pp}\overline{\Pi}^{\pp}_{\lambda, \eta} + \overline{\Pi}^{\pp}\hat{A}\overline{\Pi}^{\pp}\,.
  \end{equation}

The spectral gap property can be shown for
  $\Pi^{\pp}_{\lambda, \eta}\hat{A}\Pi^{\pp}_{\lambda, \eta}$ by showing it for each of the
  two terms in \eqref{eq:split-hat}: they are the two blocks of a block decomposition of
  the operator. By definition, we immediately have
  \begin{equation*}
    \Spec\Bigl(\overline{\Pi}^{\pp}_{\lambda, \eta}\Pi^{\pp}\hat{A}\Pi^{\pp}\overline{\Pi}^{\pp}_{\lambda, \eta}\Bigr) \subset \Real \setminus [\lambda-\eta, \lambda + \eta]\,.
  \end{equation*}
  For the second part, \cref{prop:bound-block-error} implies that there exists a
  constant $C_{\nu} > 0$ such that with $\nu$-high probability,
\begin{equation*}
    \Spec\Bigl(\overline{\Pi}^{\pp}\hat{A}\overline{\Pi}^{\pp}\Bigr) = \Spec\Bigl(\overline{\Pi}^{\pp}A^\pp\overline{\Pi}^{\pp}\Bigr) \subset \Biggl[-\frac{C_{\nu}}{2}\sqrt{\frac{\log N}{\log\log N}}\,, \frac{C_{\nu}}{2}\sqrt{\frac{\log N}{\log\log N}}\,\Biggr] \subset \Real \setminus [\lambda-\eta, \lambda+\eta]\,,
  \end{equation*}
  as we chose $\eta$ such that $|\lambda \pm \eta| \geq |\lambda| / 2 > C_{\nu}\sqrt{\frac{\log N}{\log\log N}}/2$.

Since the two summands in \eqref{eq:split-hat} commute, we have proved
  \begin{equation*}
    \Spec\Bigl(\overline{\Pi}^{\pp}_{\lambda, \eta}\hat{A}\overline{\Pi}^{\pp}_{\lambda, \eta}\Bigr)\subset \Real \setminus [\lambda-\eta, \lambda+\eta]\,.
  \end{equation*}
\cref{prop:approx-hat} allows to upgrade this to a spectral gap property for
  $\overline{\Pi}^{\pp}_{\lambda, \eta}A\overline{\Pi}^{\pp}_{\lambda, \eta}$: there exists
  $c_{\nu} > 0$ such that with $\nu$-high probability
  \begin{equation*}
    \Spec\Bigl(\overline{\Pi}^{\pp}_{\lambda, \eta}A\overline{\Pi}^{\pp}_{\lambda, \eta}\Bigr)
    \subset \Real \setminus \Biggl[\lambda - \Biggl(\eta - c_{\nu}\sqrt{\frac{\log N}{\log\log N}}\,\Biggr), \lambda + \Biggl(\eta - c_{\nu}\sqrt{\frac{\log N}{\log\log N}}\,\Biggr)\Biggr]\,.
  \end{equation*}
  By convention, if $\eta - c_{\nu}\sqrt{\frac{\log N}{\log\log N}} > 0$, then the interval
  is $\emptyset$. Assume that $ \eta > c_{\nu}\sqrt{\frac{\log N}{\log\log N}}$, otherwise the
  result is vacuous.

  We now conclude as follows. Let $\lambda$ be an eigenvalue associated to
  a normalized eigenvector $\b q$, we have $(A - \lambda)\b q = 0$. Multiplying
  by $\overline{\Pi}^{\pp}_{\lambda, \eta}$ and introducing
  $\Id = \Pi^{\pp}_{\lambda, \eta}+\overline{\Pi}^{\pp}_{\lambda, \eta}$, we get
  \begin{equation*}
    \overline{\Pi}^{\pp}_{\lambda, \eta}(A - \lambda)\overline{\Pi}^{\pp}_{\lambda, \eta}\b q + \overline{\Pi}^{\pp}_{\lambda, \eta}(A - \lambda)\Pi^{\pp}_{\lambda, \eta}\b q = 0\,,
  \end{equation*}
  which simplifies to
  \begin{equation*}
    (\overline{\Pi}^{\pp}_{\lambda, \eta}A\overline{\Pi}^{\pp}_{\lambda, \eta} - \lambda)\overline{\Pi}^{\pp}_{\lambda, \eta}\b q = -\overline{\Pi}^{\pp}_{\lambda, \eta}A\Pi^{\pp}_{\lambda, \eta}\b q\,.
  \end{equation*}
  Finally, we have
  \begin{equation*}
    \overline{\Pi}^{\pp}_{\lambda, \eta}\b q = -(\overline{\Pi}^{\pp}_{\lambda, \eta}A\overline{\Pi}^{\pp}_{\lambda, \eta} - \lambda)^{-1}\overline{\Pi}^{\pp}_{\lambda, \eta}A\Pi^{\pp}_{\lambda, \eta}\b q\,.
  \end{equation*}

  The spectral gap property and
\cref{prop:approx-hat} imply the bounds
  \begin{equation*}
    \begin{split}
      \|(\overline{\Pi}^{\pp}_{\lambda, \eta}A\overline{\Pi}^{\pp}_{\lambda, \eta} - \lambda)^{-1}\| &\leq \frac{1}{\eta - c_{\nu}\sqrt{\frac{\log N}{\log\log N}}}\\
      \|\overline{\Pi}^{\pp}_{\lambda, \eta}A\Pi^{\pp}_{\lambda, \eta}\| &\leq \|\hat{A} - A\| \leq c_{\nu}\sqrt{\frac{\log N}{\log\log N}}\,.
    \end{split}
  \end{equation*}
  They allow us to deduce
  \begin{equation}\label{eq:last-eq-main-thm}
      \|\overline{\Pi}^{\pp}_{\lambda, \eta}\b q\| \leq \frac{c_{\nu}\sqrt{\frac{\log N}{\log\log N}}}{\eta - c_{\nu}\sqrt{\frac{\log N}{\log\log N}}}\wedge 1 \leq \frac{2c_{\nu}\sqrt{\frac{\log N}{\log\log N}}}{\eta}\,,
    \end{equation}
  which can be restated as
   \begin{equation*}
    \sum_{x\in \W^{\pp}_{\lambda, \eta}}\langle \b q, \b u_{\sigma}(x) \rangle^{2} \geq 1 - \left(\frac{c_{\nu}}{\eta}\sqrt{\frac{\log N}{\log\log N}}\,\right)^{2}\,,
  \end{equation*}
  if $\sigma = \sign \lambda$.

  To go from a result on the set $\W^\pp_{\lambda, \eta}$ to the result on $\W_{\lambda, \eta}$, we notice that by \cref{prop:pruning}, with $\nu$-high probability,
  $D_{x} - \xi/2 -1 \leq D_{x}^{\pp -} = D_{x}^{\pp} - 1 \leq D_{x}$. Thus, assuming $\lambda, \eta \geq \sqrt{\xi}/2$
  we get
  \begin{equation}\label{eq:inclusion-W}
    \W^{\pp}_{\lambda, \eta - \sqrt{\xi/2}} \subset \W_{\lambda, \eta}\,.
  \end{equation}
  Set $\tilde{\eta} = \eta - \sqrt{\frac{\xi}{2}}$. The inclusion \eqref{eq:inclusion-W}
  implies that for all vectors $\b q$,
  \begin{equation*}
    \normBB{\Biggl( \Id - \sum_{x \in \W_{\lambda, \eta}}\b u_{\sigma}(x)\b u_{\sigma}(x)^{*} \Biggr)\b q} \leq \|\overline{\Pi}^{\pp}_{\lambda, \tilde{\eta}}\b q\|\,.
  \end{equation*}
  where $\sigma = \sign \lambda$.

  We have by \eqref{eq:last-eq-main-thm} that with $\nu$-high probability,
  \begin{equation*}
    \normBB{ \Biggl( \Id - \sum_{x \in \W_{\lambda, \eta}}\b u_{\sigma}(x)\b u_{\sigma}(x)^{*} \Biggr)\b q} \leq \|\overline{\Pi}^{\pp}_{\lambda, \tilde{\eta}}\b q\| \leq \frac{c_{\nu}\sqrt{\frac{\log N}{\log\log N}}}{\tilde{\eta}} = \frac{c_{\nu}\sqrt{\frac{\log N}{\log\log N}}}{\eta- \sqrt{\xi/2}}\,.
  \end{equation*}
  Note that the result is vacuous if $\eta$ is not of order $\sqrt{\frac{\log N}{\log\log N}}$: either $\eta > \sqrt{\xi}$ and we have reached the wanted result up to increasing the constant $c_\nu$, or $\eta \leq \sqrt{\xi}$ and we can bound the left-hand term by any constant $c_\nu \geq 1$.
\end{proof}

\section{The extremal eigenvalues and the localization phenomenon}
\label{sec:biggest-eigenvalues}

We now turn to the extremal eigenvalues. We show that they are close to
the square roots of the degrees of some vertices in $G$. Furthermore,
assuming that the average degrees $d_{x}$ are well separated, we
obtain the localization of some eigenvectors around single vertices.

We use the notation $\lambda_1(A) \geq \lambda_2(A) \geq \cdots \geq \lambda_N(A)$ for the ordered eigenvalues of $A$. Moreover, we recall the permutation $\pi$ from \cref{def:order}.

\begin{theorem}\label{thm:eigenvalues}
  There exists a constant $C_{\nu}> 0$ such that with $\nu$-high
  probability, for all $i \in [N]$, if $\lambda_{i}(A) > C_{\nu}\sqrt{\frac{\log N}{\log\log N}}$, then
  \begin{equation*}
    \left|\lambda_{i}(A) - \sqrt{D_{\pi(i)}}\right| \leq C_{\nu}\sqrt{\frac{\log N}{\log\log N}}\,.
  \end{equation*}
  or if $\lambda_{i}(A) < -C_{\nu}\sqrt{\frac{\log N}{\log\log N}}$, then
  \begin{equation*}
    \left|\lambda_{i}(A) + \sqrt{D_{\pi(N+1 - i)}}\right| \leq C_{\nu}\sqrt{\frac{\log N}{\log\log N}}\,.
  \end{equation*}
\end{theorem}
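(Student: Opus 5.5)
We compare the spectrum of $A$ with that of the block-diagonal approximation $\hat A$ from \eqref{eq:block-approx}, and then relate the explicit eigenvalues of $\hat A$ to the degrees. Write $\Delta \deq \sqrt{\log N/\log\log N}$.

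\emph{Step 1: from $A$ to $\hat A$.} By \cref{prop:approx-hat}, with $\nu$-high probability $\|A-\hat A\|\le C_\nu\Delta$. Weyl's inequality then gives $|\lambda_i(A)-\lambda_i(\hat A)|\le C_\nu\Delta$ for all $i\in[N]$.

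\emph{Step 2: the spectrum of $\hat A$.} The operator $\hat A$ is block diagonal with respect to $\Pi^\pp\oplus\overline\Pi^\pp$. On the range of $\Pi^\pp$ it is diagonal in the orthonormal basis $(\b u_\sigma(x))$ of \cref{prop:ps-ev}, with eigenvalues $\sigma\sqrt{D^{\pp-}_x}$ for $x\in\hcV^{(\rm h)}_\nu$. On the complementary range, \cref{prop:bound-block-error} shows that the eigenvalues lie in $[-C_\nu\Delta,C_\nu\Delta]$.

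Now fix $i$ with $\lambda_i(A)>C'_\nu\Delta$, where $C'_\nu$ is large. Then $\lambda_i(\hat A)>2C_\nu\Delta$, so this eigenvalue is not an eigenvalue of the second block. The eigenvalues of $\hat A$ above $2C_\nu\Delta$ are therefore exactly the numbers $\sqrt{D^{\pp-}_x}$ exceeding that threshold, listed in decreasing order. Consequently $\lambda_i(\hat A)$ is the $i$-th largest element of $\{\sqrt{D^{\pp-}_x}\colon x\in\hcV^{(\rm h)}_\nu\}$. The negative side is symmetric: there the eigenvalues are $-\sqrt{D^{\pp-}_x}$, and the index $i$ from the bottom gives the $(N+1-i)$-th position in the ordering.

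\emph{Step 3: from pruned degrees to $D_{\pi(i)}$.} This is the main point to check carefully. By \cref{prop:pruning}, with $\nu$-high probability we have $D_x-\xi/2-1\le D^{\pp-}_x\le D_x$ for all $x$. Hence the decreasingly ordered sequences $(D^{\pp-})_{(k)}$ and $(D_{\pi(k)})$ satisfy $|(D^{\pp-})_{(k)}-D_{\pi(k)}|\le \xi$. This holds because a uniform entrywise perturbation of size $a$ moves each order statistic by at most $a$, which one checks by counting how many entries exceed each threshold.

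Vertices outside $\hcV^{(\rm h)}_\nu$ have $D_x<\xi$, so their square roots are $O(\Delta)$. Since $\xi\asymp\Delta^2$, the order statistics restricted to $\hcV^{(\rm h)}_\nu$ coincide with the global ones in the regime above $C\Delta$. We must also confirm that the index $i$ falls in this regime: if $D_{\pi(i)}$ were below $C\xi$, the count of pruned values above $2C_\nu\Delta$ would be smaller than $i$, contradicting Step 2.

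Converting to square roots, we use $|\sqrt a-\sqrt b|\le |a-b|/(\sqrt a+\sqrt b)\le \xi/\sqrt{\xi}=\sqrt\xi$ when $a,b\gtrsim\xi$. When one of $a,b$ is smaller, both square roots are $O(\sqrt\xi)$ and the bound holds trivially. Combining with Step 1 gives $|\lambda_i(A)-\sqrt{D_{\pi(i)}}|\le C_\nu\Delta$ after enlarging $C_\nu$.

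The only delicate part is the index bookkeeping. We need the $i$-th eigenvalue of $A$ to match the $i$-th largest degree even though some vertices leave $\hcV^{(\rm h)}_\nu$ and pruning reorders the degrees. This is handled by the threshold argument above: above level $C\Delta$ the counting functions of $\lambda(A)$ and $\sqrt{D}$ agree up to shifts of the threshold by $O(\Delta)$.
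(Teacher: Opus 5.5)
Your proposal is correct and follows the paper's route: Weyl's inequality via \cref{prop:approx-hat}, reading off the spectrum of the block-diagonal $\hat A$ above the level of $\|\overline{\Pi}^{\pp}A^{\pp}\overline{\Pi}^{\pp}\|$, and then comparing $D^{\pp -}$ with $D$ via \cref{prop:pruning}. Your order-statistics step supplies the index bookkeeping the paper skips: the paper writes $\lambda_i(\hat A)=\sqrt{D^{\pp -}_{\pi(i)}}$ even though $\pi$ orders vertices by $D$ rather than by $D^{\pp -}$.
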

\begin{proof}
  This is a consequence of \cref{prop:approx-hat}. There
  exists $C'_{\nu} > 0$ such that with $\nu$-high probability
  \begin{equation*}
    |\lambda_{i}(A) - \lambda_{i}(\hat{A})| \leq \|A - \hat{A}\| \leq C'_{\nu}\sqrt{\frac{\log N}{\log\log N}}\,,
  \end{equation*}
  for all $i \in [N]$.

  The $i$-th eigenvalue of $\hat{A}$ satisfies
  $\lambda_{i}(\hat{A}) = \pm\sqrt{D_{\pi(i)}^{\pp -}}$ if
  $\lambda_{i}(\hat{A}) > \|\overline{\Pi}^{\pp}A^{\pp}\overline{\Pi}^{\pp}\|$.
  Furthermore, \cref{prop:pruning} implies that with $\nu$-high probability, for
  all $x \in \hcV^{(\rm h)}$,
  \begin{equation*}
    |D_{x} - D_{x}^{\pp -}| \leq \xi/2\,.
  \end{equation*}
  Hence, there exists a constant $C_{\nu}$ such that with $\nu$-high
  probability: for all $i$ such that
  $\lambda_{i}(\hat{A}) > \|\overline{\Pi}^{\pp}A^{\pp}\overline{\Pi}^{\pp}\|$,
  \begin{equation*}
    |\lambda_{i}(A) - \sqrt{D_{x}}| \leq C_{\nu} \sqrt{\frac{\log N}{\log\log N}}\,.
  \end{equation*}
  We have a similar result when
  $\lambda_{i}(\hat{A}) < - \|\overline{\Pi}^{\pp}A^{\pp}\overline{\Pi}^{\pp}\|$.
\end{proof}

We now consider the phenomenon of localization around a single vertex,
in general a stronger result than the semilocalization. According to
\cref{thm:main-intro}, it occurs when
$\#\mathcal{W}_{\lambda, \eta} = 1$ for an appropriate pair
$(\lambda, \eta)$. We fix $\nu > 0$ and $\eta > 0$.

We introduce the set of isolated vertices:
\begin{equation}\label{eq:def-set-loc}
  \hcV^{*}_{\nu, \eta} = \left\{ x \in \vertices \col
    \begin{aligned}
      &\bullet \, \forall y \in \vertices \setminus \{x\} , |d_{x} - d_{y}| \geq \left( 4\sqrt{\nu\log N d_{x}} + 4\sqrt{\nu\log N d_{y}} \right)\vee 16 \eta^{2}\\
      &\bullet \, d_{x} \geq \frac{4\nu}{9}\log N\\
    \end{aligned}
  \right\}\,.
\end{equation}
Note that while the definition of the set $\hcV^{*}_{\nu, \eta}$ is
somewhat involved, it only depends on the sequence of weights. We
shall show that the eigenvectors associated to the vertices in
$\hcV^{*}_{\nu, \eta}$ are localized with $\nu$-high probability.
\begin{theorem}[Localization]\label{thm:localization}
  There exists $C_{\nu} > 0$ such that with $\nu$-high
  probability, for any eigenvalue $\lambda > C_{\nu}\sqrt{\log N}$ of $A$, with associated
  eigenvector $\b q$, and all $\eta \leq \lambda/2$, we have the following property.

  If $\mathcal{W}_{\lambda, \eta}\cap \hcV^{*}_{\nu, \eta} \neq \emptyset$ then there exists
  $x \in \hcV^{*}_{\nu, \eta}$ such that
  \begin{equation*}
    \langle \b q, \b u_{+}(x) \rangle^{2} \geq 1 - \Biggl(\frac{C_\nu}{\eta}\sqrt{\frac{\log N}{\log\log N}}\Biggr)^2\,.
  \end{equation*}
\end{theorem}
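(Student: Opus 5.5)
The plan is to reduce \cref{thm:localization} to the semilocalization bound already proved in the proof of \cref{thm:main-intro}, namely \eqref{eq:last-eq-main-thm} together with the inclusion \eqref{eq:inclusion-W}. That bound gives, with $\nu$-high probability, simultaneously for all eigenpairs $(\lambda,\b q)$ with $\lambda>0$ and all admissible $\eta$,
\begin{equation*}
\sum_{x\in\W_{\lambda,\eta}}\scalar{\b q}{\b u_{+}(x)}^2\geq 1-\Bigl(\frac{C_\nu}{\eta}\sqrt{\frac{\log N}{\log\log N}}\Bigr)^2\,.
\end{equation*}
It therefore suffices to show that, on an event of $\nu$-high probability, $\W_{\lambda,\eta}\cap\hcV^*_{\nu,\eta}\neq\emptyset$ forces $\W_{\lambda,\eta}=\{x\}$ for a single $x\in\hcV^*_{\nu,\eta}$. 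The statement then follows with $x$ the unique resonant vertex.

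The key step is a deterministic consequence of degree concentration. By \cref{lem:estimate-degree} and a union bound over $[N]$ (replacing $\nu$ by $\nu+1$), with high probability every $y$ satisfies $|D_y-d_y|\leq 2\sqrt{\nu\log N\,(d_y\vee\frac{4\nu}{9}\log N)}$. Fix $x\in\hcV^*_{\nu,\eta}\cap\W_{\lambda,\eta}$ and suppose $y\in\W_{\lambda,\eta}$ with $y\neq x$. Since both $\sqrt{D_x}$ and $\sqrt{D_y}$ lie in $[\lambda-\eta,\lambda+\eta]$ with $\eta\leq\lambda/2$, we get
\begin{equation*}
|D_x-D_y|=|\sqrt{D_x}-\sqrt{D_y}|\,(\sqrt{D_x}+\sqrt{D_y})\leq 2\eta\cdot 2(\lambda+\eta)\leq 6\eta\lambda\,.
\end{equation*}
On the other hand, the first bullet of \eqref{eq:def-set-loc} and the concentration bounds give
\begin{equation*}
|D_x-D_y|\geq |d_x-d_y|-2\sqrt{\nu\log N\,d_x}-2\sqrt{\nu\log N\,(d_y\vee\tfrac{4\nu}{9}\log N)}\geq \tfrac12|d_x-d_y|\,.
\end{equation*}
This uses $d_x\geq\frac{4\nu}{9}\log N$. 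For $d_y$ below the threshold, the term $4\sqrt{\nu\log N\,d_x}$ in the definition dominates the missing piece, because $d_x\geq\frac{4\nu}{9}\log N$.

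The main obstacle is comparing $\frac12|d_x-d_y|\geq 8\eta^2$ with $6\eta\lambda$. Since $\eta\leq\lambda/2$, the scale $\eta^2$ alone does not beat $\eta\lambda$, so the $\eta^2$ branch of the definition is not sufficient by itself. I would use the $\sqrt{d}$ branch instead. We have $D_x\asymp d_x$ and $\lambda\leq\sqrt{D_x}+\eta\leq 2\sqrt{D_x}$, so $\lambda\lesssim\sqrt{d_x}$. Hence $\eta\lambda\lesssim\eta\sqrt{d_x}$, and for this to fall below $\sqrt{\nu\log N\,d_x}$ we need $\eta\lesssim\sqrt{\nu\log N}$. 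In the complementary regime $\eta\gtrsim\sqrt{\log N}$, I would combine $16\eta^2$ with $\lambda>C_\nu\sqrt{\log N}$ and enlarge $\eta$-dependent constants. Failing that, I would run the semilocalization bound with a smaller window $\eta'=c\eta$ (chosen so that $6\eta'\lambda<8\eta^2$ is consistent) and absorb the constant loss into $C_\nu$. I expect this bookkeeping between the two branches of \eqref{eq:def-set-loc} to be the delicate point.

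Once uniqueness holds, $\W_{\lambda,\eta}=\{x\}$ and the displayed semilocalization bound reduces to a single term. If $\eta\lesssim\sqrt{\log N/\log\log N}$ the claimed bound is vacuous, so only large $\eta$ needs the argument.
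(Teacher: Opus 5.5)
Your reduction is the paper's own: force $\W_{\lambda,\eta}=\{x\}$, then read off \cref{thm:main-intro}. Your uniqueness argument is correct for $\eta<\frac12\sqrt{\nu\log N}$. There the $\sqrt{d}$-branch of \eqref{eq:def-set-loc} gives $|D_x-D_y|\geq\frac12|d_x-d_y|\geq 2\sqrt{\nu\log N}\,(\sqrt{d_x}+\sqrt{d_y})$, while two resonant vertices satisfy $|D_x-D_y|\leq 2\eta(\sqrt{D_x}+\sqrt{D_y})\leq 4\eta(\sqrt{d_x}+\sqrt{d_y})$.

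The genuine gap is the regime $\eta\gtrsim\sqrt{\nu\log N}$, and neither of your fallbacks closes it. No choice of constants helps. The $16\eta^2$ branch only yields $|D_x-D_y|\geq 8\eta^2$, whereas $\W_{\lambda,\eta}$ has width $(\lambda+\eta)^2-(\lambda-\eta)^2=4\lambda\eta\geq 8\eta^2$ in degree space. Since $\eta/\lambda$ can be a negative power of $N$, the gap is not a constant factor, and the lower bound on $\lambda$ only widens the window. Shrinking the window to $\eta'$ would require $\eta'\lesssim\eta^2/\lambda$ rather than $\eta'=c\eta$, so the error term cannot be absorbed into $C_\nu$. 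Worse, the vertex $x\in\W_{\lambda,\eta}\cap\hcV^{*}_{\nu,\eta}$ given by the hypothesis need not lie in $\W_{\lambda,\eta'}$, and then the hypothesis says nothing about $\b q$.

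This is not a bookkeeping issue: in that regime the statement contradicts \cref{thm:main-intro}. Take all weights equal to $1$ except three vertices $x,y,z$ with $d_x\asymp N^{\kappa}$ ($\kappa>0$ small), $d_x-d_y\approx 10\sqrt{\nu d_x\log N}$ and $|d_y-d_z|\leq 2$, and set $\eta=8\sqrt{\nu\log N}$.
\begin{itemize}
\item Assumptions \ref{hyp:upperbd-weights} and \ref{hyp:behavior-m21} hold.
\item $\hcV^{*}_{\nu,\eta}=\{x\}$: $x$ satisfies both conditions in \eqref{eq:def-set-loc}, since $16\eta^2=O(\log N)\ll\sqrt{d_x\log N}$. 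The vertices $y$ and $z$ violate the separation condition against each other, and every other vertex has $d<\frac{4\nu}{9}\log N$.
\item By \cref{lem:estimate-degree}, with $\nu$-high probability $\sqrt{D_x}-\sqrt{D_v}\in[3,7]\sqrt{\nu\log N}$ for $v\in\{y,z\}$.
\item By \cref{thm:eigenvalues}, $\lambda=\lambda_2(A)$ lies within $O(\sqrt{\log N/\log\log N})$ of $\sqrt{D_y}$ or $\sqrt{D_z}$.
\end{itemize}
Hence $x\in\W_{\lambda,\eta}\cap\hcV^{*}_{\nu,\eta}$, and the statement would force $\scalar{\b q}{\b u_{+}(x)}^{2}\geq 1-O(1/\log\log N)$. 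But applying \cref{thm:main-intro} to the same $\lambda$ with window $2\sqrt{\nu\log N}$, which excludes $x$, and using orthonormality of the $\b u_{+}(\cdot)$, gives $\scalar{\b q}{\b u_{+}(x)}^{2}=O(1/\log\log N)$.

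The paper's proof handles this regime only through $|\sqrt{D_x}-\sqrt{D_y}|\geq\sqrt{|D_x-D_y|/2}$. That inequality is backwards: one always has $|\sqrt a-\sqrt b|=|a-b|/(\sqrt a+\sqrt b)\leq\sqrt{|a-b|}$. So the obstruction you flagged is exactly where the paper's argument fails. What your route does prove is the theorem for $\eta<\frac12\sqrt{\nu\log N}$. It also proves it for all $\eta\leq\lambda/2$ if $16\eta^2$ in \eqref{eq:def-set-loc} is replaced by $C\eta(\sqrt{d_x}+\sqrt{d_y})$, i.e.\ a separation of order $\eta$ in $\sqrt{d}$ rather than in $d$.
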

\begin{proof}
  We shall show that if
  $\mathcal{W}_{\lambda, \eta}\cap \hcV^{*}_{\nu, \eta} \neq \emptyset$,
  then $\#\mathcal{W}_{\lambda, \eta} = 1$.
  \cref{thm:localization} is then a consequence of \cref{thm:main-intro}.

  Let $x \in \mathcal{W}_{\lambda, \eta}\cap \hcV^{*}_{\nu, \eta}$ and
  $y \in \mathcal{W}_{\lambda, \eta}$. We first notice that assuming
  $\lambda > C_{\nu}\log N$, we get thanks to \cref{lem:estimate-degree} that
  \begin{equation*}
    \frac{C_{\nu}}{2}\log N \ \frac{\lambda}{2} \leq D_{y} \leq d_{y} + 2 \sqrt{\nu \log N (d_{y}\vee \frac{4\nu}{9}\log N)}\,.
  \end{equation*}
  By choosing $C_{\nu} > 0$ big enough, we can ensure
  $d_{y} \geq \frac{4\nu}{9}\log N$. A second use of
  \cref{lem:estimate-degree} implies that with $\nu$-high probability,
  \begin{equation*}
    d_{x}-d_{y}-2\sqrt{\nu\log N d_{x}}-2\sqrt{\nu\log N d_{y}} \leq D_{x} - D_{y}\leq d_{x}-d_{y}+2\sqrt{\nu\log N d_{x}}+2\sqrt{\nu\log N d_{y}}\,.
  \end{equation*}
  Using \eqref{eq:def-set-loc}, we have
  \begin{equation*}
    d_{x}-d_{y} - \frac{1}{2}|d_{x} - d_{y}| \leq D_{x} - D_{y} \leq d_{x}-d_{y} + \frac{1}{2}|d_{x} - d_{y}|\,.
  \end{equation*}
  Notice that if $D_{x} > D_{y}$ then $d_{x} \geq d_{y}$ as otherwise we would have
  \begin{equation*}
    D_{x} - D_{y} \leq - \frac{1}{2}|d_{x} - d_{y}| \leq 0\,,
  \end{equation*}
  a contradiction. We can show similarly that if $D_{x} < D_{y}$ then
  $d_{x}\leq d_{y}$. Thus, we have
  \begin{equation}\label{eq:lowerbd-D}
    |D_{x} - D_{y}| \geq \frac{1}{2}|d_{x} - d_{y}|\,.
  \end{equation}

  Finally, if $x \neq y$, we have by \eqref{eq:lowerbd-D} and \eqref{eq:def-set-loc}:
  \begin{equation*}
    \Bigl|\sqrt{D_{x}} - \sqrt{D_{y}}\Bigr| \geq \sqrt{\frac{|D_{x} - D_{y}|}{2}}\geq \frac{\sqrt{|d_{x} - d_{y}|}}{2} \geq 2\eta\,.
  \end{equation*}
  Thus, either $x = y$ or $y \notin \mathcal{W}_{\lambda, \eta}$. Hence, $\mathcal{W}_{\lambda, \eta}$ is a singleton.
\end{proof}

\begin{example} \label{ex:localization_power}
  Consider \cref{ex:heavy-tail}. The weights are chosen as the
  quantiles of a power law. Let $\alpha > 2$. We choose for simplicity
  \begin{equation*}
    w_{i} = \left( \frac{N}{i} \right)^{1/\alpha}\,.
  \end{equation*}

  In that case, if $i \leq N^{1/(2\alpha + 2)}$, we have
  \begin{equation*}
    |w_{i} - w_{i-1}| \geq \left( \frac{N}{i} \right)^{1/\alpha}\frac{c}{i}\,,
  \end{equation*}
  for some constant $c > 0$. As the sequence $(|w_{i+1} - w_{i}|)_{i \geq 1}$ is
  decreasing, we get that
  $\{1, \ldots, \lfloor N^{1/(2\alpha + 2)}\rfloor\} \subset \hcV^{*}_{\nu, \eta}$ with
  $\eta = \frac{\sqrt{\alpha}}{4}N^{1/(4\alpha)}$. On the other hand,
  \cref{thm:eigenvalues} implies that for $i \leq \lfloor N^{1/(2\alpha+2)}\rfloor$,
  \begin{equation*}
    \left| \lambda_{i}(A)- \sqrt{D_{\pi(i)}} \right| \ll \eta
  \end{equation*}
  with $\nu$-high probability, that is for all such $i$,
  $\pi(i) \in \W_{\lambda_{i}(A), \eta}$. Bennett's inequality shows
  that $D_{i} > D_{i+1}$ with $\nu$-high probability for
  $i \leq \lfloor N^{1/(2\alpha+2)}\rfloor$. Hence, $\pi(i) = i$ for all such $i$'s.

  It then follows from \cref{thm:localization} that with $\nu$-high probability, the
  eigenvectors corresponding to the $N^{1/(2\alpha + 2)}$ first eigenvalues
  are localized.
\end{example}

\appendix

\section{Bounds for the proof of \cref{prop:approx-hat}}
\label{sec:bounds-proof-prop}
The norm of
\begin{equation*}
  B_{2} \deq \sum_{x\in\hcV^{(\rm h)}_{\nu}} \frac{2}{Z_{x}}\sum_{\substack{y \in S_{1}^{\pp -}(\hat{x})\\y \prec x}}\frac{\b 1_{S_{1}^{\pp -}(x)}\b 1_{y}^{*}}{\sqrt{D_{x}^{\pp -}}\#\Sib^{-}(x)}
\end{equation*}
is equal to
\begin{equation*}
  \begin{split}
    \|B_{2}\|^{2} = \max_{\|\b u\| = 1}(\b u^{*}B_{2}B_{2}^{*}\b u)
    &= \max_{\|\b u\| = 1}\sum_{x, x'\in\hcV^{(\rm h)}_{\nu}} \frac{4 \left< \b u, \b 1_{S_{1}^{\pp -}(x)} \right> \left< \b 1_{S_{1}^{\pp -}(x')}, \b u \right>}{Z_{x} Z_{x'}\sqrt{D_{x}^{\pp -}D_{x'}^{\pp -}}\#\Sib^{-}(x)\Sib_{x'}^{-}}\sum_{\substack{y \in S_{1}^{\pp -}(\hat{x})\\y \prec x\\y' \in S_{1}^{\pp -}(\hat{x}')\\y' \prec x'}}\b 1_{y}^{*}\b 1_{y'}\\
    &= \max_{\|\b u\| = 1}\sum_{x, x'\in\hcV^{(\rm h)}_{\nu}} \frac{4 \left< \b u, \b 1_{S_{1}^{\pp -}(x)} \right> \left< \b 1_{S_{1}^{\pp -}(x')}, \b u \right>}{Z_{x} Z_{x'}\sqrt{D_{x}^{\pp -}D_{x'}^{\pp -}}\sqrt{\#\Sib^{-}(x)\Sib_{x'}^{-}}}\delta_{\hat{x} \hat{x}'}\\
    &= \max_{\|\b u\| = 1}\sum_{x\in\hcV^{(\rm h)}_{\nu}} \frac{4 \left<  \b u, \b 1_{S_{1}^{\pp -}(x)}\right> \left< \b 1_{S_{1}^{\pp -}(x)}, \b u \right>}{Z_{x}^{2} D_{x}^{\pp -}\#\Sib^{-}(x)}\,.
  \end{split}
\end{equation*}
At this point, we use the orthonormal family
$\left( \b V_{1}(x) \right)_{x \in \hcV^{(\rm h)}_{\nu}} = \left(\frac{1}{\sqrt{D_{x}^{\pp -}}}\b 1_{S_{1}^{\pp -}(x)}\right)_{x \in \hcV^{(\rm h)}_{\nu}}$.
There exists a constant $C_{\nu} > 0$ such that $\frac{1}{\# \Sib^{-}(x)} \leq \frac{C_{\nu}}{\xi_{\nu}}$ so that
\begin{equation*}
  \|B_{2}\|^{2} \leq \frac{C_{\nu}}{\xi_{\nu}}\max_{\|\b u\| = 1}\sum_{x\in\hcV^{(\rm h)}_{\nu}} \left< \b u, \b V_{1}(x) \right>^{2} \leq \frac{C_{\nu}}{\xi_{\nu}}\,.
\end{equation*}

We now turn to the operator
\begin{equation*}
  B_{3} \deq -\sum_{x\in\hcV^{(\rm h)}_{\nu}} \frac{2}{Z_{x}^{2}}\sum_{y \in \Sib^{-}(x)}\frac{\b 1_{S_{1}^{\pp -}(y)}\b 1_{x}^{*}}{\#\Sib^{-}(x)}\,,
\end{equation*}
whose norm is
\begin{equation*}
  \|B_{3}\|^{2} = \max_{\|\b u\|= 1}\b u^{*}B_{3}^{*}B_{3}\b u = \sum_{x, x'\in\hcV^{(\rm h)}_{\nu}} \frac{4u_{x}u_{x'}}{Z_{x} Z_{x'}\#\Sib^{-}(x)\Sib_{x'}^{-}}\sum_{\substack{y \in \Sib^{-}(x)\\ y' \in \Sib^{-}(x')}} \left< \b 1_{S_{1}^{\pp -}(y)}, \b 1_{S_{1}^{\pp -}(y')} \right>\,.
\end{equation*}
The orthogonality of the vectors
$(\b 1_{S_{1}^{\pp -}(y)})_{y \in \vertices} = (\sqrt{D_{y}^{\pp -}}V_{1}(y))_{y \in \vertices}$ yields
\begin{equation*}
  \|B_{3}\|^{2} = \sum_{x, x'\in\hcV^{(\rm h)}_{\nu}} \frac{4\delta_{\hat{x} \hat{x}'}u_{x}u_{x'}}{Z_{x} Z_{x'}\#\Sib^{-}(x)\# \Sib^{-}(x')}\sum_{y \in \Sib^{-}(x) \cap \Sib^{-}(x')}D_{y}\,.
\end{equation*}
We apply Young's inequality to replace
$\frac{u_{x}u_{x'}}{\#\Sib^{-}(x)\#\Sib^{-}(x')}$ by
$\frac{u_{x}^{2}}{(\#\Sib^{-}(x))^{2}}$:
\begin{equation*}
  \|B_{3}\|^{2} \leq \sum_{x, x'\in\hcV^{(\rm h)}_{\nu}} \frac{4\delta_{\hat{x} \hat{x}'}u_{x}^{2}}{Z_{x} Z_{x'} \left( \#\Sib^{-}(x) \right)^{2}}\sum_{y \in \Sib^{-}(x) \cap \Sib^{-}(x')}D_{y} \leq \sum_{x\in\hcV^{(\rm h)}_{\nu}} \frac{4u_{x}^{2}D_{\hat{x}}}{Z_{x} \left( \#\Sib^{-}(x) \right)^{2}}\sum_{y \in \Sib^{-}(x)}D^{\pp}_{y}\,.
\end{equation*}
Lemmas \ref{lem:bound-siblings} and \ref{lem:descending-ball} give
that with $\nu$-high probability
\begin{equation*}
  \|B_{3}\|^{2} \leq 8\nu\sum_{x, x'\in\hcV^{(\rm h)}_{\nu}} \frac{4u_{x}^{2}}{Z_{x}} \frac{\log N}{\log\log N} = \order\pbb{\frac{\log N}{\log\log N}}\,.
\end{equation*}

Finally, consider the operator
\begin{equation*}
  B_{4} \deq \sum_{x\in\hcV^{(\rm h)}_{\nu}} \frac{2}{Z_{x}}\sum_{y, z \in \Sib^{-}(x)}\frac{\b 1_{S_{1}^{\pp -}(y)}\b 1_{z}^{*}}{(\#\Sib^{-}(x))^{2}}\,,
\end{equation*}
and define for $x \in \hcV^{(\rm h)}_{\nu}$,
\begin{equation*}
  \begin{split}
    \b u(x) &= \frac{1}{\#\Sib^{-}(x)}\b 1_{\Sib^{-}(x)}\\
    \b v(x) &= \frac{1}{\#\Sib^{-}(x)}\sum_{y \in \Sib^{-}(x)} \b 1_{S_{1}^{\pp -}(y)}\,.
  \end{split}
\end{equation*}

Notice that
\begin{equation*}
  B_{4}\b u(x) = \frac{2}{Z_{x}(\#\Sib^{-}(x))^{3}}\sum_{y, z \in \Sib^{-}(x)}\b 1_{S_{1}^{\pp -}(y)} = \frac{2}{Z_{x}(\#\Sib^{-}(x))^{2}}\sum_{y \in \Sib^{-}(x)}\b 1_{S_{1}^{\pp -}(y)} = \frac{2}{Z_{x}\#\Sib^{-}(x)}\b v(x)\,.
\end{equation*}
Furthermore,
\begin{equation*}
\b  v(x)^{*}B_{4} = \frac{2}{Z_{x}(\#\Sib^{-}(x))^{3}}\sum_{y, z \in \Sib^{-}(x)}D_{y}^{\pp -}\b 1_{z}^{*} = \left(  \frac{2}{Z_{x}(\#\Sib^{-}(x))^{2}}\sum_{y \in \Sib^{-}(x)}D_{y}^{\pp -}\right) \b u(x)^{*}\,.
\end{equation*}
Thus, Lemmas \ref{lem:descending-ball} and \ref{lem:bound-siblings}, and the Schur test
imply that
\begin{equation*}
  \|B_{4}\| = \order\p{1}\,.
\end{equation*}

\section{Estimation of the size of $\W_{\lambda, \eta}$}
\label{sec:estim-size-w_lambda}

Let $\lambda, \eta > 0$ such that $2\sqrt{\frac{\log N}{\log\log N}} \leq \eta \leq \lambda/2$. We
consider the expectation $\E \left[ \# \W_{\lambda, \eta} \right]$,
which we rewrite as
\begin{equation*}
  \E \left[ \# \W_{\lambda, \eta} \right]
  = \sum_{\substack{x \in \vertices\\ w_{x} \leq \sqrt{\log N}}}\P \left( (\lambda - \eta)^{2} \leq D_{x} \leq (\lambda + \eta)^{2} \right) + \sum_{\substack{x \in \vertices\\ w_{x} > \sqrt{\log N}}}\P \left( (\lambda - \eta)^{2} \leq D_{x} \leq (\lambda + \eta)^{2} \right)\,.
\end{equation*}
The first part can be bounded using Bennett's inequality \cite[Theorem 2.9]{BLM13}:
\begin{equation*}
  \P \left( D_{x} \geq (\lambda - \eta)^{2} \right) \leq \exp \left( - (\lambda - \eta)^{2}\log(\frac{\left( (\lambda - \eta)^{2} \right)}{d_{x}}) - \left( \lambda - \eta \right)^{2} + d_{x}\right) = \order\p{N^{-2}}\,,
\end{equation*}
so that
\begin{equation*}
  \E \left[ \# \W_{\lambda, \eta} \right]
  = \sum_{\substack{x \in \vertices\\ w_{x} > \sqrt{\log N}}}\P \left( (\lambda - \eta)^{2} \leq D_{x} \leq (\lambda + \eta)^{2} \right) + \order\p{N^{-1}}\,.
\end{equation*}

We start be recalling the following Lemma of approximation of the
degrees by Poisson variables.
\begin{lemma}[Approximation of degrees by a Poisson variable {\cite[Theorem 6.7]{hofstad_random_2016}}]
  There exists a coupling $(\hat{D}_{x}, \hat{P}_{x})$ of the degree
  $D_{x}$ of vertex $x$ and a Poisson variable $x$ with parameter
  $w_{x}$, such that
  \begin{equation*}
    \P \left( \hat{D}_{x} \neq \hat{P}_{x} \right) \leq \frac{w_{x}^{2}}{m_{1}N} \left( 1 + 2\frac{m_{2}}{m_{1}} \right)\,.
  \end{equation*}
\end{lemma}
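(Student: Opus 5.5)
The bound comes from two elementary couplings: first between the Bernoulli edge indicators and independent Poisson variables, then between two Poisson laws with nested parameters. The two error terms add up to exactly the stated constant.

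\emph{Step 1 (Bernoulli to Poisson, edge by edge).} Write $D_x = \sum_{y \neq x} A_{xy}$, where the $A_{xy}$ are independent Bernoulli variables with parameter $p_{xy}$. For each $y \neq x$, take the maximal coupling of $A_{xy}$ with a Poisson variable $P_{xy}$ of parameter $p_{xy}$, with the couplings independent across $y$. Their total variation distance is $p_{xy}(1-e^{-p_{xy}}) \leq p_{xy}^2$, so
\begin{equation*}
\P(A_{xy} \neq P_{xy}) \leq p_{xy}^2 .
\end{equation*}
The sum $\sum_{y \neq x} P_{xy}$ is Poisson with parameter $\sum_{y\neq x} p_{xy} = d_x$. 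By a union bound and \eqref{eq:proba-edge},
\begin{equation*}
\P\Bigl(D_x \neq \sum_{y\ne x} P_{xy}\Bigr) \leq \sum_{y} p_{xy}^2 \leq \sum_y \frac{w_x^2 w_y^2}{(m_1 N)^2} = \frac{w_x^2}{m_1 N}\,\frac{m_2}{m_1}.
\end{equation*}

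\emph{Step 2 (Poisson$(d_x)$ to Poisson$(w_x)$).} Since $d_x \leq w_x$, let $Q$ be an independent Poisson variable with parameter $w_x - d_x$ and set $\hat P_x \deq \sum_{y\ne x} P_{xy} + Q$. Then $\hat P_x$ has law Poisson$(w_x)$, and it differs from $\sum_{y\ne x} P_{xy}$ only on $\{Q \ge 1\}$, whose probability is at most $w_x - d_x$.

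\emph{Step 3 (bounding $w_x - d_x$).} This is the only real computation. Write $w_x = \sum_{y} w_x w_y/(m_1 N)$ and split off the $y = x$ term:
\begin{equation*}
w_x - d_x = \frac{w_x^2}{m_1 N} + \sum_{y\neq x} w_x w_y\Bigl(\frac{1}{m_1N} - \frac{1}{m_1 N + w_x w_y}\Bigr) = \frac{w_x^2}{m_1 N} + \sum_{y \neq x}\frac{w_x^2 w_y^2}{m_1 N (m_1 N + w_x w_y)}.
\end{equation*}
Bounding the last denominator below by $(m_1N)^2$ gives
\begin{equation*}
w_x - d_x \leq \frac{w_x^2}{m_1N}\Bigl(1 + \frac{m_2}{m_1}\Bigr).
\end{equation*}

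\emph{Conclusion.} Set $\hat D_x \deq D_x$ realised as in Step 1. Adding the error probabilities from Steps 1 and 3 gives
\begin{equation*}
\P(\hat D_x \neq \hat P_x) \leq \frac{w_x^2}{m_1N}\Bigl(1+2\frac{m_2}{m_1}\Bigr),
\end{equation*}
as claimed. No step is a genuine obstacle; the one point to get right is keeping the $y = x$ term in Step 3 exactly, since it produces the leading ``$1$'' in the constant.
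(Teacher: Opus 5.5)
Your proof is correct, and it is essentially the standard argument behind \cite[Theorem 6.7]{hofstad_random_2016}, which the paper quotes without proof. That argument first couples each edge indicator $A_{xy}$ with an independent Poisson$(p_{xy})$ variable at cost $\sum_{y} p_{xy}^2 \leq \frac{w_x^2}{m_1 N}\frac{m_2}{m_1}$, and then passes from Poisson$(d_x)$ to Poisson$(w_x)$ at cost $w_x - d_x \leq \frac{w_x^2}{m_1 N}\bigl(1+\frac{m_2}{m_1}\bigr)$; your computation of both error terms, including the diagonal $y = x$ contribution that gives the leading $1$, is right.
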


This Lemma will be key in estimating $\E \left[ \# \W_{\lambda, \eta} \right]$
for some $\lambda, \eta > 0$. Indeed, we have
\begin{equation*}
  \begin{split}
    \E \left[ \# \W_{\lambda, \eta} \right]
    &= \sum_{\substack{x \in \vertices\\ w_{x} > \sqrt{\log N}}}\P \left( (\lambda - \eta)^{2} \leq D_{x} \leq (\lambda + \eta)^{2} \right) + \order\p{N^{-1}}\\
    &\leq \sum_{\substack{x \in \vertices\\ w_{x} > \sqrt{\log N}}}\P \left( (\lambda - \eta)^{2} \leq \hat{P}_{x} \leq (\lambda + \eta)^{2} \right) + \frac{m_{2}}{m_{1}} \left( 1 + 2 \frac{m_{2}}{m_{1}} \right) + \order\p{N^{-1}}\,.
  \end{split}
\end{equation*}
By \cref{hyp:behavior-m21}, the last term is of order at most $(\log N)^{2/3}$.
The first term can be written in term of incomplete Gamma functions
\begin{equation*}
  \Gamma(s, x) = \int_{x}^{\infty}t^{s-1}\ee^{-t}\dd t\,.
\end{equation*}
Indeed, we have
\begin{equation*}
  \sum_{\substack{x \in \vertices\\ w_{x} > \sqrt{\log N}}}\P \left( (\lambda - \eta)^{2} \leq \hat{P}_{x} \leq (\lambda + \eta)^{2} \right)
  = \sum_{\substack{x \in \vertices\\ w_{x} > \sqrt{\log N}}} \left( \frac{\Gamma(U_{N}, w_{x})}{\Gamma(U_{N})} - \frac{\Gamma(L_{N} - 1, w_{x})}{\Gamma(L_{N}- 1)} \right)\,,
\end{equation*}
where we set for convenience $L_{N} = \lfloor(\lambda - \eta)^{2}\rfloor$ and $U_{N} = \lceil (\lambda + \eta)^{2} \rceil$.

We shall use the two following properties of incomplete Gamma
function.
\begin{lemma}\label{lem:prop-incomplete-gamma}
  Let $s \geq 1$ and $x > 0$. Then, $\Gamma(s) - x^{s-1} \leq \Gamma(s, x) \leq \Gamma(s)$. Furthermore,
  $\Gamma(s, x) \sim x^{s-1}\ee^{-x}$ as $x \to \infty$.
\end{lemma}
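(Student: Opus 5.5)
The lower bound $\Gamma(s,x)\geq \Gamma(s)-x^{s-1}$ should follow by splitting the complete Gamma integral at $x$. The asymptotics $\Gamma(s,x)\sim x^{s-1}\ee^{-x}$ come from an integration by parts or a change of variables.

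\emph{Bounds.} Write $\Gamma(s)=\int_0^x t^{s-1}\ee^{-t}\,\dd t+\Gamma(s,x)$. Both integrands are nonnegative, so $\Gamma(s,x)\leq\Gamma(s)$ is immediate. For the other direction, use $s\geq 1$: the map $t\mapsto t^{s-1}$ is nondecreasing on $[0,x]$, hence $t^{s-1}\leq x^{s-1}$ there. Together with $\ee^{-t}\leq 1$ this gives
\begin{equation*}
\int_0^x t^{s-1}\ee^{-t}\,\dd t\leq x^{s-1}\int_0^x \ee^{-t}\,\dd t\leq x^{s-1}(1-\ee^{-x})\leq x^{s-1},
\end{equation*}
so $\Gamma(s,x)\geq \Gamma(s)-x^{s-1}$.

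\emph{Asymptotics.} Substitute $t=x+u$, which yields
\begin{equation*}
\Gamma(s,x)=x^{s-1}\ee^{-x}\int_0^\infty (1+u/x)^{s-1}\ee^{-u}\,\dd u.
\end{equation*}
For $x\geq 1$ and $s\geq 1$, the integrand is dominated by $(1+u)^{s-1}\ee^{-u}$, which is integrable. Pointwise it tends to $\ee^{-u}$ as $x\to\infty$. Dominated convergence then sends the integral to $1$, which gives $\Gamma(s,x)\sim x^{s-1}\ee^{-x}$.

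The only step needing care is the domination in the asymptotic part, and it is routine. Note that the monotonicity of $t^{s-1}$ used in the lower bound is exactly where the hypothesis $s\geq1$ enters. If, as in the application with $s=U_N$ growing, one needs uniformity in $s$, I would instead integrate by parts: $\Gamma(s,x)=x^{s-1}\ee^{-x}+(s-1)\Gamma(s-1,x)$. Iterating this gives a series whose ratio of successive terms is controlled by $s/x$. That yields the equivalence with a relative error of order $s/x$ whenever $s\ll x$.
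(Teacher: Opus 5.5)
Your proof is correct and follows the paper's argument essentially verbatim. For the lower bound you split $\Gamma(s)$ at $x$ and bound $t^{s-1}\le x^{s-1}$ on $[0,x]$, and for the asymptotics you shift $t=x+u$ and apply dominated convergence. Beyond that, you make explicit the dominating function $(1+u)^{s-1}\ee^{-u}$ for $x\ge 1$, which the paper leaves implicit. Your remark is also well taken: the equivalence holds only for fixed $s$, so using it with growing $s=U_N$ would need a uniform estimate such as the integration-by-parts bound with relative error $O(s/x)$.
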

\begin{proof}
  It is immediate that $\Gamma(s, x) \leq \Gamma(s)$. For the other bound, we have
  \begin{equation*}
    \Gamma(s, x) = \Gamma(s) - \int_{0}^{x}t^{s-1}\ee^{-t}\dd t \geq \Gamma(s) - x^{s-1}\int_{0}^{x}\ee^{-t}\dd t \leq \Gamma(s) - x^{s-1}\,.
  \end{equation*}

  To prove the asymptotic estimate, we remark that
  \begin{equation*}
    \frac{\Gamma(s, x)}{x^{s-1}\ee^{-x}}
    = \int_{x}^{\infty} \left( \frac{t}{x} \right)^{s-1}\ee^{-(t-x)}\dd t
    = \int_{0}^{\infty} \left( \frac{t}{x} +1\right)^{s-1}\ee^{-t}\dd t\,.
  \end{equation*}
  The dominated convergence theorem then implies that the limit of the left-hand term is $1$ as $x \to \infty$.
\end{proof}

We kept in the sum only terms $x$ such that $w_{x} \to \infty$ as $N \to \infty$, hence by \cref{lem:prop-incomplete-gamma}, we have
\begin{equation*}
  \E \left[ \# \W_{\lambda, \eta} \right]
  \leq \sum_{\substack{x \in \vertices\\w_{x} > \sqrt{\log N}}} \ee^{-w_{x}}\left( \frac{w_{x}^{U_{N}-1}}{(U_{N}-1)!} - \frac{w_{x}^{L_{N} -2}}{(L_{N}-2)!} \right)(1+o(1)) + \order\p{(\log N)^{2\delta}}\,.
\end{equation*}

We now consider two cases:
\begin{itemize}
  \item The weights $(w_{x})$ are the $(N+1)$-quantiles of an
        exponential law as in \cref{ex:exponential}.
  \item The weights $(w_{x})$ are the $(N+1)$-quantiles of a power law as in \cref{ex:heavy-tail}.
\end{itemize}

In the exponential case, we then have
\begin{equation*}
  \E \left[ \# \W_{\lambda, \eta} \right] \leq N \int_{0}^{\infty}\alpha\ee^{-\alpha t-t}  \left( \frac{t^{U_{N}-1}}{(U_{N}-1)!} - \frac{t^{L_{N}-2}}{(L_{N}-2)!} \right)\dd t(1 + o(1)) + \order\p{(\log N)^{2\delta}}\,.
\end{equation*}
Using that the $k$-th moment of an exponential law of parameter
$\alpha + 1$ is $k!/(\alpha + 1)^{k}$ we get
\begin{equation*}
  \E \left[ \# \W_{\lambda, \eta} \right] \leq N \frac{\alpha}{\alpha+1}\left( \frac{1}{(\alpha+1) ^{L_{N}-2}} - \frac{1}{(\alpha+1)^{U_{N}-1}} \right)(1 + o(1)) + \order\p{(\log N)^{2\delta}}\,.
\end{equation*}
That is,
\begin{equation}\label{eq:exponential-W}
  \E \left[ \# \W_{\lambda, \eta} \right] \leq N \frac{\alpha}{(\alpha+1)^{\lfloor (\lambda - \eta)^{2}\rfloor -1}}(1 + o(1)) + \order\p{(\log N)^{2\delta}}\,.
\end{equation}

In the power law case, we denote the power law measure by $\mu$ and recall that
\begin{equation*}
  \mu([t, +\infty)) = L(t)t^{-\alpha},
\end{equation*}
where $L$ is a positive, slowly varying function. We notice that
\begin{equation*}
  \int_{0}^{\infty} \frac{t^{U_{N}-1}}{(U_{N}-1)!} \ee^{-t} \dd \mu(t)
  = \int_{0}^{\infty}L(t) t^{-\alpha}\left( \frac{t^{U_{N}-2}}{(U_{N}-2)!} - \frac{t^{U_{N}-1}}{(U_{N}-1)!} \right) \ee^{-t}\dd t\,.
\end{equation*}
Then, for all $\iota > 0$, there exists $C > 0$ such that
$0 \leq L(t) \leq C t^{\iota}$. Note that if $L$ is bounded we can choose
$\iota = 0$. We observe that on $(0, \frac{U_{N}-2}{\ee})$, the
integrand is positive. Hence, we have
\begin{equation*}
  \int_{0}^{\infty}L(t) t^{-\alpha}\left( \frac{t^{U_{N}-2}}{(U_{N}-2)!} - \frac{t^{U_{N}-1}}{(U_{N}-1)!} \right) \ee^{-t}\dd t \leq  C\int_{0}^{+\infty} \left( \frac{t^{U_{N}-\alpha-2+\iota}}{(U_{N}-2)!} - \frac{t^{U_{N}-\alpha-1+\iota}}{(U_{N}-1)!} \right) + \order\pB{\ee^{-\frac{U_{N} - 2}{2}}},
\end{equation*}
where the error $\order\p{\ee^{-\frac{U_{N} - 2}{2}}}$ is a bound for
the integral on $(\frac{U_{N}-2}{\ee}, +\infty)$, with and without
$L$. Finally, we have
\begin{equation*}
  \int_{0}^{\infty} \frac{t^{U_{N}-1}}{(U_{N}-1)!} \ee^{-t} \dd \mu(t) \leq C \left( \frac{\Gamma(U_{N}-\alpha-1+\iota)}{\Gamma(U_{N}-1)} - \frac{\Gamma(U_{N}-\alpha+\iota)}{\Gamma(U_{N})} \right)\,.
\end{equation*}
\begin{lemma}[Stirling's approximation]\label{lem:stirling-first}
  Let $t > 0$. We have as $x \to \infty$
  \begin{equation*}
    \frac{\Gamma(x - t)}{\Gamma(x)} = \frac{1}{x^{t}} \ee^{\frac{-t}{2x} + o(1/x)}\,.
  \end{equation*}
\end{lemma}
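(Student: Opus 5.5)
The plan is to take logarithms and compare both Gamma values through Stirling's series. For fixed $t>0$ and $z\to\infty$ I will use
\begin{equation*}
\log\Gamma(z) = \Bigl(z-\tfrac12\Bigr)\log z - z + \tfrac12\log(2\pi) + \frac{1}{12z} + O(z^{-2})\,.
\end{equation*}
I apply it at $z=x-t$ and at $z=x$ and subtract. The constant terms cancel. The difference of the $\frac{1}{12z}$ terms is $O(x^{-2})$, since $\frac{1}{x-t}-\frac1x = \frac{t}{x(x-t)}$. So everything reduces to the elementary quantity
\begin{equation*}
\Bigl(x-t-\tfrac12\Bigr)\log(x-t) - \Bigl(x-\tfrac12\Bigr)\log x + t\,.
\end{equation*}

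Next I expand $\log(x-t) = \log x - \frac tx - \frac{t^2}{2x^2} + O(x^{-3})$ and collect terms by order.
\begin{itemize}
\item The $\log x$ terms give $-t\log x$, which produces the main factor $x^{-t}$.
\item The constant terms cancel: $-t$ from $x\cdot(-t/x)$ against $+t$.
\item The $1/x$ terms are $\frac{t^2}{x} + \frac{t}{2x} - \frac{t^2}{2x} = \frac{t(t+1)}{2x}$.
\end{itemize}
All remaining terms are $O(x^{-2})$. Exponentiating gives
\begin{equation*}
\frac{\Gamma(x-t)}{\Gamma(x)} = x^{-t}\,\ee^{\frac{t(t+1)}{2x} + O(x^{-2})}\,.
\end{equation*}

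This is the step I expect to be the real obstacle. The $1/x$ coefficient I obtain, $t(t+1)/2$, differs from the $-t/2$ printed in the lemma. As a sanity check, take $t=1$: then $\Gamma(x-1)/\Gamma(x) = 1/(x-1) = x^{-1}\ee^{1/x+O(x^{-2})}$, which agrees with $t(t+1)/2=1$ and not with $-1/2$. So the exponent cannot be proved literally as printed with an $o(1/x)$ error. What the argument does establish, and I will state explicitly, is the consequence
\begin{equation*}
\frac{\Gamma(x - t)}{\Gamma(x)} = x^{-t}\bigl(1+O_t(1/x)\bigr)\,,
\end{equation*}
with the $1/x$ correction identified exactly.

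Finally, I will check that this weaker form is all the subsequent power-law estimate needs. There the lemma is applied with $x=U_N-1$ or $x=U_N$, $U_N\to\infty$, and $t=\alpha-\iota$ or $t=\alpha+1-\iota$. The two Gamma ratios in the bound on $\int_0^\infty \frac{t^{U_N-1}}{(U_N-1)!}\ee^{-t}\,\dd\mu(t)$ are then both of size $U_N^{-\alpha+\iota}$. Their difference, controlled by the exact $1/x$ terms, is of order $U_N^{-\alpha-1+\iota}$. This order is what produces the power $\lambda^{2\alpha+3-\iota}$ in Proposition~\ref{prop:size-W}(ii), once it is combined with $U_N-L_N\asymp\lambda\eta$. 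That exponent depends only on the $x^{-t}$ leading behaviour and on the fact that the correction is $O(1/x)$, not on the precise correction coefficient. So the proof is the Stirling subtraction above, together with a remark correcting the constant in the exponent.
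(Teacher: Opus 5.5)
Your computation is correct and follows the same route as the paper: Stirling's series at $x-t$ and at $x$, subtracted, with $\log(1-t/x)$ expanded. Your objection to the statement is also valid. The true expansion is $\Gamma(x-t)/\Gamma(x) = x^{-t}\exp\bigl(\frac{t(t+1)}{2x}+O(x^{-2})\bigr)$. The printed exponent $-\frac{t}{2x}+o(1/x)$ therefore fails for every $t>0$, since the discrepancy is $\frac{t(t+2)}{2x}$, and your check $\Gamma(x-1)/\Gamma(x)=1/(x-1)$ settles it.

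The paper's own derivation ends with $+\frac{t}{2x}$, which already disagrees in sign with its statement, and that coefficient is wrong too. The paper writes $(x-t)\log(1-t/x) = -(x-t)\bigl(\frac{t}{x}+o(\frac1x)\bigr)$. But $(x-t)\,o(1/x)$ is only $o(1)$, so the second-order term of the logarithm must be kept, exactly as you do. The cross term $+\frac{t^2}{x}$ and the contribution $-\frac{t^2}{2x}$ of $x\cdot(-\frac{t^2}{2x^2})$ together give precisely the $\frac{t^2}{2x}$ that the paper loses. (Its Stirling formula also has $\frac{12}{x}$ in place of $\frac{1}{12x}$; this is harmless, since that term cancels to $O(x^{-2})$.)

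Your remark about the downstream use is also right, and it can be made exact. With $s=\alpha-\iota$ and $y=U_N$, the functional equation $\Gamma(z+1)=z\Gamma(z)$ gives
\[
\frac{\Gamma(y-1-s)}{\Gamma(y-1)}-\frac{\Gamma(y-s)}{\Gamma(y)} = s\,\frac{\Gamma(y-1-s)}{\Gamma(y)} = s\,y^{-s-1}\bigl(1+O(1/y)\bigr).
\]
This needs only $\Gamma(y-t)/\Gamma(y)=y^{-t}(1+O_t(1/y))$ with $t=s+1$, and it recovers the paper's $\frac{\alpha-\iota}{U_N^{\alpha-\iota+1}}$. So the version you prove, $x^{-t}\exp\bigl(\frac{t(t+1)}{2x}+O(x^{-2})\bigr)$ (or even just $x^{-t}(1+O_t(1/x))$), is both the correct statement and all that is used later.
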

\begin{proof}
  Stirling's approximation to the first order,
\begin{equation*}
  \log \Gamma(x) = x\log x - x + \frac{1}{2}\log \frac{2\pi}{x} + \frac{12}{x} + \order\pB{\frac{1}{x^{2}}}\,,
\end{equation*}
gives
\begin{equation*}
  \begin{split}
    \log \frac{\Gamma(x-t)}{\Gamma(x)}
    &= - t\log x - (x-t) \left( \frac{t}{x}  + o\left(\frac{1}{x}\right) \right)  + t +\frac{1}{2}\left(\frac{t}{x} + o\left(\frac{1}{x}\right)\right)  + o\left(\frac{1}{x}\right)\\
    &= - t\log x  +\frac{t}{2x} + o\left(\frac{1}{x}\right)\,. \qedhere
  \end{split}
\end{equation*}
\end{proof}
\cref{lem:stirling-first} immediately gives
\begin{equation*}
  \frac{\Gamma(U_{N}-1-\alpha+\iota)}{\Gamma(U_{N}-1)} = \frac{1}{ \left( U_{N} - 1 \right)^{\alpha - \iota}}\ee^{\frac{t}{2U_{N}} + o(1/U_{N})} = \frac{1}{U_{N}^{\alpha - \iota}} \left( 1 + \frac{\alpha-\iota}{U_{N}} + o(\frac{1}{U_{N}}) \right)\,.
\end{equation*}
Similarly, we have
\begin{equation*}
  \frac{\Gamma(U_{N}-\alpha+\iota)}{\Gamma(U_{N})} = \frac{1}{ U_{N}^{\alpha - \iota}} \left( 1 + o(\frac{1}{U_{N}}) \right)\,.
\end{equation*}
This means that
\begin{equation*}
  \int_{0}^{\infty} \frac{t^{U_{N}-1}}{(U_{N}-1)!} \ee^{-t} \dd \mu(t) = \frac{\alpha - \iota}{U_{N}^{\alpha - \iota + 1}} + o\pB{\frac{1}{U_{N}^{\alpha+1-\iota}}}\,.
\end{equation*}
Similarly,
\begin{equation*}
  \int_{0}^{\infty} \frac{t^{L_{N}-2}}{(L_{N}-2)!} \ee^{-t} \dd \mu(t) =\frac{\alpha - \iota}{L_{N}^{\alpha - \iota + 1}} + o\pB{\frac{1}{L_{N}^{\alpha+1-\iota}}}\,.
\end{equation*}
Putting everything together, we get that there is a constant $C > 0$ such that
\begin{equation*}
  \E \left[ \# \W_{\lambda, \eta} \right] \leq C\left( \frac{\alpha - \iota}{L_{N}^{\alpha - \iota + 1}} -  \frac{\alpha - \iota}{U_{N}^{\alpha - \iota + 1}}\right) \left( 1 + o(\frac{1}{L_{N}}) \right) + \order\pB{(\log N)^{2\delta}}\,.
\end{equation*}
This means
\begin{equation}\label{eq:heavy-W}
 \E \left[ \# \W_{\lambda, \eta} \right] = \order\pB{\frac{N}{\lambda^{2\alpha+3-2\iota}}} + \order\pB{(\log N)^{2\delta}}\,.
\end{equation}

We now turn to the proof of \cref{prop:size-W}. The proof
will use the following variant of \cref{lem:bound-D+}.
\begin{lemma}\label{lem:bound-Dhat+}
  For each vertex $x \in [N]$, define
  \begin{equation*}
    \hat{D}^{+}_{x} = \#\{y \in S_{1}(x) \col w_{y} \geq w_{x}\} = \sum_{y \neq x}\ind_{\{x \sim y, w_{x} \leq w_{y}\}}\,.
  \end{equation*}
  Let $\nu > 0$. With $\nu$-high probability, we have
  \begin{equation*}
    \hat{D}_{x}^{+} \leq \frac{2\nu}{1 - \delta}\frac{\log N}{\log\log N}\,.
  \end{equation*}
\end{lemma}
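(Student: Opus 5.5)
The bound on $\hat{D}^{+}_{x}$ should be much simpler than \cref{lem:bound-D+}, because the condition $w_{x} \leq w_{y}$ is deterministic. Hence $\hat{D}^{+}_{x}$ is a sum of independent Bernoulli variables $\ind_{\{x \sim y\}}$ over the fixed set $\{y \neq x \col w_{y} \geq w_{x}\}$. No coupling with the tree $\Tree_{x}$ is needed, and no comparison between random degrees and weights either. The plan is to bound the mean of this sum and then apply Bennett's inequality \cite[Theorem 2.9]{BLM13} in its Poisson-tail form, exactly as in the final step of the proof of \cref{lem:bound-D+}.

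The first step is the mean. Using \eqref{eq:proba-edge} and $\ind_{\{w_{y} \geq w_{x}\}} \leq w_{y}/w_{x}$, I would write
\begin{equation*}
v \deq \E[\hat{D}^{+}_{x}] = \sum_{y \neq x} p_{xy}\ind_{\{w_{y} \geq w_{x}\}} \leq \sum_{y} \frac{w_{x}w_{y}}{m_{1}N}\frac{w_{y}}{w_{x}} = \frac{m_{2}}{m_{1}} = \order\bigl((\log N)^{\delta}\bigr)
\end{equation*}
by \cref{hyp:behavior-m21}. The bound is uniform in $x$, which is the crucial point: the high-weight neighbours of $x$ are few in expectation, whatever $w_{x}$ is.

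The second step is the tail bound. Bennett's inequality for a sum of independent $[0,1]$-valued variables with mean at most $v$ gives, for $t > v$,
\begin{equation*}
\P\bigl(\hat{D}^{+}_{x} \geq t\bigr) \leq \exp\Bigl(-t\log\frac{t}{v} + t - v\Bigr).
\end{equation*}
I would take $t = \frac{2\nu}{1-\delta}\frac{\log N}{\log\log N}$. Then $\log(t/v) \geq \log\log N - \log\log\log N - \delta\log\log N - O(1) = (1-\delta)\log\log N\,(1+o(1))$. The exponent is therefore at least $2\nu\log N\,(1+o(1)) - O(\log N/\log\log N)$, so the probability is $O(N^{-\nu})$, in fact $O(N^{-2\nu+o(1)})$.

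There is essentially no obstacle here. The only care needed is the bookkeeping in $\log(t/v)$, where the factor $(\log N)^{\delta}$ coming from $m_{2}/m_{1}$ eats a fraction $\delta$ of the $\log\log N$; this explains the $1/(1-\delta)$ in the statement. The constant $2$ leaves slack for the lower-order $\log\log\log N$ and $t$ terms.

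If the bound is later needed for all $x$ simultaneously, I would apply the lemma with $\nu+1$ in place of $\nu$ and take a union bound over $x \in [N]$, at the cost of adjusting the constant.
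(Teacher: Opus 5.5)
Your proposal is correct and follows essentially the same route as the paper. The deterministic weight condition gives independence, the bound $\ind_{\{w_x \leq w_y\}} \leq w_y/w_x$ reduces everything to $m_2/m_1 = O((\log N)^{\delta})$, and a Poisson-type tail at level $\asymp \log N/\log\log N$ finishes; the only difference is that the paper uses a union bound over $k$-subsets of neighbours, $\P(\hat{D}^{+}_x \geq k) \leq (m_2/m_1)^k/k!$, instead of Bennett's inequality, which gives the same estimate up to the Stirling factor.
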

\begin{proof}
  Let $k \geq 1$ be an integer. The union bound implies
  \begin{equation*}
    \P \left( \hat{D}^{+}_{x} \geq k \right) \leq \frac{1}{k!}\sum_{\substack{x_{1}, \ldots, x_{k}\\\text{distinct}}}\P \left( \forall i \in [k], x \sim x_{i}, w_{x} \leq w_{x_{i}}\right)\,.
  \end{equation*}
  By independence and \eqref{eq:proba-edge}, we have
  \begin{equation*}
    \P \left( \hat{D}^{+}_{x} \geq k \right) \leq \frac{1}{k!}\sum_{\substack{x_{1}, \ldots, x_{k}\\\text{distinct}}}\prod_{i=1}^{k} \left( \frac{w_{x}w_{x_{i}}}{m_{1}N}\ind_{\{w_{x} \leq w_{x_{i}}\}} \right) \leq \frac{1}{k!}\sum_{\substack{x_{1}, \ldots, x_{k}\\\text{distinct}}}\prod_{i=1}^{k} \left( \frac{w_{x_{i}}^{2}}{m_{1}N}\right)\,,
  \end{equation*}
  where in the last line, we used that
  $\ind_{\{w_{x} \leq w_{x_{i}}\}} \leq w_{x_{i}} / w_{x}$. By definition
  of the second empirical moment, we have
  \begin{equation*}
    \P \left( \hat{D}^{+}_{x} \geq k \right) \leq  \frac{1}{k!} \left( \frac{m_{2}}{m_{1}} \right)^{k}(1 + o(1))\,.
  \end{equation*}
  Taking $k = \lfloor \frac{2\nu}{1 - \delta}\frac{\log N}{\log\log N}\rfloor$ allows
  us to conclude.
\end{proof}

\begin{proof}[Proof of \cref{prop:size-W}]
  Let $k \geq 1$ be an integer. We use the union bound to write
  \begin{equation*}
    \P \left( \# \W_{\lambda, \eta} \geq k\right) \leq \frac{1}{k!}\sum_{\substack{x_{1}, \ldots, x_{k}\\\text{distinct}}}\P \left( \forall i \in [k], (\lambda - \eta)^{2} \leq D_{x_{i}} \leq (\lambda+\eta)^{2} \right)\,.
  \end{equation*}
  Set $\hat{D}^{-}_{x} = D_{x} - \hat{D}_{x}^{+}$ for all $x \in [N]$.
  By \cref{lem:bound-Dhat+}, we have for all $x \in [N]$ that
  \begin{equation*}
    D_{x} = \hat{D}_{x}^{+} + \hat{D}_{x}^{-} \leq \hat{D}_{x}^{-} + \frac{2\nu+2}{1-\delta}\frac{\log N}{\log \log N}\,,
  \end{equation*}
  with $\nu$-high probability. For convenience, write
  $c_{\nu, N} = \frac{2\nu+2}{1-\delta}\frac{\log N}{\log \log N}$.

  Now, notice that the random variables $\hat{D}_{x}^{-}$ are
  independent. It implies
  \begin{equation*}
    \begin{split}
      \P \left( \# \W_{\lambda, \eta} \geq k\right)
      &\leq \frac{1}{k!}\sum_{\substack{x_{1}, \ldots, x_{k}\\\text{distinct}}}\prod_{i=1}^{k}\P \left((\lambda - \eta)^{2}-c_{\nu, N} \leq \hat{D}^{-}_{x_{i}} \leq (\lambda+\eta)^{2} \right) + \order\p{N^{-\nu}}\\
      &\leq \frac{1}{k!} \left( \sum_{x}\P \left((\lambda - \eta)^{2}-c_{\nu, N} \leq \hat{D}^{-}_{x} \leq (\lambda+\eta)^{2} \right) \right)^{k} + \order\p{N^{-\nu}}\,.
    \end{split}
  \end{equation*}

Notice that we have 
  \begin{equation*}
    \begin{split}
      \P \left( \# \W_{\lambda, \eta} \geq k\right)
      &\leq \frac{1}{k!} \left( \sum_{x}\P \left((\lambda - \eta)^{2}-c_{\nu, N} \leq D_{x} \leq (\lambda+\eta)^{2}  + c_{\nu, N}\right) \right)^{k} + \order\p{N^{-\nu}}\\
      &\leq \frac{\left( \Ec\#\W_{\lambda, \eta+c_{\nu, N}/2\lambda} \right)^{k}}{k!} + \order\p{N^{-\nu}}\,.
    \end{split}
  \end{equation*}

  Markov's inequality gives the crude bound
  \begin{equation*}
    \P \left( \# \W_{\lambda, \eta} \geq k\right)
    \leq \frac{1}{k!} \left( \sum_{x}\frac{w_{x}^{2}}{\left( (\lambda - \eta)^{2} - c_{\nu, N} \right)^{2}} \right)^{k} = \frac{1}{k!} \left(N\frac{m_{2}}{\left( (\lambda - \eta)^{2} - c_{\nu, N} \right)^{2}} \right)^{k}\,.
  \end{equation*}
Choosing
  $k = \lfloor \frac{2m_{2}}{(\lambda - \eta)^{4}}N \vee \frac{2\nu \log N}{\log\log N}\rfloor$
  gives the result. The result can be improved in our two examples
  using expressions \eqref{eq:exponential-W} and \eqref{eq:heavy-W}, derived above.
\end{proof}

\subsection*{Acknowledgements}
TBdA was supported by ERC Project LDRAM : ERC-2019-ADG Project 884584
in the initial stages of this projet, and by ERC Project InSpeGMos
101096550 in the late stages of this project. AK acknowledges funding from the European Research Council (ERC) and the Swiss State Secretariat for Education, Research and Innovation (SERI) through the consolidator grant ProbQuant, as well as funding from the Swiss National Science Foundation through the NCCR SwissMAP grant.

\bibliographystyle{alpha}
\bibliography{bibliography}

\end{document}